\documentclass[1 leqno,11pt]{amsart}
\usepackage{amssymb, amsmath,latexsym,amsfonts,amsbsy, amsthm,mathtools,graphicx,color}
\usepackage{float}
\usepackage{hyperref}

\setlength{\oddsidemargin}{0mm}
\setlength{\evensidemargin}{0mm} \setlength{\topmargin}{0mm}
\setlength{\textheight}{220mm} \setlength{\textwidth}{160mm}

\numberwithin{equation}{section}

\allowdisplaybreaks


\let\al=\alpha

\let\e=\varepsilon

\let\f=\frac

\let\th=\theta
\let\pa=\partial



\def\no{\noindent}

\newcommand{\beq}{\begin{equation}}
\newcommand{\eeq}{\end{equation}}
\newcommand{\ben}{\begin{eqnarray}}
\newcommand{\een}{\end{eqnarray}}
\newcommand{\beno}{\begin{eqnarray*}}
\newcommand{\eeno}{\end{eqnarray*}}

\newtheorem{theorem}{Theorem}[section]

\newtheorem{lemma}[theorem]{Lemma}
\newtheorem{proposition}[theorem]{Proposition}

\newtheorem{Theorem}{Theorem}[section]

\newtheorem{Corollary}[Theorem]{Corollary}


\begin{document}

\title{Tollmien-Schlichting waves near neutral stable curve}

\author[Qi Chen]{Qi Chen}
\address[Q. Chen]{School of Mathematical Sciences, Zhejiang University, Hangzhou, 310058,  P. R. China}
\email{chenqi123@zju.edu.cn}

\author{Di Wu}
\address[D. Wu]{School of Mathematics, South China University of Technology, Guangzhou, 510640,  P. R. China}
\email{wudi@scut.edu.cn}

\author{Zhifei Zhang}
\address[Z. Zhang]{School of Mathematical Sciences, Peking University, 100871, Beijing, P. R. China}
\email{zfzhang@math.pku.edu.cn}

\date{\today}

\maketitle

\begin{abstract}
	In this paper, we study the linear stability of boundary layer flows over a flat plate. Tollmien, Schlichting, Lin et al. found that there exists a neutral curve, which consists of two branches: lower branch $\al_{low}(Re)$ and upper branch $\al_{up}(Re)$. Here, $\al$ is the wave number and $Re$ is the Reynolds number. For any $\al\in(\al_{low},\al_{up})$, there exist unstable modes known as Tollmien-Schlichting (T-S) waves to the linearized Navier-Stokes system. These waves play a key role during the early stage of boundary layer transition. In a breakthrough work (\textit{Duke math Jour, 165(2016)}), Grenier, Guo, and Nguyen provided a rigorous construction of the unstable T-S waves. In this paper, we confirm the existence of the neutral stable curve. To achieve this, we develop a more delicate method for solving the Orr-Sommerfeld equation by borrowing some ideas from the triple-deck theory. This approach allows us to construct the T-S waves in a neighborhood of the neutral curve.
\end{abstract}

\section{Introduction}

In this paper, we are concerned with a classical problem in fluid mechanics: the stability and transition of laminar flow. It has been observed in Reynolds's experiment that the flow is in the laminar state when the Reynolds number ($Re$) is small. When the Reynolds number exceeds a certain critical value, the flow becomes unstable and can transition to turbulence \cite{Drazin-Reid}. To understand this phenomenon, we consider the flow over a flat plate. The motion of the fluid can be described by the incompressible Navier-Stokes equations in a
domain $\Omega=[0,+\infty)\times\mathbb{R}_+$:
\begin{align}\label{NS}
\left\{
\begin{aligned}
&\partial_t u^\nu+u^\nu\cdot\nabla u^\nu+\nabla p^\nu-\nu\Delta u^\nu=0\quad \text{in $[0,T]\times\Omega$},\\
&\nabla\cdot u^\nu=0\quad \text{in $[0,T]\times\Omega$},\\
&u^\nu|_{y=0}=0.
\end{aligned}
\right.
\end{align}
Here $u^\nu=\big(u^\nu_1, u^\nu_2\big)$ is the velocity field and $p^\nu$ is the pressure. When $\nu$ is very small, a thin boundary layer will be formed in a neighborhood of plate. In this layer, the behavior of the solution could be described by the Prandtl equation:
 \begin{eqnarray}\label{equ:P}
\left\{\begin{aligned}
&u^p\pa_x u^p+v^p\pa_Y u^p+\pa_xp^e|_{y=0}=\pa^2_{Y}u^p,\\
&\pa_xu^p+\pa_Y v^p=0,\\
&u^p|_{Y=0}=v^p|_{Y=0}=0,\quad\lim_{Y\rightarrow+\infty}u^p(x,Y)=u^e_1(t,x,0).
\end{aligned}\right.
\end{eqnarray}
For the outer flow $U^e=1,P^e=0$, the Prandtl equation \eqref{equ:P} has a self-similar solution  called Blasius solution with the form 
\begin{align*}
	(u_B,v_B)=\Big(f'(\zeta),\f{1}{2\sqrt{x+x_0}}(\zeta f'(\zeta)-f(\zeta))\Big),
\end{align*}
where $\zeta=\f{Y}{\sqrt{x+x_0}}$ with $x_0>0$ as a free parameter, and the profile $f$ satisfies
\begin{align*}
	\f12ff''+f'''=0,\quad f(0)=f'(0)=0,\quad\lim_{\zeta\to\infty}f'(\zeta)=1.
\end{align*}
The Blasius solution has been experimentally confirmed with remarkable accuracy as a basic flow over a flat plate \cite{Sch}. In \cite{GI}, Guo and Iyer justified the local Prandtl expansion with the Blasius solution as a leading-order approximation of the steady Navier-Stokes equations, see also \cite{GZ}. Iyer and Masmoudi \cite{IM} justified the global Prandtl expansion. In a recent work \cite{GWZ}, Guo, Wang, and Zhang proved that the Blasius solution is a stable solution of the unsteady Prandtl equation. This provides another indication that the Blasius solution is reasonable as a basic flow. See \cite{GN, GI, Iyer, WZ-aihp, WZ} and references therein for more related works.

Considering a local region in the streamwise direction, the flow $u^\nu_s=\Big(u(\f{y}{\sqrt{\nu}}),0\Big)$ with a Blasius profile $u(Y)$ can be seen as an approximation of the Blasius solution. To study the instability of the flow at high Reynolds number under the perturbations, we consider the linearized Naiver-Stokes(NS) system around the flow $u_s^\nu$, which takes as follows,
    \begin{align}\label{eq:NS-linear}
    	\left\{
\begin{aligned}
&\partial_t v^\nu+u(\frac{y}{\sqrt{\nu}})\partial_x v^\nu+\frac{1}{\sqrt{\nu}}v_2^\nu(\partial_Y u(\frac{y}{\sqrt{\nu}}),0) +\nabla q^\nu-\nu\Delta v^\nu=0,\\
&\nabla\cdot u^\nu=0,\\
&v^\nu|_{y=0}=0.
\end{aligned}
\right.
\end{align}
A traditional method studying the linear stability is the spectral analysis, that is, finding the
solution of \eqref{eq:NS-linear} in the form $e^{\lambda t}V$.  Then $V$ solves an eigenvalue problem. We say that the linearized NS system is spectrally unstable if there exists a solution $V$ with $\mathrm{Re}\lambda>0$. Otherwise, the linearized NS system is spectrally stable.

To proceed, we reformulate the problem. We first introduce the following rescaled variables:
\begin{align*}
	\tau=\frac{t}{\sqrt{\nu}},\quad X=\frac{x}{\sqrt{\nu}},\quad Y=\frac{y}{\sqrt{\nu}}.
\end{align*}
Then the linearized NS system \eqref{eq:NS-linear} is reduced to 
\begin{align}\label{eq:NS-linear-scale}
	\left\{
	\begin{aligned}
		&\partial_\tau v-\sqrt{\nu}\Delta_{X,Y}v+(v_2\partial_Y u,0)+u\partial_X v+\nabla_{X,Y}q=0,\\
		&\mathrm{div}_{X,Y}v=0,\quad v|_{Y=0}=0.
	\end{aligned}
	\right.
\end{align}

\subsection{Main result}

We try to find the solution to the linearized NS system \eqref{eq:NS-linear} in the form
\begin{align}\label{eq:TS-velocity}
	v^{\nu}_{TS}=(v_1^\nu,v_2^\nu)=e^{-\mathrm icnt}e^{\mathrm inx}(\partial_Y\phi(\frac{y}{\sqrt{\nu}}),-\mathrm i\nu^\f12n\phi(\frac{y}{\sqrt{\nu}})),
\end{align}
Let $\alpha:=\nu^\f12 n$. Then
\begin{align}
	v_{TS}=(v_1,v_2)=e^{-\mathrm i\alpha c\tau}e^{\mathrm i\alpha X}(\partial_Y\phi(Y),-\mathrm i\alpha\phi(Y))
\end{align}
is a solution to the rescaled linearized system \eqref{eq:NS-linear-scale}, where $\phi$ is a solution to the Orr-Sommerfeld equation 
\begin{align}\label{eq:OS}
\left\{\begin{aligned}
	&\varepsilon(\partial_Y^2-\alpha^2)^2\phi-(u-c)(\partial_Y^2-\alpha^2)\phi+u''\phi=0,~~\lim_{Y\to\infty}\phi(Y)=0,\\
	&\phi(0)=\partial_Y\phi(0)=0,
	\end{aligned}
\right.
\end{align}
where $\varepsilon:=\frac{1}{\mathrm i n}=\frac{\nu^\f12}{\mathrm i\alpha}$. We know from \eqref{eq:TS-velocity} that
\begin{itemize}
	\item If $\mathrm{Im}(c)<0$, $|e^{-\mathrm icnt}|=e^{-|\mathrm{Im}(c)|n t}\to0$ as $t\to\infty$, then $v_{TS}^\nu$ is stable.
	\item If $\mathrm{Im}(c)=0$, $|e^{-\mathrm icnt}|\sim 1$ as $t\to\infty$, then $v_{TS}^\nu$ is neutral stable.
	\item If $\mathrm{Im}(c)>0$, $|e^{-\mathrm icnt}|=e^{|\mathrm{Im}(c)|n t}\to+\infty$ as $t\to\infty$, then $v_{TS}^\nu$ is unstable.
\end{itemize}

The unstable (growing) modes are known as Tollmien-Schlichting (T-S) waves, which play a key role during the early stage of boundary layer transition. This instability was first predicted by Tollmien. Later on, Schlichting and Lin carried out a detailed calculation to reproduce Tollmien's neutral curve via asymptotic analysis  \cite{L}. Roughly speaking, there exist lower and upper marginal stability branches  $\al_{low}(\nu),\al_{up}(\nu)$ that depend on the profile $u(Y)$ such that when $\al\in(\al_{low},\al_{up})$, the Orr-Sommerfeld equation exist unstable solutions with $\mathrm{Im}(c)>0$. In the breakthrough work \cite{GGN}, Grenier, Guo and Nyugen constructed the T-S waves
for $\al$ lying in an interval $[a,b]$ with $\al_{low}\ll a<b\ll\al_{up}$ for generic flows $u(Y)$. Moreover, they developed a robust Rayleigh-Airy iteration method to solve the Orr-Sommerfeld equation. For nonlinear stability results of boundary layer flows, we refer to \cite{GMM, GMM-apde, GMa, GN-arma, GNg,GN-jmpa, CWZ1, CWZ2, BG-siam, BG-2024}.
For further linear stability results concerning channel flow or pipe flow, we refer to \cite{Rom, GGN-AM, AH, AH-arxiv, CWZ-CPAM, CWZ-CMP}.

For subsonic flow, Yang and Zhang \cite{YZ} provided the first rigorous construction of compressible Tollmien-Schlichting (T-S) waves of temporal mode by developing a quasi-compressible and Stokes iteration method. Recently, Masmoudi, Wang, and the last two authors constructed spatial and temporal unstable T-S waves for the entire subsonic regime by introducing a new quasi-incompressible-compressible iteration scheme \cite{MWWZ}. They also constructed multiple unstable acoustic modes, known as Mack modes, in the supersonic boundary layer \cite{MWWZ2}.

Before stating our main result, we first outline the structural assumptions on the background shear flow:
\begin{eqnarray}\label{eq:Hyper-u}
\begin{split}
&u(0)=0,\quad \lim_{Y\to\infty}u(Y)=1,\quad u'(0)>0,\quad u''(0)<0,\\
&\sup_{Y\geq0}|\partial_Y^k(u(Y)-1)e^{\eta_0 Y}|<+\infty,\quad k=0,\dots,4\text{ with }\eta_0\geq C>0.
\end{split}
\end{eqnarray}
For this kind of flows equipped with analyticity, Grenier, Guo and Nyugen \cite{GGN} constructed the unstable T-S waves for $\al\in(A\nu^\f18,B\nu^{\f{1}{12}})$ with $B\ll 1\ll A$, which matches the physical results quite well. However, physical results also suggest that the transition of linear stability occurs on the neutral curve. Recently, Bian and Grenier \cite{BG-2023} made some progress on the existence of the neutral curve for the analytic basic flow.

The main goal of this paper is to confirm the existence of the neutral stable curve. To this end, we will show that the stable and neutrally stable T-S waves actually exist near the neutral
curve for sufficiently small viscosity $\nu$.  
Moreover, we will also attempt to elucidate the transition mechanism of linear stability for boundary flows by studying the stability transition of T-S waves.
\smallskip

Our main result is stated as follows.

\begin{theorem}\label{them:main}
	Let $u(Y)$ be a shear flow profile satisfying \eqref{eq:Hyper-u}. Then there exist  $\nu_0\ll1$ and $A\leq 1\leq B$ such that for any $\nu\leq \nu_0$ the following statement holds true. For any $\al\in(A\nu^{\f18},B\nu^{\frac{1}{12}})$, we can find $c(\al)$ such that there exists a solution $v_{TS}^\nu\in W^{2,\infty}$ to the linearized Navier-Stokes equation \eqref{eq:NS-linear} in the form of 
	\begin{align*}
		v^{\nu}_{TS}(t,x,y)=e^{-\mathrm ic\al \nu^{-\f12}t}e^{\mathrm i\al \nu^{-\f12}x}(\partial_Y\phi(\frac{y}{\sqrt{\nu}}),-\mathrm i\nu^\f12n\phi(\frac{y}{\sqrt{\nu}})).
	\end{align*}
  Moreover,
	\begin{itemize}
		\item There exist $A_1\geq A_c\geq A_0>A$ such that 
		      \begin{align*}
		      	&\mathrm{Im}(c(A_0\nu^\f18))<0,\quad \mathrm{Im}(c(A_c\nu^\f18))=0,\quad\mathrm{Im}(c(A_1\nu^\f18))>0.
		      \end{align*}
		\item There exsit $B_0\leq B_c\leq B_1<B$ such that 
		       \begin{align*}
		      	&\mathrm{Im}(c(B_0\nu^\frac{1}{12}))>0,\quad \mathrm{Im}(c(B_c\nu^\frac{1}{12}))=0,\quad\mathrm{Im}(c(B_1\nu^\frac{1}{12}))<0.
		      \end{align*}
		\item If $\nu^{\f18}\ll \al\ll\nu^{\frac{1}{12}}$, then 
		      \begin{align*}
		      	\alpha\mathrm{Im}(c)\sim\nu^\f14.
		      \end{align*}
	\end{itemize}
	\end{theorem}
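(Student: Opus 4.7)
My plan is to construct the T-S waves by building two linearly independent decaying-at-infinity solutions of the Orr-Sommerfeld equation \eqref{eq:OS} and then enforcing the wall conditions $\phi(0)=\partial_Y\phi(0)=0$ to derive an implicit dispersion relation $E(\al,c,\nu)=0$ whose root $c(\al)$ is the sought wave speed. Following the Rayleigh--Airy philosophy of \cite{GGN} with refinements from the triple-deck heuristic, I would construct a slow (Rayleigh) mode $\phi_s$ from the inviscid equation $(u-c)(\pa_Y^2-\al^2)\phi=u''\phi$, inverting it via the explicit Green's function built from $u-c$ and carefully handling the logarithmic singularity at the critical layer $Y=Y_c$, where $u(Y_c)=c$. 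The fast (Airy) mode $\phi_f$ is concentrated in a sublayer of width $(\varepsilon/u'(Y_c))^{1/3}$ about $Y_c$ and is constructed from solutions of $\varepsilon\phi''''-(u-c)\phi''=0$ via the primitive Airy function $\mathrm{Ai}(2,\cdot)$.

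After normalization the dispersion relation takes the schematic form
\[
E(\al,c)\ :=\ \phi_s(0)\phi_f'(0)-\phi_s'(0)\phi_f(0)\ =\ 0,
\]
which reduces to the requirement that a Tietjens-type function $T(\eta_0)$ of the scaled critical-layer location $\eta_0\sim -Y_c(u'(0)/\varepsilon)^{1/3}$ equals a slow-mode wall ratio $R_s(\al,Y_c)$. Because $c\to 0$ on both branches, the critical layer sits close to the wall and the triple-deck ingredient becomes essential: expanding $u(Y)\approx u'(0)Y+\tfrac12 u''(0)Y^2$ and tracking how $R_s$ and $T$ respond to both $\al$ and $Y_c$, one finds two distinct balances. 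The lower branch $\al\sim\nu^{1/8}$ emerges from $\al^2\sim(\varepsilon/u'(0))^{1/3}\sim u'(0)Y_c^2$ (sublayer and critical layer merged), while the upper branch $\al\sim\nu^{1/12}$ emerges from $u'(0)Y_c^2\sim\al$ with $Y_c\gg(\varepsilon/u'(0))^{1/3}$ (critical layer detached).

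To produce $c(\al)$ with controlled error, I would solve $E(\al,c)=0$ via a contraction/implicit-function argument around a leading-order guess $c_0(\al)$ obtained from the appropriate balance, showing that the dispersion functional is a small perturbation of an explicit Tietjens-type equation. The sign of $\mathrm{Im}(c)$ is then read off from the position of $c$ relative to the zeros of this equation in the complex plane: along the lower branch, $\mathrm{Im}(c)$ transitions from negative to positive as $\al$ crosses the relevant Tietjens zero, yielding the claimed $A_0<A_c<A_1$; the analogous transition on the upper branch yields $B_0<B_c<B_1$. For $\al$ strictly between the two branches, the real part of $E$ at real $c$ is forced to be of order $\nu^{1/4}/\al$, so perturbing to recover $E=0$ produces an imaginary shift $\mathrm{Im}(c)\sim\nu^{1/4}/\al$, which is the sharp asymptotic claimed.

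The main obstacle is obtaining the slow and fast modes with enough quantitative precision that both the real and imaginary parts of $E(\al,c)$ are captured to leading order, uniformly in $\al$ across the whole window $(A\nu^{1/8},B\nu^{1/12})$. Near the lower branch the critical layer, the Stokes layer at the wall, and the logarithmic singularity in the Rayleigh iteration interact nontrivially, so I would organize the slow mode via matched asymptotic expansions in the upper, main, and lower decks and extract the sign-determining information from subleading Tietjens-function corrections. Moreover, the Rayleigh iteration must be shown to converge uniformly in $c$ throughout a complex neighborhood of the expected eigenvalue (so that the implicit-function step closes), which is the principal technical difficulty and dictates the hypothesis \eqref{eq:Hyper-u} on $u$.
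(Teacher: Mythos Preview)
Your outline captures the overall architecture correctly---two decaying modes, a dispersion relation from the wall data, and a Tietjens-type analysis---but it misses the central obstruction that forces the paper's main new idea. You propose to build the slow mode from the standard Rayleigh operator $Ray_c[\varphi]=(u-c)(\pa_Y^2-\al^2)\varphi-u''\varphi$ with the actual eigenvalue $c$. That operator is singular at $Y=Y_c$ precisely when $c_i=0$, and every Rayleigh estimate (see Proposition~\ref{pro: phi_non} and Proposition~\ref{prop:ray-homo}) comes with a $|\log c_i|$ loss. So the Rayleigh--Airy iteration you describe cannot converge, or even be defined, for $c_i\le 0$; you can at best reproduce \cite{GGN}, which constructs only strictly unstable modes. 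You acknowledge that ``the Rayleigh iteration must be shown to converge uniformly in $c$ throughout a complex neighborhood of the expected eigenvalue \ldots which is the principal technical difficulty,'' but you do not say how to achieve this, and with the unmodified $Ray_c$ it is simply false on the neutral curve. The paper's resolution is to replace $c$ by $\hat c=c+\mathrm i|\varepsilon|\al^{-3/2}$ in the Rayleigh step, so that $Ray_{\hat c}$ is uniformly nonsingular while the extra term $\mathrm ic_0(\pa_Y^2-\al^2)\varphi$ is still small enough to be absorbed by the Airy corrector (see Section~1.2.1 and the iteration in Theorem~\ref{thm:OS-non-homo}); the window $|\varepsilon|^{2/3}\ll c_0\ll|\varepsilon|^{1/3}$ is exactly what makes both requirements compatible. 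Without this device (or an equivalent one) your contraction/implicit-function step cannot close for $c_i\le 0$, and hence you cannot produce the neutral points $A_c,B_c$ or the stable modes at $A_0,B_1$.

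A second, more localized issue: you attribute the upper-branch transition to ``the analogous Tietjens zero,'' but that is not the mechanism. Near $\al\sim\nu^{1/12}$ one has $|\kappa\eta(0)|\gg1$, the Airy ratio contributes $\mathrm{Im}\bigl(-u'(0)\tilde{\mathcal A}(2,0)/\tilde{\mathcal A}(1,0)\bigr)\sim+|\varepsilon|^{1/2}c_r^{-1/2}>0$, and it is the second-order Rayleigh boundary term $\al^2 u''(0)\pi/u'(0)^2<0$ (requiring the hypothesis $u''(0)<0$) that eventually dominates and forces $c_i<0$; see \eqref{eq:dr-i} and Lemmas~\ref{lem: asymp-phi_02}, \ref{lem:ray-R-0}, \ref{lem:m0s}. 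Your slow-mode expansion must therefore be pushed one order further than in \cite{GGN} to see this term, which is another place where the refined estimates (Propositions~\ref{prop:airy-green-Y}, Theorem~\ref{them:airy-sf}) for the singular Airy source are essential.
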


    Let us now provide some remarks regarding our results.
    
    	\begin{itemize}
    	\item  In Theorem \ref{them:main}, we prove the existence of the stable Tollmien-Schlichting waves near $\al\sim\nu^{\f18}$ and $\al\sim\nu^{\frac{1}{12}}$. These results suggest that the lower branch of the neutral curve should be located at $\al\sim\nu^{\f18}$, and the upper branch of the neutral curve should be located at  $\al\sim\nu^{\frac{1}{12}}$.
	
    	\item We remove the analytic regularity assumption of the profile $u(Y)$ in \cite{GGN} by introducing a modified Langer transformation.
	
    	\item  The assumption $u''(0)<0$ is unnecessary for the construction of T-S wave when $\al$ is around the lower branch. This allows the Blasius profile. The upper branch depends
    	on the profile of $u(Y)$, with $\al_{up}\sim\nu^{\f{1}{20}}$ for the Blasius profile. In this case, our proof still works for the construction of the unstable T-S waves. However, the proof of the transition for the upper branch would be much involved.
	
    	\item Inspired by \cite{GGN}, the main strategy for constructing Tollmien-Schlichting waves $v^\nu_{TS}$ is to use a modified Rayleigh-Airy iteration. This method constructs two linearly independent solutions(slow mode $\phi_s$ and fast mode $\phi_f$) to the homogeneous Orr-Sommerfeld equation without matching the boundary conditions. Subsequently, we obtain the eigenvalue $c$ and its properties by solving dispersion equation
    	\begin{align*}
    		\frac{\phi_s(0)}{\partial_Y\phi_s(0)}=\frac{\phi_f(0)}{\partial_Y\phi_f(0)}.
    	\end{align*} 
    	However, compared with \cite{GGN},  the  Rayleigh equation with the same eigenvalue $c$ is singular and can not approximate the OS equation in the upper and main deck when $\al$ is on the neutral curve. \textbf{One of main improvements and achievements in this paper is that we observe a refined approximation of the OS equation in the upper and main deck when $\al$ is on the neutral curve.} This ensures the above dispersion relation exists for $c_i\leq 0$ and leads us to find the transition mechanism of linear stability near the neutral curve. 	
    \end{itemize}
    
	\subsection{Sketch of the proof}
	We introduce the following notations of three important operators
	\begin{align}
		\begin{split}
			&OS[\cdot]=\varepsilon(\partial_Y^2-\alpha^2)^2-(u- c)(\partial_Y^2-\alpha^2)+u'',\\
			&Ray_c[\cdot]=(u- c)(\partial_Y^2-\alpha^2)-u'',\\
			&Airy[\cdot]=\varepsilon(\partial_Y^2-\alpha^2)-(u-c).
		\end{split}
	\end{align}
	The key to constructing a non-zero solution to \eqref{eq:OS} is to construct two linearly independent solutions $\phi_s$ and $\phi_f$, to the following homogeneous OS equation
	\begin{align*}
		OS[\phi]=0,\quad\lim_{Y\to\infty}\phi(Y)=0,
	\end{align*}
for $\nu^\f18\lesssim\al\lesssim\nu^{\f{1}{12}}$, $c_r\sim\al$ and $|c_i|\ll\al$.
	Moreover, the slow mode $\phi_s$ and fast mode $\phi_f$ are constructed around $\varphi_{Ray}$ and $\psi_a$ respectively, where
	\begin{align*}
		Ray_c[\varphi_{Ray}]=0,\quad Airy[(\partial_Y^2-\alpha^2)\psi_a]=0,\quad\lim_{Y\to\infty}\varphi_{Ray}(Y)=\lim_{Y\to\infty}\psi_a(Y)=0.
	\end{align*}
	\subsubsection{The triple-deck structure and modified Rayleigh equation} In the case where $c_r\geq C|\varepsilon|^\f13$, there exist three crucial scales such that the homogeneous OS equation can be approximated by different simpler equations. In details,
	\begin{align*}
		OS[\phi]=0\sim
		\left\{
		\begin{aligned}
			&(\partial_Y^2-\alpha^2)\phi=0,\quad\phi\sim e^{-\alpha Y},\quad Y\sim \alpha^{-1}:\text{ upper deck},\\
			&(u-c)\partial_Y^2\phi-u''\phi=0,\quad\phi\sim(u-c)\log(u-c),\quad |Y-Y_c|\sim Y_c:\text{ main deck,}\\
			&Airy[(\partial_Y^2-\alpha^2)\phi]=0,\quad\phi\sim Ai(\kappa(Y-Y_c)),\quad |Y-Y_c|\sim|\varepsilon|^\f13:\text{ lower deck},
		\end{aligned}
		\right.
	\end{align*}
	where $u(Y_c)=c_r$, $\kappa=|\varepsilon|^{-\f13}u'(Y_c)$ and $Ai(\cdot)$ is the classical Airy function.  The lower deck, also known as the sublayer, appears when $Y_c\geq |\varepsilon|^\f13$. From the triple-deck structure, we formally have that	
	\begin{align}\label{eq:app-os-ray}
		OS[\phi]\sim -Ray_c[\phi]\text{ in upper and main deck},
	\end{align}
	and 
	\begin{align*}
		OS[\phi]\sim Airy[(\partial_Y^2-\alpha^2)\phi]\text{ in lower deck(sublayer)}.
	\end{align*}
	However, unlike the construction of the unstable mode ($c_i>0$) in \cite{GGN}, the approximation \eqref{eq:app-os-ray} fails on the neutral curve. In fact, on the neutral curve $c_i=0\Longrightarrow u(Y_c)-c=0$, the operator $Ray_c[\cdot]$ is singular.  
	
	\textbf{The main difficulty is finding a good approximation of $OS[\cdot]$ in the upper and main deck around the neutral curve.} To address this, our key observation is that
	\begin{align}\label{eq:idea-diff}
		\text{Diffusion effect}\Longrightarrow \text{Regularity}\Longrightarrow\text{Push $c$ to the left in $\mathbb C$}.
	\end{align}
	This means that we can find some $\hat c=c+\mathrm i c_0$ with $c_0>0$ such that 
	\begin{align*}
		Ray_{\hat c}[\varphi]=(u-\hat c)(\partial_Y^2-\alpha^2)\varphi-u''\varphi
	\end{align*}
	is a good approximation of $OS[\cdot]$ for $|Y-Y_c|\geq CY_c$. Here the size of $c_0$ is critical because we need to ensure that  $\hat c$ contains a strong enough regularized effect 
	from the diffusion term($c_0\gg|\varepsilon|$) and that the extra errors $\mathrm{i}c_0(\partial_Y^2-\al^2)\varphi$ generated by adding $\mathrm{i}c_0$ can be treated as a small perturbation. We observe that the decay solution $\varphi_{Ray}$ to the homogeneous Rayleigh solution
	\begin{align*}
		\varphi_{Ray}(Y)\sim \underbrace{(u-c)e^{-\alpha Y}}_{\text{upper deck}}+\underbrace{u'(0)^{-1}\alpha e^{-\eta_0 Y}}_{\text{main deck}}\text{ for $Y_c\lesssim|Y-Y_c|$},
	\end{align*}
	which implies that 
	\begin{align*}
		(\partial_Y^2-\alpha^2)\varphi_{Ray}(Y)\sim \underbrace{u''(Y)e^{-\alpha Y}}_{\text{upper deck}}+\underbrace{\frac{u''(Y)\alpha e^{-\eta_0 Y}}{u'(0)(u-c)}}_{\text{main deck}}\text{ for $Y_c\lesssim|Y-Y_c|$}.
	\end{align*}
	The influence of the diffusion term for  $Y_c\lesssim|Y-Y_c|$  is similar to
	\begin{align*}
		&\varepsilon(\partial_Y^2-\alpha^2)^2\varphi_{Ray}(Y)\sim\varepsilon(\partial_Y^2-\alpha^2)(u''(Y)e^{-\alpha Y}+\frac{u''(Y)\alpha e^{-\eta_0 Y}}{u'(0)(u-c)})\sim\underbrace{\mathrm i|\varepsilon|}_{\text{upper main}} +\underbrace{\mathrm i|\varepsilon|\alpha^{-2}}_{\text{main deck}},
	\end{align*}
   from which, we can guess $c_0\in[|\varepsilon|\al^{-2},|\varepsilon|]$. On the other hand, it requires $c_0\ll|\varepsilon|^\f13$ to ensure that the extra errors $\mathrm{i}c_0(\partial_Y^2-\al^2)\varphi$ can be treated as small perturbation. By the relation $|\varepsilon|=\nu^\f12\al^{-1}$ and $\al\geq C\nu^\f18$, we can choose $c_0=|\varepsilon|\al^{-s}$ with $0<s<2$. In our construction, we always take $c_0=|\varepsilon|\al^{-\f32}$. When $\al$ is on the neutral curve, the approximation of $OS[\cdot]$ in the upper and main deck  is 
  \begin{align}\label{eq:modify-ray}
  	-Ray_{\hat c}[\cdot]=-(u-\hat c)(\partial_Y^2-\al^2)+u'',\quad \hat c=c+\mathrm{i}|\varepsilon|\al^{-\f32}.
  \end{align}
  \textbf{This is the key observation for constructing slow/fast mode.} The modified Rayleigh operator $Ray_{\hat c}[\cdot]$ is a perturbation of the singular $Ray_c[\cdot]$ by incorporating the regularized effect of the diffusion term $\varepsilon(\partial_Y^2-\alpha^2)^2$. By \eqref{eq:idea-diff} we can say that  $Ray_{\hat c}[\cdot]$ not just approximates the OS operator $OS[\cdot]$ at the quantitative level but also at the spectral level.
  
\subsubsection{The modified Rayleigh-Airy iteration}
Inspired by \cite{GGN}, we apply a modified Rayleigh-Airy iteration to construct the slow mode $\phi_s$ and fast mode $\phi_f$. Compared with the Rayleigh-Airy iteration introduced in \cite{GGN},  we use the modified Rayleigh operator \eqref{eq:modify-ray} to approximate the OS operator $OS[\cdot]$, as the original Rayleigh operator $Ray_c[\cdot]$ is singular and cannot approximate $OS[\cdot]$ on the neutral curve. 

We introduce the modified Rayleigh-Airy iteration through the following non-homogeneous OS equation:
\begin{align*}
	OS[\phi_{non}]=f,\quad\lim_{Y\to \infty}\phi_{non}(Y)=0.
\end{align*}
We define 
	\begin{align*}
		Ray_{\hat c}[\varphi^{(0)}]=-f,\quad Airy[(\partial_Y^2-\alpha^2) \psi^{(0)}]=-\varepsilon(\partial_Y^2-\alpha^2)^2\varphi^{(0)}-\mathrm i|\varepsilon|\alpha^{-\f32}(\partial_Y^2-\alpha^2)\varphi^{(0)},
	\end{align*}
and for any $j\geq 1$,
\begin{align*}
	 &Ray_{\hat c}[\varphi^{(j)}]=u''\psi^{(j-1)}, \\
	 &Airy[(\partial_Y^2-\alpha^2)\psi^{(j)}]=-\varepsilon(\partial_Y^2-\alpha^2)^2\varphi^{(j)}-\mathrm i|\varepsilon|\alpha^{-\f32}(\partial_Y^2-\alpha^2)\varphi^{(j)}.
\end{align*}
Then we can formally obtain that
\begin{align*}
	\phi_{non}(Y)=\sum_{k=0}^\infty(\varphi^{(k)}+\psi^{(k)}).
\end{align*}
Then the slow mode $\phi_s$ and  the fast mode $\phi_f$ can be constructed in a suitable function space $\mathcal X\subset W^{4,\infty}$ as follows:
\begin{align}\label{eq:sp-slow-mode}
	\phi_s=\varphi_{Ray,\hat c}+\psi_s^{(1)}+\tilde\phi_s,\quad\phi_f=\psi_a+\tilde\phi_f,
\end{align}
where 
\begin{align*}
	&Ray_{\hat c}[\varphi_{Ray,\hat c}]=0,\quad Airy[(\partial_Y^2-\alpha^2)\psi_s^{(1)}]=-\varepsilon(\partial_Y^2-\alpha^2)^2\varphi_{Ray,\hat c}-\mathrm i|\varepsilon|\alpha^{-\f32}(\partial_Y^2-\alpha^2)\varphi_{Ray,\hat c},\\
	&Airy[(\partial_Y^2-\alpha^2)\psi_a]=0
\end{align*}
and $\tilde\phi_s$, $\tilde\phi_f$ are constructed via the modified Rayleigh-Airy iteration by solving the non-homogeneous OS equations
\begin{align*}
	OS[\tilde\phi_s]=-u''\psi_s^{(1)},\quad OS[\tilde\phi_f]=-u''\psi_a.
\end{align*}
Moreover, to ensure the iterative scheme converges, we construct solutions to the non-homogeneous Airy equation with three kinds of source terms matching the triple-deck structure. In detail, we show the estimates for the non-homogeneous Airy equations
\begin{align*}
	Airy[(\partial_Y^2-\alpha^2)\psi_{non}]=F\sim\left\{
	\begin{aligned}
		&e^{-\eta_0 Y},\quad\text{upper deck},\\
		&(u-\hat c)^{-1}e^{-\eta_0 Y},\quad\text{main deck},\\
		&(u-\hat c)^{-3}e^{-\eta_0 Y},\quad\text{lower deck(sublayer)}.
	\end{aligned}
	\right.
\end{align*}

\subsubsection{Dispersion relation}

With the slow mode $\phi_s$ and the fast mode $\phi_f$ in hand, we construct the solution $\phi$ to  \eqref{eq:OS} by taking 
\begin{align*}
	\phi=C_s\phi_s+C_f\phi_f,\text{ with }C_s\phi_s(0)+C_f\phi_f(0)=C_s\partial_Y\phi_s(0)+C_f\partial_Y\phi_f(0)=0.
\end{align*}
The existence of non-zero $C_s$ and $C_f$ is equivalent to the following dispersion relation holding for certain $(\alpha,c)$:
\begin{align*}
	\frac{\phi_s(0)}{\partial_Y\phi_s(0)}=\frac{\phi_f(0)}{\partial_Y\phi_f(0)},
\end{align*}
from which, we infer that
\begin{align}\label{eq:intro-dr}
	c\sim u'(0)^{-1}\alpha\underbrace{-\frac{u'(0)\tilde{\mathcal A}(2,0)}{\tilde{\mathcal A}(1,0)}}_{\text{related to Airy function}} ,\quad c_i\sim\frac{\alpha^2u''(0)\pi}{u'(0)^2}\underbrace{-\mathrm{Im}\Big(\frac{u'(0)\tilde{\mathcal A}(2,0)}{\tilde{\mathcal A}(1,0)}\Big)}_{\text{related to Airy function}} .
\end{align}
The dispersion relation \eqref{eq:intro-dr} holds for the case $-\delta_1(\nu)\leq c_i$ with $\delta_1(\nu)>0$. This is guaranteed by constructing the slow mode $\phi_s$ and the fast mode $\phi_f$ via the modified Rayleigh-Airy iteration. This allows  us to construct the neutral stable mode by solving \eqref{eq:intro-dr}. \textbf{This is our main novelty and improvement compared with \cite{GGN}}

From the above dispersion relation, we can infer that for $\al\in(A_0\nu^\f18,B_0\nu^{\f{1}{12}})$ with $A_0\ll 1\ll B_0$, $c_r\sim\nu^\f18$ for $\alpha\sim\nu^\f18$ and $c_r\sim\alpha$ for $\alpha\gg\nu^\f18$. Moreover,
\begin{itemize}
	\item for $\al\in(A_0\nu^\f18,B\nu^{\f{1}{12}})$ with $B\ll 1$
	      \begin{align*}
	      	c_i\sim-\mathrm{Im}\Big(\f{u'(0)\tilde{\mathcal{A}}(2,0)}{\tilde{\mathcal{A}}(1,0)}\Big)
	      	\left\{
	      	\begin{aligned}
	      		&<0,\quad\al=A\nu^\f18\text{ with }A\sim 1,\\
	      		&>0,\quad\al\gg\nu^\f18.
	      	\end{aligned}
	      	\right.
	      \end{align*}
      \item for $\al=B\nu^{\f{1}{12}}$ with $B\gg 1$,
      \begin{align*}
      	c_i\sim\frac{\alpha^2u''(0)\pi}{u'(0)^2}<0.
      \end{align*}
\end{itemize}
Hence, there exist $A_c$ and $B_c$ such that $c_i=0$ for $\al=A_c\nu^\f18$ or $\al=B_c\nu^{\f{1}{12}}$. From the above discussion, we see that the instability originates from the sublayer. However, the stable effects near the lower branch and the upper branch are quite different. Around the lower branch, the transition mechanism is the competition between the stabilizing effect generated by diffusion and the instability caused by the sublayer.  Around the upper branch,  the stability due to
the structure of the background flow $u''(0)<0$ suppresses the instability generated by the sublayer when $\al$ crosses the upper branch of the neutral curve.

	\subsection{Organization}
	The rest of this paper is organized as follows. Section 2 is devoted to the Rayleigh equation, in which we construct the solutions to the Rayleigh equation and show some key estimates. Section 3 contains the constructions of solutions to the Airy equation with three kinds of source terms related to the Rayleigh-Airy iteration. In Section 4, we construct the slow mode and fast mode of the Orr-Sommerfeld equation by a modified Rayleigh-Airy iteration, respectively. In Section 5, we present the proof of Theorem \ref{them:main} by solving the dispersion equation.


\section{The Rayleigh equation}

In this section, we consider the non-homogeneous Rayleigh  equation on $\mathbb{R}_+$:
\begin{align}\label{eq:Ray-force}
\left\{
\begin{aligned}
&(u(Y)-c)(\partial_Y^2-\alpha^2)\varphi (Y)-u''\varphi (Y)=F(Y),\quad Y>0,\\
&\lim_{Y\to\infty}\varphi(Y)=0,
\end{aligned}
\right.
\end{align}
and the homogeneous Rayleigh equation:
\begin{align}\label{eq:Ray-homo}
\left\{
\begin{aligned}
	&(u(Y)-c)(\partial_Y^2-\alpha^2)\varphi_{Ray} (Y)-u''\varphi_{Ray}(Y)=0,\quad Y>0,\\
&\lim_{Y\to\infty}\varphi(Y)=0.
\end{aligned}
\right.
\end{align} 
For convenience, we denote 
\begin{align*}
Ray[\cdot]=(u-c)(\partial_Y^2-\al^2)-u''.
\end{align*}
We construct the solution to the Rayleigh equation in the following functional spaces:
\begin{align}\label{def: Y_theta}
\|f\|_{\mathcal{Y}_\theta}=&\|(u-c)^3\pa_Y^4 f\|_{L^\infty_{\theta}}+\|(u-c)^2\pa_Y^3 f\|_{L^\infty_{\theta}}+\|(u-c)\pa_Y^2 f\|_{L^\infty_{\theta}}+\|\pa_Y f\|_{L^\infty_{\theta}}+\| f\|_{L^\infty_{\theta}},
\end{align}
and
\begin{align}\label{def: Y_bar-theta}
\|f\|_{\tilde{\mathcal Y}_\theta}=\|(u-c)\partial_Y^2 f\|_{L^\infty_\theta}+\|\pa_Y f\|_{L^\infty_{\theta}}+\| f\|_{L^\infty_{\theta}},
\end{align}
where
\begin{align*}
\|f\|_{L^\infty_\theta}=\|e^{\th Y} f\|_{L^\infty}.
\end{align*}

We denote
\begin{align}\label{def:H1set}
\mathbb H_1=\big\{(\alpha,c)\in\mathbb R_+\times \mathbb C:(|\alpha|+|c|)|\log c_i|\ll1, c_r\geq C_1c_i>0\big\}.
\end{align}
In this section, we always assume that $(\al, c)\in \mathbb H_1$.

\subsection{Non-homogeneous Rayleigh equation}
For the non-homogeneous Rayleigh equation \eqref{eq:Ray-force}, we have the following result.

\begin{proposition}\label{pro: phi_non}
	If  $e^{\theta Y}F\in L^\infty$, then there exists a solution $\varphi_{non}\in \tilde{\mathcal Y}_\theta$ to \eqref{eq:Ray-force} with 	
	\begin{align*}
	&\|\varphi_{non}\|_{\tilde{\mathcal Y}_\theta}\leq C|\log c_i|\|F\|_{L^\infty_\theta}.
	\end{align*}	
	In addition, if $e^{\theta Y} \partial_Y^jF\in L^\infty$ for $j=1,2$, then 	\begin{align}\label{eq:ray-force-Y}
	\|\varphi_{non}\|_{\mathcal Y_\theta}\leq C|\log c_i|\|F\|_{L^\infty_\theta}+C\|(u-c)\pa_YF\|_{L^\infty_\theta}+C\|(u-c)^2\pa_Y^2F\|_{L^\infty_\theta}.
	\end{align}
\end{proposition}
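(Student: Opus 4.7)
The plan is to put the Rayleigh operator in divergence form via the substitution $G=\varphi/(u-c)$, invert the $\alpha=0$ part explicitly by double integration, and close a fixed-point argument for the $\alpha^2$ correction exploiting the smallness of $\alpha|\log c_i|$ enforced by $\mathbb H_1$ in \eqref{def:H1set}.

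The identity $(u-c)\partial_Y^2\varphi-u''\varphi=\partial_Y\bigl[(u-c)^2\partial_Y G\bigr]$ turns \eqref{eq:Ray-force} into $\partial_Y\bigl[(u-c)^2\partial_Y G\bigr]=F+\alpha^2(u-c)^2 G$. Integrating twice using the decay conditions $G\to 0$ and $(u-c)^2\partial_Y G\to 0$ at infinity yields
\begin{equation*}
\varphi(Y)=(u(Y)-c)\int_Y^\infty\frac{1}{(u(Z)-c)^2}\int_Z^\infty\bigl[F(W)+\alpha^2(u(W)-c)^2 G(W)\bigr]dW\,dZ,
\end{equation*}
and denoting by $T_0^{-1}$ the map $F\mapsto\varphi$ when $\alpha=0$, the problem becomes the fixed-point equation $\varphi=T_0^{-1}F+\alpha^2 T_0^{-1}[(u-c)\varphi]$. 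The heart of the proof is the bound $\|T_0^{-1}H\|_{\tilde{\mathcal Y}_\theta}\le C|\log c_i|\|H\|_{L^\infty_\theta}$. The pointwise estimate $|u-c|^{-2}\lesssim c_i^{-2}$ near the critical point $Y_c$ (with $u(Y_c)=c_r$) is far too crude; the remedy is the identity $(u-c)^{-2}=-\partial_Z[(u-c)^{-1}]/u'$, which allows one integration by parts in $Z$. This transfers one factor of $(u-c)^{-1}$ onto the smooth primitive $\int_Z^\infty H$, producing a boundary term that exactly cancels the apparent $(u-c)^{-1}$ singularity of $\varphi=(u-c)G$, together with a remaining integral of the form $\int(u-c)^{-1}h\,dZ$ that behaves like a complex logarithm near $Y_c$ and is therefore $O(|\log c_i|)$. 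This controls $\varphi$ and $\partial_Y\varphi$ in $L^\infty_\theta$; the weighted bound on $(u-c)\partial_Y^2\varphi$ is read off algebraically from the equation itself.

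With this estimate in hand, the contraction constant of $\alpha^2 T_0^{-1}[(u-c)\,\cdot\,]$ on $\tilde{\mathcal Y}_\theta$ is $O(\alpha^2|\log c_i|\|u-c\|_\infty)$, which is $\ll 1$ by the hypothesis $(\alpha+|c|)|\log c_i|\ll 1$; Banach's theorem then produces $\varphi_{non}\in\tilde{\mathcal Y}_\theta$ with the first stated bound. For the $\mathcal Y_\theta$ estimate, once $\varphi$, $\partial_Y\varphi$ and $(u-c)\partial_Y^2\varphi$ are under control, differentiating $(u-c)\partial_Y^2\varphi=F+u''\varphi+\alpha^2(u-c)\varphi$ once and twice and multiplying by $(u-c)$ respectively $(u-c)^2$ expresses $(u-c)^2\partial_Y^3\varphi$ and $(u-c)^3\partial_Y^4\varphi$ in terms of $(u-c)^k\partial_Y^k F$ for $k\le 2$ and already-controlled lower-order quantities, without any further singular integration.

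The hardest part is the critical-layer bound in the previous paragraph: the delicate cancellation in the integration by parts that converts the formally $O(c_i^{-1})$ integrals into the $O(|\log c_i|)$ ones, uniformly in the exponential weight $e^{\theta Y}$. Both the inner region $|Y-Y_c|\lesssim c_i$, where the logarithm is born, and the outer decay contribute, and tracking them simultaneously in the weighted norm while keeping the perturbation contractive is the technical heart of the proof.
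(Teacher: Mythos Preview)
Your approach is correct and is essentially what lies behind the paper's proof. The paper itself outsources the $\tilde{\mathcal Y}_\theta$ estimate entirely to an external reference (Proposition~3.2 in \cite{MWWZ} with $m=0$), so its proof of the first part is a single citation; what you have written is a faithful sketch of the content of that cited proposition---the divergence-form substitution $\varphi=(u-c)G$, the double integral representation, the integration-by-parts trick $(u-c)^{-2}=-\partial_Z[(u-c)^{-1}]/u'$ that trades the $c_i^{-1}$ singularity for a logarithm, and the fixed-point closure using $\alpha^2|\log c_i|\ll 1$ from $\mathbb H_1$. For the upgrade to $\mathcal Y_\theta$, your argument and the paper's are identical: differentiate the equation $(u-c)\partial_Y^2\varphi=F+u''\varphi+\alpha^2(u-c)\varphi$ once and twice, multiply by appropriate powers of $(u-c)$, and read off the bounds on $(u-c)^2\partial_Y^3\varphi$ and $(u-c)^3\partial_Y^4\varphi$ algebraically from the already-controlled quantities and $(u-c)^j\partial_Y^j F$.
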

\begin{proof}
Applying Proposition 3.2 in \cite{MWWZ} by taking $m=0$, we can obtain 
\begin{align}\label{eq:ray-non-ty}
	&\|\varphi_{non}\|_{\tilde{\mathcal Y}_\theta}\leq C|\log c_i|\|e^{\theta Y}F\|_{L^\infty}.
	\end{align}

From \eqref{eq:Ray-force} and \eqref{eq:ray-non-ty}, we infer that 
\begin{align*}
	|e^{\theta Y}(u-c)^2\partial_Y^3\varphi_{non}(Y)|\leq& C|\log c_i|\|F\|_{L^\infty_\theta}+C\|(u-c)\pa_YF\|_{L^\infty_\theta},\\
	|e^{\theta Y}(u-c)^3\partial_Y^4\varphi_{non}(Y)|\leq& C|\log c_i|\|F\|_{L^\infty_\theta}+C\|(u-c)\pa_YF\|_{L^\infty_\theta}+C\|(u-c)^2\pa_Y^2F\|_{L^\infty_\theta},
	\end{align*}
which implies \eqref{eq:ray-force-Y}. 
\end{proof}

\subsection{Homogeneous Rayleigh equation}
In this part, we consider the Rayleigh equation on $\mathbb{R}_+$:
\begin{align}\label{eq:ray}
(u(Y)-c)(\partial_Y^2-\alpha^2)\varphi(Y)-u''(Y)\varphi(Y)=0,\quad Y>0.
\end{align}
Our task is to construct a non-zero decay solution $\varphi_{Ray}(Y)$ to \eqref{eq:ray}. In fact, we construct such solution around $\varphi_{Ray}^{(0)}$ defined by 
\begin{align}\label{eq:ray-main-def}
\varphi_{Ray}^{(0)}:=2\alpha e^{\alpha Y}(u-c)\int_Y^{+\infty}\frac{1}{e^{2\alpha Z}(u(Z)-c)^2}dZ.
\end{align}
By a direct calculation, we find that 
\begin{align*}
Ray[\varphi_{Ray}^{(0)}]=2\alpha u'\varphi^{(0)}_{Ray}.
\end{align*}
We define that $\varphi_{R}$ is the solution to the equation 
\begin{align*}
	Ray[\varphi_R]=-2\alpha u'\varphi^{(0)}_{Ray},
\end{align*}
which is constructed as in Proposition \ref{pro: phi_non}. Then 
\begin{align*}
	\varphi_{Ray}(Y)=\varphi_{Ray}^{(0)}(Y)+\varphi_{R}(Y),
\end{align*}
and Proposition \ref{pro: phi_non} suggests that $\varphi_{R   }(Y)$ is a $\mathcal O(\alpha)$ perturbation. In order to obtain the point-wise estimates  and the asymptotic expansion on the boundary for $\varphi_{Ray}$, we first show such estimates for $\varphi_{Ray}^{(0)}$.
\begin{lemma}\label{lem: asymp-phi_0}
	There exists $Y_0>0$ such that 
	\begin{itemize}
		\item For $Y\geq Y_0$,
	\begin{align*}
	\left|\varphi_{Ray}^{(0)}(Y)-\frac{e^{-\alpha Y}(u-c)}{(1-c)^2}\right|+\left|\partial_Y\varphi_{Ray}^{(0)}(Y)-\partial_Y\Big(\frac{e^{-\alpha Y}(u-c)}{(1-c)^2}\Big) \right|\leq C\alpha e^{-\eta_0 Y}.
	\end{align*}
		
		\item For $0\leq Y\leq Y_0$,
	\begin{align*}
	\left|\varphi_{Ray}^{(0)}(Y)-\frac{e^{-\alpha Y}(u-c)}{(1-c)^2}-\frac{2\alpha e^{-\eta_0 Y}}{u'(Y_c)} \right|\leq C\alpha|u(Y)-c||\log|c_i||,
	\end{align*}
	\begin{align*}
	\left|\partial_Y\varphi_{Ray}^{(0)}(Y)-\frac{e^{-\alpha Y}u'(Y)}{(1-c)^2} \right|\leq C\alpha|\log|c_i||.
	\end{align*}
		In particular, we have 
		\begin{align*}
		&\varphi_{Ray}^{(0)}(0)=-c+\frac{2\alpha}{u'(Y_c)}+\mathcal O\Big((|\alpha|+|c|)|c||\log c_i|\Big),\\
		&\pa_Y\varphi_{Ray}^{(0)}(0)=u'(0)+\mathcal O\Big((|\al|+|c|)|\log c_i|\Big).
		\end{align*}
	\end{itemize}
	Moreover, we have 
		\begin{align}\label{est: varphi^(0)_Ray,R}
		\big\|\varphi_{Ray}^{(0)}-(1-c)^{-2}(u-c)e^{-\alpha Y}\big\|_{\mathcal Y_{\eta_0}}\leq C|\alpha||\log c_i|.
		\end{align}
\end{lemma}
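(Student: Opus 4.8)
The proof rests on the explicit representation $\varphi_{Ray}^{(0)}(Y)=2\alpha e^{\alpha Y}(u(Y)-c)\,I(Y)$ with $I(Y):=\int_Y^\infty e^{-2\alpha Z}(u(Z)-c)^{-2}\,dZ$, together with
$$
\partial_Y\varphi_{Ray}^{(0)}(Y)=2\alpha e^{\alpha Y}\big(\alpha(u-c)+u'\big)I(Y)-\frac{2\alpha e^{-\alpha Y}}{u(Y)-c},
$$
obtained by differentiating under the integral and using $I'(Y)=-e^{-2\alpha Y}(u(Y)-c)^{-2}$. Everything reduces to the leading behaviour of $I(Y)$. Fix a constant $Y_0$ such that $u'\geq c_0>0$ on $[0,Y_0]$ and $|u(Z)-c|\geq c_0$ for $Z\geq Y_0$ (possible by the structural assumptions on $u$, since $c_r\to 0$); then also $Y_c<Y_0$ for $\nu$ small. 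For $Y\geq Y_0$ write $(u-c)^{-2}=(1-c)^{-2}+h(Z)$ with $h(Z)=\frac{(1-u)(1+u-2c)}{(u-c)^2(1-c)^2}$, so $|h(Z)|\leq Ce^{-\eta_0 Z}$ by \eqref{eq:Hyper-u}; the main part integrates exactly to $\frac{e^{-2\alpha Y}}{2\alpha(1-c)^2}$ and the $h$-part is $O(e^{-(2\alpha+\eta_0)Y})$. Multiplying by $2\alpha e^{\alpha Y}(u-c)$ gives the first pointwise bound; for $\partial_Y\varphi_{Ray}^{(0)}$ one inserts the same expansion and notes that $2\alpha e^{\alpha Y}(\alpha(u-c)+u')\frac{e^{-2\alpha Y}}{2\alpha(1-c)^2}$ cancels the explicit $\frac{2\alpha e^{-\alpha Y}}{u-c}$ up to $O(\alpha e^{-\eta_0 Y})$, since $(u-c)(1-c)^{-2}-(u-c)^{-1}=O(|1-u|)$ there. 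This proves the first bullet.

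\textbf{The range $0\leq Y\leq Y_0$ and the boundary values.} Split $I(Y)=\int_Y^{Y_0}+\int_{Y_0}^\infty$; the second piece is treated as above, equalling $\frac{e^{-2\alpha Y_0}}{2\alpha(1-c)^2}+O(1)$. For the inner integral, integrate by parts through $(u-c)^{-2}=-\frac1{u'}\partial_Z\big((u-c)^{-1}\big)$:
$$
\int_Y^{Y_0}\frac{e^{-2\alpha Z}}{(u-c)^2}dZ=\frac{e^{-2\alpha Y}}{u'(Y)(u(Y)-c)}-\frac{e^{-2\alpha Y_0}}{u'(Y_0)(u(Y_0)-c)}-\int_Y^{Y_0}\frac{e^{-2\alpha Z}(2\alpha u'+u'')}{u'^2(u-c)}dZ,
$$
where the last integral is $O\big(\int_0^{Y_0}|u(Z)-c|^{-1}dZ\big)=O(|\log c_i|)$ — this is the only source of the logarithm, coming from the critical layer $Z\approx Y_c$ where $|u-c|\sim c_i$. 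Multiplying by $2\alpha e^{\alpha Y}(u(Y)-c)$, the boundary term yields $\frac{2\alpha e^{-\alpha Y}}{u'(Y)}$, the $\int_{Y_0}^\infty$-term yields $\frac{e^{\alpha Y-2\alpha Y_0}(u-c)}{(1-c)^2}$, and all other terms are $\leq C\alpha|u(Y)-c||\log c_i|$. Replacing $e^{\alpha Y-2\alpha Y_0}$ by $e^{-\alpha Y}$ and $\frac{2\alpha e^{-\alpha Y}}{u'(Y)}$ by $\frac{2\alpha e^{-\eta_0 Y}}{u'(Y_c)}$ — the discrepancies being controlled by $\alpha Y_0\ll 1$, $|u'(Y)-u'(Y_c)|\leq C|Y-Y_c|$ and $|Y-Y_c|\leq c_0^{-1}|u(Y)-c_r|\leq c_0^{-1}|u(Y)-c|$ — gives the second pointwise bound. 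For $\partial_Y\varphi_{Ray}^{(0)}$ on $[0,Y_0]$ the same computation applies: the boundary term now produces $\frac{2\alpha e^{-\alpha Y}}{u(Y)-c}$, which cancels the explicit term, leaving $\frac{e^{-\alpha Y}u'(Y)}{(1-c)^2}$ and error $O(\alpha|\log c_i|)$. Evaluating both pointwise bounds at $Y=0$ and using $(1-c)^{-2}=1+O(|c|)$ yields the two boundary expansions (the remainders $O(|c|^2)$, $O(|c|)$, $O(\alpha|c||\log c_i|)$ all lie in $O((|\alpha|+|c|)|c||\log c_i|)$, resp.\ $O((|\alpha|+|c|)|\log c_i|)$).

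\textbf{The $\mathcal Y_{\eta_0}$ estimate \eqref{est: varphi^(0)_Ray,R}.} Put $g(Y):=(1-c)^{-2}(u(Y)-c)e^{-\alpha Y}$ and $f:=\varphi_{Ray}^{(0)}-g$. The bounds above give $\|f\|_{L^\infty_{\eta_0}}+\|\partial_Y f\|_{L^\infty_{\eta_0}}\leq C\alpha|\log c_i|$ (the weight $e^{\eta_0 Y}$ is admissible because $f=O(\alpha e^{-\eta_0 Y})$ for $Y\geq Y_0$). A short computation gives $Ray[g]=-2\alpha u' g$; together with $Ray[\varphi_{Ray}^{(0)}]=2\alpha u'\varphi_{Ray}^{(0)}$ this yields $(u-c)(\partial_Y^2-\alpha^2)f=u''f+2\alpha u'f+4\alpha u'g$, hence
$$
\partial_Y^2 f=\frac{1}{u-c}\Big(u''f+2\alpha u'f+4\alpha u'g+\alpha^2(u-c)f\Big).
$$
Since $|u'e^{\eta_0 Y}|\leq C$ by \eqref{eq:Hyper-u} we have $\|u'g\|_{L^\infty_{\eta_0}}=O(1)$, so $\|(u-c)\partial_Y^2 f\|_{L^\infty_{\eta_0}}\leq C\alpha|\log c_i|$. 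Differentiating this identity once and twice in $Y$: a derivative hitting $f$ or $g$ is harmless (the lower derivatives of $f$ are already controlled, the coefficients $u^{(j)}$ are bounded, and $u^{(j)}g$ stays in $L^\infty_{\eta_0}$ because $u^{(j)}$ decays like $e^{-\eta_0 Y}$), while a derivative hitting the factor $(u-c)^{-1}$ produces one more power of $(u-c)^{-1}$; multiplying by $(u-c)^2$, resp.\ $(u-c)^3$, gives $\|(u-c)^2\partial_Y^3 f\|_{L^\infty_{\eta_0}}+\|(u-c)^3\partial_Y^4 f\|_{L^\infty_{\eta_0}}\leq C\alpha|\log c_i|$. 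Summing the five contributions proves \eqref{est: varphi^(0)_Ray,R}.

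\textbf{Main obstacle.} The delicate point is the range $0\leq Y\leq Y_0$: one must control $\int_Y^{Y_0}e^{-2\alpha Z}(u-c)^{-2}dZ$ across the critical layer — where $|u-c|$ degenerates to size $c_i$, producing the factor $|\log c_i|$ — and then check that, after multiplication by the vanishing prefactor $2\alpha(u-c)$, every remainder term carries a compensating factor $|u(Y)-c|$ (or $\alpha$), so that it is absorbed into the stated errors $C\alpha|u(Y)-c||\log c_i|$ and $O((|\alpha|+|c|)|c||\log c_i|)$. The weighted higher-order bounds of the last step are then a mechanical consequence of the second-order equation satisfied by $\varphi_{Ray}^{(0)}$.
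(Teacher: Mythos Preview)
Your approach is the same as the paper's: the paper simply cites Lemma~3.3 of \cite{MWWZ} for all the pointwise statements and then, for \eqref{est: varphi^(0)_Ray,R}, bootstraps from the Rayleigh identity satisfied by $\tilde\varphi^{(0)}=\varphi_{Ray}^{(0)}-(1-c)^{-2}(u-c)e^{-\alpha Y}$, exactly as you do in your last step (the paper's displayed formula $Ray_c[\tilde\varphi^{(0)}]=2\alpha u''\tilde\varphi^{(0)}$ is a typo; your $Ray[f]=2\alpha u'f+4\alpha u'g$ is the correct relation). Your write-up is more explicit and self-contained.

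One replacement, however, does not go through as you claim. After the integration by parts you obtain the correction $\frac{2\alpha e^{-\alpha Y}}{u'(Y)}$ and then replace it by $\frac{2\alpha e^{-\eta_0 Y}}{u'(Y_c)}$, asserting the discrepancy is $O(\alpha|u(Y)-c|)$. The $u'(Y)\to u'(Y_c)$ part is fine via $|Y-Y_c|\lesssim|u(Y)-c|$, but $|e^{-\alpha Y}-e^{-\eta_0 Y}|\sim(\eta_0-\alpha)Y\sim Y$, which is \emph{not} controlled by $|Y-Y_c|$. At $Y=Y_c$ this produces an error $\sim\alpha Y_c\sim\alpha c_r$, whereas the allowed remainder there is $C\alpha|u(Y_c)-c|\,|\log c_i|=C\alpha c_i|\log c_i|$; in $\mathbb H_1$ one only has $c_r\ge C_1 c_i$, so $c_r\lesssim c_i|\log c_i|$ need not hold. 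This lacuna does not touch the boundary expansions (at $Y=0$ both exponentials equal $1$) nor \eqref{est: varphi^(0)_Ray,R} (which only uses $\|f\|_{L^\infty_{\eta_0}}+\|\partial_Yf\|_{L^\infty_{\eta_0}}\le C\alpha|\log c_i|$), but the second pointwise display on $[0,Y_0]$ is not established as written. The clean fix is to leave the correction as $\frac{2\alpha e^{-\alpha Y}}{u'(Y)}$, or to weaken the error to $C\alpha(|u(Y)-c|+|c|)|\log c_i|$; the statement as quoted from \cite{MWWZ} may itself carry this imprecision.
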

\begin{proof}
	According to Lemma 3.3 in \cite{MWWZ}, we can obtain the results in Lemma \ref{lem: asymp-phi_0} except for \eqref{est: varphi^(0)_Ray,R}. We denote 
	\begin{align*}
		\tilde\varphi^{(0)}(Y)=\varphi_{Ray}^{(0)}-(1-c)^{-2}(u-c)e^{-\alpha Y}.
	\end{align*}
	Then we have for $j=0,1,2$ 
	\begin{align*}
		\partial_Y^jRay_c[\tilde\varphi^{(0)}]=2\alpha\partial_Y^j( u''\tilde\varphi^{(0)}),\quad\|\tilde\varphi^{(0)}\|_{L^\infty_{\eta_0}}+\|\partial_Y\tilde\varphi^{(0)}\|_{L^\infty_{\eta_0}}\leq C\alpha|\log c_i|,
	\end{align*}
	which gives \eqref{est: varphi^(0)_Ray,R}.
	\end{proof}
	
 We provide a more precise expansion of the boundary value of $\varphi_{Ray}^{(0)}$ up to the second order,  as the second-order term plays a crucial role in the transition of stability near the upper branch of the neutral curve.

\begin{lemma}\label{lem: asymp-phi_02}
Let  $|c_i|\ll \min\{\alpha, c_r\}$. Then we have 

      \begin{align*}
      	&\mathrm{Im}\left(\varphi_{Ray}^{(0)}(0)\right)=-c_i-\frac{2\alpha c_ru''(Y_c)}{u'(Y_c)^3}\arg(-c)+o(\alpha(\alpha+c_r)),\\
      &\mathrm{Im}\left(\partial_Y\varphi_{Ray}^{(0)}(0) \right)=\frac{2\al u''(Y_c)}{u'(Y_c)^2}\arg(-c) +o(\alpha+c_r).
      \end{align*}
  
\end{lemma}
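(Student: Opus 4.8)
The plan is to extract the imaginary parts of $\varphi_{Ray}^{(0)}(0)$ and $\partial_Y\varphi_{Ray}^{(0)}(0)$ from the defining integral \eqref{eq:ray-main-def} by carefully tracking the branch of $\log(u-c)$ near the critical point $Y_c$, under the hypothesis $|c_i|\ll\min\{\alpha,c_r\}$, which guarantees $Y_c$ is well-defined and bounded away from $0$ and $\infty$. Starting from
\begin{align*}
	\varphi_{Ray}^{(0)}(0)=2\alpha(u(0)-c)\int_0^{+\infty}\frac{dZ}{e^{2\alpha Z}(u(Z)-c)^2}
	=-2\alpha c\int_0^{+\infty}\frac{dZ}{e^{2\alpha Z}(u(Z)-c)^2},
\end{align*}
the only source of an imaginary contribution beyond the explicit factor $-c$ is the near-singular integral $\int_0^\infty e^{-2\alpha Z}(u(Z)-c)^{-2}\,dZ$. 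First I would write $(u(Z)-c)^{-2}=\partial_Z\big(-(u(Z)-c)^{-1}\big)/u'(Z)$ and integrate by parts to reduce the order of the singularity, turning the integral into boundary terms plus $\int_0^\infty (u(Z)-c)^{-1}\partial_Z\big(e^{-2\alpha Z}/u'(Z)\big)\,dZ$; this is the standard device (essentially the one underlying Lemma 3.3 of \cite{MWWZ}) that isolates the logarithmic singularity. The remaining $(u-c)^{-1}$ integral has imaginary part governed by the Plemelj-type formula: as $c_i\to0^+$ with $c_r=u(Y_c)$, one gets a contribution proportional to $\arg(u(Z)-c)$ evaluated across $Y_c$, concretely $\mathrm{Im}\int (u-c)^{-1}g\,dZ \approx \arg(-c)\cdot g(Y_c)/u'(Y_c)$ up to the sign conventions fixed by $c_r\geq C_1 c_i>0$.

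The key steps, in order: (1) reduce to the single integral $I:=\int_0^\infty e^{-2\alpha Z}(u(Z)-c)^{-2}dZ$ and record $\varphi_{Ray}^{(0)}(0)=-2\alpha c\, I$, $\partial_Y\varphi_{Ray}^{(0)}(0)=u'(0)-2\alpha c\cdot(\text{a similar integral coming from }\partial_Y)$ — more precisely differentiate \eqref{eq:ray-main-def} to get $\partial_Y\varphi_{Ray}^{(0)}(0)=\alpha\varphi_{Ray}^{(0)}(0)+u'(0)-2\alpha c\int_0^\infty e^{-2\alpha Z}(u(Z)-c)^{-2}dZ\cdot 0 + \dots$; I would in fact use the cleaner identity $\partial_Y\varphi_{Ray}^{(0)} = \alpha\varphi_{Ray}^{(0)} + u'(Y) - 2\alpha(u-c)\cdot e^{\alpha Y}\cdot\big[\text{derivative of the integral}\big]$, where the derivative of $\int_Y^\infty$ supplies the $-e^{-2\alpha Y}(u(Y)-c)^{-2}$ term, giving after simplification $\partial_Y\varphi_{Ray}^{(0)}(0)=u'(0)+\alpha\varphi_{Ray}^{(0)}(0)-2\alpha$. (2) Expand $I$ via the integration-by-parts/Plemelj analysis to second order in the small parameters: the leading real part is $O(1/c)$-ish but multiplied by $2\alpha c$ it becomes $O(\alpha)$, and the subleading term (the logarithmic/arg term) produces exactly the stated $\arg(-c)$ coefficients. (3) Compute $\mathrm{Im}$: from $\varphi_{Ray}^{(0)}(0)=-c + 2\alpha/u'(Y_c)+(\text{error})$ already known from Lemma \ref{lem: asymp-phi_0}, refine the error term's imaginary part; the $-c$ gives $-c_i$, and the next correction, after using $u(Y_c)=c_r$ and Taylor expanding $u'$ near $Y_c$, gives $-\frac{2\alpha c_r u''(Y_c)}{u'(Y_c)^3}\arg(-c)$. (4) For $\partial_Y\varphi_{Ray}^{(0)}(0)$, feed the refined expansion of $\varphi_{Ray}^{(0)}(0)$ into the identity from step (1): $\mathrm{Im}(\partial_Y\varphi_{Ray}^{(0)}(0))=\alpha\,\mathrm{Im}(\varphi_{Ray}^{(0)}(0))+O(\dots)$; but $\alpha\cdot(-c_i)=o(\alpha+c_r)$ is negligible, so the main imaginary term must instead come from a more careful expansion of the integral appearing in $\partial_Y$ — I would track the $\arg(-c)$ term in $2\alpha\int_0^\infty e^{-2\alpha Z}(u-c)^{-2}\partial_Z(\cdots)$ directly, yielding $\frac{2\alpha u''(Y_c)}{u'(Y_c)^2}\arg(-c)$.

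The main obstacle I anticipate is step (2)–(3): getting the \emph{second-order} term with the correct constant. The first-order expansion is exactly Lemma 3.3 of \cite{MWWZ} (hence Lemma \ref{lem: asymp-phi_0}), but here we need one more order of precision, and the subtlety is that the $(u-c)^{-2}$ singularity is stronger than logarithmic, so a single integration by parts leaves a $(u-c)^{-1}$ term whose imaginary part (the $\arg$ contribution) must be computed against a smooth weight that itself must be Taylor-expanded at $Y_c$ to the right order. One must be careful that cross terms — the product of the explicit prefactor $(u(0)-c)=-c$ with the $\arg(-c)$ term in $I$, versus the product of the real leading part of $I$ with corrections — are correctly separated, and that everything not of the form $\alpha\cdot\arg(-c)$ (times constants) genuinely sits in the $o(\alpha(\alpha+c_r))$ or $o(\alpha+c_r)$ remainder; this requires the hypothesis $|c_i|\ll\min\{\alpha,c_r\}$ in an essential way to control $\log|c_i|$-type factors against the gains. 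Once the integral expansion is pinned down, taking imaginary parts and substituting $u(Y_c)=c_r$, $\arg(u(0)-c)=\arg(-c)$ is routine.
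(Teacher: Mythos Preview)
Your proposal follows essentially the same route as the paper: reduce $\int e^{-2\alpha Z}(u-c)^{-2}\,dZ$ via integration by parts to a $(u-c)^{-1}$ integral, then extract the $\log(-c)$ (hence $\arg(-c)$) contribution by Taylor-expanding the smooth weight at $Y_c$; the paper carries this out by introducing an auxiliary function $g(Y)=u'(Y)-u'(Y_c)-\tfrac{u''(Y_c)}{u'(Y_c)^2}u'(Y)(u(Y)-c_r)$ with a double zero at $Y_c$, which is exactly the Taylor-expansion step you allude to.

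One correction: your ``cleaner identity'' $\partial_Y\varphi_{Ray}^{(0)}(0)=u'(0)+\alpha\varphi_{Ray}^{(0)}(0)-2\alpha$ is wrong---differentiating \eqref{eq:ray-main-def} and evaluating at $Y=0$ gives
\[
\partial_Y\varphi_{Ray}^{(0)}(0)=\alpha\varphi_{Ray}^{(0)}(0)+2\alpha u'(0)\int_0^\infty \frac{e^{-2\alpha Z}}{(u(Z)-c)^2}\,dZ+\frac{2\alpha}{c},
\]
and it is the last two terms (not $\alpha\varphi_{Ray}^{(0)}(0)$) that carry the $\arg(-c)$ contribution at order $\alpha$. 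You correctly anticipate in step (4) that the integral must be analyzed directly, so this slip does not derail the argument; the paper proceeds from exactly this expression (its equation \eqref{eq:ray-main-pboundary}) and then repeats the integration-by-parts/$g(Y)$ extraction on the integral term.
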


\begin{proof}
\textbf{Estimates of  $\varphi_{Ray}^{(0)}(0)$}. By a direct calculation, we can obtain 
\begin{align}\label{eq:ray-app-1}
\varphi_{Ray}^{(0)}(0)=-c-2\alpha c\int_0^{Y_0}e^{-2\alpha Z}\frac{1}{(u(Z)-c)^2}dZ+\mathcal{B}_0,
\end{align}
where 
\begin{align*}
\mathcal{B}_0=&c(1-\f{1}{(1-c)^2})+ \frac{2\alpha c}{(1-c)^2}\int_0^{Y_0}e^{-2\alpha Z}dZ-2\alpha c\int_{Y_0}^{+\infty}e^{-2\alpha Z}\left(\frac{1}{(u-c)^2}-\frac{1}{(1-c)^2} \right)dZ.
\end{align*}
We notice that
\begin{align}\label{eq:ray-main-Re}
\begin{split}
|\mathrm{Re}(\mathcal{B}_0)|\lesssim&|c|^2+\alpha c_r+\al|c|\int_{Y_0}^{+\infty}e^{-\eta_0Z}dZ+|c_i|c_r+\alpha|c|\lesssim\alpha c_r,
\end{split}
\end{align}
and 
\begin{align}\label{eq:ray-app-b1}
\begin{split}
|\mathrm{Im}(\mathcal{B}_0)|\lesssim&c_r|c_i|+\alpha|c_i|+\alpha|c_i|\int_{Y_0}^{+\infty}e^{-\eta_0 Z}dZ+c_r|c_i|\ll\al(\al+c_r).
\end{split}
\end{align}
Therefore, we need to show an asymptotic formula for $2\alpha c\int_0^{Y_0}e^{-2\alpha Z}\frac{1}{(u(Z)-c)^2}dZ$. In details, we have 
\begin{align}
\begin{split}
2\alpha c\int_0^{Y_0}e^{-2\alpha Z}\frac{1}{(u(Z)-c)^2}dZ=-\frac{2\al}{u'(Y_c)}+\frac{2\alpha c}{u'(Y_c)}\int_0^{Y_0}\frac{(u'(Y_c)-u'(Z))dZ}{e^{2\alpha Z}(u(Z)-c)^2}+\mathcal{B}_{1},
\end{split}
\end{align}
where 
\begin{align*}
\mathcal{B}_1=-\frac{2\alpha c}{e^{2\alpha}u'(Y_c)(u(Y_0)-c)}-\frac{4\alpha^2 c}{u'(Y_c)}\int_0^{Y_0}\frac{dZ}{e^{2\alpha Z}(u(Z)-c)}.
\end{align*}
Moreover, we notice that 
\begin{align}\label{eq:ray-main-key1-Re}
\begin{split}
|\mathrm{Re}(\mathcal{B}_1)|\lesssim\al c_r+\left|\frac{\alpha^2 c}{u'(Y_c)}\int_0^{Y_0}\frac{dZ}{e^{2\alpha Z}(u(Z)-c)}\right|\lesssim\al c_r+\al^2 c_r|\log|c_i||\lesssim\al c_r,
\end{split}
\end{align}
and 
\begin{align}\label{eq:ray-main-key1-Im}
\begin{split}
|\mathrm{Im}(\mathcal{B}_1)|\lesssim\al|c_i|+\left|\frac{\alpha^2 c}{u'(Y_c)}\int_0^{Y_0}\frac{dZ}{e^{2\alpha Z}(u(Z)-c)}\right|\lesssim\al c_i+\al^2 c_r|\log|c_i||\ll\al(\al+c_r).
\end{split}
\end{align}	
Therefore, by \eqref{eq:ray-app-1}-\eqref{eq:ray-main-key1-Im}, we obtain  
\begin{align}\label{eq:ray-app-2}
\varphi_{Ray}^{(0)}(0)=-c+\frac{2\al}{u'(Y_c)}-\frac{2\alpha c}{u'(Y_c)}\int_0^{Y_0}\frac{(u'(Y_c)-u'(Z))dZ}{e^{2\alpha Z}(u(Z)-c)^2}+\mathcal{B}_2,
\end{align}	
where $\mathcal{B}_2=\mathcal{B}_0+\mathcal{B}_1$ with 
\begin{align*}
|\mathrm{Re}(\mathcal{B}_2)|\lesssim\al(\al+c_r),\quad|\mathrm{Im}(\mathcal{B}_2)|\ll\al(\al+c_r).
\end{align*}

To give more precise expansion of $\varphi_{Ray}^{(0)}(0)$, we introduce 
	\begin{align*}
     g(Y)=u'(Y)-u'(Y_c)-\frac{u''(Y_c)}{u'(Y_c)^2}u'(Y)(u(Y)-c_r),
    \end{align*}
    which satisfies  
    \begin{align*}
    g(Y_c)=0,\quad g'(Y_c)=0\quad\text{and}\quad |g''(Y)|\leq C.
    \end{align*}
Then we have 
\begin{align}\label{eq:ray-main-key}
\begin{split}
\frac{2\alpha c}{u'(Y_c)}\int_0^{Y_0}\frac{(u'(Y_c)-u'(Z))dZ}{e^{2\alpha Z}(u(Z)-c)^2}=\frac{2\al cu''(Y_c)}{u'(Y_c)^3}\log(-c)+\mathcal{B}_3,
\end{split}
\end{align}
where 
\begin{align*}
\mathcal{B}_3=&-\frac{2\alpha cu''(Y_c)}{u'(Y_c)^3}e^{-2\alpha Y_0}\log(u(Y_0)-c) -\frac{4\alpha^2cu''(Y_c)}{u'(Y_c)^3}\int_0^{Y_0}e^{-2\alpha Z}\log(u(Z)-c)dZ\\
&-\frac{2\alpha cu''(Y_c)}{u'(Y_c)^3}\int_0^{Y_0}\frac{\mathrm i c_i u'(Z)}{e^{2\alpha Z}(u(Z)-c)^2}dZ-\frac{2\alpha c}{u'(Y_c)}\int_0^{Y_0}\frac{g(Z)dZ}{e^{2\alpha Z}(u(Z)-c)^2}\\
=&\mathcal{B}_{3,1}+\mathcal{B}_{3,2}+\mathcal{B}_{3,3}+\mathcal{B}_{3,4}.
\end{align*}
We also notice that 
    \begin{align}\label{eq:ray-main-key-err}
    	\begin{split}
    		&\left|\mathrm{Re}(\mathcal{B}_{3,1})\right|\lesssim \alpha|c|,\quad\left|\mathrm{Im}(\mathcal{B}_{3,1})\right|\leq C\alpha|c||c_i|\ll\alpha(\alpha+c_r),\\
    		&\left|\mathcal{B}_{3,2}\right|\leq C\alpha^2|c||\log|c_i||\ll\alpha(\alpha+c_r),\quad\left|\mathcal{B}_{3,3}\right|\leq C\alpha|c_i|\ll\alpha(\alpha+c_r).
    	\end{split}
    \end{align}
    By the definition of $g(Y)$, we find that for any $Y\in[0,Y_0]$, $|g(Y)|\leq C|Y-Y_c|^2$, from which we notice that for any $Z\in[0,Y_0]$,
    \begin{align*}
    	\frac{g(Z)}{(u(Z)-c)^2}=&\frac{g(Z)}{(u(Z)-c_r)^2}+g(Z)\left(\frac{1}{(u(Z)-c)^2} -\frac{1}{(u(Z)-c_r)^2}\right)\\
    	=&\frac{g(Z)}{(u(Z)-c_r)^2}+g(Z)\left(\frac{c_i^2}{(u-c_r)^2(u-c)^2}+\frac{2\mathrm i c_i}{(u-c_r)(u-c)^2} \right).
    \end{align*}
    Then by the property of $g$, we know that for any $Z\in[0,Y_0]$,
    \begin{align*}
    	&\left|g(Z)\left(\frac{c_i^2}{(u-c_r)^2(u-c)^2}+\frac{2\mathrm i c_i}{(u-c_r)(u-c)^2} \right)\right|\\
    	&\leq C|c_i|\frac{1}{|u(Z)-c|}\leq \frac{C|c_i|}{|Z-Y_c|+|c_i|},
    \end{align*}
    which implies that for any $Z\in[0,Y_0]$,
    \begin{align*}
    	&\left|\mathrm{Re}\left(\frac{g(Z)}{(u(Z)-c)^2}\right) \right|\leq C+\frac{C|c_i|}{|Z-Y_c|+|c_i|},\quad\left|\mathrm{Im}\left(\frac{g(Z)}{(u(Z)-c)^2}\right) \right|\leq \frac{C|c_i|}{|Z-Y_c|+|c_i|}.
    \end{align*}
    As a consequence, we have 
    \begin{align*}
    	&\left|\mathrm{Re}\left(\int_0^1\frac{g(Z)dZ}{e^{2\alpha Z}(u(Z)-c)^2}\right)\right|\leq C+C|c_i|\int_0^1\frac{1}{|Z-Y_c|+|c_i|}dZ\leq C(1+|c_i||\log|c_i||),\\
    	&\left|\mathrm{Im}\left(\int_0^1\frac{g(Z)dZ}{e^{2\alpha Z}(u(Z)-c)^2}\right)\right|\leq C\int_0^1\frac{1}{|Z-Y_c|+|c_i|}dZ\leq C|c_i|\log|c_i|.
    \end{align*}
    From the above bounds, we get
    \begin{align*}
    	&\left|\mathrm{Re}(\mathcal{B}_{3,4})\right|\leq C\alpha c_r,\quad \left|\mathrm{Im}(\mathcal{B}_{3,4})\right|\leq C\alpha c_r|c_i||\log|c_i||\ll\alpha(\alpha+c_r).
    \end{align*}
    which along with \eqref{eq:ray-main-key} and \eqref{eq:ray-main-key-err} imply that 
    \begin{align}\label{eq:ray-main-key-RI}
   	\left|\mathrm{Re}(\mathcal{B}_{3})\right|\leq C\alpha c_r,\quad\left|\mathrm{Im}(\mathcal{B}_{3}) \right|\leq C\alpha |c_i|\ll\alpha(\alpha+c_r).
    \end{align}
    Then by \eqref{eq:ray-app-2}, \eqref{eq:ray-main-key} and \eqref{eq:ray-main-key-RI}, we obtain  
    \begin{align*}
    \varphi_{Ray}^{(0)}(0)=-c+\frac{2\al}{u'(Y_c)}-\frac{2\al cu''(Y_c)}{u'(Y_c)^3}\log(-c)+\mathcal{B}_4,
    \end{align*}
    where $\mathcal{B}_4=\mathcal{B}_2+\mathcal{B}_3$ with
    \begin{align*}
    	&\left|\mathrm{Re}(\mathcal{B}_4)\right|\leq C\alpha(\al+c_r),\quad\left|\mathrm{Im}(\mathcal{B}_4)\right|\ll\alpha(\alpha+c_r).
    \end{align*}
    In particular, we have the following asymptotic expansion for $\varphi_{Ray}^{(0)}(0)$:
   \begin{align}\label{eq:ray-app-3}
   \begin{split}
   &\mathrm{Im}\left(\varphi_{Ray}^{(0)}(0)\right)=-c_i-\frac{2\alpha c_ru''(Y_c)}{u'(Y_c)^3}\arg(-c)+o(\alpha(\alpha+c_r)).
   \end{split}
   \end{align}
	
	\no\textbf{Estimates of $\partial_Y\varphi_{Ray}^{(0)}(0)$.} We first have 
	\begin{align}\label{eq:ray-main-pboundary}
	\begin{split}
	\partial_Y\varphi_{Ray}^{(0)}(0)=&u'(0)+2\al u'(0)\int_0^{Y_0}e^{-2\al Z}(u-c)^{-2}dZ+2\al c^{-1}+\mathcal{D}_0,
	\end{split}
	\end{align}
	where 
	\begin{align*}
	\mathcal{D}_0=&u'(0)((1-c)^{-2}-1)+\al\varphi_{Ray}^{(0)}(0)-\f{2\al u'(0)}{(1-c)^2}\int_0^{Y_0}e^{-2\al Z}dZ\\
	&+2\al u'(0)\int^{Y_0}_0e^{-2\al Z}\Big((u-c)^{-2}-(1-c)^{-2}\Big)dZ.
	\end{align*}
	By a similar argument as in \eqref{eq:ray-app-b1}, we have  
	\begin{align}\label{eq:ray-main-pboundary1}
	|\mathrm{Re}\mathcal{D}_0|\lesssim\al+c_r,\quad|\mathrm{Im}\mathcal{D}_0|\lesssim |c_i|+\al(\al+c_r)\ll\al.
	\end{align}
	We  find that 
	\begin{align}\label{eq:ray-main-d-key}
	2\alpha u'(0)\int_0^{Y_0}e^{-2\alpha Z}\frac{1}{(u(Z)-c)^2} dZ+2\al c^{-1}=2\al\int_0^{Y_0}\frac{(u'(Y_c)-u'(Z))}{e^{2\alpha Z}(u(Z)-c)^2}dZ+\mathcal{D}_1,
	\end{align}
	where 
	\begin{align*}
	\mathcal{D}_1=&\frac{2\alpha(u'(Y_c)-u'(0))}{cu'(Y_c)}-\frac{2\alpha u'(0)}{e^{2\alpha}u(Y_c)(u(Y_0)-c)}-\frac{4\alpha^2u'(0)}{u'(Y_c)}\int_0^{Y_0}\frac{dZ}{e^{2\alpha Z}(u(Z)-c)}\\
	&+2\al u'(0)(u'(Y_c)^{-1}-u'(0)^{-1})\int_0^{Y_0}\frac{(u'(Y_c)-u'(Z))}{e^{2\alpha Z}(u(Z)-c)^2}dZ.
	\end{align*}
    Moreover, we notice that 
    \begin{align}
    \begin{split}
    &|\mathrm{Im}(\mathcal{D}_1)|\lesssim\al|c_i||c|^{-1}+\al|c_i|+\al^2|c_i|+\al c_r|\log|c_i||\ll\al.
    \end{split}
    \end{align}
    On the other hand, by a similar argument as in \eqref{eq:ray-main-key} and \eqref{eq:ray-main-key-RI}, we obtain 
    \begin{align}\label{eq:ray-app-p1}
    2\al\int_0^{Y_0}\frac{(u'(Y_c)-u'(Z))}{e^{2\alpha Z}(u(Z)-c)^2}dZ=\f{2\al u''(Y_c)}{u'(Y_c)^2}\log(-c)+\mathcal{D}_2,
    \end{align}
    where 
    \begin{align*}
    |\mathrm{Re}(\mathcal{D}_2)|\lesssim\al,\quad |\mathrm{Im}(\mathcal{D}_2)|\lesssim\al c_r|\log|c_i||+|c_i|\ll\al.
    \end{align*}
	Therefore, by \eqref{eq:ray-main-pboundary}-\eqref{eq:ray-app-p1}, we obtain 
	\begin{align}
	\partial_Y\varphi_{Ray}^{(0)}(0)=&u'(0)+\f{2\al u''(Y_c)}{u'(Y_c)^2}\log(-c)+\mathcal{D}_3,
	\end{align}
	where $\mathcal{D}_3=\mathcal{D}_0+\mathcal{D}_1+\mathcal{D}_2$ satisfies
	\begin{align*}
	|\mathrm{Re}(\mathcal{D}_3)|\lesssim\al,\quad |\mathrm{Im}(\mathcal{D}_3)|\lesssim\al c_r|\log|c_i||+|c_i|\ll\al.
	\end{align*}
	In particular, we have
	\begin{align*}
	\mathrm{Re}\left(\partial_Y\varphi_{Ray}^{(0)}(0) \right)=u'(0)+\mathcal O(\alpha),\quad\mathrm{Im}\left(\partial_Y\varphi_{Ray}^{(0)}(0) \right)=\frac{2\al u''(Y_c)}{u'(Y_c)^2}\arg(-c) +o(\alpha).
	\end{align*}
	
    The proof is completed.  \end{proof}
	 
Now we are in a position to construct the solution $\varphi_{Ray}$ to the homogeneous Rayleigh equation. Recall that 
\begin{align*}
Ray[\varphi_{Ray}^{(0)}]=2\al u'\varphi_{Ray}^{(0)}.
\end{align*}
As we mentioned at the beginning of this subsection, we shall construct $\varphi_{Ray}$ in the form of 
\begin{align*}
	\varphi_{Ray}(Y)=\varphi_{Ray}^{(0)}(Y)+\varphi_{R}(Y),
\end{align*}
where $\varphi_{R}=\varphi_1+\varphi_2$ with
\begin{align*}
\varphi_1(Y)=&-2\al(u-c)\int_Y^{+\infty}(u-c)^{-2}\int_{Y'}^\infty u'\varphi_{Ray}^{(0)}dY''dY',
\end{align*}
and $Ray[\varphi_2]=\al^2(u-c)\varphi_1$ constructed  in Proposition \ref{pro: phi_non}.
\begin{lemma}\label{lem: varphi_R}
	It holds that
	\begin{itemize}
		\item For $Y\geq Y_0$,
		\begin{align*}
		&|\varphi_R(Y)|\leq C|\al| e^{-\eta_0 Y},\quad |\pa_Y\varphi_R(Y)|\leq C|\al| e^{-\eta_0 Y}.
		\end{align*}
		
		\item For $0\leq Y\leq Y_0$,
		\begin{align*}
		&\varphi_R(Y)=-\f{\al e^{-\al Y}}{u'(Y_c)}+\mathcal O\Big(|\al|(|\al|+|u-c|)|\log c_i|\Big),\\
		&\pa_Y\varphi_R(Y)=\mathcal{O}(|\al| |\log c_i|).
		\end{align*}
		In particular, we have
		\begin{align*}
		&\varphi_R(0)=-\f{\al}{u'(Y_c)}+\mathcal{O}\Big((|\al|^2+|c|^2)|\log c_i|\Big),\\
		&\pa_Y\varphi_R(0)=\mathcal{O}(|\al| |\log c_i|).
		\end{align*}
		\end{itemize}
		Moreover, we have
		\begin{align}\label{est: varphi_R-Y-1}
		\|\varphi_R\|_{\mathcal Y_{\eta_0}}\leq C|\alpha||\log c_i|.
		\end{align}
\end{lemma}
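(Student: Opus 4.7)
The plan is to treat the decomposition $\varphi_R=\varphi_1+\varphi_2$ piece by piece, exploiting the explicit integral formula for $\varphi_1$ together with the pointwise expansion of $\varphi_{Ray}^{(0)}$ from Lemma \ref{lem: asymp-phi_0}, and applying the non-homogeneous Rayleigh estimate of Proposition \ref{pro: phi_non} to $\varphi_2$. First I would verify by direct differentiation of the formula defining $\varphi_1$ that
\begin{align*}
(u-c)\partial_Y^2\varphi_1-u''\varphi_1=-2\alpha u'\varphi_{Ray}^{(0)},
\end{align*}
so that $Ray[\varphi_1]=-2\alpha u'\varphi_{Ray}^{(0)}-\alpha^2(u-c)\varphi_1$; combined with the equation $Ray[\varphi_2]=\alpha^2(u-c)\varphi_1$, this gives $Ray[\varphi_R]=-2\alpha u'\varphi_{Ray}^{(0)}$, as required for $\varphi_{Ray}^{(0)}+\varphi_R$ to solve the homogeneous Rayleigh equation.

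Next I would bound $\varphi_1=-2\alpha(u-c)J(Y)$ with $J(Y)=\int_Y^{\infty}(u-c)^{-2}I(Y')\,dY'$ and $I(Y')=\int_{Y'}^{\infty}u'\varphi_{Ray}^{(0)}\,dZ$. The inner integral is controlled by the decay hypothesis $|u'|\leq Ce^{-\eta_0 Z}$ together with the global bound on $\varphi_{Ray}^{(0)}$ from Lemma \ref{lem: asymp-phi_0}, yielding $|I(Y')|\leq Ce^{-\eta_0 Y'}$. For $Y\geq Y_0$ the factor $(u-c)^{-2}$ is uniformly bounded and the estimates $|\varphi_R(Y)|\leq C|\alpha|e^{-\eta_0 Y}$, $|\partial_Y\varphi_R(Y)|\leq C|\alpha|e^{-\eta_0 Y}$ follow immediately (the contribution of $\varphi_2$ being of the same order by Proposition \ref{pro: phi_non}). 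For $0\leq Y\leq Y_0$, I would insert the two-term expansion of $\varphi_{Ray}^{(0)}$ provided by Lemma \ref{lem: asymp-phi_0} into $I$ and then into $J$: the $(u-c)e^{-\alpha Y}/(1-c)^2$ piece contributes a bounded outer integrand because the prefactor $(u-c)$ cancels one order of the critical-layer singularity; the piece $2\alpha e^{-\eta_0 Z}/u'(Y_c)$ produces, after the two explicit integrations against $(u-c)^{-2}$, the announced leading term $-\alpha e^{-\alpha Y}/u'(Y_c)$, and the remaining contributions are bounded by $\alpha(\alpha+|u-c|)|\log c_i|$, following the same principal-value analysis used for $\varphi_{Ray}^{(0)}(0)$ in Lemma \ref{lem: asymp-phi_02}. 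The derivative bound $\partial_Y\varphi_R(Y)=\mathcal O(|\alpha||\log c_i|)$ is obtained from $\partial_Y\varphi_1=-2\alpha u'J+2\alpha(u-c)^{-1}I$ by the same splitting, using that $I(Y_c)$ is finite so that $(u-c)^{-1}I$ contributes only a logarithm after integration.

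For $\varphi_2$, applying Proposition \ref{pro: phi_non} with the source $F=\alpha^2(u-c)\varphi_1$ and the pointwise bounds above gives $\|F\|_{L^\infty_{\eta_0}}\leq C|\alpha|^3|\log c_i|$, whence $\|\varphi_2\|_{\tilde{\mathcal Y}_{\eta_0}}\leq C|\alpha|^3|\log c_i|^2$; this is strictly of higher order and is absorbed into the error terms of the expansion. The weighted $\mathcal Y_{\eta_0}$-estimate \eqref{est: varphi_R-Y-1} on $\varphi_R$ follows by combining the higher-derivative bounds on $\varphi_1$ obtained by differentiating $\varphi_1=-2\alpha(u-c)J$ (each additional $\partial_Y$ produces at most one further $(u-c)^{-1}$ factor, which is exactly offset by the weights $(u-c)^k$ in the $\mathcal Y_{\eta_0}$ norm) with the $\mathcal Y_{\eta_0}$-estimate on $\varphi_2$ from \eqref{eq:ray-force-Y}, after checking that $F=\alpha^2(u-c)\varphi_1$ has the first two $(u-c)$-weighted derivatives required by that estimate.

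The main obstacle is extracting the precise leading coefficient $-\alpha/u'(Y_c)$ in the interior regime $0\leq Y\leq Y_0$: this requires tracking the critical-layer integrals in the definition of $\varphi_1$ simultaneously through both the outer $\int(u-c)^{-2}dY'$ and the inner $\int u'\varphi_{Ray}^{(0)}dZ$, picking up the Taylor expansion of $u'$ at $Y_c$ and a logarithmic correction in the manner of Lemma \ref{lem: asymp-phi_02}. The nested integrals double the bookkeeping but introduce no structurally new difficulty, since the singularity is always at the same height $Y_c$ and the $c_i$-regularisation keeps all integrals convergent. Once the pointwise expansion is in hand, the boundary values $\varphi_R(0)$ and $\partial_Y\varphi_R(0)$ follow by setting $Y=0$ and using $|c|\leq C(|\alpha|+|c|)$ together with $\alpha|c|\leq\tfrac12(\alpha^2+|c|^2)$ to absorb mixed products into the stated error $(|\alpha|^2+|c|^2)|\log c_i|$.
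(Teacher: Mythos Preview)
Your overall strategy is sound and coincides with what the paper does (the pointwise estimates are deferred to Lemma~3.4 of \cite{MWWZ}, and only the upgrade to the $\mathcal Y_{\eta_0}$ norm is argued here). There is, however, a concrete mis-identification in your sketch that you should fix before carrying out the details.

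You claim that the $(u-c)e^{-\alpha Y}/(1-c)^{2}$ piece of $\varphi_{Ray}^{(0)}$ yields a \emph{bounded} outer integrand, while the $2\alpha e^{-\eta_0 Z}/u'(Y_c)$ piece produces the leading term $-\alpha e^{-\alpha Y}/u'(Y_c)$. In fact the roles are reversed. Inserting $\varphi_{Ray}^{(0)}\sim (u-c)e^{-\alpha Z}$ into the inner integral gives
\[
I(Y')=\int_{Y'}^{\infty}u'\varphi_{Ray}^{(0)}\,dZ=\tfrac12-\tfrac12(u(Y')-c)^2+\mathcal O(\alpha+c_r),
\]
so the outer integrand is $(u-c)^{-2}I(Y')\sim\tfrac12(u-c)^{-2}$, which is \emph{not} bounded across the critical layer. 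It is precisely this double pole, integrated against the prefactor $-2\alpha(u-c)$, that yields $-\alpha/u'(Y_c)$ after one integration by parts (exactly as in the computation of $\varphi_{Ray}^{(0)}$ itself). The $2\alpha e^{-\eta_0 Z}/u'(Y_c)$ piece carries an extra factor of $\alpha$ and contributes only at order $\alpha^2$, hence lands in the remainder. Once you swap these roles the rest of your argument goes through.

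For the $\mathcal Y_{\eta_0}$ estimate the paper takes a slightly slicker route than differentiating the integral formula: from the $\tilde{\mathcal Y}_{\eta_0}$ bound one already controls $(u-c)\partial_Y^2\varphi_R$, and then differentiating the \emph{equation} $Ray_c[\varphi_R]=-2\alpha u'\varphi_{Ray}^{(0)}$ (using the bound \eqref{est: varphi^(0)_Ray,R} on the right-hand side) bootstraps to $(u-c)^2\partial_Y^3\varphi_R$ and $(u-c)^3\partial_Y^4\varphi_R$ without revisiting the explicit double integral. Your direct-differentiation approach also works but requires more bookkeeping.
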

\begin{proof}
	According to Lemma 3.4 in \cite{MWWZ}, we can obtain the results in the above lemma except for \eqref{est: varphi_R-Y-1}. Moreover, from Lemma 3.4 in \cite{MWWZ}, we can obtain 
	\begin{align*}
		\|\varphi_R\|_{\tilde{\mathcal Y}_{\eta_0}}\leq C\alpha|\log c_i|, 
	\end{align*}
	which along with \eqref{est: varphi^(0)_Ray,R} and the fact $\partial_Y^jRay_c[\varphi_R]=-2\alpha\partial_Y(u'\varphi^{(0)}_{Ray})$ gives \eqref{est: varphi_R-Y-1}.
\end{proof}

\begin{lemma}\label{lem:ray-R-0}
	Let $|c_i|\ll \min\{\alpha, c_r\}$. We obtain the following estimates for $\varphi_R(0)$:
	\begin{align*}
	\mathrm{Im}\left(\varphi_{R}(0)\right)=\frac{\alpha c_ru''(Y_c)}{u'(Y_c)^3}\arg(-c)+o(\alpha(\alpha+c_r)),
	\end{align*}
	and 
	 \begin{align*}
	\mathrm{Im}\left(\partial_Y\varphi_{R}(0) \right)=\frac{\alpha u''(Y_c)}{u'(Y_c)^2}\arg(-c) +o(\alpha+c_r),
	\end{align*}
\end{lemma}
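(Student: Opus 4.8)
The plan is to track the imaginary part of $\varphi_R(0)$ and $\partial_Y\varphi_R(0)$ through the explicit decomposition $\varphi_R = \varphi_1 + \varphi_2$ given just before the statement, exploiting the expansions of $\varphi_{Ray}^{(0)}$ from Lemma \ref{lem: asymp-phi_02} as the only source of the leading-order imaginary contribution. First I would isolate $\varphi_1(0)$, which is the genuinely explicit piece:
\begin{align*}
\varphi_1(0) = -2\alpha(u(0)-c)\int_0^{+\infty}(u-c)^{-2}\int_{Y'}^\infty u'\varphi_{Ray}^{(0)}\,dY''\,dY'.
\end{align*}
Since $u(0)-c = -c$ and since the inner integral $\int_{Y'}^\infty u'\varphi_{Ray}^{(0)}\,dY''$ is, by Lemma \ref{lem: asymp-phi_0}, essentially $\frac{1}{2}\varphi_{Ray}^{(0)}(Y')$ up to $\mathcal O(\alpha)$ corrections (recall $Ray[\varphi_{Ray}^{(0)}]=2\alpha u'\varphi_{Ray}^{(0)}$, so $-2\alpha(u-c)\int(u-c)^{-2}\int u'\varphi_{Ray}^{(0)}$ reproduces the Wronskian-type structure that generated $\varphi_{Ray}^{(0)}$ itself), the dominant behavior of $\varphi_1(0)$ is a rescaled copy of the computation already carried out in Lemma \ref{lem: asymp-phi_02}. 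Concretely, I expect $\varphi_1(0) = \tfrac{1}{2}\big(\varphi_{Ray}^{(0)}(0) + c\big) + (\text{lower order})$, or more precisely that the singular-integral part $-\frac{2\alpha c}{u'(Y_c)}\int_0^{Y_0}\frac{(u'(Y_c)-u'(Z))}{e^{2\alpha Z}(u(Z)-c)^2}\,dZ$ reappears with a factor $\tfrac12$, so that $\mathrm{Im}(\varphi_1(0)) = -\frac{\alpha c_r u''(Y_c)}{u'(Y_c)^3}\arg(-c) + o(\alpha(\alpha+c_r))$ after the same $g(Y)$-splitting and $\log(-c)$ extraction as in the proof of Lemma \ref{lem: asymp-phi_02}. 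The remaining term $\varphi_2$, solving $Ray[\varphi_2] = \alpha^2(u-c)\varphi_1$, is handled by Proposition \ref{pro: phi_non}: since $\|\alpha^2(u-c)\varphi_1\|_{L^\infty_{\eta_0}} \lesssim \alpha^2(\alpha+c_r)$ (using $|\varphi_1| \lesssim \alpha$), we get $\|\varphi_2\|_{\tilde{\mathcal Y}_{\eta_0}} \lesssim \alpha^2(\alpha+c_r)|\log c_i|$, which is $o(\alpha(\alpha+c_r))$ because $\alpha|\log c_i|\ll 1$ on $\mathbb H_1$; hence $\varphi_2(0)$ and $\partial_Y\varphi_2(0)$ do not contribute at the stated order.

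For $\partial_Y\varphi_R(0)$ I would proceed analogously: differentiating the formula for $\varphi_1$ gives
\begin{align*}
\partial_Y\varphi_1(0) = -2\alpha u'(0)\int_0^{+\infty}(u-c)^{-2}\int_{Y'}^\infty u'\varphi_{Ray}^{(0)}\,dY''\,dY' + 2\alpha(u(0)-c)\cdot(u(0)-c)^{-2}\int_0^\infty u'\varphi_{Ray}^{(0)}\,dY'',
\end{align*}
the second term being $O(\alpha)$ and real to leading order (it equals $-2\alpha c^{-1}\cdot\tfrac12(-c + \cdots) + \cdots$, whose imaginary part is $o(\alpha+c_r)$). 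The first term carries the $\log(-c)$ singularity; repeating the $u'(Y_c)-u'(Z)$ decomposition exactly as in the "$\partial_Y\varphi_{Ray}^{(0)}(0)$" half of the proof of Lemma \ref{lem: asymp-phi_02}, but with the extra factor $\tfrac12$ from the inner integral, yields $\mathrm{Im}(\partial_Y\varphi_1(0)) = \frac{\alpha u''(Y_c)}{u'(Y_c)^2}\arg(-c) + o(\alpha+c_r)$, and $\partial_Y\varphi_2(0)$ is again absorbed into the error via Proposition \ref{pro: phi_non}.

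The main obstacle I anticipate is verifying the precise constant — specifically, confirming that the inner integral $\int_{Y'}^\infty u'\varphi_{Ray}^{(0)}\,dY''$ really does produce the factor $\tfrac12$ relative to the corresponding object in Lemma \ref{lem: asymp-phi_02}, and that the corrections to this approximation are genuinely $o(\alpha(\alpha+c_r))$ and not merely $O(\alpha^2)$; this requires being careful near the critical layer $Y''\sim Y_c$, where $\varphi_{Ray}^{(0)}$ is only Lipschitz and the integrand's derivative blows up like $|Z-Y_c|^{-1}$. The bookkeeping of which pieces land in $\mathrm{Re}$ versus $\mathrm{Im}$ mirrors the lengthy estimates \eqref{eq:ray-main-Re}–\eqref{eq:ray-main-key-RI} already established, so I would cite those rather than redo them, and the new content is really just the rescaling argument plus the observation that the $\alpha^2$-order remainder $\varphi_2$ is negligible under the hypothesis $|c_i|\ll\min\{\alpha,c_r\}$ together with $(\alpha+|c|)|\log c_i|\ll 1$.
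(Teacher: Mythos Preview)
Your overall architecture matches the paper exactly: split $\varphi_R=\varphi_1+\varphi_2$, dispose of $\varphi_2$ via Proposition~\ref{pro: phi_non} (the paper uses the same bound $|\varphi_2(0)|+|\partial_Y\varphi_2(0)|\lesssim\alpha^2(\alpha+c_r)|\log|c_i||$), and for $\varphi_1(0)$ reduce to the $\log(-c)$ extraction already carried out in Lemma~\ref{lem: asymp-phi_02}. The decomposition $\varphi_1(0)=I_1+I_2+I_3$ in the paper is precisely the integration-by-parts maneuver you outline for $\partial_Y\varphi_1(0)$.

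The genuine gap is your evaluation of the inner integral. You claim $\int_{Y'}^\infty u'\varphi_{Ray}^{(0)}\,dY''\approx\tfrac12\varphi_{Ray}^{(0)}(Y')$, but this is false and would, if used, produce only an $(u-c)^{-1}$ singularity in the outer integrand rather than the needed $(u-c)^{-2}$; you would then get the wrong prefactor (no $u''(Y_c)$) and the wrong structure altogether. What the paper actually establishes (see \eqref{eq:int-u'-phi-1}) is
\[
\int_{Y}^{+\infty}u'(Y')\varphi_{Ray}^{(0)}(Y')\,dY'=\tfrac12-\tfrac12(u(Y)-c)^2+\mathcal O(\alpha+c_r),
\]
obtained from the pointwise expansion $\varphi_{Ray}^{(0)}=(1-c)^{-1}(u-c)e^{-\alpha Y}+\mathcal E$ and an explicit antiderivative. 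Near the critical layer this is $\approx\tfrac12$, a \emph{constant}, so the outer integral $\tfrac{2\alpha c}{u'(Y_c)}\int_0^{Y_0}\tfrac{u'(Y_c)-u'}{(u-c)^2}(\cdots)\,dY$ genuinely reproduces the singular term of Lemma~\ref{lem: asymp-phi_02} with weight $\tfrac12$ and with the \emph{opposite} sign (the $I_1$ above carries $+$, whereas \eqref{eq:ray-app-2} carries $-$). Your heuristic $\varphi_1(0)\approx\tfrac12(\varphi_{Ray}^{(0)}(0)+c)$ therefore has the wrong sign at leading real order (Lemma~\ref{lem: varphi_R} gives $\varphi_1(0)\approx -\alpha/u'(Y_c)$, not $+\alpha/u'(Y_c)$), and your stated $\mathrm{Im}(\varphi_1(0))=-\tfrac{\alpha c_r u''(Y_c)}{u'(Y_c)^3}\arg(-c)$ has the wrong sign relative to the lemma. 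Once you replace your inner-integral claim with \eqref{eq:int-u'-phi-1}, the rest of your plan (citing \eqref{eq:ray-main-key}--\eqref{eq:ray-main-key-RI} rather than redoing them) goes through verbatim and gives the correct $+\tfrac{\alpha c_r u''(Y_c)}{u'(Y_c)^3}\arg(-c)$.
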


\begin{proof}
According to the definition of $\varphi_R$, we know that $\varphi_R(0)=\varphi_1(0)+\varphi_2(0)$. Moreover, by Proposition  \ref{pro: phi_non} and Lemma \ref{lem: varphi_R}, 
we can obtain
\begin{align*}
|\partial_Y\varphi_2(0)|+|\varphi_2(0)|\lesssim\al^2(\al+c_r)|\log|c_i||\ll\al(\al+c_r).
\end{align*} 
Then we have 
\begin{align}\label{eq:ray-R-BV}
\begin{split}
&\mathrm{Im}(\varphi_R(0))=\mathrm{Im}(\varphi_1(0))+o(\al(\al+c_r)),\\ &\mathrm{Im}(\partial_Y\varphi_R(0))=\mathrm{Im}(\partial_Y\varphi_1(0))+o(\al(\al+c_r)).
\end{split}
\end{align}
Hence, we are left with the estimates for $\varphi_1(0)$ and $\partial_Y\varphi_1(0)$. We first notice that 
\begin{align}\label{eq:ray-R-phi1}
\begin{split}
\varphi_1(0)=&2\al c\int_0^{+\infty}(u-c)^{-2}\int_{Y}^{+\infty} u'\varphi_{Ray}^{(0)}(Y')dY'dY=I_1+I_2+I_3.
\end{split}
\end{align}
where
\begin{align*}
I_1=&\frac{2\alpha c}{u'(Y_c)}\int_0^{Y_0}\frac{u'(Y_c)-u'(Y')}{(u(Y')-c)^2}\int_{Y}^{+\infty}u'\varphi_{Ray}^{(0)}(Y')dY'dY,\\
I_2=&\f{2\al}{u'(Y_c)}\int_0^{+\infty}u'\varphi_{Ray}(Y)dY-\f{2\al c}{u'(Y_c)}\int_0^{Y_0}u'(u-c)^{-1}\varphi_{Ray}^{(0)}(Y)dY,\\
I_3=&2\al c\int_{Y_0}^{+\infty}(u-c)^{-2}\int_{Y}^{+\infty} u'\varphi_{Ray}^{(0)}(Y')dY'dY-\f{2\al c}{u'(Y_c)(u(Y_0)-c))}\int_{Y_0}^{+\infty}u'\varphi_{Ray}^{(0)}dY.
\end{align*}

By Lemma \ref{lem: asymp-phi_0}, we know that for any $Y\geq 0$,
\begin{align}\label{eq:phi-exp-r}
\varphi_{Ray}^{(0)}(Y)=(u-c)(1-c)^{-1}e^{-\al Y}+\mathcal{E}(Y),\quad|\mathcal{E}(Y)|\lesssim(\al+c_r)e^{-\eta_0Y}.
\end{align}
Moreover, we have that for $Y\geq Y_0$
\begin{align*}
\mathrm {Im}(\mathcal{E}(Y))=&2\alpha e^{\alpha Y}\mathrm{Im}\left((u-c)\int_Y^{+\infty}e^{-2\alpha Z}\left(\frac{1}{(u(Z)-c)^2}-\frac{1}{(1-c)^2} \right)dZ\right)\\
=&-2\alpha c_ie^{-\alpha Y}\int_Y^{+\infty}e^{-2\alpha Z}\mathrm{Re}\left(\frac{1}{(u(Z)-c)^2}-\frac{1}{(1-c)^2} \right)dZ\\
&+2\alpha(u-c_r)e^{-\alpha Y}\int_Y^{+\infty}e^{-2\alpha Z}\mathrm{Im}\left(\frac{1}{(u(Z)-c)^2}-\frac{1}{(1-c)^2} \right)dZ,
\end{align*}
which implies that 
\begin{align*}
|\mathrm{Im}(\mathcal{E}(Y))|\lesssim\al |c_i|e^{-\eta_0 Y},\quad\forall Y\geq Y_0.
\end{align*}
This along with the fact $|\mathrm{Im}(\mathcal{E}(Y))|\lesssim\al(\al+c_r)|\log|c_i||$ for $0\leq Y\leq Y_0$ in Lemma \ref{lem: asymp-phi_0} deduces that for any $Y\geq 0$,
\begin{align}\label{eq:phi-e-Im}
|\mathrm{Im}(\mathcal{E}(Y))|\lesssim\al(\al+c_r)|\log|c_i||e^{-\eta_0 Y}.
\end{align}
Therefore, we notice that for any $0\leq Y\leq Y_0$,
\begin{align}\label{eq:int-u'-phi-Im}
\begin{split}
&\Big|\mathrm{Im}\Big(\int_Y^{+\infty}u'(Y')\varphi_{Ray}^{(0)}(Y')dY'\Big)\Big|\\
&\lesssim\Big|\mathrm{Im}\Big((1-c)^{-1}\int_{Y}^{+\infty}u'(u-c)e^{-\al Y'}dY'\Big)\Big|+\int_Y^{+\infty}u'|\mathrm{Im}\mathcal{E}(Y'))|dY'\\
&\lesssim \al(\al+c_r)|\log|c_i||.
\end{split}
\end{align}
Moreover, for any $0\leq Y\leq Y_0$,
\begin{align}\label{eq:int-u'-phi}
\begin{split}
&\int_Y^{+\infty}u'(Y')\varphi_{Ray}^{(0)}(Y')dY'\\
&=(1-c)^{-1}\int_Y^{+\infty}u'(Y')(u-c)e^{-\al Y'}dY'+\int_{Y}^{+\infty}u'(Y')\mathcal{E}(Y')dY'\\
&=\int_Y^{\al^{-\f12}}u'(Y')(u(Y')-c)dY'+\mathcal{I},
\end{split}
\end{align}
 where 
 \begin{align*}
 \mathcal{I}=&(1-c)^{-1}\int_Y^{\al^{-\f12}}u'(Y')(u-c)e^{-\al Y'}dY'-\int_Y^{\al^{-\f12}}u'(Y')(u(Y')-c)dY'\\
 &+(1-c)^{-1}\int_{\al^{-\f12}}^{+\infty}u'(Y')(u-c)e^{-\al Y'}dY'+\int_{Y}^{+\infty}u'(Y')\mathcal{E}(Y')dY'.
 \end{align*}
By integration by parts, we have that for $Y\in[0,Y_0]$,
\begin{align*}
\int_Y^{\al^{-\f12}}u'(Y')(u(Y')-c)dY'=\f12(u(\al^{-\f12})-c)^2-\f12(u(Y)-c)^2=\f12-\f12(u(Y)-c)^2+\mathcal{O}(\al+c_r).
\end{align*}
About $\mathcal{I}$, we have that for $Y\in[0,Y_0]$,
\begin{align*}
|\mathcal{I}(Y)|\lesssim&|1-(1-c)^{-1}|\int_Y^{+\infty}e^{-\eta_0 Y'}dY'+\int_Y^{\al^{-\f12}}e^{-\eta_0 Y}|e^{\al Y'}-1|dY'\\
&+\int_{\al^{-\f12}}^{+\infty}e^{-\eta_0 Y'}dY'+\al\int_{Y}^{+\infty}e^{-\eta_0 Y'}dY'\lesssim\al+c_r.
\end{align*}
From \eqref{eq:int-u'-phi} and the above two estimates, we can deduce that for $Y\in[0,Y_0]$,
\begin{align}\label{eq:int-u'-phi-1}
\int_Y^{+\infty}u'(Y')\varphi_{Ray}^{(0)}(Y')dY'=\f12-\f12(u(Y)-c)^2+\mathcal{O}(\al+c_r).
\end{align}
 By a similar argument as in \eqref{eq:ray-main-key}-\eqref{eq:ray-main-key-RI} and applying the estimates \eqref{eq:int-u'-phi-Im} and \eqref{eq:int-u'-phi-1},  we obtain 
\begin{align}\label{eq:ray-R-I1-BV}
\mathrm{Im}(I_1)=\frac{\alpha c_ru''(Y_c)}{u'(Y_c)^3}\arg(-c) +o(\alpha(\al+c_r)).
\end{align}

About $I_{2}$, we notice that by \eqref{eq:int-u'-phi-Im},
\begin{align*}
|\mathrm{Im}(I_2)|\lesssim \al^2(\al+c_r)|\log|c_i||+\al c_r\Big|\mathrm{Im}\Big(\int_0^{Y_0}u'(u-c)^{-1}\varphi_{Ray}^{(0)}(Y)dY\Big)\Big|.
\end{align*}
On the other hand, by \eqref{eq:phi-exp-r}, we have  
\begin{align*}
\Big|\mathrm{Im}\Big(\int_0^{Y_0}u'(u-c)^{-1}\varphi_{Ray}^{(0)}(Y)dY\Big)\Big|\lesssim c_i+\Big|\int_0^{Y_0}u'(u-c)^{-1}\mathcal{E}(Y)dY\Big|\lesssim(\al+c_r)|\log|c_i||.
\end{align*}
Therefore, we obtain  
\begin{align}\label{eq:ray-R-I2-BV}
|\mathrm{Im}(I_2)|\lesssim \al^2(\al+c_r)|\log|c_i||.
\end{align}
Again by \eqref{eq:phi-exp-r} and \eqref{eq:int-u'-phi-Im}, we can obtain 
\begin{align}\label{eq:ray-R-I3-BV}
\begin{split}
|\mathrm{Im}(I_3)|\lesssim&\al c_r|c_i|\int_{Y_0}^{+\infty}\int_Y^{+\infty}|u'\varphi_{Ray}^{(0)}(Y')|dY'+\al^2 c_r\int_{Y_0}^\infty e^{-\eta_0 Y}dY+\al^2(\al+c_r)\\
\ll&\al(\al+c_r).
\end{split}
\end{align}
Therefore, by \eqref{eq:ray-R-BV}, \eqref{eq:ray-R-phi1}, \eqref{eq:ray-R-I1-BV}-\eqref{eq:ray-R-I3-BV}, we get 
  \begin{align*}
  \mathrm{Im}\left(\varphi_{R}(0)\right)=\frac{\alpha c_ru''(Y_c)}{u'(Y_c)^3}\arg(-c)+o(\alpha(\alpha+c_r)).
  \end{align*}
  
  Now we are in a position to show the estimates for $\partial_Y\varphi_R(0)$. We first notice that 
  \begin{align*}
   \partial_Y\varphi_1(0)=&-2\al u'(0)\int_0^{+\infty}(u-c)^{-2}\int_{Y}^{+\infty}u'\varphi_{Ray}^{(0)}(Y')dY'dY\\
  &+2\al c^{-1}\int_0^{+\infty}u'(Y)\varphi_{Ray}^{(0)}(Y)dY=II_1+II_2+II_3,
  \end{align*}
  where
  \begin{align*}
  II_1=&\f{2\al  u'(0)}{u'(Y_c)}\int_0^{Y_0}\f{u'(Y)-u'(Y_c)}{(u-c)^2}\int_Y^{+\infty}u'\varphi_{Ray}^{(0)}(Y')dY'dY,\\
  II_1=&2\al c^{-1}\Big(1-\f{u'(0)}{u'(Y_c)}\Big)\int_0^{+\infty}u'(Y)\varphi_{Ray}^{(0)}(Y)dY-\f{2\al u'(0)}{u'(Y_c)}\int_0^{Y_0}\f{u'}{u-c}\varphi_{Ray}^{(0)}(Y)dY,
  \end{align*}
  and 
  \begin{align*}
  II_3=&-2\al u'(0)\int_{Y_0}^{+\infty}(u-c)^{-2}\int_{Y}^{+\infty}u'\varphi_{Ray}^{(0)}(Y')dY'dY\\
  &+\f{2\al u'(0)}{u'(Y_c)(u(Y_0)-c)}\int_{Y_0}^{+\infty}u'\varphi_{Ray}^{(0)}(Y)dY.
  \end{align*}
By a similar argument in the estimates of $\mathrm{Im}(\varphi_R(0))$, we can obtain 
\begin{align*}
\mathrm{Im}(II_1)=\frac{\alpha u''(Y_c)}{u'(Y_c)^2}\arg(-c) +o(\alpha+c_r),\quad|\mathrm{Im}(II_2)|+|\mathrm{Im}(II_3)|\ll\al+c_r.
\end{align*}

The proof is completed.
\end{proof}

From Lemma \ref{lem: asymp-phi_0}-\ref{lem:ray-R-0}, we directly conclude that 

\begin{proposition}\label{prop:ray-homo}
    Let $(\al,c)\in\mathbb{H}_1$. There exists a solution $\varphi_{Ray}\in \mathcal Y_{\al}$ to \eqref{eq:ray}, for which it holds that
	\begin{align*}
	&\varphi_{Ray}(0)=-c+\f{\al}{u(Y_c)}+\mathcal{O}((|\al|^2+|c|^2) |\log |c_i||),\\
	&\pa_Y\varphi_{Ray}(0)=u'(0)+\mathcal{O}(|\al|| \log |c_i||).
	\end{align*}
	Moreover, we have 
	\begin{align*}
	\|\varphi_{Ray}-(u-c)e^{-\al Y}\|_{\mathcal Y_{\eta_0}}\leq C|\alpha||\log c_i|.
	\end{align*}
	In particular, for $|c_i|\ll \min\{\alpha, c_r\}$, we have
	 \begin{align*}
	&\mathrm{Im}\left(\varphi_{Ray}(0)\right)=-c_i-\frac{\alpha c_ru''(Y_c)}{u'(Y_c)^3}\arg(-c)+o(\alpha(\alpha+c_r)),\\
	&\mathrm{Im}\left(\partial_Y\varphi_{Ray}(0) \right)=\frac{\al u''(Y_c)}{u'(Y_c)^2}\arg(-c) +o(\alpha+c_r).
	\end{align*}

\end{proposition}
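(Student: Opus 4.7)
The plan is to synthesize the lemmas already established in this subsection, since the heavy analytic work has been done. First I would verify that the candidate $\varphi_{Ray} = \varphi_{Ray}^{(0)} + \varphi_1 + \varphi_2$ indeed solves the homogeneous Rayleigh equation on $\mathbb{R}_+$. Setting $g_1(Y) = \int_Y^{+\infty} (u-c)^{-2} \int_{Y'}^{+\infty} u' \varphi_{Ray}^{(0)} \, dY'' \, dY'$ so that $\varphi_1 = -2\al(u-c)g_1$, the reduction-of-order identity $((u-c)^2 g_1')' = u' \varphi_{Ray}^{(0)}$ together with the algebraic formula $Ray[(u-c)g] = 2u'(u-c)g' + (u-c)^2 g'' - \al^2(u-c)^2 g$ gives $Ray[\varphi_1] = -2\al u' \varphi_{Ray}^{(0)} - \al^2(u-c)\varphi_1$. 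Adding the known identity $Ray[\varphi_{Ray}^{(0)}] = 2\al u' \varphi_{Ray}^{(0)}$ and the defining relation $Ray[\varphi_2] = \al^2(u-c)\varphi_1$, all source terms cancel, so $Ray[\varphi_{Ray}] = 0$. Decay at infinity is inherited from the individual decay of $\varphi_{Ray}^{(0)}$ and $\varphi_R = \varphi_1 + \varphi_2$ established in the previous lemmas.

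The boundary expansions at $Y=0$ are obtained by summing the formulas from Lemma~\ref{lem: asymp-phi_0} (for $\varphi_{Ray}^{(0)}$) and Lemma~\ref{lem: varphi_R} (for $\varphi_R$). The leading term $2\al/u'(Y_c)$ in $\varphi_{Ray}^{(0)}(0)$ combines with the $-\al/u'(Y_c)$ in $\varphi_R(0)$ to produce $\al/u'(Y_c)$, yielding the stated form of $\varphi_{Ray}(0)$; the derivative expansion follows by the same addition since $\pa_Y \varphi_R(0) = \mathcal O(|\al||\log c_i|)$. For the weighted norm estimate I would combine \eqref{est: varphi^(0)_Ray,R} with \eqref{est: varphi_R-Y-1}, using $|(1-c)^{-2}-1| \lesssim |c|$ to absorb the discrepancy between $(1-c)^{-2}(u-c)e^{-\al Y}$ and $(u-c)e^{-\al Y}$ into the $\mathcal O(|\al||\log c_i|)$ error in $\mathcal Y_{\eta_0}$.

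For the refined imaginary-part expansions under $|c_i| \ll \min\{\al, c_r\}$, I would add the corresponding assertions of Lemma~\ref{lem: asymp-phi_02} and Lemma~\ref{lem:ray-R-0}. The coefficient $-2\al c_r u''(Y_c)/u'(Y_c)^3 \arg(-c)$ from $\varphi_{Ray}^{(0)}(0)$ combines with $+\al c_r u''(Y_c)/u'(Y_c)^3 \arg(-c)$ from $\varphi_R(0)$ to produce the claimed $-\al c_r u''(Y_c)/u'(Y_c)^3 \arg(-c)$, and analogously for the derivative at $0$, where $2\al u''(Y_c)/u'(Y_c)^2 \arg(-c) - \al u''(Y_c)/u'(Y_c)^2 \arg(-c) = \al u''(Y_c)/u'(Y_c)^2 \arg(-c)$. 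The $o(\cdot)$ remainders add up as well.

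There is no substantive obstacle at this stage, as the delicate analysis has been front-loaded into Lemmas~\ref{lem: asymp-phi_0}--\ref{lem:ray-R-0}. The only subtlety worth double-checking is that $\varphi_2$, produced via Proposition~\ref{pro: phi_non} from the source $\al^2(u-c)\varphi_1 \in \tilde{\mathcal Y}_{\eta_0}$, has boundary values of size $\mathcal O(\al^2(\al+c_r)|\log c_i|)$; this is strictly dominated by both the $\mathcal{O}((|\al|^2+|c|^2)|\log c_i|)$ error in the boundary expansion and the $o(\al(\al+c_r))$ correction in the imaginary-part expansion, so it does not disturb the stated leading-order asymptotics.
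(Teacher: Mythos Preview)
Your proposal is correct and follows exactly the paper's approach: the paper's proof is a single line stating that the proposition follows directly from Lemmas~\ref{lem: asymp-phi_0}--\ref{lem:ray-R-0}, and you have simply spelled out how those lemmas combine (including the verification that $Ray[\varphi_{Ray}^{(0)}+\varphi_1+\varphi_2]=0$, which the paper leaves implicit). Your sign bookkeeping in the imaginary-part expansions is also consistent with what the proposition claims.
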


\section{The Airy equation}

In this section, we consider the following Airy equation 
\begin{align}\label{eq:Airy-eq}
\begin{split}
	&\varepsilon(\partial_Y^2-\alpha^2)^2\psi-(u-c)(\partial_Y^2-\alpha^2)\psi =F,\quad Y>0,\\
	&\lim_{Y\to\infty}\psi(Y)=0,
\end{split}
\end{align}
with several kinds of source terms $F$. We construct a solution $\psi_a$ to the corresponding homogeneous Airy equation around a modified Airy function
\begin{align*}
	&\varepsilon(\partial_Y^2-\alpha^2)^2\psi_a-(u-c)(\partial_Y^2-\alpha^2)\psi_a =0,\quad Y>0,\\
	&\lim_{Y\to\infty}\psi_a(Y)=0.
\end{align*}
We define 
\begin{align*}
	\eta_{out}(Y):=\left\{
	\begin{array}{ll}
		\left(\f32\int_{Y_c}^Y \left(\frac{u(Z)-c_r}{u'(Y_c)}\right)^\f12dZ \right)^\f23,\quad Y\geq Y_c,\\
		-\left(\f32\int^{Y_c}_Y \left(\frac{c_r-u(Z)}{u'(Y_c)}\right)^\f12dZ \right)^\f23,\quad Y\leq Y_c,
	\end{array}
	\right.
\end{align*}
and $\eta_{in}(Y):=Y-Y_c$. Then we introduce the modified Langer transformation
\begin{align}\label{eq:Langer-mod}
	\eta(Y):=\eta_r(Y)+\mathrm i\eta_i
\end{align}
with $\eta_i:=-u'(Y_c)^{-1}(|\varepsilon|\alpha^2+c_i)$ and 
\begin{align}
	\eta_r(Y):=\chi\Big(\frac{Y-Y_c}{\kappa^{-1} M}\Big)\eta_{in}(Y)+(1-\chi\Big(\frac{Y-Y_c}{\kappa^{-1} M}\Big))\eta_{out}(Y),
\end{align}
where $\chi(\cdot)$ is a smooth function in $\mathbb R_+$ satisfying $\chi(Y)\equiv1$ on $[0,1]$ and $\chi(Y)=0$ on $[2,+\infty)$. We also introduce the modified Airy function
\begin{align*}
	A_1(Y):=-\mathrm i|\varepsilon|^{-\f23}u'(Y_c)^{-\f13} Ai(e^{\mathrm i\frac{\pi}{6}}\kappa\eta(Y)),~~A_2(Y):=2\pi  Ai(e^{\mathrm i\frac{5\pi}{6}}\kappa\eta(Y))
\end{align*}
with $\kappa:=|\varepsilon|^{-\f13}u'(Y_c)^\f13$ matching the scale of sublayer. We also define 
\begin{align*}
	&A_1(1,Y):=-\int_Y^{+\infty}A_1(Z)dZ,\qquad A_1(2,Y)=-\int_Y^{+\infty}A_1(1,Z)dZ,\\
	&A_2(1,Y):=\int_{-\infty}^{Y}A_2(Z)dZ,\qquad A_{2}(2,Y)=\int_{-\infty}^{Y}A_2(1,Z)dZ.
\end{align*}
By the properties of Airy function and the above definitions, we can obtain by direct calculations that for $j=1,2$,
\begin{align}\label{eq:Airy-Err-app}
	\varepsilon(\partial_Y^2-\alpha^2)A_j(Y)-(u-c)A_j(Y)=Err_1(Y)A_j+Err_2(Y)\partial_YA_j,
\end{align}
where
\begin{align*}
	Err_1(Y):=u'(Y_c)\eta(\partial_Y\eta)^2-\varepsilon\alpha^2-(u-c)\text{ and } Err_2(Y):=\frac{\varepsilon\partial_Y^2\eta(Y)}{\partial_Y\eta(Y)}.
\end{align*}
We shall see that $Err_1$ and $Err_2$ are, in fact,  small errors.
Moreover, by the property of Airy function, we obtain 
\begin{align}\label{eq:Airy-wron}
	A_1(Y)\partial_Y A_2(Y)-(\partial_Y A_1(Y))A_2(Y)=-\varepsilon^{-1}\partial_Y\eta(Y).
\end{align}
Hence, for given source term $e^{\theta Y}F\in L^\infty$, we define
\begin{align}\label{eq:airy-app-green}
	\begin{split}
	w_{app}(Y):=&A_1(Y)\int_0^YA_2(Z)\partial_Y\eta(Z)^{-1} F(Z)dZ\\
	&+A_2(Y)\int_Y^{+\infty}A_1(Z)\partial_Y\eta(Z)^{-1}F(Z)dZ,\\
	\psi_{app}(Y):=&\int_Y^{+\infty}\int_{Y'}^{+\infty}w_{app}(Z)dZ.
	\end{split}
\end{align}
By direct calculations, we know that $\psi_{app}$ is a solution to the following equation
\begin{align}\label{eq:Airy-app-Fs}
	\begin{split}
		&\varepsilon(\partial_Y^2-\alpha^2)w_{app}-(u-c)w_{app}=F+Err_1w_{app}+Err_2\partial_Yw_{app},\\
		&\partial_Y^2\psi_{app}(Y)=w_{app}(Y),\quad\quad\lim_{Y\to\infty}\psi_{app}(Y)=w_{app}(Y)=0.
	\end{split}
\end{align}
We will show that \eqref{eq:Airy-app-Fs} is an appropriate approximation of \eqref{eq:Airy-eq}.\smallskip

In the  rest of this section, we decompose $\mathbb R_+\cup\{0\}=\mathcal N^-\cup\mathcal N\cup\mathcal N^+,$ where
\begin{align*}
	&\mathcal N^+:=\{Y\geq Y_c:|\kappa\eta(Y)|\geq M\},\quad\mathcal N:=\{Y:|\kappa\eta(Y)|\leq M\},\\
	&\mathcal N^-:=\{0\leq Y\leq Y_c:|\kappa\eta(Y)|\geq M\}.
\end{align*}
We always assume that the structure assumption \eqref{eq:Hyper-u}  holds and $(\al,c)\in\mathbb{H}_2\subset\mathbb{R}_+\times\mathbb{C}$, where
\begin{align*}
\mathbb{H}_2:=\{(\al,c):|\varepsilon|^\f13\leq C\min(\al,c_r),(\al+c_r)|\log|c_i||+(\al^2+c_r^2)|\varepsilon|^{-\f13}\ll1,|c_i|\ll\min\{\al,c_r\}\}.
\end{align*}
It is easy to check that for $(\alpha,c)\in\mathbb H_2$, we always have $-\delta_0<\mathrm{Im}(\kappa\eta(Y))<\delta_0$, where $\delta_0$ is a small constant as defined in Lemma \ref{lem:pri-Airy-decay}.

 \subsection{Estimates of the Airy function}
 
 Since we introduce the modified  Langer transformation involved in the construction of $w_{app}$, we first show some useful estimates for $\eta(Y)$ and the modified Airy function. 
 The proof of the following results is presented  in \cite{MWWZ}.
 
\begin{lemma}[Lemma 4.1 in \cite{MWWZ}]\label{lem:est-eta}
	Let $\alpha,|\varepsilon|,|c|\ll1$ and $c_r>0$. Suppose that $u(Y)$ satisfies the structure assumption \eqref{eq:Hyper-u} and $0<L\leq 1$ is a small number. Then $\eta_r(Y)\in C^{\infty}(\mathbb R_+)$ and $\eta_r(Y)$ satisfies that following properties: 
	
    \begin{enumerate}
    	\item For any $|Y-Y_c|\leq L$, 
    	      \begin{align*}
	           |\eta_r(Y)-(Y-Y_c)|\leq C|Y-Y_c|^2,
              \end{align*}
        	  and for any $|Y-Y_c|\geq L$,
        	  \begin{align*}
	           C^{-1}(1+|Y-Y_c|)^\f23\leq |\eta_r(Y)|\leq C(1+|Y-Y_c|)^\f23.
              \end{align*}
    	\item There exists $0<C_1\leq C_2<+\infty$ such that for any $Y\geq 0$,
    	      \begin{align*}
    	      	C_1(1+|Y-Y_c|)^{-\f13}\leq |\partial_Y \eta_{r}(Y)|\leq C_2(1+|Y-Y_c|)^{-\f13}.
    	      \end{align*}
    	      Moreover, for any $|Y-Y_c|\leq L$,
    	      \begin{align*}
    	      	|\partial_Y \eta_{r}(Y)-1|\leq C|Y-Y_c|.
    	      \end{align*}
    	\item For any $Y\geq 0$,
              \begin{align*}
	           |\partial_Y^2\eta_{r}(Y)|\leq C(1+|Y-Y_c|)^{-\f43}.
              \end{align*}
    	\end{enumerate}
    	Here the constant $C$ is independent of $\eta_0$.
\end{lemma}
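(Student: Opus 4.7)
The plan is to analyze $\eta_r$ by treating the two building blocks $\eta_{in}(Y)=Y-Y_c$ and $\eta_{out}(Y)$ separately, controlling $\eta_{out}$ via the defining integral identity, and then showing that the cutoff gluing via $\chi$ does not spoil any of the estimates. Since $\eta_{in}$ is trivially linear, the whole difficulty is in $\eta_{out}$ and in verifying that near the matching region $|Y-Y_c|\sim \kappa^{-1}M$ the two expressions agree to the required order in $Y-Y_c$.

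First I would show item (1). Starting from the identity
\begin{align*}
\tfrac{2}{3}|\eta_{out}(Y)|^{3/2}\operatorname{sgn}(\eta_{out}(Y))=\int_{Y_c}^Y\bigl(\tfrac{u(Z)-c_r}{u'(Y_c)}\bigr)^{1/2}dZ,
\end{align*}
I would Taylor expand $u(Z)-c_r=u'(Y_c)(Z-Y_c)+O((Z-Y_c)^2)$, valid uniformly for $|Z-Y_c|\leq L$ because $u\in C^4$ with bounded derivatives by \eqref{eq:Hyper-u}. The square root and integration then yield $\tfrac{2}{3}\eta_{out}(Y)^{3/2}=\tfrac{2}{3}(Y-Y_c)^{3/2}\bigl(1+O(Y-Y_c)\bigr)$ for $Y\geq Y_c$, whence $\eta_{out}(Y)=(Y-Y_c)(1+O(Y-Y_c))$. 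The symmetric argument on $Y\leq Y_c$ uses the sign convention in the definition. For the large-$|Y-Y_c|$ regime, the decay assumption in \eqref{eq:Hyper-u} implies $u(Z)-c_r$ is comparable to a positive constant for $Z$ bounded away from $Y_c$, so the integrand is bounded between positive constants, giving $\int_{Y_c}^Y(\cdots)^{1/2}dZ\sim (1+|Y-Y_c|)$; raising to the $2/3$ power gives item (1) in the far regime.

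For the derivative bounds, implicit differentiation of the identity above yields
\begin{align*}
\partial_Y\eta_{out}(Y)=\bigl(\tfrac{u(Y)-c_r}{u'(Y_c)}\bigr)^{1/2}\eta_{out}(Y)^{-1/2}
\end{align*}
(with the sign dictated by the region). Near $Y_c$, both factors equal $(Y-Y_c)^{1/2}$ to leading order, so the ratio tends to $1$ with error $O(|Y-Y_c|)$, giving the refined estimate in item (2). Away from $Y_c$, the numerator is bounded while $\eta_{out}\sim (1+|Y-Y_c|)^{2/3}$, giving $\partial_Y\eta_{out}\sim (1+|Y-Y_c|)^{-1/3}$. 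Differentiating once more, $\partial_Y^2\eta_{out}$ is a combination of $u'(Y)/((u-c_r)\eta_{out})^{1/2}$ and $\partial_Y\eta_{out}\cdot\eta_{out}^{-1}$-type terms; the first is uniformly bounded near $Y_c$ (since both factors in the denominator cancel the numerator's vanishing) and decays like $(1+|Y-Y_c|)^{-4/3}$ for large $|Y-Y_c|$, yielding item (3).

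The last step is the gluing. The cutoff $\chi\big(\tfrac{Y-Y_c}{\kappa^{-1}M}\big)$ is supported in $|Y-Y_c|\leq 2\kappa^{-1}M$, which is small under our standing hypothesis $|\varepsilon|^{1/3}\leq C\min(\alpha,c_r)$. On that interval the key identity $\eta_{out}(Y)-\eta_{in}(Y)=O((Y-Y_c)^2)$ (shown already in step one) ensures that when we write $\eta_r=\eta_{in}+(1-\chi)(\eta_{out}-\eta_{in})$, the derivatives falling on $\chi$ produce factors $\kappa M^{-1}$ multiplied by $(Y-Y_c)^2$ or $(Y-Y_c)$, which are bounded uniformly; all other terms are controlled by the individual estimates on $\eta_{in}$ and $\eta_{out}$. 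The main obstacle I anticipate is verifying this last point quantitatively so that the constants in items (2) and (3) are uniform across the transition zone: one must check that $\eta_{out}(Y)-(Y-Y_c)$ and its first two derivatives remain small at scale $\kappa^{-1}M$, which reduces to a Taylor-remainder bookkeeping using the $C^4$ control on $u$ from \eqref{eq:Hyper-u}.
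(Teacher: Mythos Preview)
The paper does not give its own proof of this lemma; it is quoted verbatim as Lemma~4.1 of \cite{MWWZ} and used as a black box. So there is no ``paper's proof'' to compare against here.

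Your outline is the standard Langer-transformation argument and is correct in substance: Taylor-expand the defining integral near $Y_c$, use $u(Y)-c_r\sim 1$ away from $Y_c$, differentiate the relation $\eta_{out}(\partial_Y\eta_{out})^2=(u-c_r)/u'(Y_c)$, and check that the cutoff in the transition zone $|Y-Y_c|\sim\kappa^{-1}M$ introduces only lower-order terms because $\eta_{out}-\eta_{in}=O((Y-Y_c)^2)$ there.

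One point to tighten: your justification that ``the first term'' in $\partial_Y^2\eta_{out}$ is bounded near $Y_c$ is not right as stated. Writing $\partial_Y^2\eta_{out}$ directly from $\partial_Y\eta_{out}=\bigl((u-c_r)/u'(Y_c)\bigr)^{1/2}\eta_{out}^{-1/2}$ gives two terms, each of size $\sim |Y-Y_c|^{-1}$ near $Y_c$; it is their \emph{difference} that is bounded, not each term separately. The clean way is to differentiate the squared identity $\eta_{out}(\partial_Y\eta_{out})^2=(u-c_r)/u'(Y_c)$ once, obtaining
\[
\partial_Y^2\eta_{out}=\frac{u'(Y)/u'(Y_c)-(\partial_Y\eta_{out})^3}{2\,\eta_{out}\,\partial_Y\eta_{out}},
\]
where the numerator vanishes to first order at $Y_c$ (since $\partial_Y\eta_{out}(Y_c)=1$ and $u'(Y_c)/u'(Y_c)=1$), cancelling the simple zero of the denominator. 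This makes both the near-$Y_c$ boundedness and the far-field $(1+|Y-Y_c|)^{-4/3}$ decay transparent. With that correction your plan goes through.
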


\begin{lemma}[Lemma 4.3 in \cite{MWWZ}]\label{lem:err1-err2}
	Let $\alpha,|\varepsilon|,|c|\ll1$ and $c_r>0$, $|c_i|\leq C|\varepsilon|^\f13$. Suppose that $u(Y)$ satisfies the structure assumption \eqref{eq:Hyper-u} and $0<L\leq 1$ is a small number. Then 
\begin{align*}
	&|Err_1(Y)|\leq C|\eta_r(Y)|(|c_i|+|\eta_r(Y)|),\qquad\forall|Y-Y_c|\leq 2\kappa^{-1} M,\\
		&|Err_1(Y)|\leq C|c_i||\eta_r(Y)|,\qquad 2\kappa^{-1} M\leq |Y-Y_c|\leq L,\\
		&|Err_1(Y)|\leq C|c_i|,\qquad|Y-Y_c|\geq L,
\end{align*}
and 
\begin{align*}
	|Err_2(Y)|\leq C|\varepsilon|.
\end{align*}
\end{lemma}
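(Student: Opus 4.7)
The plan is to reduce both estimates to the quantitative bounds on $\eta_r$ from Lemma \ref{lem:est-eta} via a single algebraic identity, valid on all of $\mathbb{R}_+$, which decomposes $Err_1$ into one piece that vanishes in the outer region and another that vanishes in the inner region.

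I begin with two elementary algebraic facts. Since $\alpha$ is real, $\varepsilon=\nu^{1/2}/(\mathrm i\alpha)=-\mathrm i|\varepsilon|$ is purely imaginary; in particular $\varepsilon\alpha^2=-\mathrm i|\varepsilon|\alpha^2$. And because $\eta_i$ is a real constant, $\partial_Y\eta=\partial_Y\eta_r$. The definition $u'(Y_c)\eta_i=-(|\varepsilon|\alpha^2+c_i)$ is engineered precisely so that the constant terms in $u'(Y_c)\eta(\partial_Y\eta)^2-\varepsilon\alpha^2-(u-c)$ cancel; expanding and regrouping yields
\begin{align*}
Err_1(Y)=\bigl[u'(Y_c)\,\eta_r(Y)\,(\partial_Y\eta_r(Y))^2-(u(Y)-c_r)\bigr]-\mathrm i\bigl(|\varepsilon|\alpha^2+c_i\bigr)\bigl[(\partial_Y\eta_r(Y))^2-1\bigr].
\end{align*}
The geometric content is clean: by the Langer identity $u'(Y_c)\eta_{out}(\partial_Y\eta_{out})^2=u-c_r$, the first bracket vanishes wherever $\eta_r=\eta_{out}$, while the second vanishes wherever $\eta_r=\eta_{in}=Y-Y_c$, since there $\partial_Y\eta_r\equiv 1$.

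In the inner region $|Y-Y_c|\leq \kappa^{-1}M$, only the first bracket survives; Taylor-expanding $u$ at $Y_c$ yields $-\tfrac12 u''(Y_c)(Y-Y_c)^2+O(|Y-Y_c|^3)=O(|\eta_r|^2)$, which is $\leq C|\eta_r|(|c_i|+|\eta_r|)$. In the outer region $|Y-Y_c|\geq 2\kappa^{-1}M$ only the second survives, equal to $-\mathrm i(|\varepsilon|\alpha^2+c_i)[(\partial_Y\eta_{out})^2-1]$. Lemma \ref{lem:est-eta} gives $|\partial_Y\eta_{out}-1|\leq C|Y-Y_c|\leq C|\eta_r|$ for $|Y-Y_c|\leq L$ and $|(\partial_Y\eta_{out})^2-1|\leq C$ for $|Y-Y_c|\geq L$, which produces the two outer bounds (with $|\varepsilon|\alpha^2$ absorbed into $|c_i|$ under the smallness assumptions of $\mathbb H_2$). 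In the transition strip $\kappa^{-1}M\leq|Y-Y_c|\leq 2\kappa^{-1}M$, the cutoff $\chi$ gives $\eta_r=(Y-Y_c)+O((Y-Y_c)^2)$ and $\partial_Y\eta_r-1=O(|Y-Y_c|)$, so the two brackets are of size $O(|\eta_r|^2)$ and $O(|c_i||\eta_r|)$ respectively, matching the inner bound continuously across the interface.

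The bound on $Err_2=\varepsilon\,\partial_Y^2\eta_r/\partial_Y\eta_r$ is immediate: Lemma \ref{lem:est-eta} gives $|\partial_Y^2\eta_r|\leq C(1+|Y-Y_c|)^{-4/3}$ and $|\partial_Y\eta_r|\geq C^{-1}(1+|Y-Y_c|)^{-1/3}$, whence $|Err_2|\leq C|\varepsilon|(1+|Y-Y_c|)^{-1}\leq C|\varepsilon|$. The main conceptual step is identifying the decomposition of $Err_1$ written above; once the exact cancellations engineered by the purely imaginary $\varepsilon$ and the tailored value of $\eta_i$ are made visible, every regional estimate reduces to a direct application of Lemma \ref{lem:est-eta}, with no delicate matching required across the region interfaces.
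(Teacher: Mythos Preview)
The paper does not give its own proof of this lemma---it is quoted verbatim from \cite{MWWZ}---so there is nothing to compare against here. Your argument is the natural one and is essentially correct: the algebraic identity
\[
Err_1=\bigl[u'(Y_c)\,\eta_r(\partial_Y\eta_r)^2-(u-c_r)\bigr]-\mathrm i\bigl(|\varepsilon|\alpha^2+c_i\bigr)\bigl[(\partial_Y\eta_r)^2-1\bigr]
\]
is exactly the mechanism behind the choice of $\eta_i$, and once it is written down every regional estimate is a direct consequence of Lemma~\ref{lem:est-eta}, as you say. The $Err_2$ bound is immediate.

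One small gap: your assertion that ``$|\varepsilon|\alpha^2$ is absorbed into $|c_i|$ under the smallness assumptions of $\mathbb H_2$'' is not justified. Neither the lemma's own hypotheses nor the definition of $\mathbb H_2$ force $|\varepsilon|\alpha^2\lesssim|c_i|$; indeed on the neutral curve $c_i=0$ while $|\varepsilon|\alpha^2>0$. What your decomposition actually yields in the outer zones is
\[
|Err_1(Y)|\leq C\bigl(|c_i|+|\varepsilon|\alpha^2\bigr)|\eta_r(Y)|\quad(2\kappa^{-1}M\leq|Y-Y_c|\leq L),\qquad |Err_1(Y)|\leq C\bigl(|c_i|+|\varepsilon|\alpha^2\bigr)\quad(|Y-Y_c|\geq L),
\]
i.e.\ the bound with $|c_i|$ replaced by $u'(Y_c)|\eta_i|$. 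This is the estimate actually used downstream (cf.\ the proof of Corollary~\ref{cor:airy-app-sf-err}, where only $|Err_1|\leq C|\varepsilon|^{1/3}|u-\hat c|$ is needed), so the discrepancy is a harmless imprecision in the quoted statement rather than a defect in your method. Just state the bound you actually prove.
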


Now we show the asymptotic expansion of the primitives of the modified Airy function. 

\begin{lemma}[Lemma A.4 in \cite{MWWZ}]\label{lem:airy-langer-asy}
Let $\alpha,|\varepsilon|,|c|\ll1$ and $c_r>0$. Suppose that $0<\delta_0\ll1\leq M$ are the constants in Lemma \ref{lem:pri-Airy-decay}. Then there holds that for any $Y\in\mathcal N^-\cup\mathcal N^+$,
      \begin{align*}
 		A_1(1,Y)=&\frac{1}{2\sqrt{\pi}}e^{\mathrm i\frac{\pi}{3}}|\varepsilon|^{-\f13}u'(Y_c)^{-\f23}\partial_Y\eta^{-1}\Big(e^{\mathrm i\frac{\pi}{6}}\kappa\eta(Y) \Big)^{-\f34}e^{-\f23\big(e^{\mathrm i\frac{\pi}{6}}\kappa\eta(Y)\big)^\f32}(1+\mathcal O(|\kappa\eta(Y)|^{-\f32})),\\
 		A_1(2,Y)=&-\frac{1}{2\sqrt{\pi}}u'(Y_c)^{-1}e^{\mathrm i\frac{5\pi}{6}}\partial_Y\eta^{-2}\Big(e^{\mathrm i\frac{\pi}{6}}\kappa\eta(Y) \Big)^{-\f54} e^{-\f23\big(e^{\mathrm i\frac{\pi}{6}}\kappa\eta(Y)\big)^\f32}(1+\mathcal O(|\kappa\eta(Y)|^{-\f32})),\\
 		A_2(1,Y)=&-\sqrt{\pi}e^{-\mathrm i\frac{5\pi}{6}}|\varepsilon|^\f13u'(Y_c)^{-\f13}\partial_Y\eta^{-1}\Big(e^{\mathrm i\frac{5\pi}{6}}\kappa\eta(Y) \Big)^{-\f34} e^{-\f23\big(e^{\mathrm i\frac{5\pi}{6}}\kappa\eta(Y)\big)^\f32}(1+\mathcal O(|\kappa\eta(Y)|^{-\f32})),\\
 		A_2(2,Y)=&\sqrt{\pi}e^{\mathrm i\frac{\pi}{3}}|\varepsilon|^\f23u'(Y_c)^{-\f23}\partial_Y\eta^{-2}\Big(e^{\mathrm i\frac{5\pi}{6}}\kappa\eta(Y) \Big)^{-\f54}e^{-\f23\big(e^{\mathrm i\frac{5\pi}{6}}\kappa\eta(Y)\big)^\f32}(1+\mathcal O(|\kappa\eta(Y)|^{-\f32})),
 	\end{align*}
and  for any $Y\in\mathcal N$,
	      \begin{align*}
	      	&|A_1(1,Y)|\leq C|\varepsilon|^{-\f13},\quad|A_1(2,Y)|\leq C,\\
	      	&|A_2(1,Y)|\leq C|\varepsilon|^\f13,\quad|A_2(2,Y)|\leq C|\varepsilon|^\f23.
	      \end{align*}

\end{lemma}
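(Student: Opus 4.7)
The proof amounts to extracting the large-argument asymptotics of the classical Airy function through the modified Langer variable $\eta(Y)$. The starting point is the standard expansion
\[
Ai(z) = \frac{1}{2\sqrt{\pi}} z^{-1/4} e^{-\frac{2}{3} z^{3/2}} \bigl(1 + \mathcal{O}(|z|^{-3/2})\bigr), \qquad |\arg z| < \pi,
\]
together with the analogous expansion for $Ai'$. For $Y\in\mathcal{N}^-\cup\mathcal{N}^+$, the argument $z = e^{\mathrm{i}\pi/6}\kappa\eta(Y)$ (resp.\ $e^{\mathrm{i}5\pi/6}\kappa\eta(Y)$) satisfies $|z|\geq M \gg 1$ and stays in the principal sector by the constraint $|\mathrm{Im}(\kappa\eta(Y))|<\delta_0$ noted after the definition of $\mathbb{H}_2$, so the expansion applies uniformly.

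The main mechanism is a single integration by parts using the identity
\[
e^{-\frac{2}{3}(e^{\mathrm{i}\pi/6}\kappa\eta(Z))^{3/2}} \;=\; -\frac{1}{e^{\mathrm{i}\pi/6}\kappa\,\partial_Z\eta(Z)\,\bigl(e^{\mathrm{i}\pi/6}\kappa\eta(Z)\bigr)^{1/2}}\, \frac{d}{dZ} e^{-\frac{2}{3}(e^{\mathrm{i}\pi/6}\kappa\eta(Z))^{3/2}}.
\]
Inserting the leading asymptotic of $Ai$ into $A_1(1,Y)=\mathrm{i}|\varepsilon|^{-2/3}u'(Y_c)^{-1/3}\int_Y^{+\infty} Ai(e^{\mathrm{i}\pi/6}\kappa\eta(Z))\,dZ$ and integrating by parts, the boundary term at $Z=+\infty$ vanishes (super-exponentially), the boundary term at $Z=Y$ produces the stated leading formula, and the remainder involves $\partial_Z[\partial_Z\eta^{-1}(e^{\mathrm{i}\pi/6}\kappa\eta)^{-3/4}]$ which by Lemma~3.2 (on $\partial_Y\eta,\partial_Y^2\eta$) gains a factor $\mathcal{O}(|\kappa\eta(Y)|^{-3/2})$, proving the claimed error estimate. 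The simplification $\kappa = |\varepsilon|^{-1/3}u'(Y_c)^{1/3}$ yields the stated prefactor. For $A_1(2,Y)$ the same procedure is iterated on $A_1(1,Z)$: the leading factor of $(e^{\mathrm{i}\pi/6}\kappa\eta)^{-3/4}$ in $A_1(1,\cdot)$ loses another half-power, producing $(e^{\mathrm{i}\pi/6}\kappa\eta)^{-5/4}$ and an extra factor $\partial_Y\eta^{-1}|\varepsilon|^{1/3}u'(Y_c)^{-1/3}e^{\mathrm{i}\pi/6}\cdot \mathrm{const}$, matching the stated formula. The treatment of $A_2(1,Y),A_2(2,Y)$ is identical but uses $z=e^{\mathrm{i}5\pi/6}\kappa\eta(Y)$ and integration from $-\infty$; the phase $-\frac{2}{3}(e^{\mathrm{i}5\pi/6}\kappa\eta)^{3/2}$ again has positive real part throughout $\mathcal{N}^-\cup\mathcal{N}^+$ thanks to $|\mathrm{Im}(\kappa\eta)|<\delta_0$, so the same integration by parts works.

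For $Y\in\mathcal{N}$, the argument $\kappa\eta(Y)$ lies in a fixed compact set, hence $Ai(e^{\mathrm{i}\pi/6}\kappa\eta),Ai(e^{\mathrm{i}5\pi/6}\kappa\eta)$ are uniformly bounded. I would estimate $|A_1(1,Y)|$ by splitting $A_1(1,Y)= A_1(1,Y^+)-\int_Y^{Y^+}A_1(Z)\,dZ$, where $Y^+$ is the right endpoint of $\mathcal{N}$; the first term is controlled by the formula just proved on $\mathcal{N}^+$ (which gives $|A_1(1,Y^+)|\lesssim |\varepsilon|^{-1/3}$), and the second by the uniform bound $|A_1|\lesssim |\varepsilon|^{-2/3}$ on $\mathcal{N}$ together with $|\mathcal{N}|\lesssim \kappa^{-1} = |\varepsilon|^{1/3}u'(Y_c)^{-1/3}$, producing the claimed $|\varepsilon|^{-1/3}$. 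The bounds for $A_1(2,Y), A_2(1,Y), A_2(2,Y)$ on $\mathcal{N}$ follow by the same splitting argument.

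The main technical subtlety is to verify that every integration by parts really does gain the uniform factor $|\kappa\eta(Y)|^{-3/2}$: one must control $\partial_Z\eta$ from below (Lemma~3.2 gives $|\partial_Z\eta|\gtrsim (1+|Z-Y_c|)^{-1/3}$), and estimate $\partial_Z^2\eta$ from above (Lemma~3.2 again), so that the relative remainder from differentiating the prefactor $\partial_Z\eta^{-1}(e^{\mathrm{i}\pi/6}\kappa\eta(Z))^{-3/4}$ is dominated by $|\kappa\eta(Z)|^{-3/2}$ uniformly in $Z\in\mathcal{N}^\pm$. A secondary subtlety, relevant to $A_2(1,Y)$ with $Y\in\mathcal{N}^-$, is that the integration path from $-\infty$ passes through both $\mathcal{N}^-$ (where the expansion applies) and the compact $\mathcal{N}$ (where only $L^\infty$ bounds are available); the contribution from $\mathcal{N}$ is absorbed into the asserted $\mathcal{O}(|\kappa\eta(Y)|^{-3/2})$ correction because $|A_2|\lesssim 1$ on $\mathcal{N}$ while the leading asymptotic term is of size at least $|\kappa\eta(Y)|^{-3/4}e^{-\frac{2}{3}|\kappa\eta(Y)|^{3/2}}$, which is comparable through the matching at $Y^-=\partial\mathcal{N}\cap\mathcal{N}^-$.
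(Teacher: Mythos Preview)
The paper does not give its own proof of this lemma; it is quoted verbatim as Lemma~A.4 of \cite{MWWZ}. Your integration-by-parts argument is the standard derivation and is correct: it is exactly the computation that yields Lemma~\ref{lem:pri-Airy-decay} (the asymptotics of $\mathcal A(k,z)$ and $\mathcal C(k,z)$ in the model variable), transported through the Langer change of variable $z=\kappa\eta(Y)$ with the Jacobian $\partial_Y\eta$ carried as a slowly varying prefactor using the bounds of Lemma~\ref{lem:est-eta}.
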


At the end of this part, we show the estimates for the Green function of the approximate Airy equation \eqref{eq:Airy-app-Fs}.

\begin{lemma}[Lemma 4.4 in \cite{MWWZ}]\label{lem:A1A2}
	Let $0<\delta_0\ll1\leq M$ be the constants as defined in Lemma \ref{lem:pri-Airy-decay}.  Suppose $|\kappa\eta_i|<\delta_0$  and $\alpha,|\varepsilon|, |c|\ll1$. Then for any $0\leq Z\leq Y$, there exists $\gamma_0>0$ such that for $k=0,1,2$ and $j=0,1,2$,
\begin{align*}
		|\partial_Y^kA_1(Y)A_2(j,Z)|
		\leq C|\varepsilon|^{-\frac{2+k-j}{3}}\mathcal M(k,Y)\mathcal M(-j,Z)e^{-W_{\varepsilon}(Y,Z)},\\
		|\partial_Y^kA_2(Z)A_1(j,Y)|
		\leq C|\varepsilon|^{-\frac{2+k-j}{3}}\mathcal M(k,Z)\mathcal M(-j,Y) e^{-W_{\varepsilon}(Y,Z)},
	\end{align*}
	where $\mathcal M(l;Y)=|\partial_Y\eta(Y)|^l|\kappa\eta(Y)|^{-\f14+\f l2}$ for $Y\in\mathcal{N}^-\cup\mathcal{N}^+$, $\mathcal M(l;Y)=M^{-\f14+\f l2}$ for $Y\in\mathcal{N}$ and 
	\begin{align}\label{def:W-e}
	W_{\varepsilon}(Y,Z)=\gamma_0|\varepsilon|^{-\f13}|\eta(Y)-\eta(Z)|(|\kappa\eta(Y)|^\f12+|\kappa\eta(Z)|^\f12).
	\end{align}
\end{lemma}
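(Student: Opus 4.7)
Since this is cited as Lemma 4.4 of \cite{MWWZ}, the argument ultimately reduces to invoking that paper's detailed analysis. Nevertheless, the strategy is a classical WKB-type pointwise bound for Airy functions adapted to the modified Langer transformation \eqref{eq:Langer-mod}. The plan is a case analysis on the location of the pair $(Y,Z)$ with respect to the decomposition $\mathbb{R}_+ = \mathcal{N}^- \cup \mathcal{N} \cup \mathcal{N}^+$: both points in the same outer region, both in the transitional region $\mathcal{N}$, one in each outer region, or mixed outer/inner.

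First I would handle the generic outer case, say $Y, Z \in \mathcal{N}^+$ with $0 \leq Z \leq Y$. Substituting the asymptotic expansions from Lemma \ref{lem:airy-langer-asy} for each factor, one sees that each differentiation of $A_1$ produces an algebraic factor of order $\kappa \,\partial_Y\eta\,|\kappa\eta|^{1/2}$, while each primitive contributes $|\varepsilon|^{1/3}|\kappa\eta|^{-1/2}/\partial_Y\eta$; multiplying out (together with the prefactor $|\varepsilon|^{-2/3}$ in the definition of $A_1$) recovers exactly the algebraic weight $|\varepsilon|^{-(2+k-j)/3}\,\mathcal{M}(k,Y)\,\mathcal{M}(-j,Z)$. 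The exponential part of $A_1(Y) A_2(Z)$ then has modulus comparable to
\[
\exp\!\Big\{-\tfrac{2}{3}\cos(\pi/4)\big[(\kappa\eta(Y))^{3/2} - (\kappa\eta(Z))^{3/2}\big]\Big\}
\]
(the phase $e^{i\pi/6}$ for $A_1$ gives $\cos(\pi/4)>0$ in the decay, and $e^{i5\pi/6}$ for $A_2$ gives the opposite sign, hence a difference rather than a sum). Using the elementary inequality $a^{3/2}-b^{3/2}\geq \tfrac{3}{4}(a-b)(a^{1/2}+b^{1/2})$ for $0\leq b \leq a$, together with $\kappa = |\varepsilon|^{-1/3}u'(Y_c)^{1/3}$, this produces precisely the decay rate $W_\varepsilon(Y,Z)$ defined in \eqref{def:W-e}. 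The assumption $|\kappa\eta_i| < \delta_0$ guarantees that the small imaginary parts only cost a constant factor.

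The more delicate case is $Z \in \mathcal{N}^-$ and $Y \in \mathcal{N}^+$, where $\eta(Z)<0<\eta(Y)$ and the two phases $e^{i\pi/6}\kappa\eta(Y)$ and $e^{i5\pi/6}\kappa\eta(Z)$ live on opposite sides of $Y_c$; one must carefully choose the branches of the $3/2$ powers and verify that both exponents contribute with the correct sign to the real part, so that the sum yields $\gamma_0|\varepsilon|^{-1/3}|\eta(Y)-\eta(Z)|(|\kappa\eta(Y)|^{1/2}+|\kappa\eta(Z)|^{1/2})$. A convenient way is to split the contribution into the three pieces $[Z,Y_c-\kappa^{-1}M]$, $[Y_c-\kappa^{-1}M, Y_c+\kappa^{-1}M]$ and $[Y_c+\kappa^{-1}M, Y]$, bound the middle piece by the uniform estimates for $A_1,A_2$ and their primitives on the compact set $\mathcal{N}$ (which also yield the definition $\mathcal{M}(l,\cdot) = M^{-1/4+l/2}$ there), and on each outer piece apply the estimate from the previous paragraph with $Y_c \pm \kappa^{-1}M$ playing the role of the common endpoint. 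The reversed bound for $\partial_Y^k A_2(Z) A_1(j,Y)$ is obtained by the same analysis after interchanging the roles of $A_1$ and $A_2$. The main obstacle throughout is the bookkeeping of the phases and weight functions across the interface between $\mathcal{N}$ and $\mathcal{N}^\pm$; one must check that the two definitions of $\mathcal{M}(l,\cdot)$ agree up to constants on $|\kappa\eta|=M$ so that the pieces can be glued without loss.
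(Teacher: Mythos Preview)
The paper does not give its own proof of this lemma: it is stated with the attribution ``[Lemma 4.4 in \cite{MWWZ}]'' and the surrounding text explicitly says the proofs of these Airy-function estimates are presented in \cite{MWWZ}. Your proposal correctly acknowledges this at the outset, and the WKB-type sketch you provide (asymptotic expansions from Lemma~\ref{lem:airy-langer-asy}, case analysis on $\mathcal{N}^-,\mathcal{N},\mathcal{N}^+$, and the $a^{3/2}-b^{3/2}$ inequality to extract $W_\varepsilon$) is a reasonable outline of the standard argument one would expect in \cite{MWWZ}; there is nothing in the present paper to compare it against.
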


\subsection{Approximate solution to Airy equation with good force} 
In this part, we show estimates about $\psi_{app}$ and $w_{app}$ defined in \eqref{eq:airy-app-green} satisfying the approximate Airy equation \eqref{eq:Airy-app-Fs} for the source term $F$ corresponding to the upper and main deck. In a similar way as in Proposition 4.5 in \cite{MWWZ}, we can have the following results.
\begin{proposition}\label{prop:airy-gf1}
	Let $1\lesssim\theta\leq 3\eta_0$ and $\hat c:=c+\mathrm i c_0$ with $c_0\in(|\varepsilon|^\f23,|\varepsilon|^\f13)$. Suppose $(\alpha,c)\in\mathbb H_2$ with $c_i>-c_0/2$ and $e^{\theta Y}F\in L^{\infty}$. Then it holds that
	\begin{align*}
	&|e^{\theta Y}(U_s-\hat c)w_{app}(Y)|+|\e|^\f23|e^{\theta Y}\pa_Yw_{app}(Y)|+|\e||e^{\theta Y}\pa_Y^2w_{app}(Y)|
	\leq C\|F\|_{L^\infty_\theta},\\
	&|e^{\theta Y}\partial_Y\psi_{app}(Y)|+|e^{\theta Y}\psi_{app}(Y)|\leq C |\log c_i|\|F\|_{L^\infty_\theta}.
	\end{align*}
	Moreover, we have 
	\begin{align*}
	|e^{\theta Y}Err_1(Y)w_{app}(Y)|+|e^{\theta Y}Err_2(Y)\partial_Yw_{app}(Y)|\leq C|\varepsilon|^\f13 \|F\|_{L^\infty_\theta}.
	\end{align*}	
\end{proposition}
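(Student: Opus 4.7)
The strategy is to exploit the Green function representation \eqref{eq:airy-app-green} together with the pointwise bounds from Lemma \ref{lem:A1A2}, decomposed according to the trichotomy $\mathbb R_+=\mathcal N^-\cup\mathcal N\cup\mathcal N^+$ that matches the sublayer/outer structure of the modified Langer transformation. The role of the regularization $c_0\in(|\varepsilon|^{2/3},|\varepsilon|^{1/3})$ is to guarantee $|u(Y)-\hat c|\gtrsim |\varepsilon|^{1/3}(|\kappa\eta(Y)|+1)$ uniformly in $Y$ (this uses the assumption $c_i>-c_0/2$), which is the key pointwise comparison linking the Langer variable to the size of $u-\hat c$ and makes the $(u-\hat c)$-weighted estimate meaningful.

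First I would bound $w_{app}$ itself. Applying Lemma \ref{lem:A1A2} with $k=j=0$ to both halves of \eqref{eq:airy-app-green}, the kernels $A_1(Y)A_2(Z)/\partial_Y\eta(Z)$ and $A_2(Y)A_1(Z)/\partial_Y\eta(Z)$ are controlled by $|\varepsilon|^{-2/3}\mathcal M(0,Y)\mathcal M(0,Z)|\partial_Y\eta(Z)|^{-1}e^{-W_\varepsilon(Y,Z)}$. The Gaussian-type factor $e^{-W_\varepsilon(Y,Z)}$ localizes the contribution to a window of size $|\varepsilon|^{1/3}(|\kappa\eta(Y)|^{1/2}+1)^{-1}$ around $Z=Y$, and after a change of variables the $Z$-integration yields a bound $\lesssim |u(Y)-\hat c|^{-1}e^{-\theta Y}\|F\|_{L^\infty_\theta}$. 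Here the exponential decay of $F$ is transferred to $w_{app}$ because $\theta\lesssim\eta_0\ll 1$ is much smaller than the Airy decay rate $|\varepsilon|^{-1/3}$. This gives the first estimate $|e^{\theta Y}(u-\hat c)w_{app}(Y)|\leq C\|F\|_{L^\infty_\theta}$. For $\partial_Y w_{app}$, differentiating produces boundary terms that cancel via the Wronskian identity \eqref{eq:Airy-wron} and a kernel bounded by the $k=1$ case of Lemma \ref{lem:A1A2}, which costs an extra $|\varepsilon|^{-1/3}$ but gains $|\partial_Y\eta||\kappa\eta|^{1/2}$, yielding $|\varepsilon|^{2/3}|\partial_Y w_{app}|\lesssim\|F\|_{L^\infty_\theta}$. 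Rather than differentiating again for $\partial_Y^2 w_{app}$, I would use the approximate equation itself \eqref{eq:Airy-app-Fs},
\begin{align*}
\varepsilon\partial_Y^2 w_{app}=\varepsilon\alpha^2 w_{app}+(u-c)w_{app}+F+Err_1 w_{app}+Err_2\partial_Y w_{app},
\end{align*}
reducing everything to the bounds already obtained (plus the small-error estimates at the end).

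For $\psi_{app}$, the pointwise bound on $w_{app}$ gives $|w_{app}(Y)|\lesssim e^{-\theta Y}|u(Y)-\hat c|^{-1}\|F\|_{L^\infty_\theta}$, and integrating this twice from $Y$ to $\infty$ produces the announced $|\log c_i|$ loss: away from $Y_c$ the weight $e^{\theta Y}$ is preserved by the outer integration because $\theta>0$, while the inner integral $\int |u-\hat c|^{-1}\,dZ$ across the critical layer contributes $\sim|\log(|c_i|+c_0)|\lesssim|\log c_i|$. Note that two outer integrations of $|u-\hat c|^{-1}$ only remove one power (the second integration is regular against the primitive), so only a single logarithm appears.

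Finally the last estimate combines Lemma \ref{lem:err1-err2} with the bounds already proved. In $\mathcal N$ one has $|Err_1|\lesssim |\eta_r|^2\lesssim |\varepsilon|^{2/3}$ and $|w_{app}|\lesssim |\varepsilon|^{-1/3}\|F\|_{L^\infty_\theta}$, so $|Err_1 w_{app}|\lesssim |\varepsilon|^{1/3}\|F\|_{L^\infty_\theta}$; in $\mathcal N^\pm$ one uses $|Err_1|\lesssim|c_i||\eta_r|$ together with $|u-\hat c|\sim|\eta_r|$ and $|c_i|\lesssim |\varepsilon|^{2/3}$ (from $c_i>-c_0/2$ and the outer bound on $|u-\hat c|w_{app}$) to conclude the same estimate, while the bound on $Err_2\partial_Y w_{app}$ is immediate from $|Err_2|\lesssim|\varepsilon|$ and $|\partial_Y w_{app}|\lesssim|\varepsilon|^{-2/3}\|F\|_{L^\infty_\theta}$.

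The main obstacle will be the careful tracking of the transition between $\mathcal N$ and $\mathcal N^\pm$ in the $Z$-integration of the Green kernel, since the weights $\mathcal M(l,\cdot)$ and the equivalence $|u-\hat c|\sim|\varepsilon|^{1/3}(|\kappa\eta|+1)$ degenerate in different ways on either side of the critical layer; this is exactly where the $c_0$-regularization is essential to avoid picking up additional powers of $|\log c_i|$ in the $w_{app}$ estimates.
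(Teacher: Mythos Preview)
Your outline is correct and follows the same Green-kernel argument the paper imports from Proposition~4.5 of \cite{MWWZ}: bound the kernel via Lemma~\ref{lem:A1A2}, integrate over the near/far decomposition to get the $(u-\hat c)$-weighted estimate on $w_{app}$, recover $\partial_Y^2 w_{app}$ from the equation \eqref{eq:Airy-app-Fs}, and pick up the single logarithm when integrating $|u-\hat c|^{-1}$ for $\psi_{app}$. One correction in the last paragraph: for the $\mathcal N^\pm$ part of the $Err_1$ estimate you claim $|c_i|\lesssim|\varepsilon|^{2/3}$ ``from $c_i>-c_0/2$'', but that hypothesis is only a lower bound on $c_i$ and gives no such control; the correct input is $|c_i|\lesssim|\varepsilon|^{1/3}$, which follows from $|\kappa\eta_i|<\delta_0$ (recorded just after the definition of $\mathbb H_2$) and is already enough to get $|Err_1|\lesssim|c_i||\eta_r|\lesssim|\varepsilon|^{1/3}|u-\hat c|$ and hence $|Err_1\,w_{app}|\lesssim|\varepsilon|^{1/3}\|F\|_{L^\infty_\theta}$.
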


Now we show the estimates about $w_{app}$ with the source term $F\sim(u-\hat c)^{-1}$. 

\begin{proposition}\label{prop:airy-gf2}
	Let $1\lesssim\theta\leq 3\eta_0$ and $\hat c:=c+\mathrm i c_0$ with $c_0\in(|\varepsilon|^\f23,|\varepsilon|^\f13)$. Suppose $(\alpha,c)\in\mathbb H_2$ with $c_i>-c_0/2$ and  $e^{\theta Y}(U_s-\hat c)F\in L^{\infty}$. Then it holds  that
	\begin{align*}
	&|(u-\hat c)^2w_{app}(Y)|+|\e|^\f23|(u-\hat c)\pa_Yw_{app}(Y)|+|\e||(u-\hat c)\pa_Y^2w_{app}(Y)|
	\\
	&\leq C|\log c_i|e^{-\theta Y}\|(u-\hat  c)F\|_{L^\infty_\theta},\\
	&|e^{\theta Y}(u-\hat c)\partial_Y\psi_{app}(Y)|+|e^{\theta Y}\psi_{app}(Y)|\leq C(1+|c||\varepsilon|^{-\f13}) |\log c_i|^2\|(u-\hat c)F\|_{L^\infty_\theta}.
	\end{align*}
	Moreover, we have
	\begin{align*}
	|(u-\hat c)Err_1(Y)w_{app}(Y)|+|(u-\hat  c)Err_2(Y)\partial_Yw_{app}(Y)|
	\leq C|\varepsilon|^\f13 |\log c_i|e^{-\theta Y} \|(u- \hat c)F\|_{L^\infty_\theta}.
	\end{align*}
\end{proposition}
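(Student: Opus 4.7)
\textbf{Proof proposal for Proposition \ref{prop:airy-gf2}.}
The plan is to adapt the Green-function argument used for Proposition \ref{prop:airy-gf1} (i.e., Proposition 4.5 of \cite{MWWZ}), while carefully tracking the extra singular factors produced by $F\sim(u-\hat c)^{-1}$. Starting from
\begin{align*}
w_{app}(Y)=A_1(Y)\int_0^Y\frac{A_2(Z)F(Z)}{\partial_Y\eta(Z)}\,dZ+A_2(Y)\int_Y^{+\infty}\frac{A_1(Z)F(Z)}{\partial_Y\eta(Z)}\,dZ,
\end{align*}
I would split each integral according to whether $Z$ lies in $\mathcal N^-$, $\mathcal N$, or $\mathcal N^+$, insert the pointwise bounds from Lemma \ref{lem:A1A2} with the Gaussian-type factor $e^{-W_\varepsilon(Y,Z)}$, and use $|F(Z)|\le |u(Z)-\hat c|^{-1}e^{-\theta Z}\|(u-\hat c)F\|_{L^\infty_\theta}$. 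Since $c_0\in(|\varepsilon|^{2/3},|\varepsilon|^{1/3})$ and $c_i>-c_0/2$, we have $|u(Z)-\hat c|\gtrsim c_0+|Z-Y_c|$, so the integral $\int dZ/|u(Z)-\hat c|$ is of size $|\log c_i|$. This is where the new logarithm appears compared with Proposition \ref{prop:airy-gf1}.

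To prove the bound $|(u-\hat c)^2w_{app}(Y)|\le C|\log c_i|e^{-\theta Y}\|(u-\hat c)F\|_{L^\infty_\theta}$, I multiply the representation by $(u(Y)-\hat c)^2$ and distribute it against $A_1(Y)A_2(Z)/(u(Z)-\hat c)$ inside the integrand. On $\mathcal N^\pm$, the asymptotics in Lemma \ref{lem:airy-langer-asy} give $(u(Y)-\hat c)^2|A_1(Y)A_2(Z)|$ essentially as $|\eta_r(Y)|^{1/2}|\eta_r(Z)|^{-1/2}e^{-W_\varepsilon(Y,Z)}$, integrable in $Z$ with the single $|\log c_i|$ from the $|u-\hat c|^{-1}$ piece. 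On $\mathcal N$, the uniform bound $|u-\hat c|\lesssim|\varepsilon|^{1/3}$ makes $(u-\hat c)^2=O(|\varepsilon|^{2/3})$, which exactly absorbs the $|\varepsilon|^{-2/3}$ in Lemma \ref{lem:A1A2}. The derivative bounds come by differentiating the representation and using the Wronskian \eqref{eq:Airy-wron}: replacing $A_j$ with $\partial_YA_j$ or $\partial_Y^2 A_j$ costs $|\varepsilon|^{-1/3}$ or $|\varepsilon|^{-2/3}$, which is compensated by the single factor of $(u-\hat c)$ or $|\varepsilon|^{2/3}$ in the statement. The bounds for $Err_1 w_{app}$ and $Err_2\partial_Y w_{app}$ follow directly from Lemma \ref{lem:err1-err2}, where $|Err_1|\lesssim|\eta_r|^2\sim|\varepsilon|^{2/3}$ on $\mathcal N$ and $|Err_2|\lesssim|\varepsilon|$ globally provide the extra $|\varepsilon|^{1/3}$.

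For the $\psi_{app}$ estimates, I integrate $w_{app}$ twice using the pointwise bound $|w_{app}(Y)|\le C|\log c_i||u(Y)-\hat c|^{-2}e^{-\theta Y}\|(u-\hat c)F\|_{L^\infty_\theta}$ just established. The integral $\int_Y^{+\infty}|u(Y')-\hat c|^{-2}e^{-\theta Y'}\,dY'$ near the critical layer produces a factor $|Y_c|/|\varepsilon|^{1/3}\sim|c||\varepsilon|^{-1/3}$ from the inner region $|Y'-Y_c|\le|\varepsilon|^{1/3}$ where $|u-\hat c|\gtrsim|\varepsilon|^{1/3}$; the outer region yields a further $|\log c_i|$, which together with the one already in the $w_{app}$ bound gives the $|\log c_i|^2$ factor. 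Multiplying by $(u(Y)-\hat c)$ on the left-hand side absorbs the remaining singularity at $Y_c$ and gives the stated bound for $\partial_Y\psi_{app}$; integrating once more yields the claim for $\psi_{app}$ itself.

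The main obstacle is the bookkeeping at the critical layer: one must prevent the powers of $|u-\hat c|^{-1}$ in both the kernel and the source from compounding to worse than $|\log c_i|^2$, and must isolate the $|c||\varepsilon|^{-1/3}$ contribution cleanly. The key technical input is the Gaussian factor $e^{-W_\varepsilon}$ from Lemma \ref{lem:A1A2}, which confines the effective integration to a $|\varepsilon|^{1/3}$-neighborhood of $Y_c$ before one extracts the logarithm, thereby matching the anticipated loss in the statement.
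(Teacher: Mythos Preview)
Your treatment of $w_{app}$ and of the error terms $Err_1 w_{app}$, $Err_2\partial_Y w_{app}$ is correct and is exactly the Green-function argument the paper has in mind (the paper itself does not spell out a proof here but refers to Proposition~4.5 of \cite{MWWZ}, which proceeds by the same region decomposition and the kernel bounds of Lemma~\ref{lem:A1A2}).

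There is, however, a genuine gap in your argument for $\psi_{app}$ and $(u-\hat c)\partial_Y\psi_{app}$. Integrating the pointwise bound $|w_{app}(Y)|\le C|\log c_i|\,|u(Y)-\hat c|^{-2}e^{-\theta Y}\|(u-\hat c)F\|_{L^\infty_\theta}$ twice loses too much: the hypotheses only give $|u-\hat c|\ge \hat c_i>c_0/2$ with $c_0$ possibly as small as $|\varepsilon|^{2/3}$, not $|\varepsilon|^{1/3}$ as you assert, and even granting $|u-\hat c|\gtrsim|\varepsilon|^{1/3}$ the integral $\int |u-\hat c|^{-2}\,dY'$ is of order $|\varepsilon|^{-1/3}$ with no factor of $|c|$ in front. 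For $Y$ away from $Y_c$ one then gets $|(u-\hat c)\partial_Y\psi_{app}|\lesssim |\log c_i|\,|\varepsilon|^{-1/3}$, which is strictly worse than the stated bound $C(1+|c||\varepsilon|^{-1/3})|\log c_i|^2$ since $|c|\ll1$.

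The correct route, and the one used in \cite{MWWZ}, is to avoid naive integration and instead write $\partial_Y\psi_{app}$ and $\psi_{app}$ directly via the primitives $A_j(1,\cdot)$, $A_j(2,\cdot)$: after an integration by parts one has
\[
-\partial_Y\psi_{app}(Y)=A_1(1,Y)\!\int_0^Y\!\frac{A_2 F}{\partial_Z\eta}\,dZ+A_2(1,Y)\!\int_Y^\infty\!\frac{A_1 F}{\partial_Z\eta}\,dZ+\int_Y^\infty\!\big[A_1(1,Z)A_2(Z)-A_2(1,Z)A_1(Z)\big]\frac{F}{\partial_Z\eta}\,dZ,
\]
and similarly for $\psi_{app}$ with $A_j(2,\cdot)$. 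Now the kernel bounds of Lemma~\ref{lem:A1A2} apply directly: each primitive gains a factor $|\varepsilon|^{1/3}$ in the kernel, and the diagonal term $A_1(1,Z)A_2(Z)-A_2(1,Z)A_1(Z)$ is $O(|\varepsilon|^{-1/3}|\kappa\eta(Z)|^{-1})$, so that the contribution from $Z\in[0,Y_c]$ (where $|\kappa\eta(Z)|\sim|\kappa\eta(0)|\sim c_r|\varepsilon|^{-1/3}$ is essentially constant) produces precisely the factor $Y_c\cdot|\varepsilon|^{-1/3}|\kappa\eta(0)|^{-1}\cdot \sup|F|\sim |c||\varepsilon|^{-1/3}$ after one more $|\log c_i|$ from $\int|u-\hat c|^{-1}dZ$. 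This is how the factor $(1+|c||\varepsilon|^{-1/3})$ actually arises; the same mechanism is written out in detail in the paper's proof of Proposition~\ref{prop:airy-green-Y} (see the estimates of $\mathcal M_3^{(1)}$ there).
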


\subsection{Approximate solution to the Airy equation with singular force}
In this part, we construct a solution to 
\begin{align}\label{eq:Airy-app-SF}
	\begin{split}
	&\varepsilon(\partial_Y^2-\alpha^2)w_{app,s}-(u-c)w_{app,s}=\partial_Y^2G(Y) +Err_1w_{app,s}+Err_2\partial_Yw_{app,s},\\
		&\partial_Y^2\psi_{app,s}(Y)=w_{app,s}(Y),\quad\quad\lim_{Y\to\infty}\psi_{app,s}(Y)=w_{app,s}(Y)=0,
	\end{split}
\end{align}
for some $G(Y)$ containing the singularity around the critical layer. By integration by parts, the solution $\psi_{app}(Y)$ to \eqref{eq:Airy-app-SF} can be written as
	\begin{align}\label{eq:psi-app-01}
	\begin{split}
	\psi_{app,s}(Y)=\sum_{j=1}^4\mathcal M_j(Y)+\sum_{j=1}^3\mathcal E_j^{(0)}(Y),
	\end{split}
	\end{align}
	where 

	\begin{align}\label{eq:airy-app-green-remain}
		\begin{split}
			\mathcal M_1(Y):=&-A_1(2,Y)A_2(0)\partial_Y\eta(0)^{-1} \partial_Y G(0)+A_1(2,Y)\partial_Y A_2(0)\partial_Y\eta(0)^{-1} G(0),\\
		\mathcal M_2(Y):=&A_1(2,Y)\int_0^Y\partial_Z^2A_2(\partial_Z\eta)^{-1}G(Z) dZ+A_2(2,Y)\int_Y^{+\infty}\partial_Z^2A_1(\partial_Z\eta)^{-1}G(Z)dZ,\\
		\mathcal M_3(Y):=&\int_Y^{+\infty}(Z-Y)\mathcal H_1(Z)\partial_Z\eta(Z)^{-1}G(Z)dZ,\\
		\mathcal M_4(Y):=&\int_Y^{+\infty}\mathcal (A_1(2,Z)\partial_Z^2A_2(Z)-A_2(2,Z)\partial_Z^2A_1(Z))\partial_Z\eta(Z)^{-1}G(Z)dZ,\\
			\mathcal E_1^{(0)}(Y):=&A_1(2,Y)\int_0^Y\frac{\partial_Z^2\eta(Z)}{\partial_Z\eta(Z)^2}\Big(A_2(Z)\partial_ZG(Z)-\partial_ZA_2(Z)G(Z) \Big)dZ\\
			&+A_2(2,Y)\int_Y^{+\infty}\frac{\partial_Z^2\eta(Z)}{\partial_Z\eta(Z)^2}\Big(A_1(Z)\partial_ZG(Z)-\partial_ZA_1(Z)G(Z) \Big)dZ,\\
			\mathcal E_2^{(0)}(Y):=&\int_Y^{+\infty}(Z-Y)\frac{\partial_Z^2\eta(Z)}{\partial_Z\eta(Z)^2}\mathcal H_2(Z)dZ,\quad\mathcal E_3^{(0)}(Y):=\int_Y^{+\infty}\frac{\partial_Z^2\eta(Z)}{\partial_Z\eta(Z)^2}\mathcal H_3(Z)dZ
		\end{split}
	\end{align}
	with
	\begin{align}\label{eq:airy-app-green-H}
		\begin{split}
			\mathcal H_1(Z):=&\partial_Z A_1(Z)A_2(Z)-\partial_Z A_2(Z)A_1(Z)+A_2(1,Z)\partial_Z^2A_1(Z)-A_1(1,Z)\partial_Z^2A_2(Z),\\
			\mathcal H_2(Z):=&(A_2(1,Z)A_1(Z)-A_1(1,Z)A_2(Z))\partial_Z G(Z)\\
			&-(A_2(1,Z)\partial_Z A_1(Z)-A_1(1,Z)\partial_Z A_2(Z)) G(Z),\\
			\mathcal H_3(Z):=&(A_1(2,Z)A_2(Z)-A_2(2,Z)A_1(Z))\partial_Z G(Z)\\
			&-(A_1(2,Z)\partial_Y A_2(Z)-A_2(2,Z)\partial_Z A_1(Z)) G(Z).
		\end{split}
	\end{align}

	\begin{proposition}\label{prop:airy-green-Y}
Let  $1\lesssim\theta\leq 3\eta_0$ and $\hat c:=c+\mathrm i c_0$ with $c_0\in(|\varepsilon|^\f23,|\varepsilon|^\f13)$.  Suppose $(\alpha,c)\in\mathbb H_2$ with $c_i>-c_0/2$. Then for any $G(Y)\in W^{2,\infty}$ with $\mathrm{supp}(G)\subset[0,\tilde Y]$, $\tilde Y\sim 1$, $\psi_{app,s}(Y)$ defined  by \eqref{eq:psi-app-01} is a solution to \eqref{eq:Airy-app-SF} and satisfies that for any $Y\geq 0$,
\begin{align*}
&|e^{\theta Y}\psi_{app,s}(Y)|
        	+|e^{\theta Y}(u-\hat c)\partial_Y\psi_{app,s}|\\
        	&
        	\leq C|\varepsilon|^{-1}|\hat c||\log|\hat c_i|| \|(u-\hat 
        	c)G\|_{L^\infty}+ C|\varepsilon|^{-\f23}|\log|\hat c_i||\|(u-\hat c)^2\partial_Y G\|_{L^\infty},\\
        	&|e^{\theta Y}(u-\hat c)^2\partial_Y^2\psi_{app,s}(Y)|\leq C|\varepsilon|^{-\f23}|\log|\hat c_i||(\|(u-\hat c)G\|_{L^\infty}+\|(u-\hat c)^2\partial_Y G\|_{L^\infty}),\\
	    	&|e^{\theta Y}(u-\hat c)\partial_Yw_{app,s}(Y)|\leq C(|\varepsilon|^{-1}|\hat c_i|^{-1}+|\varepsilon|^{-\f43})|\log|\hat c_i| |\|(u-\hat c)^2\partial_YG\|_{L^\infty}.
    	        \end{align*} 
  Moreover, for any $Y\in\mathcal N\cup\mathcal N^+$,
  \begin{align*}
  	|e^{\theta Y}\psi_{app,s}(Y)|+|(u-\hat c)\partial_Y\psi_{app,s}(Y)|\leq C|\varepsilon|^{-\f23}|\log|\hat c_i||(\|(u-\hat 
        	c)G\|_{L^\infty}+\|(u-\hat c)^2\partial_Y G\|_{L^\infty}).
  \end{align*}     
\end{proposition}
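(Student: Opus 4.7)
The plan is to verify first that $\psi_{app,s}$ as defined by \eqref{eq:psi-app-01} actually solves \eqref{eq:Airy-app-SF}, and then to estimate each of the seven constituent pieces. Starting from the Green-function representation analogous to \eqref{eq:airy-app-green} but with source $\partial_Z^2 G$ in place of $F$, I would integrate by parts twice in $Z$ to shift both derivatives off $G$. The boundary contributions at $Z=0$ assemble into $\mathcal M_1$; the interior contributions split naturally into two families, one in which $\partial_Z$ falls on $A_j$ (giving $\mathcal M_2,\mathcal M_3,\mathcal M_4$ after regrouping via the Wronskian identity \eqref{eq:Airy-wron} and via the primitives $A_j(k,\cdot)$), and one in which $\partial_Z$ falls on $(\partial_Z\eta)^{-1}$, producing the Langer defect $-\partial_Z^2\eta/(\partial_Z\eta)^2$ and hence the error family $\mathcal E_j^{(0)}$. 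Passing to $\psi_{app,s}(Y)=\int_Y^{+\infty}\int_{Y'}^{+\infty} w_{app,s}\,dZ\,dY'$ by a further integration by parts gives exactly \eqref{eq:psi-app-01}.

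For the main terms I would estimate $\mathcal M_1$ using the boundary-value asymptotics from Lemma \ref{lem:airy-langer-asy}, noting that $A_1(2,Y)$ carries the exponential factor $e^{-W_\varepsilon(Y,0)}$ from Lemma \ref{lem:A1A2}; bounding $G(0)$ and $\partial_Y G(0)$ by $|\hat c|^{-1}\|(u-\hat c)G\|_{L^\infty}$ and by $|\hat c|^{-2}\|(u-\hat c)^2\partial_Y G\|_{L^\infty}$ produces the prefactor $|\varepsilon|^{-1}|\hat c|$. For $\mathcal M_2$ I would rewrite $\partial_Z^2 A_j$ using the modified Airy equation \eqref{eq:Airy-Err-app}, namely $\partial_Z^2 A_j=\alpha^2 A_j+\varepsilon^{-1}((u-c)+Err_1)A_j+\varepsilon^{-1}Err_2\partial_Z A_j$; the dominant piece $\varepsilon^{-1}(u-c)A_j(\partial_Z\eta)^{-1}G$ feeds back into the same Green-function structure as Proposition \ref{prop:airy-gf2} applied to the effective source $(u-c)G$, producing the $|\varepsilon|^{-2/3}|\log|\hat c_i||$ loss, while the $Err_1,Err_2$ pieces are absorbed via Lemma \ref{lem:err1-err2}.

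The combination $\mathcal H_1$ entering $\mathcal M_3$ is the "defect" of the Wronskian identity for the primitives; using the approximate Airy equation once more, it reduces to a product of Airy quantities with an explicit $Err$-prefactor that gains $|\varepsilon|^{1/3}$ over the naive bound, so $\mathcal M_3$ is absorbed. The term $\mathcal M_4$ is handled directly by the pointwise Green-function bounds of Lemma \ref{lem:A1A2}. The error family $\mathcal E_j^{(0)}$ carries the factor $\partial_Z^2\eta/(\partial_Z\eta)^2=\varepsilon^{-1}Err_2$, which by Lemmas \ref{lem:est-eta} and \ref{lem:err1-err2} is bounded pointwise by $C(1+|Z-Y_c|)^{-2/3}$, so $\mathcal E_j^{(0)}$ pairs with the Green-function kernel exactly as in $\mathcal M_2$ but with an extra $|\varepsilon|^{1/3}$ of slack. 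The bounds on $w_{app,s}$ and $\partial_Y w_{app,s}$ then follow either by reading them off the approximate Airy equation \eqref{eq:Airy-app-SF} itself (to recover $(u-\hat c)^2\partial_Y^2\psi_{app,s}$) or by differentiating the Green representation once, at the cost of one extra factor of $|\varepsilon|^{-1/3}$ inside $\mathcal N$ and $|\hat c_i|^{-1}$ in the prefactor from $\mathcal M_1$.

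The main obstacle is controlling the passage across the critical layer: the source $\partial_Y^2 G$ is $W^{2,\infty}$ but the natural weighting for the target is $(u-\hat c)^2$, and the Green kernel has no decay inside $\mathcal N$, so one must integrate $|(u-\hat c)^{-1}|$ across the layer, which is what produces the logarithmic factor $|\log|\hat c_i||$. The delicate accounting is to verify that this log loss occurs only once, and that on $\mathcal N\cup\mathcal N^+$ the bound improves from $|\varepsilon|^{-1}|\hat c|$ to $|\varepsilon|^{-2/3}$; this improvement comes from the exponential decay of $A_1(2,Y)$ as $Y$ moves past $Y_c$, which kills precisely the boundary piece $\mathcal M_1$ responsible for the larger prefactor in the general estimate.
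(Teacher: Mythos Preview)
Your plan contains a genuine gap in the treatment of $\mathcal M_3$. You claim that $\mathcal H_1$, after invoking the approximate Airy equation, reduces to an $Err$-weighted combination and therefore gains $|\varepsilon|^{1/3}$ so that $\mathcal M_3$ is absorbed. This is not correct. In fact $\mathcal H_1=\partial_Z\tilde{\mathcal H}$ with $\tilde{\mathcal H}=\partial_ZA_1\cdot A_2(1,\cdot)-A_1(1,\cdot)\partial_ZA_2$, and by Lemma~\ref{lem:A1A2} one has $|\tilde{\mathcal H}(Z)|\sim|\varepsilon|^{-2/3}|\kappa\eta(Z)|^{-1/2}$ on $\mathcal N^\pm$, with no additional smallness. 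After one integration by parts in $Z$ the term $\mathcal M_3$ becomes the \emph{dominant} contribution on $\mathcal N^-$: the paper obtains
\[
|\mathcal M_3(Y)|\lesssim|\varepsilon|^{-1}|Y-Y_c||\log|\hat c_i||\,\|(u-\hat c)G\|_{L^\infty}+|\varepsilon|^{-2/3}|\log|\hat c_i||\,\|(u-\hat c)^2\partial_YG\|_{L^\infty},\qquad Y\in\mathcal N^-,
\]
and it is precisely this $|\varepsilon|^{-1}|Y-Y_c|\le C|\varepsilon|^{-1}|\hat c|$ factor that produces the larger prefactor in the proposition. Correspondingly, the improvement to $|\varepsilon|^{-2/3}$ on $\mathcal N\cup\mathcal N^+$ does \emph{not} come from exponential decay of $\mathcal M_1$ (which already satisfies the $|\varepsilon|^{-2/3}$ bound for all $Y$, cf.~\eqref{eq:airy-app-sf-main1-b}); it comes from the fact that the bad $\mathcal N^-$ part of $\mathcal M_3$ is simply not present there. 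So both your size estimate on $\mathcal M_3$ and your attribution of the large prefactor to $\mathcal M_1$ are wrong.

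Two smaller issues. First, for $\mathcal M_2$ the paper does not substitute $\partial_Z^2A_j$ via the Airy equation; it uses the pointwise bounds of Lemma~\ref{lem:A1A2} directly, splitting the $Z$-integral into $\mathcal N_{near}(Y)$, $\mathcal N_{far}(Y)$ and $\mathcal N$. Your proposed detour through Proposition~\ref{prop:airy-gf2} with effective source $(u-c)G$ would yield only $|\varepsilon|^{-1}|\log|\hat c_i||\,\|(u-\hat c)G\|$, one power of $|\varepsilon|^{-1/3}$ worse than the claimed $|\varepsilon|^{-2/3}$. Second, the bound on $(u-\hat c)^2\partial_Y^2\psi_{app,s}=(u-\hat c)^2w_{app,s}$ cannot be ``read off'' equation~\eqref{eq:Airy-app-SF}, which controls $\partial_Y^2w_{app,s}$ rather than $w_{app,s}$; the paper instead returns to the Green representation of $w_{app,s}$, integrates by parts once in $Z$ to trade $\partial_Z^2G$ for $\partial_ZG$, and repeats the near/far/$\mathcal N$ splitting.
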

For any $Y\geq 0$, we define 
\begin{align*}
&\mathcal N_{near}(Y):=\{Z\in\mathcal{N}^+\cup\mathcal{N}^- :|\kappa(\eta(Z)-\eta(Y))|\leq 1\},\\
&\mathcal N_{far}(Y):=\{Z\in\mathcal{N}^+\cup\mathcal{N}^- :|\kappa(\eta(Z)-\eta(Y))|>1\}.
\end{align*}
For $Z, Y\geq 0$, we have
\begin{align*}
e^{\theta|Y-Z|}\leq Ce^{C|\eta_r(Y)^\f32-\eta_r(Z)^\f32|}\leq& Ce^{C|\eta_r(Y)-\eta_r(Z)|(|\eta_r(Y)|^\f12+|\eta_r(Z)|^\f12)}\\
\leq& Ce^{C|\varepsilon|^{\f12} |\varepsilon|^{-\f13}|\eta(Y)-\eta(Z)|(|\kappa\eta(Y)|^\f12+|\kappa\eta(Z)|^\f12)},
\end{align*}
which implies that
\begin{align}\label{eq:ariy-app-gf-decay1}
\begin{split}
W_\varepsilon(Y,Z)e^{\theta|Y-Z|}\leq& e^{-C|\varepsilon|^{-\f13}|\eta(Y)-\eta(Z)|(|\kappa\eta(Y)|^\f12+|\kappa\eta(Z)|^\f12)(1+|\varepsilon|^\f12)}\\
\leq& e^{-\tilde{C}|\varepsilon|^{-\f13}|\eta(Y)-\eta(Z)|(|\kappa\eta(Y)|^\f12+|\kappa\eta(Z)|^\f12)}:=\tilde{W}_\varepsilon(Y,Z).
\end{split}
\end{align}
  By the definition of $\mathcal N^-,\mathcal N$ and $\mathcal N^+$, there exist $Y_-$ and $Y_+$ such that 
	    \begin{align}
	    	\mathcal N^-=[0, Y_-],\quad\mathcal N=[Y_-,Y_+]\text{ and }\mathcal N^+=[Y_+,+\infty).
	    \end{align}

\begin{proof}[Proof of Proposition \ref{prop:airy-green-Y}]
\no\textbf{Estimates about $\psi_{app,s}(Y)$.} \smallskip

\no\underline{Estimates about $\mathcal M_1(Y)$.} By the definition of $\mathcal M_1(Y)$, we first notice that by Lemma \ref{lem:est-eta} and \ref{lem:A1A2}, for any $Y\in\mathcal N^+$, 
	\begin{align}\label{eq:airy-app-SF-L1}
		\begin{split}
	|\mathcal M_1(Y)|
\leq& Ce^{-\gamma_0|\varepsilon|^{-\f13} Y}|\partial_Y\eta(0)|^{-1} |\partial_Y G(0)|+C|\varepsilon|^{-\f13}e^{-\gamma_0|\varepsilon|^{-\f13} Y}|\partial_Y\eta(0)|^{-1}|G(0)|	\\
\leq&C|\hat c|^{-2}e^{-\gamma_0|\varepsilon|^{-\f13} Y}|\partial_Y\eta(0)|^{-1}\|(u-\hat c)^2\partial_Y G\|_{L^\infty}\\
&+C|\varepsilon|^{-\f13}|\hat c|^{-1}e^{-\gamma_0|\varepsilon|^\f13 Y}|\partial_Y\eta(0)|^{-1}\|(u-\hat c)G\|_{L^\infty}\\
\leq&C|\hat c|^{-2}e^{-\theta Y}\|(u-\hat c)^2\partial_YG\|_{L^\infty}+|\hat c|^{-1}|\varepsilon|^{-\f13} e^{-\theta Y}\|(u-\hat c)G\|_{L^\infty}.
		\end{split}
	\end{align}
Again by Lemma \ref{lem:est-eta} and \ref{lem:A1A2}, for any $Y\in\mathcal N^-\cup\mathcal N$, we have 
	\begin{align*}
		|\mathcal M_1(Y)|
		\leq&C|\partial_YG(0)|+C|\varepsilon|^{-\f13}|G(0)|
			\lesssim|\hat c|^{-2}\|(u-\hat c)^2\partial_YG\|_{L^\infty}+|\hat c|^{-1}|\varepsilon|^{-\f13} \|(u-\hat c)G\|_{L^\infty},
	\end{align*}
	which along with \eqref{eq:airy-app-SF-L1} implies that for any $Y\geq 0$,
	\begin{align}\label{eq:airy-app-sf-main1-b}
		\begin{split}
	     |\mathcal M_1(Y)|
		\leq&C|\hat c|^{-2}e^{-\theta Y}\|(u-\hat c)^2\partial_YG\|_{L^\infty}+|\hat c|^{-1}|\varepsilon|^{-\f13} e^{-\theta Y}\|(u-\hat c)G\|_{L^\infty}\\
		\leq& C|\varepsilon|^{-\f23} e^{-\theta Y}(\|(u-\hat c)G\|_{L^\infty}+\|(u-\hat c)^2\partial_YG\|_{L^\infty}).
		\end{split}
	\end{align}
	
	\no\underline{Estimates about $\mathcal M_2(Y)$.} We notice that for any $Y\in\{Y:|\kappa\eta(Y)|\geq M+1\}$,
	    \begin{align}\label{eq:airy-app-sf-main1}
	    	\begin{split}
	    	|\mathcal M_2(Y)
			\leq&C|\varepsilon|^{-\f23}\int_{\mathcal N_{near}(Y)}\f{|\kappa\eta(Z)|^{\f34}|\partial_Z\eta(Z)|}{	|\kappa\eta(Y)|^{\f54}|\partial_Y\eta(Y)|^{2}}
		 e^{-W_{\varepsilon}(Y,Z)}|G(Z)|dZ\\
			&+C|\varepsilon|^{-\f23}\int_{\mathcal N_{far}(Y)}\f{|\kappa\eta(Z)|^{\f34}|\partial_Z\eta(Z)|}{	|\kappa\eta(Y)|^{\f54}|\partial_Y\eta(Y)|^{2}}
			e^{-W_{\varepsilon}(Y,Z)}|G(Z)|dZ\\
			&+C|\varepsilon|^{-\f23}\int_{\mathcal N}|\kappa\eta(Y)|^{-\f54}|\partial_Y\eta(Y)|^{-1} e^{-W_{\varepsilon}(Y,Z)}|G(Z)|dZ=\mathcal{I}_{2,1}+\mathcal{I}_{2,2}+\mathcal{I}_{3,3}.
	    	\end{split}
	    \end{align}
	    By Lemma \ref{lem:est-eta},  for any $Y\in\{Y\geq 0:|\kappa\eta(Y)|\geq M+1, |Y-Y|\leq L\}$ and $Z\in \mathcal N_{near}(Y)$, we have
	    \begin{align*}
	    &\f{|\kappa\eta(Z)|^{\f34}|\partial_Z\eta(Z)|}{	|\kappa\eta(Y)|^{\f54}|\partial_Y\eta(Y)|^{2}|u(Z)-c|}
	    	\leq C |\kappa\eta(Y)|^{-\f12}|\eta(Z)|^{-1}\leq C|\varepsilon|^{-\f13}|\kappa\eta(Y)|^{-\f32}\leq C|\varepsilon|^{-\f13},
	    \end{align*}
	    and for any $|Y-Y_c|\geq L$ and $Z\in \mathcal N_{near}(Y)$,
	    \begin{align*}
	    &\f{|\kappa\eta(Z)|^{\f34}|\partial_Z\eta(Z)|}{	|\kappa\eta(Y)|^{\f54}|\partial_Y\eta(Y)|^{2}|u(Z)-c|}
	    	\leq C|\kappa\eta(Y)|^{-\f12}|\partial_Y\eta(Y)|^{-1}\leq C|\varepsilon|^{\f16}.
	    \end{align*}
	    Therefore, we obtain that for any $Y\in\{Y:|\kappa\eta(Y)|\geq M+1\}$,
	    \begin{align*}
	    	|\kappa\eta(Y)&|^{-\f54}|\partial_Y\eta(Y)|^{-2}|\kappa\eta(Z)|^{\f34}|\partial_Z\eta(Z)||u(Z)-\hat c)|^{-1}\leq C|\varepsilon|^{-\f13}.
	    \end{align*}
	  Then by the above estimates and \eqref{eq:ariy-app-gf-decay1}, we obtain that for any $Y\in\{Y:|\kappa\eta(Y)|\geq M+1\}$,
	    \begin{align}\label{eq:airy-app-sf-main1-1}
	    	\begin{split}
	    		\mathcal{I}_{2,1}\lesssim& |\varepsilon|^{-\f23}e^{-\theta Y}\|e^{\theta_0Y}(u-\hat  c)G\|_{L^\infty}\int_{\mathcal N_{near}(Y)}\f{|\kappa\eta(Z)|^{\f34}|\partial_Z\eta(Z)|}{	|\kappa\eta(Y)|^{\f54}|\partial_Y\eta(Y)|^{2}|u(Z)-\hat c|}e^{-\tilde{W}_\varepsilon(Y,Z)}dZ        \\
			\lesssim& |\varepsilon|^{-1} e^{-\theta Y}\|(u-\hat  c)G\|_{L^\infty}\int_{\mathcal N_{near}(Y)}1dZ\lesssim|\varepsilon|^{-\f23} e^{-\theta Y}\|(u- \hat c)G\|_{L^\infty}.
	    	\end{split}
	    \end{align}
	    By Lemma \ref{lem:est-eta}, for any $Y\in\{Y\geq 0:|\kappa\eta(Y)|\geq M+1\}$ and $Z\in \mathcal N_{far}(Y)$, we have
	    \begin{align*}
	    |\kappa\eta(Z)|^{\f34}|\partial_Z\eta(Z)|e^{-\tilde{W}_\varepsilon(Y,Z)}\leq C|\kappa\eta(Z)|^{\f34}e^{-|\kappa\eta(Z)|^\f12}\leq Ce^{-C|\kappa\eta(Z)|^\f12}.
	    \end{align*}
       Therefore, for $\mathcal{I}_{2,2}$, we have 
        \begin{align}\label{eq:airy-app-sf-main1-2}
        	\begin{split}
        	\mathcal{I}_{2,2}
			&\lesssim|\varepsilon|^{-\f23}e^{-\theta Y}\|e^{\theta_0Y}(u-\hat c)G\|_{L^\infty}\int_{\mathcal N_{far}(Y)}\f{|\kappa\eta(Z)|^{\f34}|\partial_Z\eta(Z)|}{	|\kappa\eta(Y)|^{\f54}|\partial_Y\eta(Y)|^{2}|u(Z)-\hat c|}e^{-\tilde{W}_\varepsilon(Y,Z)}dZ\\
			&\lesssim|\varepsilon|^{-1} e^{-\theta  Y}\|(u- \hat c)G\|_{L^\infty}\int_{\mathcal N_{far}(Y)}e^{-|\kappa\eta(Z)|^\f12}|\partial_Z\eta(Z)|dZ\\
			&\lesssim |\varepsilon|^{-\f23}e^{-\theta Y}\|(u- \hat c)G\|_{L^\infty}.
        	\end{split}
        \end{align}
	    For $\mathcal{I}_{2,3}$, we notice that
	    \begin{align*}
	    	\mathcal{I}_{2,3}
	    	\leq&C|\varepsilon|^{-\f23}\|(u-\hat c)G\|_{L^\infty}\int_{Z\in\mathcal N} |u(Z)-\hat c|^{-1}dZ
	    	\leq C|\varepsilon|^{-\f23}\|(u-\hat c)G\|_{L^\infty}|\log|\hat c_i||,
	    \end{align*}
	    which along with \eqref{eq:airy-app-sf-main1}, \eqref{eq:airy-app-sf-main1-1} and \eqref{eq:airy-app-sf-main1-2} deduces that for any $Y\in\{Y:|\kappa\eta(Y)|\geq M+1\}$,
	    \begin{align*}
	    	   |\mathcal M_2(Y)| \leq Ce^{-\theta Y}\|(u- \hat c)G\|_{L^\infty}|\varepsilon|^{-\f23} |\log|\hat c_i||. 
	    \end{align*}
	    
	    Now we consider the estimates of $\mathcal M_2(Y)$ for the case of $|\kappa\eta(Y)|\leq M+1$. By Lemma \ref{lem:A1A2}, we have that for any $|\kappa\eta(Y)|\leq M+1$, 
	   \begin{align*}
	   		|\mathcal M_2(Y)|\lesssim &|\varepsilon|^{-\f23}\int_{|\kappa\eta(Z)|\geq M+2}|\kappa\eta(Z)|^\f34|\partial_Z\eta(Z)|e^{-\tilde{W}_\varepsilon(Y,Z)}|G|dZ+|\varepsilon|^{-\f23}\int_{|\kappa\eta(Z)|\leq M+2}|G(Z)|dZ\\
	    		\lesssim&|\varepsilon|^{-1}\|(u- \hat c)G\|_{L^\infty}\int_{|\kappa\eta(Z)|\geq M+2}|\kappa\eta(Z)|^{-\f14}e^{-C|\kappa\eta(Z)|^\f12}dZ\\
	    		&+|\varepsilon|^{-\f23}\|(u- \hat c)G\|_{L^\infty}\int_{|\kappa\eta(Z)|\leq M+2}|u(Z)-\hat c|^{-1}dZ\\
	    		\lesssim&\|(u- \hat c)G\|_{L^\infty}|\varepsilon|^{-\f23} |\log|\hat c_i||.
	   \end{align*}
	   Therefore, we obtain that for any $Y\geq 0$,
	   \begin{align}\label{eq:airy-app-sf-main2-b}
	   	  |\mathcal M_2(Y)| \leq Ce^{-\theta Y}\|(u- \hat c)G\|_{L^\infty}|\varepsilon|^{-\f23} |\log|\hat c_i||. 
	   \end{align}
	   
	   \no\underline{Estimates about $\mathcal M_3(Y)$.} Notice that for any $Y\geq 0$,
	   \begin{align*}
	   	\mathcal H_1(Z)=\partial_Z\Big(\underbrace{\partial_ZA_1(Z)A_2(1,Z)-A_1(1,Z)\partial_ZA_2(Z)}_{\tilde{\mathcal{H}}(Z)}\Big).
	   \end{align*}
	    Hence, we have that for any $Y\geq 0$,
	    \begin{align}\label{eq:airy-app-sf-main3}
	    	\begin{split}
	    		\mathcal M_3(Y)
	    		&=-\int_{Y}^{+\infty}(Z-Y)\tilde{\mathcal{H}}(Z)\partial_Z\eta(Z)^{-1}\partial_ZG(Z)dZ-\int_{Y}^{+\infty}\tilde{\mathcal{H}}(Z)\partial_Z\eta(Z)^{-1}G(Z)dZ\\
	    		&\quad+\int_{Y}^{+\infty}(Z-Y)\tilde{\mathcal{H}}(Z)\frac{\partial_Z^2\eta(Z)}{\partial_Z\eta(Z)^2} G(Z)dZ=\mathcal{I}_{3,1}+\mathcal{I}_{3,2}+\mathcal{I}_{3,3}.
	    	\end{split}
	    \end{align}
	    We first notice that by Lemma \ref{lem:A1A2}, for any $Y\geq Y_c+L$, 
	    \begin{align*}
	    |\mathcal{I}_{3,1}(Y)|
	    	\leq&C|\varepsilon|^{-\f23}\int_{Y}^{+\infty}(Z-Y)|\kappa\eta(Z)|^{-\f12}|\partial_Z G(Z)|dZ
	    	\leq C|\varepsilon|^{-\f12}e^{-\theta Y}\|(u- \hat c)^2\partial_Y G\|_{L^\infty}.
	    \end{align*}
	    Again by Lemma \ref{lem:A1A2}, for any $Y\in\mathcal N^+\cap\{Y:|Y-Y_c|\leq L\}$,
	    \begin{align}\label{eq:airy-app-sf-main3-1}
	    	\begin{split}
	    		|\mathcal{I}_{3,1}(Y)|
	    	\leq&|\mathcal{I}_{3,1}(Y_c+L)|+\Big|(Y_c+L-Y)\int_{Y_c+L}^{+\infty}|\kappa\eta(Z)|^\f12|\partial_ZG|dZ\Big| \\
	    	&+\Big|\int_{Y}^{Y_c+L}(Z-Y)\tilde{\mathcal{H}}(Z)\partial_Z\eta(Z)^{-1}\partial_ZG(Z)dZ\Big|\\
	    	\leq&C|\varepsilon|^{-\f12}\|(u- \hat c)^2\partial_Y G\|_{L^\infty}+C|\varepsilon|^{-\f23}\int_{Y}^{Y_c+L}|Z-Y||\kappa\eta(Z)|^{-\f12}|\partial_ZG(Z)|dZ.
	    	\end{split}
	    \end{align}
	    Moreover, for any $Y\in\mathcal N^+\cap\{Y:|Y-Y_c|\leq L\}$,
	    \begin{align*}
	    \int_{Y}^{Y_c+L}&|Z-Y||\kappa\eta(Z)|^{-\f12}|\partial_ZG(Z)|dZ\\
	    \leq&\|(u-\hat c)^2\partial_Y G\|_{L^\infty}\int_Y^{Y_c+L}|Z-Y_c||\kappa\eta(Z)|^{-\f12}|u(Z)-\hat c|^{-2}dZ\\
	    \leq&C|\varepsilon|^{-\f13}\|(u-\hat c)^2\partial_Y G\|_{L^\infty}\int_Y^{Y_c+L}|\kappa\eta(Z)|^{-\f32}dZ\\
	    \leq&C\|(u-\hat c)^2\partial_Y G\|_{L^\infty},
	    \end{align*}
	   which along with \eqref{eq:airy-app-sf-main3-1} implies that for any $Y\in\mathcal N^+$,
	    \begin{align}\label{eq:airy-app-sf-main3-2}
	    	\begin{split}
	    		\Big|\mathcal{I}_{3,1}(Y)\Big|
	    	\leq C|\varepsilon|^{-\f23} e^{-\theta  Y}\|(u-\hat c)^2\partial_Y G\|_{L^\infty}.
	    	\end{split}
	    \end{align}
	     By a similar argument as above, we  can also deduce that for any $Y\in\mathcal N^+$,
        \begin{align}\label{eq:airy-app-sf-main3-5}
        	\begin{split}
        		\Big|\mathcal{I}_{3,2}(Y)\Big|
        		\leq C|\varepsilon|^{-\f23}|\log|\hat c_i||e^{-\theta  Y}\|(u-\hat c) G\|_{L^\infty}.
        	\end{split}
        \end{align}
        About $\mathcal{I}_{3,3}$, we get by Lemma \ref{lem:est-eta} that 
        \begin{align}\label{eq:langer-d2-d1}
        	\left|\frac{\partial_Z^2\eta(Z)}{\partial_Z\eta(Z)^2}\right|\leq C.
        \end{align}
        Hence, again by performing a similar argument in the estimates for $\mathcal I_{3,1}$, we have that for any $Y\in\mathcal N^+$,
        \begin{align*}
        	\Big|\mathcal{I}_{3,3}(Y)\Big|
            \leq C|\varepsilon|^{-\f13}(|\log|\hat c_i||+|c||\varepsilon|^{-\f13})e^{-\theta  Y}\|(u-\hat c) G\|_{L^\infty},
        \end{align*}
        which along with \eqref{eq:airy-app-sf-main3}, \eqref{eq:airy-app-sf-main3-2} and \eqref{eq:airy-app-sf-main3-5} implies that for any $Y\in\mathcal N^+$,
        \begin{align}\label{eq:airy-app-sf-main3-bYL}
        	|\mathcal M_3(Y)|\leq C|\varepsilon|^{-\f23}|\log|\hat c_i||e^{-\theta Y} (\|(u-\hat 
        	c)G\|_{L^\infty}+ \|(u-\hat c)^2\partial_Y G\|_{L^\infty}).
        \end{align}
       For any $Y\in\mathcal N$, we can write that 
	    \begin{align*}
	    	\mathcal M_3(Y)=&\mathcal M_3(Y_+)+(Y_+-Y)\int_{Y_+}^{+\infty}\mathcal H_1(Z)\partial_Z\eta(Z)^{-1}G(Z)dZ\\
	    	&+\int_Y^{Y_+}(Z-Y)\mathcal H_1(Z)\partial_Z\eta(Z)^{-1}G(Z)dZ=\mathcal M_3(Y_+)+\mathcal I_{3,4}^++\mathcal I_{3,5}^+.
	    \end{align*}
	    By the definition of $\mathcal H_1$ and Lemma \ref{lem:A1A2}, we infer that for any $Y\in\mathcal N$,
	    \begin{align*}
	    	|\mathcal I_{3,4}^+(Y)|\leq C|\varepsilon|^{-\f23}\|(u-\hat c)G\|_{L^\infty}\int_{Y_+}^{1}|u(Z)-\hat c|^{-1}dZ\leq C|\varepsilon|^{-\f23}\|(u-\hat c)G\|_{L^\infty}|\log|\hat c_i||,
	    \end{align*}
	    and 
	    \begin{align*}
	    	|\mathcal I_{3,5}^+(Y)|\leq& C|\varepsilon|^{-\f23}\|(u-\hat c)G\|_{L^\infty}\int_Y^{Y_+}|u(Z)-\hat c|^{-1}dZ
	    	\leq C|\varepsilon|^{-\f23}\|(u-\hat c)G\|_{L^\infty}|\log|\hat c_i||.
	    \end{align*}
	    Therefore, we obtain that for any $Y\in\mathcal N$,
	    \begin{align}\label{eq:airy-app-sf-main3-bYc}
        	|\mathcal M_3(Y)|\leq C|\varepsilon|^{-\f23}|\log|\hat c_i|| (\|(u-\hat 
        	c)G\|_{L^\infty}+ \|(u-\hat c)^2\partial_Y G\|_{L^\infty}).
        \end{align}
        For any $Y\in\mathcal N^-$, notice that 
         \begin{align*}
	    	\mathcal M_3(Y)=&\underbrace{(Y_c-Y)\int_{Y}^{+\infty}\mathcal H_1(Z)\partial_Z\eta(Z)^{-1}G(Z)dZ+\int_Y^{Y_-}(Z-Y_c)\mathcal H_1(Z)\partial_Z\eta(Z)^{-1}G(Z)dZ}_{\mathcal I_{3,4}^-}\\
	    	&+\underbrace{(Y_--Y_c)\int_Y^{+\infty}\mathcal H_1\partial_Z\eta^{-1}G(Z)dZ}_{\mathcal I_{3,5}^-}+\mathcal M_3(Y_-).
	    \end{align*}
	    By the definition of $\mathcal H_1$ and Lemma \ref{lem:A1A2}, we infer that for any $Y\in\mathcal N^-$
	    \begin{align*}
	    	|\mathcal I_{3,4}^-(Y)|\leq& C|Y-Y_c||\varepsilon|^{-1}\|(u-\hat c)G\|_{L^\infty}\int_Y^{Y_-}|u(Z)-\hat c|^{-1}dZ\\
	    	&+C|\varepsilon|^{-1}\|(u-\hat c)G\|_{L^\infty}\int_Y^{Y_-}1dZ \\
	    	\leq&C|Y-Y_c||\varepsilon|^{-1}\|(u-\hat c)G\|_{L^\infty}|\log\hat c_i|,
	    \end{align*} 
	    and 
	    \begin{align*}
	    	|\mathcal I_{3,5}^-(Y)|\leq& C|\varepsilon|^{-\f23}\|(u-\hat c)G\|_{L^\infty}\int_{Y_-}^{1}|u(Z)-\hat c|^{-1}dZ\\
	    	\leq& C|\varepsilon|^{-\f23}|\log|\hat c_i||\|(u-\hat c)G\|_{L^\infty}.
	    \end{align*}
	  	 Therefore, we conclude that for any $Y\in\mathcal N^-$,
	     \begin{align}\label{eq:airy-app-sf-main3-bYS}
        \begin{split}
        	|\mathcal M_3(Y)|
        	&\leq C|\varepsilon|^{-1}|Y-Y_c||\log|\hat c_i||\|(u-\hat 
        	c)G\|_{L^\infty}+ C|\varepsilon|^{-\f23}|\log|\hat c_i||\|(u-\hat c)^2\partial_Y G\|_{L^\infty}.
        \end{split}
        \end{align}
        By \eqref{eq:airy-app-sf-main3-bYL}, \eqref{eq:airy-app-sf-main3-bYc} and \eqref{eq:airy-app-sf-main3-bYS}, we infer that for any $Y\geq 0$,
	  \begin{align}\label{eq:airy-app-sf-main3-b}
        \begin{split}
        		|e^{\theta Y}\mathcal M_3(Y)|\leq& C|\varepsilon|^{-1}|\log\hat c_i|(|\varepsilon|^\f13+(Y-Y_c)\mathbf 1_{\mathcal N^-})\|(u-\hat 
        	c)G\|_{L^\infty}\\
        	&+C|\varepsilon|^{-\f23}|\log \hat c_i| \|(u-\hat c)^2\partial_Y G\|_{L^\infty}.
        \end{split}
        \end{align}
        
        \no\underline{Estimates about $\mathcal M_4(Y)$.}  We can obtain the following bounds for $\mathcal M_4(Y)$ by a similar argument as above
        \begin{align}\label{eq:airy-app-sf-main4-b}
        	|e^{\theta Y}\mathcal M_4(Y)|\leq C|\varepsilon|^{-\f23}|\log|\hat c_i||\|(u-\hat 
        	c)G\|_{L^\infty}.
        \end{align}
        
        \no\underline{Estimates about $\mathcal E^{(0)}_j(Y)$ for $j=1,2,3$.} By Lemma \ref{lem:A1A2} and \eqref{eq:langer-d2-d1}, we obtain that for any $Y\geq 0$,
        \begin{align}\label{eq:airy-app-sf-E-b}
        	\begin{split}
        		|e^{\theta Y}\mathcal E^{(0)}_{1}(Y)|\leq& C|\varepsilon|^{-\f13}|\log|\hat c_i||\|(u-\hat c)G\|_{L^\infty}+C|\hat c_i|^{-1}\|(u-\hat c)^2\partial_Y G\|_{L^\infty},\\
        		|e^{\theta Y}\mathcal E^{(0)}_{2}(Y)|\leq&C|\varepsilon|^{-\f13}(|\log|\hat c_i||+|c||\varepsilon|^{-\f13})(\|(u-\hat 
        	c)G\|_{L^\infty}+ \|(u-\hat c)^2\partial_Y G\|_{L^\infty}),\\
        	   |e^{\theta Y}\mathcal E^{(0)}_{3}(Y)|\leq&C|\varepsilon|^{-\f23}|\log|\hat c_i||\|(u-\hat 
        	c)G\|_{L^\infty}.
        	\end{split}
        \end{align}
        Therefore, by \eqref{eq:airy-app-sf-main1-b}, \eqref{eq:airy-app-sf-main2-b}, \eqref{eq:airy-app-sf-main3-b}, \eqref{eq:airy-app-sf-main4-b} and \eqref{eq:airy-app-sf-E-b}, we deduce that for any $Y\geq 0$,
        \begin{align*}
        	|e^{\theta Y}\psi_{app,s}(Y)|\leq& C|\varepsilon|^{-1}|\log\hat c_i|(|\varepsilon|^\f13+(Y-Y_c)\mathbf 1_{\mathcal N^-})\|(u-\hat 
        	c)G\|_{L^\infty}\\
        	&+C|\varepsilon|^{-\f23}|\log \hat c_i| \|(u-\hat c)^2\partial_Y G\|_{L^\infty}.
        \end{align*}      
        \no\textbf{Estimates about $\partial_Y\psi_{app,s}(Y)$.} By a direct calculation, we write 
        \begin{align}\label{eq:airy-app-sf-green1}
		\begin{split}
			\partial_Y\psi_{app,s}(Y)=\sum_{j=1}^3\mathcal M_{j}^{(1)}(Y)+\mathcal E^{(1)}_1(Y)+\mathcal E^{(1)}_2(Y),
		\end{split}
	\end{align}
	where
	\begin{align}
		\begin{split}
		   \mathcal M_1^{(1)}(Y):=&-A_1(1,Y)A_2(0)\partial_Y\eta(0)^{-1} \partial_Y G(0)+A_1(1,Y)\partial_Y A_2(0)\partial_Y\eta(0)^{-1} G(0),\\
		   \mathcal M_2^{(1)}(Y):=&A_1(1,Y)\int_0^Y\partial_Z^2A_2(\partial_Z\eta)^{-1}G(Z) dZ
			+A_2(1,Y)\int_Y^{+\infty}\partial_Z^2A_1(\partial_Z\eta)^{-1}G(Z)dZ,\\
			\mathcal M_3^{(1)}(Y):=&-\int_Y^{+\infty}\mathcal H_1(Z)\partial_Z\eta(Z)^{-1}G(Z)dZ,  \\
			\mathcal E_1^{(1)}(Y):=&A_1(1,Y)\int_0^Y\frac{\partial_Z^2\eta(Z)}{\partial_Z\eta(Z)^2}(A_2(Z)\partial_ZG(Z)-\partial_ZA_2(Z)G(Z))dZ\\
			&+A_2(1,Y)\int_Y^{+\infty}\frac{\partial_Z^2\eta(Z)}{\partial_Z\eta(Z)^2}(A_1(Z)\partial_ZG(Z)-\partial_ZA_1(Z)G(Z))dZ,\\
			\mathcal E_2^{(1)}(Y):=&\int_Y^{+\infty}\frac{\partial_Z^2\eta(Z)}{\partial_Z\eta(Z)^2}\mathcal H_2(Z)dZ.
		\end{split}
	\end{align}
  \underline{Estimates about $\mathcal M^{(1)}_1(Y)$.} According to Lemma \ref{lem:est-eta}, we have for any $Y\geq 0$,
  \begin{align*}
  	|(u(Y)-\hat c)|\lesssim|\varepsilon|^{\f13}|\kappa\eta(Y)|.
  \end{align*}
  Hence, by a similar argument as in the estimates \eqref{eq:airy-app-SF-L1} and \eqref{eq:airy-app-sf-main1-b},
     \begin{align}\label{eq:eq:airy-app-sf-maind1-e}
	    	\begin{split}
	    		|(u-\hat c)\mathcal M_1^{(1)}(Y)|	 	\leq&C|\varepsilon|^{-\f23} e^{-\theta Y}(\|(u-\hat c)G\|_{L^\infty}+\|(u-\hat c)^2\partial_YG\|_{L^\infty}).
	    	\end{split}
	    \end{align}
\no\underline{Estimates about $\mathcal M_2^{(1)}(Y)$.} By Lemma \ref{lem:A1A2}, we notice that for any $Y$ such that $|\kappa\eta(Y)|\geq M+1$,
	    \begin{align}\label{eq:airy-app-sf-main2d}
	    	\begin{split}
	    		|(u-\hat c)\mathcal M_2^{(1)}(Y)|
	    		\leq&C|\varepsilon|^{-1} \int_{\mathcal N_{near}(Y)}\frac{|u(Y)-\hat c||\eta(Z)|^\f34|\partial_Z\eta(Z)|}{|\eta(Y)|^\f34|\partial_Y\eta(Y)|}e^{-W_\varepsilon(Y,Z)}|G(Z)|dZ\\
	    		&+C|\varepsilon|^{-1} \int_{\mathcal N_{far}(Y)}\frac{|u(Y)-\hat c||\eta(Z)|^\f34|\partial_Z\eta(Z)|}{|\eta(Y)|^\f34|\partial_Y\eta(Y)|}e^{-W_\varepsilon(Y,Z)}|G(Z)|dZ\\
	    		&+C|\varepsilon|^{-1}\int_{Z\in \mathcal N}|u(Y)-\hat c||\kappa\eta(Y)|^{-\f34}|\partial_Y\eta(Y)|^{-1}e^{-W_\varepsilon(Y,Z)}|G(Z)|dZ.	    	
	    \end{split}
	    \end{align}
	    By Lemma \ref{lem:est-eta}, we can obtain that $Y\in\{Y:|\kappa\eta(Y)|\geq M+1\},$
	    \begin{align*}
	    	\frac{|\eta(Z)|^\f34|\partial_Z\eta(Z)|}{|\eta(Y)|^\f34|\partial_Y\eta(Y)|}\lesssim\left\{
	    	\begin{aligned}
	    		&1,\quad Z\in\mathcal N_{near}(Y),\\
	    		&|\kappa\eta(Z)|^{\f34},\quad Z\in\mathcal N_{far}(Y),
	    	\end{aligned}
	    	\right.
	    \end{align*}
	    and $|\kappa\eta(Y)|^{-\f34}|\partial_Y\eta(Y)|^{-1}\lesssim 1$. Hence, by a similar argument as in the estimates about $\mathcal M_2(Y)$, we can infer that  for any $|\kappa\eta(Y)|\geq M+1$,
	    \begin{align}\label{eq:airy-app-sf-main2d1}
	    	|(u(Y)-\hat c)\mathcal M_2^{(1)}(Y)|\leq C|\varepsilon|^{-\f23}|\log|\hat c_i|| e^{-\theta Y} \|(u-\hat c)G\|_{L^\infty}.
	    \end{align}
    For any $|\kappa\eta(Y)|\leq M+1$, notice that 
	    \begin{align*}
	    		&|(u(Y)-\hat c)\mathcal M_2^{(1)}(Y)|\\
	    		\leq&C|\varepsilon|^{-\f23} \int_{\mathcal N_{near}(Y)}|\kappa\eta(Z)|^{\f34}|\partial_Z\eta(Z)|e^{-W_\varepsilon(Y,Z)}|G(Z)|dZ\\
	    			 &+C|\varepsilon|^{-\f23} \int_{\mathcal N_{far}(Y)}|\kappa\eta(Z)|^{\f34}|\partial_Z\eta(Z)|e^{-W_\varepsilon(Y,Z)}|G(Z)|dZ+C|\varepsilon|^{-\f23}\int_{ \mathcal N}|G(Z)|dZ\\
	    		\leq&C|\varepsilon|^{-\f23}\|(u-\hat c)G\|_{L^\infty}\int_0^{\tilde Y}|u(Z)-\hat c|^{-1}dZ\leq C|\varepsilon|^{-\f23}|\log|\hat c_i||  \|(u-\hat c)G\|_{L^\infty},
	    \end{align*}
	    which along with \eqref{eq:airy-app-sf-main2d1} implies that for any $Y\geq 0$,
	    \begin{align}\label{eq:eq:airy-app-sf-maind2-e}
	    	|(u-\hat c)\mathcal M_2^{(1)}(Y)|\leq C|\varepsilon|^{-\f23}|\log|\hat c_i|| e^{-\theta Y} \|(u-\hat c)G\|_{L^\infty}.
	    \end{align}
	    
	    \no\underline{Estimates about $\mathcal M_3^{(1)}(Y)$.}
	    By the definition of $\mathcal M_3^{(1)}(Y)$, we have that for any $Y\geq 0$,
	    \begin{align*}
	    	\mathcal M_3^{(1)}(Y)=&-\int_Y^{+\infty}\partial_Z\tilde{\mathcal H}(Z)\partial_Z\eta(Z)^{-1}G(Z)dZ\\
	    	=&\tilde{\mathcal H}(Y)\partial_Y\eta(Y)^{-1}G(Y)+\int_Y^{+\infty}\tilde{\mathcal H}(Z)\partial_Z(\partial_Z\eta(Z)^{-1}G(Z))dZ,
	    \end{align*}
	    which implies that 
	    \begin{align}\label{eq:airy-app-sf-main3d}
	    	\begin{split}
	    		&|(u-\hat c)\mathcal M_3^{(1)}(Y)|\\
	    		&\leq |\tilde{\mathcal H}(Y)\partial_Y\eta(Y)^{-1}(u-\hat c)G(Y)|+\int_{Y}^{+\infty}|u(Y)-\hat c||\tilde{\mathcal H}(Z)||\partial_Z\eta(Z)^{-1}\partial_ZG(Z)|dZ\\
	    		&\quad+\int_{Y}^{+\infty}|u(Y)-\hat c|\tilde{\mathcal H}(Z)|\frac{\partial_Z^2\eta(Z)}{\partial_Z\eta(Z)^2} G(Z)|dZ.
	    	\end{split}
	    \end{align}
	    By Lemma \ref{lem:A1A2}, we can obtain that for any $Y\geq 0$,
	    \begin{align*}
	    	|\tilde{\mathcal H}(Y)\partial_Y\eta(Y)^{-1}(u-\hat c)G(Y)|\leq C|\varepsilon|^{-\f23}e^{-\theta Y}\|(u-\hat c)G\|_{L^\infty}.
	    \end{align*}
	    Moreover, by a similar arguments as in the estimates about $\mathcal M_3$, we can deduce that for $Y\in\mathcal N\cup\mathcal N^+$,
	     \begin{align}
	    	|(u-\hat c)\mathcal M_3^{(1)}(Y)|\leq C|\varepsilon|^{-\f23}|\log|\hat c_i| e^{-\theta Y}(\|(u-\hat c)G\|_{L^\infty}+ \|(u-\hat c)^2\partial_Y G\|_{L^\infty}).
	    \end{align}
	   
	    For any $Y\in\mathcal N^-$, we notice that 
	     \begin{align*}
	     	|(u-\hat c)\mathcal M_3^{(1)}(Y)|\leq&C|\varepsilon|^{-\f23}|\log|\hat c_i||e^{-\theta Y}(\|(u-\hat c)G\|_{L^\infty}+ \|(u-\hat c)^2\partial_Y G\|_{L^\infty})\\
	    	&+\int_Y^{Y_-}|u(Y)-\hat c||\mathcal H_1(Z)|G(Z)|dZ,
	     \end{align*}
	     which along with the fact that 
	     \begin{align*}
	     	\int_Y^{Y_-}|u(Y)-\hat c||\mathcal H_1(Z)|G(Z)|dZ\leq& C|\varepsilon|^{-1}|\hat c|\|(u-\hat c)G\|_{L^\infty}\int_Y^{Y^-}|u(Z)-\hat c|^{-1}dZ\\
	     	\leq&C|\varepsilon|^{-1}|\hat c|\log|\hat c_i|| \|(u-\hat c) G\|_{L^\infty},
	     \end{align*}
	     implies that for any $Y\in\mathcal N^-$,
	     \begin{align*}
	     	|(u-\hat c)\mathcal M_3^{(1)}(Y)|\leq&  C|\varepsilon|^{-1}|\hat c||\log|\hat c_i|| \|(u-\hat 
        	c)G\|_{L^\infty}+ C|\varepsilon|^{-\f23}|\log|\hat c_i||\|(u-\hat c)^2\partial_Y G\|_{L^\infty}.
	     \end{align*}
	     Therefore,  we obtain that for any $Y\geq 0$,
	     \begin{align}\label{eq:eq:airy-app-sf-maind3-e}
	     	  \begin{split}
	     	  		&|(u-\hat c)e^{-\theta Y}\mathcal M_3^{(1)}(Y)|\\
	     	  		&\lesssim   |\varepsilon|^{-1}|\hat c||\log|\hat c_i|| \|(u-\hat 
        	c)G\|_{L^\infty}+ |\varepsilon|^{-\f23}|\log|\hat c_i||\|(u-\hat c)^2\partial_Y G\|_{L^\infty}.
	     	  \end{split}
	     \end{align}
	     By \eqref{eq:langer-d2-d1} and similar calculations as above, we can obtain that for any $Y\geq 0$
	     \begin{align}\label{eq:airy-app-sf-erd-e}
	     	\begin{split}
	     		\sum_{j=1}^2|(u-\hat c)\mathcal E^{(1)}_j(Y)|\leq C|\varepsilon|^{-\f13}(|\log|\hat c_i||+|c||\varepsilon|^{-\f13})(\|(u-\hat 
        	c)G\|_{L^\infty}+ \|(u-\hat c)^2\partial_Y G\|_{L^\infty}).
	     	\end{split}
	     \end{align}
	     Therefore, by \eqref{eq:eq:airy-app-sf-maind1-e}, \eqref{eq:eq:airy-app-sf-maind2-e}, \eqref{eq:eq:airy-app-sf-maind3-e} and \eqref{eq:airy-app-sf-erd-e}, we conclude that for any $Y\geq 0$,
	      \begin{align*}
        	|(u-\hat c)e^{-\theta Y}\partial_Y\psi_{app,s}(Y)|
        	\leq& C|\varepsilon|^{-1}|\hat c||\log|\hat c_i|| \|(u-\hat 
        	c)G\|_{L^\infty}+ C|\varepsilon|^{-\f23}|\log|\hat c_i||\|(u-\hat c)^2\partial_Y G\|_{L^\infty}.
        \end{align*}
        
	     \no\textbf{Estimates about $\partial_Y^2\psi_{app,s}(Y)$.} By integration by parts, we have that for any $Y\in\{Y:|\kappa\eta(Y)|\geq M+1,Y\geq Y_c\}$,
	     	     \begin{align}\label{eq:airy-app-sf-dd-e}
	     	\partial_Y^2\psi_{app,s}(Y)=&\mathcal M_{1,1}^{(2)}(Y)+\mathcal M_{2,1}^{(2)}(Y)+\mathcal E_1^{(2)}(Y),
	     \end{align}
	     where 
	     \begin{align*}
	     	\mathcal M_{1,1}^{(2)}(Y)=&-A_1(Y)A_2(0)\partial_Y\eta(0)^{-1}\partial_YG(0)+A_1(Y)\partial_YA_2(Y_-)\partial_Y\eta(Y_-)^{-1}G(Y_-)\\
	     	&-A_1(Y)\partial_YA_2(Y_+)\partial_Y\eta(Y_+)^{-1}G(Y_+),\\
	     	\mathcal M_{2,1}^{(2)}(Y)=&A_1(Y)\int_{Z\in\mathcal N}\partial_Z^2A_2(Z)\partial_Z\eta(Z)^{-1}G(Z)dZ\\
	     	&-A_1(Y)\left(\int_0^{Y_-}+\int_{Y_+}^{Y}\right)\partial_ZA_2(Z)\partial_Z\eta(Z)^{-1}\partial_ZG(Z)dZ\\
	     	&-A_2(Y)\int_Y^{+\infty}\partial_ZA_1(Z)\partial_Z\eta(Z)^{-1}\partial_ZG(Z)dZ,\\
	     	\mathcal E_1^{(2)}(Y)=&A_1(Y)\int_0^YA_2(Z)\frac{\partial_Z^2\eta(Z)}{\partial_Z\eta(Z)^2} \partial_ZG(Z)dZ-A_1(Y)\int_{\mathcal N}\partial_ZA_2(Z)\frac{\partial_Z^2\eta(Z)}{\partial_Z\eta(Z)^2} G(Z)dZ\\
	  &+A_2(Y)\int_Y^{+\infty}A_1(Z)\frac{\partial_Z^2\eta(Z)}{\partial_Z\eta(Z)^2} \partial_ZG(Z)dZ.
	     \end{align*}
	      \no\underline{Estimates about $\mathcal M_{1,1}^{(2)}(Y)$.} By Lemma \ref{lem:A1A2} and  a similar argument as in the calculation of $\mathcal M_1(Y)$, we can obtain that for any $Y\in\{Y:|\kappa\eta(Y)|\geq M+1,Y\geq Y_c\}$,
	   	    \begin{align}\label{eq:airy-app-sf-maindd1-e}
	    	\begin{split}
	    	|(u-\hat c)^2\mathcal M_{1,1}^{(2)}(Y)|\leq& C|\varepsilon|^{-\f23} e^{-\theta Y}(\|(u-\hat c)^2\partial_Y G\|_{L^\infty}+\|(u-\hat c) G\|_{L^\infty})
	  	    	\end{split}
	    \end{align}
	    
	    \no\underline{Estimates about $\mathcal M_{2,1}^{(2)}(Y)$.}
	    For any $Y\in\{Y:|\kappa\eta(Y)|\geq M+1,Y\geq Y_c\}$, we have 
	    \begin{align}\label{eq:airy-app-sf-main2dd}
	    	\begin{split}
	    		&|(u-\hat c)^2\mathcal M_{2,1}^{(2)}(Y)|\\
	    		&\leq C|\varepsilon|^{-1} \int_{\mathcal N_{near}(Y)}|u(Y)-\hat c|^2 |\kappa\eta(Y)|^{-\f14}|\kappa\eta(Z)|^{\f14}e^{-W_\varepsilon(Y,Z)}|\partial_Z G(Z)|dZ\\
	    		&\quad+C|\varepsilon|^{-1} \int_{\mathcal N_{far}(Y)}|u(Y)-\hat c|^2|\kappa\eta(Y)|^{-\f14}||\kappa\eta(Z)|^{\f14}e^{-W_\varepsilon(Y,Z)}|\partial_Z G(Z)|dZ\\
	    		&\quad+C|\varepsilon|^{-\f43}\int_{Z\in \mathcal N}|u(Y)-\hat c|^2|\kappa\eta(Y)|^{-\f14}|
	    		 e^{-W_\varepsilon(Y,Z)}| G(Z)|dZ.	 
	    	\end{split}
	    \end{align}
	    By Lemma \ref{lem:est-eta}, we have that for $Y\in\{Y:|\kappa\eta(Y)|\geq M+1,Y\geq Y_c\}$,
	    \begin{align}\label{eq:airy-langer-N+}
	    	|\kappa\eta(Y)|^{-\f14}|\kappa\eta(Z)|^{\f14}\lesssim\left\{
	    	\begin{aligned}
	    		&1,\quad Z\in\mathcal N_{near}(Y),\\
	    		&|\kappa\eta(Z)|^\f14,\quad Z\in\mathcal N_{far}(Y).
	    	\end{aligned}
	    	\right.
	    \end{align}
	    Therefore, by a similar calculation of $\mathcal M_2(Y)$, we have that for $Y\in\{Y:|\kappa\eta(Y)|\geq M+1,Y\geq Y_c\}$,
	    	    \begin{align}\label{eq:airy-app-sf-main2dd-1}
	    	\begin{split}
	    		&\int_{\mathcal N_{near}(Y)}|u(Y)-\hat c|^2 |\kappa\eta(Y)|^{-\f14}|\kappa\eta(Z)|^{\f14}e^{-W_\varepsilon(Y,Z)}|\partial_Z G(Z)|dZ\\
	    	&\quad\leq C|\varepsilon|^\f13e^{-\theta Y}\|(u-\hat c)^2\partial_Y G\|_{L^\infty},
	    	\end{split}
	    \end{align}
	    and 
	   	        \begin{align}\label{eq:airy-app-sf-main2dd-2}
	    	\begin{split}
	    		&\int_{\mathcal N_{far}(Y)}|u(Y)-\hat c|^2|\kappa\eta(Y)|^{-\f14}||\kappa\eta(Z)|^{\f14}e^{-W_\varepsilon(Y,Z)}|\partial_Z G(Z)|dZ\\
	    	&\quad\leq C|\varepsilon|^\f13e^{-\theta Y}\|(u-\hat c)^2\partial_Y G\|_{L^\infty}.
	    	\end{split}
	    \end{align}
	    For the last term on the right hand side of \eqref{eq:airy-app-sf-main2dd}, we notice that if $Y-Y_c\geq L$,
	    \begin{align*}
	    	&\int_{Z\in \mathcal N}|u(Y)-\hat c|^2|\kappa\eta(Y)|^{-\f14}|
	    		 e^{-W_\varepsilon(Y,Z)}| G(Z)|dZ\\
	    	&	 \leq Ce^{-C|\kappa\eta(Y)|^\f32}\|(u-\hat c) G\|_{L^\infty}\int_{Z\in\mathcal N}|u(Z)-\hat c|^{-1}dZ\\
	    	&	 \leq Ce^{-C|\varepsilon|^{-\f12}}|\log|\hat c_i| |e^{-\theta Y}\|(u-\hat c) G\|_{L^\infty}\leq C|\varepsilon|^{\f23}e^{-\theta Y}\|(u-\hat c) G\|_{L^\infty},
	    \end{align*}
	    and  for any $Y\in\{Y:|\kappa\eta(Y)|\geq M+1, 0\leq Y-Y_c\leq L\}$,
	    \begin{align*}
	    	\int_{Z\in \mathcal N}|u(Y)-\hat c|^2|\kappa\eta(Y)|^{-\f14}|
	    		 e^{-W_\varepsilon(Y,Z)}| G(Z)|dZ
	    		 \leq& C|\varepsilon|^\f23\int_{Z\in\mathcal N}|\kappa\eta(Y)|^\f74e^{-C|\kappa\eta(Y)|^\f12}| G(Z)|dZ\\
	    		 \leq&C|\varepsilon|^{\f23}|\log|\hat c_i|| e^{-\theta Y}\|(u-\hat c)G\|_{L^\infty},
	    \end{align*}
	    which implies that for any $Y\in\{Y:|\kappa\eta(Y)|\geq M+1,Y\geq Y_c\}$,
	    \begin{align}\label{eq:airy-app-sf-main2dd-3}
	    	\begin{split}
	    		&\int_{Z\in \mathcal N}|u(Y)-\hat c|^2|\kappa\eta(Y)|^{-\f14}|
	    		 e^{-W_\varepsilon(Y,Z)}| G(Z)|dZ\leq C|\varepsilon|^{\f23}|\log|\hat c_i||e^{-\theta Y}\|(u-\hat c) G\|_{L^\infty}.
	    	\end{split}
	    \end{align}
	    Therefore by \eqref{eq:airy-app-sf-main2dd}, \eqref{eq:airy-app-sf-main2dd-1}, \eqref{eq:airy-app-sf-main2dd-2} and \eqref{eq:airy-app-sf-main2dd-3}, we infer that for any $Y\in\{Y:|\kappa\eta(Y)|\geq M+1,Y\geq Y_c\}$,
	    \begin{align}\label{eq:airy-app-sf-maindd2-e}
	    	|(u-\hat c)^2\mathcal M_{2,1}^{(2)}(Y)|\leq C|\varepsilon|^{-\f23}|\log|\hat c_i||e^{-\theta Y}(\|(u-\hat c)G\|_{L^\infty}+\|(u-\hat c)^2\partial_Y G\|_{L^\infty}).  
	    \end{align}
        By a similar argument as in the calculation of $\mathcal E^{(1)}_j$, we can obtain that for any $|\kappa\eta(Y)|\geq M+1$,
        \begin{align*}
        	|(u-\hat c)\mathcal E^{(2)}_1(Y)|\leq C|\varepsilon|^{-\f13}(1+|\varepsilon|^\f13|\hat c_i|^{-1})e^{-\theta Y}(\|(u-\hat c)G\|_{L^\infty}+\|(u-\hat c)^2\partial_Y G\|_{L^\infty}),
        \end{align*}
	    which along with \eqref{eq:airy-app-sf-dd-e}, \eqref{eq:airy-app-sf-maindd1-e} and \eqref{eq:airy-app-sf-maindd2-e} deduces that for any $Y\in\{Y:|\kappa\eta(Y)|\geq M+1,Y\geq Y_c\}$,
	    \begin{align*}
	    	|(u-\hat c)^2\partial_Y^2\psi_{app,s}(Y)|\leq C|\varepsilon|^{-\f23}|\log|\hat c_i||e^{-\theta  Y}(\|(u-\hat c)G\|_{L^\infty}+\|(u-\hat c)^2\partial_Y G\|_{L^\infty}). 
	    \end{align*}
	    By a similar argument, we  can also infer that for any $Y\in\{Y:|\kappa\eta(Y)|\geq M+1, Y\leq Y_c\}$,
	     \begin{align*}
	    	|(u-\hat c)^2\partial_Y^2\psi_{app,s}(Y)|\leq C|\varepsilon|^{-\f23}|\log|\hat c_i||e^{-\theta Y}(\|(u-\hat c)G\|_{L^\infty}+\|(u-\hat c)^2\partial_Y G\|_{L^\infty}). 
	    \end{align*}
	    To show the estimates about $(u-\hat c)^2\partial_Y^2\psi_{app,s}(Y)$ for the case of $Y\in\{Y:|\kappa\eta(Y)|\leq M+1\}$, we notice that 
	    \begin{align}\label{eq:airy-app-sf-maindd1-se}
		\begin{split}
			\partial_Y^2\psi_{app,s}(Y)=&\mathcal M_{1,2}^{(2)}(Y)+\mathcal M_{2,2}^{(2)}(Y) +\mathcal E^{(2)}(Y),
		\end{split}
	\end{align}
	with 
	\begin{align*}
		\mathcal M_{1,2}^{(2)}(Y)=&-A_1(Y)A_2(0)\partial_Y\eta(0)^{-1} \partial_Y G(0)+A_1(Y)\partial_Y A_2(0)\partial_Y\eta(0)^{-1} G(0),   \\
		\mathcal M_{2,2}^{(2)}(Y)=&A_1(Y)\int_0^Y\partial_Z^2A_2\partial_Z\eta(Z)^{-1}G(Z) dZ+A_2(Y)\int_Y^{+\infty}\partial_Z^2A_1\partial_Z\eta(Z)^{-1}G(Z)dZ, \\ 
			\mathcal E^{(2)}(Y)=&A_1(Y)\int_0^Y\frac{\partial_Z^2\eta(Z)}{\partial_Z\eta(Z)^2}(A_2(Z)\partial_ZG(Z)-\partial_ZA_2(Z)G(Z))dZ\\
			&+A_2(Y)\int_Y^{+\infty}\frac{\partial_Z^2\eta(Z)}{\partial_Z\eta(Z)^2}(A_1(Z)\partial_ZG(Z)-\partial_ZA_1(Z)G(Z))dZ.
	\end{align*}
Again by Lemma \ref{lem:A1A2} and a similar argument of the calculation of $\mathcal M_1(Y)$, we can obtain that for any $Y\in\{Y:|\kappa\eta(Y)|\leq M+1\}$,
       \begin{align}\label{eq:airy-app-sf-maindd-s}
       	\begin{split}
       		|(u-\hat c)^2\mathcal M_{1,2}^{(2)}(Y)|\leq& C(|\varepsilon|^{-\f13}|\hat c|^{-1}+|\hat c|^{-2})\|(u-\hat c)^2\partial_Y G\|_{L^\infty}\\
	    	&+C(|\varepsilon|^{-\f23}+|\varepsilon|^{-\f13}|\hat c|^{-1})\|(u-\hat c) G\|_{L^\infty}\\
	    	\leq&C|\varepsilon|^{-\f23}(\|(u-\hat c) G\|_{L^\infty}+\|(u-\hat c)^2\partial_Y G\|_{L^\infty}.
       	\end{split}
       \end{align}
       
      \no\underline{Estimates about $\mathcal M^{(2)}_{2,2}(Y)$ for $Y\in\{Y:|\kappa\eta(Y)|\leq M+1\}$. }
       By Lemma \ref{lem:A1A2}, we have that for any $Y\in\{Y:|\kappa\eta(Y)|\leq M+1\}$,
      \begin{align*}
      	|(u-\hat c)^2\mathcal M_{2,2}^{(2)}(Y)|\leq&C|\varepsilon|^{-\f43}\int_{Z\in\{|\kappa\eta(Z)|\geq M+2\}}|u(Y)-\hat c|^2|\kappa\eta(Z)|^\f34|\partial_Z\eta(Z)|e^{-W_\varepsilon(Y,Z)}|G(Z)|dZ\\
       		&+C|\varepsilon|^{-\f43}\int_{Z\in\{|\kappa\eta(Z)|\leq M+2\}}|u(Y)-\hat c|^2e^{-W_\varepsilon(Y,Z)}|G(Z)|dZ=\mathcal{II}_1+\mathcal{II}_2.
      \end{align*}
       About $\mathcal{II}_1$, we notice that for any $Y\in\{Y:|\kappa\eta(Y)|\leq M+1\}$,
       \begin{align*}
       	|\mathcal{II}_1(Y)|\leq& C|\varepsilon|^{-1}\|(u-\hat c)G\|_{L^\infty}\int_{Z\in\{|\kappa\eta(Z)|\geq M+2\}}|\kappa\eta(Z)|^\f34e^{-C|\kappa\eta(Z)|^\f12}dZ\\
       		\leq& C|\varepsilon|^{-\f23}\|(u-\hat c)G\|_{L^\infty}\leq C|\varepsilon|^{-\f23}\|(u-\hat c)G\|_{L^\infty}.
       \end{align*}
       About $\mathcal{II}_2$, we notice that for any $Y\in\{Y:|\kappa\eta(Y)|\leq M+1\}$,
       \begin{align*}
       	|\mathcal{II}_2(Y)|\leq& C|\varepsilon|^{-\f23} \|(u-\hat c)G\|_{L^\infty}\int_{Z\in\{|\kappa\eta(Z)|\leq M+2\}}|u(Z)-\hat c|^{-1}dZ\\
       		\leq& C|\varepsilon|^{-\f23}|\log|\hat c_i||\|(u-\hat c)G\|_{L^\infty}.
       \end{align*}
       Therefore, we obtain that for any $Y\in\{Y:|\kappa\eta(Y)|\leq M+1\}$,
       \begin{align}\label{eq:airy-app-sf-maindd2-se}
       	|(u-\hat c)^2\mathcal M_{2,2}^{(2)}(Y)|\leq C|\varepsilon|^{-\f23}|\log|\hat c_i||\|(u-\hat c)G\|_{L^\infty}.
       \end{align}
       Again by a similar argument as in the calculation of $\mathcal E^{(1)}_j$, we can obtain that for any $Y\in\{Y:|\kappa\eta(Y)|\leq M+1\}$,
       \begin{align*}
       	\mathcal E^{(2)}_2(Y) \leq C|\varepsilon|^{-\f13}(|\log|\hat c_i||+|c||\varepsilon|^{-\f13})(\|(u-\hat 
        	c)G\|_{L^\infty}+ \|(u-\hat c)^2\partial_Y G\|_{L^\infty}),
       \end{align*}
       which along with\eqref{eq:airy-app-sf-maindd1-se}, \eqref{eq:airy-app-sf-maindd-s} and \eqref{eq:airy-app-sf-maindd2-se} deduces that for any $Y\in\{Y:|\kappa\eta(Y)|\leq M+1\}$,
       \begin{align*}
       	|(u-\hat c)\partial_Y^2\psi_{app,s}(Y)|\leq& C|\varepsilon|^{-\f23}|\log|\hat c_i||(\|(u-\hat c)G\|_{L^\infty}+\|(u-\hat c)^2\partial_Y G\|_{L^\infty}).   
	    	\end{align*}
       Therefore,  we obtain that for any $Y\geq 0$,
          \begin{align*}
       	|(u-\hat c)^2\partial_Y^2\psi_{app,s}(Y)|\leq& C|\varepsilon|^{-\f23}|\log|\hat c_i||e^{-\theta Y}(\|(u-\hat c)G\|_{L^\infty}+\|(u-\hat c)^2\partial_Y G\|_{L^\infty}).    
	    	\end{align*}
		
	    \no\textbf{Estimates about $\partial_Yw_{app,s}(Y)$.} By the definition of $w_{app,s}$, we have that for any $Y\geq 0$,
	     \begin{align*}
	    	\partial_Y w_{app,s}(Y)=\mathcal M_1^{(3)}(Y)+\mathcal M_2^{(3)}(Y)+\mathcal E^{(3)}(Y),
	    \end{align*}
	    where 
	    \begin{align*}
	    	\mathcal M^{(3)}_1(Y)=&-\varepsilon^{-1}\partial_Y G(Y)-\partial_YA_1(Y)A_2(0)\partial_Y\eta(0)^{-1}\partial_YG(0),\\
	    	\mathcal M^{(3)}_2(Y)=&-\partial_Y A_1(Y)\int_0^Y\partial_ZA_2\partial_Z\eta^{-1}\partial_Z GdZ-\partial_Y A_2(Y)\int_0^Y\partial_ZA_1\partial_Z\eta^{-1}\partial_Z GdZ,\\
	    	\mathcal E^{(3)}(Y)=&\partial_Y A_1(Y)\int_0^YA_2(Z)\frac{\partial_Z^2\eta}{(\partial_Z\eta)^2}\partial_Z GdZ+\partial_Y A_2(Y)\int_0^Y A_1(Z)\frac{\partial_Z^2\eta}{(\partial_Z\eta)^2}\partial_Z GdZ.
	    \end{align*}

       Firstly, we notice that for any $Y\geq 0$,
       \begin{align*}
       	|\varepsilon|^{-1}|(u-\hat c)\partial_Y G(Y)|\leq Ce^{-\theta Y}|\varepsilon|^{-1}|\hat c_i|^{-1}\|(u-\hat c)^2\partial_YG\|_{L^\infty},
       \end{align*}
       which along with a similar argument as in $\mathcal M^{(1)}_1(Y)$ implies that for any $Y\geq 0$,
       \begin{align}\label{eq:airy-app-sf-mainddd-e}
       	|(u-\hat c)\mathcal M_1^{(3)}(Y)|\leq C(|\varepsilon|^{-1}|\hat c_i|^{-1}+|\varepsilon|^{-\f23}|\hat c|^{-2})e^{-\theta Y}\|(u-\hat c)^2\partial_YG\|_{L^\infty}.
       \end{align}
       Again by Lemma \ref{lem:A1A2} and a similar calculation of $\mathcal M_2(Y)$, we can obtain that for any $Y\geq 0$,
	    \begin{align}\label{eq:airy-app-sf-mainddd-e1}
	    	|(u-\hat c)\mathcal M_2^{(3)}(Y)	|\leq&C(|\varepsilon|^{-\f43}+|\varepsilon|^{-1}|\hat c_i|^{-1})e^{-\theta Y}\| (u-\hat c)^2\partial_Y G\|_{L^\infty}.
	    \end{align}
	    By a similar argument as above, we can obtain that for any $Y\geq 0$,
	    \begin{align*}
	    	|(u-\hat c)\mathcal E^{(3)}(Y)|\leq&C(|\varepsilon|^{-1}+|\varepsilon|^{-\f23}|\hat c_i|^{-1})e^{-\theta Y}\|(u-\hat c)^2\partial_Y G\|_{L^\infty},
	    \end{align*}
	    which along with \eqref{eq:airy-app-sf-mainddd-e} and \eqref{eq:airy-app-sf-mainddd-e1} deduces that for any $Y\geq 0$,
	    \begin{align*}
	    	|(u-\hat c)\partial_Yw_{app,s}(Y)|\leq C(|\varepsilon|^{-1}|\hat c_i|^{-1}+|\varepsilon|^{-\f43})e^{-\theta Y}\|(u-\hat c)^2\partial_YG\|_{L^\infty}.
	    \end{align*}
	    
	    The proof is completed.
	   \end{proof}
	   
	\begin{Corollary}\label{cor:airy-app-sf-err}
		Under the same assumptions in Proposition \ref{prop:airy-green-Y}, $\psi_{app,s}(Y)$ defined  by \eqref{eq:psi-app-01} satisfies that for any $Y\geq 0$,
		\begin{align*}
			&|e^{\theta Y}(u-\hat c)Err_1(Y)w_{app,s}(Y)|\leq C|\varepsilon|^{-\f13}|\log|\hat c_i||(\|(u-\hat c)^2\partial_Y G\|_{L^\infty}+\|(u-\hat c) G\|_{L^\infty}),  \\
			&|e^{\theta Y}(u-\hat c)Err_2(Y)\partial_Y w_{app,s}(Y)|\leq C(|\hat c_i|^{-1}+|\varepsilon|^{-\f13})\|(u-\hat c)^2\partial_YG\|_{L^\infty}.
		\end{align*}
	\end{Corollary}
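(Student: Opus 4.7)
The plan is to reduce both estimates to the bounds already established in Proposition~\ref{prop:airy-green-Y}, by rewriting
\[
(u-\hat c)Err_1\, w_{app,s}=\frac{Err_1}{u-\hat c}\cdot(u-\hat c)^2 w_{app,s},\qquad (u-\hat c)Err_2\,\partial_Y w_{app,s}=Err_2\cdot(u-\hat c)\partial_Y w_{app,s},
\]
and combining with the pointwise bounds on $Err_1,Err_2$ from Lemma~\ref{lem:err1-err2}. The $Err_2$ contribution is immediate: Lemma~\ref{lem:err1-err2} gives $|Err_2|\leq C|\varepsilon|$, and the last estimate of Proposition~\ref{prop:airy-green-Y} gives $|e^{\theta Y}(u-\hat c)\partial_Y w_{app,s}|\leq C(|\varepsilon|^{-1}|\hat c_i|^{-1}+|\varepsilon|^{-\f43})|\log|\hat c_i||\,\|(u-\hat c)^2\partial_YG\|_{L^\infty}$; multiplying yields the claim.

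For the $Err_1$ contribution, the heart of the argument is the uniform pointwise inequality
\[
\frac{|Err_1(Y)|}{|u(Y)-\hat c|}\leq C|\varepsilon|^{\f13},\qquad Y\geq 0,
\]
which, combined with $|e^{\theta Y}(u-\hat c)^2 w_{app,s}|\leq C|\varepsilon|^{-\f23}|\log|\hat c_i||\bigl(\|(u-\hat c)G\|_{L^\infty}+\|(u-\hat c)^2\partial_YG\|_{L^\infty}\bigr)$ from Proposition~\ref{prop:airy-green-Y}, delivers the desired bound $C|\varepsilon|^{-\f13}|\log|\hat c_i||(\cdots)$. I would verify this pointwise inequality region by region according to Lemma~\ref{lem:err1-err2}. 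On the outer region $|Y-Y_c|\geq L$, $|Err_1|\leq C|c_i|$ while $|u-\hat c|\gtrsim 1$, so the ratio is $\leq C|c_i|\leq C|\varepsilon|^{\f13}$. On the intermediate region $2\kappa^{-1}M\leq|Y-Y_c|\leq L$, $|Err_1|\leq C|c_i||\eta_r|$, while Lemma~\ref{lem:est-eta} together with $u(Y)-c_r=u'(Y_c)(Y-Y_c)+O((Y-Y_c)^2)$ gives $|u-\hat c|\geq|u-c_r|\gtrsim|\eta_r|$; the ratio is again $\leq C|c_i|\leq C|\varepsilon|^{\f13}$.

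The delicate case is the inner sublayer $|Y-Y_c|\leq 2\kappa^{-1}M$, where $|Err_1|\leq C|\eta_r|(|c_i|+|\eta_r|)$ and $|\eta_r|\lesssim \kappa^{-1}M\sim|\varepsilon|^{\f13}$. Using only the imaginary-part bound $|u-\hat c|\geq|c_i+c_0|\geq c_0/2\geq|\varepsilon|^{\f23}/2$ (which uses $c_i>-c_0/2$) yields only $|Err_1/(u-\hat c)|\leq C$, which is insufficient. The improvement comes from simultaneously exploiting the real-part bound $|u-\hat c|\geq|u-c_r|\gtrsim|\eta_r|$, still valid throughout the sublayer by Lemma~\ref{lem:est-eta}. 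Taking the maximum of the two lower bounds gives
\[
\frac{|Err_1|}{|u-\hat c|}\leq C(|c_i|+|\eta_r|)\min\!\Bigl(1,\tfrac{|\eta_r|}{c_0}\Bigr)\leq C(|c_i|+|\eta_r|)\leq C|\varepsilon|^{\f13}.
\]
The main obstacle is thus this sublayer estimate: it crucially relies on the quadratic vanishing of $Err_1$ at $Y_c$ cancelling both the first-order vanishing of $u-c_r$ and the $c_0$ imaginary shift built into $\hat c$, so that the ratio stays $\mathcal O(|\varepsilon|^{\f13})$ uniformly in $Y$ and across the range $c_0\in(|\varepsilon|^{\f23},|\varepsilon|^{\f13})$.
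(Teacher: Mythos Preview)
Your proposal is correct and follows essentially the same approach as the paper: both reduce to the pointwise inequality $|Err_1(Y)|\leq C|\varepsilon|^{1/3}|u(Y)-\hat c|$ and then invoke the bound on $(u-\hat c)^2 w_{app,s}$ from Proposition~\ref{prop:airy-green-Y}, while for $Err_2$ both simply multiply $|Err_2|\leq C|\varepsilon|$ against the bound on $(u-\hat c)\partial_Y w_{app,s}$. The paper states $|Err_1|\leq C|\varepsilon|^{1/3}|u-c|\leq C|\varepsilon|^{1/3}|u-\hat c|$ as a direct consequence of Lemma~\ref{lem:err1-err2}, whereas you carry out the region-by-region verification explicitly; your sublayer discussion with the $\min(1,|\eta_r|/c_0)$ factor is a bit more elaborate than needed (since you then bound $\min\leq 1$, only the real-part lower bound $|u-\hat c|\gtrsim|\eta_r|$ is actually doing the work), but the argument is sound.
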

\begin{proof}
By Lemma \ref{lem:err1-err2},  we have for any $Y\geq 0$,
\begin{align*}
|Err_1(Y)|\leq C|\varepsilon|^\f13 |u(Y)-c|\leq C|\varepsilon|^\f13 |u-\hat{c}|,
\end{align*}
which implies that for any $Y\geq 0$,
\begin{align*}
|e^{\theta_0 Y}(u-\hat c)Err_1(Y)w_{app,s}(Y)|\leq& C|\varepsilon|^\f13|(u-\hat c)^2w_{app,s}(Y)|\\
\leq& C|\varepsilon|^{-\f13}|\log|\hat c_i||\|(u-\hat c)^2\partial_Y G\|_{L^\infty}+|\log|\hat c_i||\|(u-\hat c) G\|_{L^\infty}.      
\end{align*}
We also notice that for any $Y\geq 0$,
\begin{align*}
|Err_2(Y)|\leq C|\varepsilon|,
\end{align*}
which implies that for any $Y\geq 0$,
\begin{align*}
|e^{\theta_0 Y}(u-\hat c)Err_2(Y)\partial_Y w_{app,s}(Y)|\leq& C|\varepsilon||(u-\hat c)\partial_Yw_{app,s}(Y)|\\
\leq& C(|\hat c_i|^{-1}+|\varepsilon|^{-\f13})\|(u-\hat c)^2\partial_YG\|_{L^\infty}.
\end{align*}

The proof is completed.
\end{proof}

By the construction of $\psi_{app,s}$ in Proposition \ref{prop:airy-green-Y}, we can obtain the following estimates for the boundary value of $\psi_{app,s}$ and $\partial_Y\psi_{app,s}$.
\begin{Corollary}\label{coro:airy-app-sf-boundary}
	Under the same assumptions in Proposition \ref{prop:airy-green-Y}, the boundary value of the  solution $\psi_{app,s}(Y)$ defined  by \eqref{eq:psi-app-01} satisfies 
	\begin{align*}
		&\left|\psi_{app,s}(0)-u'(0)^{-1}c_r\mathcal M_0\right|\leq C|\varepsilon|^{-\f23}|\log|\hat c_i|| (\|(u-\hat 
        	c)G\|_{L^\infty}+\|(u-\hat c)^2\partial_Y G\|_{L^\infty}),\\
        &\left|\partial_Y\psi_{app,s}(0)+\mathcal M_0\right|\leq C|\varepsilon|^{-\f23}|\hat c|^{-1}|\log|\hat c_i|| (\|(u-\hat 
        	c)G\|_{L^\infty}+\|(u-\hat c)^2\partial_Y G\|_{L^\infty}),
	\end{align*}
	where
	\begin{align*}
		\mathcal M_0[G]:=\int_0^{+\infty}\mathcal H_1(Z)\partial_Z\eta(Z)^{-1}G(Z)dZ.
	\end{align*}
	Moreover, we have
	\begin{align*}
		|\mathcal M_0[G]|\leq C|\varepsilon|^{-1}|\log|\hat c_i||(\|(u-\hat 
        	c)G\|_{L^\infty}+\|(u-\hat c)^2\partial_Y G\|_{L^\infty}).
	\end{align*}
\end{Corollary}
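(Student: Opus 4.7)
The plan is to evaluate the decompositions \eqref{eq:psi-app-01} for $\psi_{app,s}$ and \eqref{eq:airy-app-sf-green1} for $\partial_Y\psi_{app,s}$ at $Y=0$ and identify the leading contributions, absorbing all other terms into the error using the pointwise estimates already established in the proof of Proposition \ref{prop:airy-green-Y}.

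First, I would derive the bound on $\mathcal M_0[G]$. The key observation is the identity $\mathcal M_0[G]=-\mathcal M_3^{(1)}(0)$, which is immediate from the definition of $\mathcal M_3^{(1)}$. Invoking the estimate $|(u-\hat c)\mathcal M_3^{(1)}(Y)|\leq C|\varepsilon|^{-1}|\hat c||\log|\hat c_i||\|(u-\hat c)G\|_{L^\infty}+C|\varepsilon|^{-2/3}|\log|\hat c_i||\|(u-\hat c)^2\partial_Y G\|_{L^\infty}$ from the $\mathcal N^-$ case in the proof of Proposition \ref{prop:airy-green-Y}, evaluating at $Y=0$ and dividing by $|u(0)-\hat c|=|\hat c|$, and using $|\hat c|^{-1}|\varepsilon|^{-2/3}\leq|\varepsilon|^{-1}$ (from $|\hat c|\geq c_r\geq C|\varepsilon|^{1/3}$ in $\mathbb H_2$), yields the claimed bound on $|\mathcal M_0[G]|$.

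For the second estimate, evaluating \eqref{eq:airy-app-sf-green1} at $Y=0$ and using $\mathcal M_3^{(1)}(0)=-\mathcal M_0[G]$ gives
\begin{align*}
\partial_Y\psi_{app,s}(0)+\mathcal M_0[G]=\mathcal M_1^{(1)}(0)+\mathcal M_2^{(1)}(0)+\mathcal E_1^{(1)}(0)+\mathcal E_2^{(1)}(0).
\end{align*}
Each term on the right is controlled by the pointwise estimates \eqref{eq:eq:airy-app-sf-maind1-e}, \eqref{eq:eq:airy-app-sf-maind2-e}, \eqref{eq:airy-app-sf-erd-e} from the proof of Proposition \ref{prop:airy-green-Y}, which carry an $(u-\hat c)$ weight; dividing by $|u(0)-\hat c|\approx|\hat c|$ produces the factor $|\hat c|^{-1}$ in the final bound.

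For the first estimate, I evaluate \eqref{eq:psi-app-01} at $Y=0$ and identify
\begin{align*}
\mathcal M_3(0)=\int_0^{+\infty}Z\,\mathcal H_1(Z)\partial_Z\eta(Z)^{-1}G(Z)dZ
\end{align*}
as the term carrying the leading contribution. I decompose $Z=Y_c+(Z-Y_c)$ and use $Y_c=u'(0)^{-1}c_r+O(c_r^2)$ (coming from $u(0)=0$ and $u(Y_c)=c_r$) to extract the principal term $u'(0)^{-1}c_r\mathcal M_0[G]$. The $O(c_r^2)\mathcal M_0[G]$ correction is absorbed into the error bound thanks to $c_r^2|\varepsilon|^{-1}\ll|\varepsilon|^{-2/3}$, which follows from $c_r^2|\varepsilon|^{-1/3}\ll 1$ in the definition of $\mathbb H_2$. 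The remainder $\int_0^{+\infty}(Z-Y_c)\mathcal H_1\partial_Z\eta^{-1}G\,dZ$ can be read off from the $\mathcal N^-$ decomposition of $\mathcal M_3(Y)$ in the proof of Proposition \ref{prop:airy-green-Y} evaluated at $Y=0$: the $(Y_--Y_c)\mathcal M_0[G]$ piece contributes at most $|\varepsilon|^{1/3}\cdot|\varepsilon|^{-1}|\log|\hat c_i||=|\varepsilon|^{-2/3}|\log|\hat c_i||$, $\mathcal M_3(Y_-)$ is bounded by the $\mathcal N$ estimate \eqref{eq:airy-app-sf-main3-bYc}, and the remaining sublayer integral $\int_0^{Y_-}(Z-Y_c)\mathcal H_1\partial_Z\eta^{-1}G\,dZ$ is of order $|\varepsilon|^{-2/3}|\log|\hat c_i||$. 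All remaining pieces $\mathcal M_1(0)$, $\mathcal M_2(0)$, $\mathcal M_4(0)$, and $\mathcal E_j^{(0)}(0)$ are directly controlled by the pointwise bounds \eqref{eq:airy-app-sf-main1-b}, \eqref{eq:airy-app-sf-main2-b}, \eqref{eq:airy-app-sf-main4-b}, \eqref{eq:airy-app-sf-E-b} from Proposition \ref{prop:airy-green-Y}, all of which fit within the claimed $|\varepsilon|^{-2/3}|\log|\hat c_i||$ order (noting $|\hat c_i|^{-1}\leq 2|\varepsilon|^{-2/3}$ from $|\hat c_i|\geq c_0/2\geq|\varepsilon|^{2/3}/2$).

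The main technical obstacle is verifying that the remainder after extracting the leading term $u'(0)^{-1}c_r\mathcal M_0[G]$ from $\mathcal M_3(0)$ truly gains the $|\varepsilon|^{1/3}$ improvement relative to $|\mathcal M_0[G]|$. This amounts to carefully tracking how the factor $(Z-Y_c)$ interacts with the concentration of $\mathcal H_1$ near the critical layer: a change of variable $\tau=\kappa(Z-Y_c)$ inside the sublayer $|Z-Y_c|\lesssim|\varepsilon|^{1/3}$ shows $|Z-Y_c|$ contributes a direct $|\varepsilon|^{1/3}$ factor, while away from $Y_c$ the exponential decay encoded in $W_\varepsilon$ from Lemma \ref{lem:A1A2} renders the integral exponentially small. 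Combining these two regimes consistently with the existing $\mathcal N^-$ decomposition from Proposition \ref{prop:airy-green-Y} is the most delicate bookkeeping step.
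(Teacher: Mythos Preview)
Your proposal is correct and follows essentially the same route as the paper: evaluate the decompositions \eqref{eq:psi-app-01} and \eqref{eq:airy-app-sf-green1} at $Y=0$, identify $\mathcal M_3(0)$ and $\mathcal M_3^{(1)}(0)=-\mathcal M_0[G]$ as the leading pieces, replace $Y_c$ by $u'(0)^{-1}c_r$ up to an $\mathcal O(c_r^2)$ error, and absorb all remaining terms via the pointwise bounds already obtained in Proposition \ref{prop:airy-green-Y}. Your additional justifications (e.g.\ $c_r^2|\varepsilon|^{-1}\ll|\varepsilon|^{-2/3}$ from $\mathbb H_2$, and $|\hat c_i|^{-1}\lesssim|\varepsilon|^{-2/3}$ from $c_0>|\varepsilon|^{2/3}$) make explicit what the paper leaves implicit, but no new idea is required.
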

\begin{proof}
By the definition of $\psi_{app,s}(Y)$, we know that 
\begin{align*}
\psi_{app,s}(0)=\sum_{j=1}^4\mathcal M_j(0)+\sum_{j=1}^3\mathcal E_j^{(0)}(0).
\end{align*}	
According to the calculation in the proof of Proposition \ref{prop:airy-green-Y}, we have 
\begin{align*}
&\left|\psi_{app,s}(0)-\mathcal M_3(0)\right|\leq C|\varepsilon|^{-\f23}|\log|\hat c_i|| (\|(u-\hat 
        	c)G\|_{L^\infty}+\|(u-\hat c)^2\partial_Y G\|_{L^\infty}).
\end{align*}
We also  notice that 
\begin{align*}
	\left|\mathcal M_3(0)-Y_c\mathcal M_0[G]\right|=&\left|\int_0^{+\infty}(Z-Y_c)\mathcal H_1(Z)\partial_Z\eta(Z)^{-1}G(Z)dZ\right|\\
	\leq& C|\varepsilon|^{-\f23}|\log|\hat c_i|| (\|(u-\hat 
        	c)G\|_{L^\infty}+\|(u-\hat c)^2\partial_Y G\|_{L^\infty}),
\end{align*}
provided by a similar argument as in the proof of  Proposition \ref{prop:airy-green-Y}. 
Then we obtain 
\begin{align*}
	&\left|\psi_{app,s}(0)-Y_c\mathcal M_0[G]\right|\leq C|\varepsilon|^{-\f23}|\log|\hat c_i|| (\|(u-\hat c)G\|_{L^\infty}+\|(u-\hat c)^2\partial_Y G\|_{L^\infty}),
\end{align*}
which along with the fact $Y_c=u'(0)^{-1}c_r+\mathcal O(|\hat c|^2)$
implies that 
\begin{align*}
	\left|\psi_{app,s}(0)-u'(0)^{-1}c_r\mathcal M_0[G]\right|\leq C|\varepsilon|^{-\f23}|\log|\hat c_i||(\|(u-\hat c)G\|_{L^\infty}+\|(u-\hat c)^2\partial_Y G\|_{L^\infty}).
\end{align*}
By the definition of $\partial_Y\psi_{app,s}(Y)$, we know that 
\begin{align*}
			\partial_Y\psi_{app,s}(0)=\mathcal M_0[G]+\mathcal M_{1}^{(1)}(0)+\mathcal M_{2}^{(1)}(0)+\mathcal E^{(1)}_1(0)+\mathcal E^{(1)}_2(0),
\end{align*}
which along with the arguments in Proposition \ref{prop:airy-green-Y} deduces that 
\begin{align*}
	&\left|\partial_Y\psi_{app,s}(0)+\mathcal M_0[G]\right|\leq C|\hat c|^{-1}|\varepsilon|^{-\f23}|\log|\hat c_i|| (\|(u-\hat 
        	c)G\|_{L^\infty}+\|(u-\hat c)^2\partial_Y G\|_{L^\infty}).
\end{align*}

Moreover, by a similar calculation as in Proposition \ref{prop:airy-green-Y}, we obtain 
\begin{align*}
	\left|\mathcal M_0[G]\right|\leq  C|\varepsilon|^{-1}|\log|\hat c_i||(\|(u-\hat 
        	c)G\|_{L^\infty}+\|(u-\hat c)^2\partial_Y G\|_{L^\infty}).
\end{align*}

The proof is completed.
\end{proof}

\subsection{Non-homogeneous Airy equation}

In this part, we are aim to construct a solution to the Airy equation \eqref{eq:Airy-eq} with several types of source terms $F$. By a similar argument as in the proof of Proposition 4.7 in \cite{MWWZ}, we can obtain the following result.

\begin{theorem}\label{them:airy-gf-1}
	Let $1\lesssim\theta\leq 3\eta_0$ and $\hat c:=c+\mathrm i c_0$ with $c_0\in(|\varepsilon|^\f23,|\varepsilon|^\f13)$. Suppose $(\alpha,c)\in\mathbb H_2$ with $c_i>-c_0/2$. Then there exists a solution $\psi\in W^{4,\infty}$ to \eqref{eq:Airy-eq} satisfies the following properties:
	\begin{itemize}
		\item  for any $e^{\theta Y}F\in L^{\infty}$, we have 
		\begin{align*}
		&|\varepsilon|\|\partial^2_Y w\|_{L^\infty_\theta}+	|\varepsilon|^{\f23}\|\partial_Y w\|_{L^\infty_\theta}+\|(u-\hat c)w\|_{L^\infty_\theta}\leq C \|F\|_{L^\infty_\theta},\\
		&\|\psi\|_{L^\infty_\theta}+\|\partial_Y\psi\|_{L^\infty_\theta}\leq C|\log|\hat c_i||\|F\|_{L^\infty_\theta}.
		\end{align*}
		
		\item for any  $e^{\theta Y}(u-\hat c)F\in L^{\infty}$, we have
		\begin{align*}
		&\|(u-\hat{c})^2w\|_{L^\infty_\theta}+|\varepsilon|^\f23\|(u-\hat{c})\partial_Yw\|_{L^\infty_\theta}+|\varepsilon|\|(u-\hat{c})\partial_Y^2 w\|_{L^\infty_\theta}\leq C|\log|\hat{c}_i|\|(u-\hat c)F\|_{L^\infty_\theta},\\
		&\|\psi\|_{L^\infty_\theta}+\|(u-\hat{c})\partial_Y\psi\|_{L^\infty_\theta}\leq C|\hat{c}||\varepsilon|^{-\f13}|\log|\hat c_i|| \|(u-\hat c)F\|_{L^\infty_\theta}.
		\end{align*}
	\end{itemize}
\end{theorem}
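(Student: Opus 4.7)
My plan is to construct $\psi$ by a Neumann-type iteration around the approximate solution operator $\psi_{app}[\cdot]$ defined in \eqref{eq:airy-app-green}, absorbing the Langer residuals $Err_1 w + Err_2\partial_Y w$ together with an $\alpha^2$ correction as a small perturbative source. In both parts of the theorem the iteration has the same structure; the difference is only the weighted norm, which is controlled by Proposition \ref{prop:airy-gf1} in the regular-force case and by Proposition \ref{prop:airy-gf2} in the singular-force case.

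First I would set $\psi^{(0)} := \psi_{app}[F]$, $w^{(0)} := \partial_Y^2\psi^{(0)}$. By \eqref{eq:Airy-app-Fs} and the identity $(\partial_Y^2-\alpha^2)\psi^{(0)} = w^{(0)} - \alpha^2\psi^{(0)}$, substituting $\psi^{(0)}$ into the left side of \eqref{eq:Airy-eq} reproduces $F$ plus the residue
\begin{align*}
R^{(0)} := Err_1 w^{(0)} + Err_2\partial_Y w^{(0)} - \alpha^2\bigl[\varepsilon(\partial_Y^2-\alpha^2)\psi^{(0)} - (u-c)\psi^{(0)}\bigr].
\end{align*}
The first two terms are bounded by $C|\varepsilon|^{1/3}\|F\|_{L^\infty_\theta}$ via Proposition \ref{prop:airy-gf1} together with Lemma \ref{lem:err1-err2}, while the $\alpha^2$ term is $O\bigl(\alpha^2|\log c_i|\,\|F\|_{L^\infty_\theta}\bigr)$ after bounding $\|\psi^{(0)}\|_{L^\infty_\theta}$ and $\|w^{(0)}\|_{L^\infty_\theta}$ by Proposition \ref{prop:airy-gf1}. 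The $\mathbb H_2$ assumptions $(\alpha^2+c_r^2)|\varepsilon|^{-1/3}\ll 1$ and $(\alpha+c_r)|\log|c_i||\ll 1$ guarantee $\|R^{(0)}\|_{L^\infty_\theta} \leq \kappa_0 \|F\|_{L^\infty_\theta}$ for some $\kappa_0 \in (0,1)$. Setting $\psi^{(n+1)} := \psi_{app}[R^{(n)}]$ recursively yields a convergent geometric series $\psi := \sum_{n\geq 0}\psi^{(n)}$ whose sum solves \eqref{eq:Airy-eq} and inherits the norm estimates by the triangle inequality. The singular-force version runs identically, replacing $\|\cdot\|_{L^\infty_\theta}$ with $\|(u-\hat c)\,\cdot\,\|_{L^\infty_\theta}$ and invoking Proposition \ref{prop:airy-gf2}; the shift $\hat c = c + \mathrm i c_0$ with $c_0 \in (|\varepsilon|^{2/3}, |\varepsilon|^{1/3})$ and the hypothesis $c_i > -c_0/2$ ensure $|u-\hat c|$ stays bounded below by a positive quantity on all of $\mathbb R_+$, so that multiplication and division by $(u-\hat c)$ remain controlled.

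The main obstacle will be tracking the logarithmic losses $|\log|\hat c_i||$ through the iteration in the singular-force case, since each step produces a factor $|\varepsilon|^{1/3}|\log|\hat c_i||$ from Proposition \ref{prop:airy-gf2} rather than the clean $|\varepsilon|^{1/3}$ of the regular case. I would absorb this by using $|\hat c_i|\geq c_0/2 > |\varepsilon|^{2/3}/2$, whence $|\log|\hat c_i|| \lesssim |\log|\varepsilon||$, which is still beaten by $|\varepsilon|^{1/3}$ for $\varepsilon$ small. A secondary obstacle is verifying that the triple-deck-compatible bounds on $(u-\hat c)w$, $\partial_Y w$, and $\partial_Y^2 w$ all close simultaneously in the iteration; this reduces to checking that Propositions \ref{prop:airy-gf1}--\ref{prop:airy-gf2} give all of these estimates with the same small multiplicative constant, so that the Neumann series converges uniformly in the corresponding product norm and the $W^{4,\infty}$ regularity claim follows at once from the bound on $\partial_Y^2 w$.
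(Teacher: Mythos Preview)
Your proposal is correct and follows essentially the same route as the paper, which defers the proof to Proposition~4.7 of \cite{MWWZ} but has set up precisely the ingredients you use: Propositions~\ref{prop:airy-gf1}--\ref{prop:airy-gf2} supply the approximate inverse together with the $|\varepsilon|^{1/3}$ (resp.\ $|\varepsilon|^{1/3}|\log|\hat c_i||$) smallness of the Langer residuals, and a Neumann iteration then yields the exact solution. The only cosmetic difference is that in the analogous argument of Theorem~\ref{them:airy-sf} the paper splits off the $\alpha^2\psi_{app}$ correction and solves $(\partial_Y^2-\alpha^2)\psi_{s,1}=\alpha^2\psi_{app}$ as a separate Poisson step rather than folding it into the Airy residue as you do, but both variants close under the $\mathbb H_2$ constraints.
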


Now, we present the results for the singular source term corresponding to the sublayer.

\begin{theorem}\label{them:airy-sf}
	Under the same assumptions in Proposition \ref{prop:airy-green-Y}, there is a solution to \eqref{eq:Airy-eq} with the source term $F(Y)=\partial_Y^2G(Y)$ satisfying the following properties
\begin{align*}
&\|\psi_{s}\|_{L^\infty_\theta}+\|(u-\hat{c})\partial_Y\psi_{s}\|_{L^\infty_\theta}\leq C|\varepsilon|^{-\f23}|\log|\hat{c}_i||(|\varepsilon|^{-\f13}|\hat c|\|(u-\hat c)G\|_{L^\infty}+\|(u-\hat c)^2\partial_Y G\|_{L^\infty}),  \\
&|\varepsilon|^{\f23}\|(u-\hat c)\partial_Yw_{s}\|_{L^\infty_\theta}\leq  C(|\varepsilon|^{-\f13}|\hat c_i|^{-1}+|\varepsilon|^{-\f23})|\log|\hat{c}_i||(\|(u-\hat c)^2\partial_Y G\|_{L^\infty}+\|(u-\hat c) G\|_{L^\infty}),\\
&\|(u-\hat c)^2w_{s}\|_{L^\infty_\theta}\leq C|\varepsilon|^{-\f23}|\log|\hat c_i||(\|(u-\hat c)^2\partial_Y G\|_{L^\infty}+C\|(u-\hat c) G\|_{L^\infty}),
\end{align*}
and 
\begin{align*}
\|\mathcal W(Y)\partial^2_Yw_{s}\|_{L^\infty_\theta}
\leq&C(|\varepsilon|^{-\f13}|\hat c_i|^{-1}+|\varepsilon|^{-\f23})\|(u-\hat c)^2\partial_Y G\|_{L^\infty}+C|\varepsilon|^{-\f23}|\log|\hat c_i||\|(u-\hat c) G\|_{L^\infty}\\
&+C|\varepsilon|^{-\f23}\|(u-\hat c)^3\partial_Y^2G\|_{L^\infty},
\end{align*}
where $\mathcal W(Y)>0$ is a smooth weighted function defined such that for any $Y\in\mathcal N$, $\mathcal W(Y)\sim |\varepsilon|^\f13 |u-\hat c|^3$ and for any $Y\in\mathcal N^-\cup\mathcal N^+$, $\mathcal W(Y)\sim |\varepsilon||u-\hat c|$.\\
Moreover, we have 
\begin{align*}
	&|\psi_s(0)-u'(0)^{-1}c_r\mathcal M_0[G]|\leq C|\varepsilon|^{-\f23}(\|(u-\hat c)^2\partial_Y G\|_{L^\infty}+C\|(u-\hat c) G\|_{L^\infty}),\\
	&|\partial_Y\psi_s(0)+\mathcal M_0[G]|\leq |\varepsilon|^{-\f23}|\hat c|^{-1}(\|(u-\hat c)^2\partial_Y G\|_{L^\infty}+C\|(u-\hat c) G\|_{L^\infty}),
\end{align*}
with 
\begin{align*}
	|\mathcal M_0[G]|\leq C|\varepsilon|^{-1}(\|(u-\hat c)^2\partial_Y G\|_{L^\infty}+C\|(u-\hat c) G\|_{L^\infty}).
\end{align*}
\end{theorem}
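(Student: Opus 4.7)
The strategy is a standard perturbative argument: use $\psi_{app,s}$ from Proposition \ref{prop:airy-green-Y} as the leading-order approximation and cancel the residual error by iteration, exploiting the smallness of $Err_1$ and $Err_2$ established in Lemma \ref{lem:err1-err2}. By construction, $\psi_{app,s}$ solves
\[
\varepsilon(\partial_Y^2-\alpha^2)^2 \psi_{app,s} - (u-c)(\partial_Y^2-\alpha^2)\psi_{app,s} = \partial_Y^2 G + R_0,
\]
where $R_0 := Err_1 w_{app,s} + Err_2 \partial_Y w_{app,s}$ is a regular source whose $L^\infty_\theta$-norm is controlled by Corollary \ref{cor:airy-app-sf-err}. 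I would then define $\psi^{(n)}$ for $n\geq 1$ as the solution to the non-homogeneous Airy equation with source $-R_{n-1}$ provided by the first bullet of Theorem \ref{them:airy-gf-1}, and set $R_n := Err_1 w^{(n)} + Err_2 \partial_Y w^{(n)}$ with $w^{(n)} := (\partial_Y^2-\alpha^2)\psi^{(n)}$.

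The core step is contraction of the iteration. Combining the bounds $|Err_1|\lesssim |\varepsilon|^{1/3}|u-\hat c|$ and $|Err_2|\lesssim |\varepsilon|$ from Lemma \ref{lem:err1-err2} with the weighted $L^\infty_\theta$ estimates of Theorem \ref{them:airy-gf-1} yields
\[
\|R_n\|_{L^\infty_\theta} \leq C |\varepsilon|^{1/3} \|R_{n-1}\|_{L^\infty_\theta},
\]
so the geometric series converges for $|\varepsilon|$ small. Setting $\psi_s := \psi_{app,s} + \sum_{n\geq 1}\psi^{(n)}$ and $w_s := (\partial_Y^2-\alpha^2)\psi_s$ gives an exact solution of \eqref{eq:Airy-eq}, and the first three families of weighted bounds in the statement (for $\psi_s$, $(u-\hat c)\partial_Y\psi_s$, $(u-\hat c)\partial_Y w_s$, $(u-\hat c)^2 w_s$) follow immediately from the corresponding estimates for $\psi_{app,s}$ in Proposition \ref{prop:airy-green-Y}, since the corrections are controlled uniformly by the geometric tail.

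For the weighted bound on $\partial_Y^2 w_s$, rather than differentiating the Green's representation, I would use the Airy equation itself:
\[
\varepsilon \partial_Y^2 w_s = \varepsilon \alpha^2 w_s + (u-c) w_s + \partial_Y^2 G.
\]
Multiplying by $\varepsilon^{-1}\mathcal W(Y)$ and noting $\varepsilon^{-1}\mathcal W \lesssim |\varepsilon|^{-2/3}|u-\hat c|^3$ on $\mathcal N$ and $\varepsilon^{-1}\mathcal W \lesssim |u-\hat c|$ on $\mathcal N^-\cup\mathcal N^+$ (with both alternatives dominated by $|\varepsilon|^{-2/3}|u-\hat c|^3$ in the regime $|u-\hat c|\gtrsim|\varepsilon|^{1/3}$), one combines the previously obtained $(u-\hat c)^2 w_s$ and $(u-\hat c)\partial_Y w_s$ estimates to absorb the first two terms, and the last term produces exactly the $|\varepsilon|^{-2/3}\|(u-\hat c)^3\partial_Y^2 G\|_{L^\infty}$ contribution. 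Finally, the boundary asymptotics follow from Corollary \ref{coro:airy-app-sf-boundary} applied to $\psi_{app,s}$: the leading terms $u'(0)^{-1}c_r\mathcal M_0[G]$ and $-\mathcal M_0[G]$ come from the approximation, and the contribution of $\sum_{n\geq 1}\psi^{(n)}(0)$ and $\sum_{n\geq 1}\partial_Y\psi^{(n)}(0)$ is absorbed into the error by Theorem \ref{them:airy-gf-1} together with the geometric smallness.

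The main obstacle will be organising the iteration inside a single norm rich enough to simultaneously encode all weighted estimates required in the statement (on $\psi_s$, $\partial_Y\psi_s$, $w_s$ and $\partial_Y w_s$) while still retaining the genuine $|\varepsilon|^{1/3}$ gain at each step. In particular, care is needed to ensure that the $|\log|\hat c_i||$ factors produced by the main-deck estimates of Theorem \ref{them:airy-gf-1} do not accumulate across iterations, and that the $(u-\hat c)^{-1}$-type singularities introduced when translating between the $(u-\hat c) F$-weighted and $F$-weighted norms are consistently balanced by the $|u-\hat c|$ factor inside $Err_1$.
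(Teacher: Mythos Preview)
Your approach is essentially the paper's: take $\psi_{app,s}$ as leading order, correct the residual $R_0=Err_1 w_{app,s}+Err_2\partial_Y w_{app,s}$ via Theorem \ref{them:airy-gf-1}, and read off the $\mathcal W\partial_Y^2 w_s$ bound directly from the Airy equation. Two small points deserve tightening. First, since Theorem \ref{them:airy-gf-1} already produces an \emph{exact} solution to \eqref{eq:Airy-eq}, your iteration collapses after one step: once $w_{s,err}$ solves $Airy[w_{s,err}]=-R_0$ exactly, there is no further residual $R_1$ of the form $Err_1 w^{(1)}+Err_2\partial_Y w^{(1)}$ to cancel. The paper accordingly performs a single correction, and the geometric series you set up is unnecessary (also note $R_0$ only lies in the weighted space $(u-\hat c)^{-1}L^\infty_\theta$ by Corollary \ref{cor:airy-app-sf-err}, so it is the second bullet of Theorem \ref{them:airy-gf-1} that applies, not the first). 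Second, the approximate stream function satisfies $\partial_Y^2\psi_{app,s}=w_{app,s}$ rather than $(\partial_Y^2-\alpha^2)\psi_{app,s}=w_{app,s}$, so an additional $\alpha^2$--correction $\psi_{s,1}$ solving $(\partial_Y^2-\alpha^2)\psi_{s,1}=\alpha^2\psi_{app,s}$ is needed before the estimates on $\psi_s$ close; this is straightforward but should be mentioned.
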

\begin{proof}
We first define $w_s^{(0)}(Y):=w_{app,s}(Y)$, where $w_{app,s}(Y)$ is the solution to \eqref{eq:Airy-app-SF} constructed in Proposition \ref{prop:airy-green-Y}. Then we know that 
\begin{align*}
Airy[w_{app,s}](Y)=\partial_Y^2G(Y)+Err_1(Y)w_{app,s}(Y)+Err_2\partial_Y w_{app,s}(Y).
\end{align*}
Moreover, by Proposition \ref{prop:airy-green-Y} and Corollary \ref{cor:airy-app-sf-err}, we have 
\begin{align}\label{eq:airy-sf-e3}
\begin{split}
\|\mathcal W(Y)\partial^2_Yw_{app,s}\|_{L^\infty_\theta}\leq& C\|(u-\hat c)^2w_{app,s}\|_{L^\infty_\theta}+C\|(u-\hat c)^3\partial_Y^2G\|_{L^\infty}\\
&+\|(u-\hat c)Err_1w_{app,s}\|_{L^\infty_\theta}
+\|(u-\hat c)Err_2\partial_Y w_{app,s}\|_{L^\infty_\theta}\\
\leq&C(|\varepsilon|^{-\f13}|\hat c_i|^{-1}+|\varepsilon|^{-\f23})|\log|\hat c_i||\|(u-\hat c)^2\partial_Y G\|_{L^\infty}\\
&+C|\varepsilon|^{-\f23}|\log|\hat c_i||\|(u-\hat c) G\|_{L^\infty}+C|\varepsilon|^{-\f23} \|(u-\hat c)^3\partial_Y^2G\|_{L^\infty},
\end{split}
\end{align}
where $\mathcal W(Y)>0$ is a smooth weighted function defined such that for any $Y\in\mathcal N$, $\mathcal W(Y)\sim |\varepsilon|^\f13 |u-\hat c|^3$ and for any $Y\in\mathcal N^-\cup\mathcal N^+$, $\mathcal W(Y)\sim |\varepsilon||u-\hat c|$.

Now we define $(w_{s,err}(Y),\psi_{s,err}(Y))$ to be the solution of
\begin{align*}
&\varepsilon(\partial_Y^2-\alpha^2)w_{s,err}-(u-c)w_{s,err}=-Err_1w_s^{(0)}-Err_2\partial_Yw_s^{(0)},\\
&(\partial_Y^2-\alpha^2)\psi_{s,err}(Y)=w_{s,err}(Y),\quad
		\lim_{Y\to\infty}\psi_{s,err}(Y)=\lim_{Y\to\infty}w_{s,err}(Y)=0,
\end{align*}
constructed in Theorem \ref{them:airy-gf-1}. Then we first have 
\begin{align*}
Airy[w^{(0)}_s+w_{s,err}](Y)=\partial_Y^2G.
\end{align*}
By Corollary \ref{cor:airy-app-sf-err}  and   Theorem \ref{them:airy-gf-1}, we obtain 
\begin{align*}
&\|(u-\hat{c})^2w_{s,err}\|_{L^\infty_\theta}+|\varepsilon|^\f23\|(u-\hat{c})\partial_Yw_{s,err}\|_{L^\infty_\theta}+|\varepsilon|\|(u-\hat{c})\partial_Y^2 w_{s,err}\|_{L^\infty_\theta}\\
&\leq C|\log|\hat{c}_i||(\|(u-\hat c)Err_1w_{app,s}\|_{L^\infty_\theta}+\|(u-\hat c)Err_2\partial_Y w_{app,s}\|_{L^\infty_\theta})\\
&\leq C(|\hat c_i|^{-1}+|\varepsilon|^{-\f13})|\log|\hat c_i||^2(\|(u-\hat c) G\|_{L^\infty}+\|(u-\hat c)^2\partial_YG\|_{L^\infty}).
\end{align*}
Similarly, we can obtain 
\begin{align}\label{eq:airy-sf-se1}
\begin{split}
&\|\psi_{s,err}\|_{L^\infty_\theta}+\|(u-\hat{c})\partial_Y\psi_{s,err}\|_{L^\infty_\theta}\\
&\quad\leq C|\log|\hat c_i||^3 (|\hat c_i|^{-1}+|\varepsilon|^{-\f13})(\|(u-\hat c) G\|_{L^\infty}+\|(u-\hat c)^2\partial_YG\|_{L^\infty}).
\end{split}
\end{align}
By Proposition \ref{prop:airy-green-Y}, \eqref{eq:airy-sf-e3} and the above estimates, we have 
\begin{align*}
&\|\bar \psi_{s}\|_{L^\infty_\theta}+\|(u-\hat{c})\partial_Y\bar\psi_{s}\|_{L^\infty_\theta}\\
        	&\quad\leq C|\varepsilon|^{-1}|\hat c||\log|\hat c_i||(\|(u-\hat c)G\|_{L^\infty}+C|\varepsilon|^{-\f23}|\log|\hat c_i| |\|(u-\hat c)^2\partial_Y G\|_{L^\infty},\\
&|\varepsilon|^{\f23}\|(u-\hat c)\partial_Yw_{s}\|_{L^\infty_\theta}\leq  C(|\varepsilon|^{-\f13}|\hat c_i|^{-1}+|\varepsilon|^{-\f23})(\|(u-\hat c)^2\partial_Y G\|_{L^\infty}+C\|(u-\hat c) G\|_{L^\infty}),\\
&\|(u-\hat c)^2w_{s}\|_{L^\infty_\theta}\leq C|\varepsilon|^{-\f23}|\log|\hat c_i||(\|(u-\hat c)^2\partial_Y G\|_{L^\infty}+C\|(u-\hat c) G\|_{L^\infty}),
\end{align*}
and 
\begin{align*}
\|\mathcal W(Y)\partial^2_Yw_{s}\|_{L^\infty_\theta}
\leq& C(|\varepsilon|^{-\f13}|\hat c_i|^{-1}+|\varepsilon|^{-\f23})\|(u-\hat c)^2\partial_Y G\|_{L^\infty}+C|\varepsilon|^{-\f23}|\log|\hat c_i||\|(u-\hat c) G\|_{L^\infty}\\
&+C|\varepsilon|^{-\f23}\|(u-\hat c)^3\partial_Y^2G\|_{L^\infty}.
\end{align*}
Here $\bar\psi_s:=\psi_{app,s}+\psi_{s,err}$.

Now we are left with the construction of $\psi_s(Y)$. By the definitions of $\psi_{app,s}$ and $\psi_{s,err}$, we have 
\begin{align*}
	(\partial_Y^2-\alpha^2)(\psi_{app,s}+\psi_{s,err})=w_s-\alpha^2\psi_{app,s}.
\end{align*}
Hence, we can construct $\psi_{s,1}$ by the iteration such that 
\begin{align*}
	(\partial_Y^2-\alpha^2)\psi_{s,1}=\alpha^2\psi_{app,s},\quad\psi_{app,1}(\infty)=0,
\end{align*}
and 
\begin{align}\label{eq:airy-sf-se2}
	\begin{split}
			&\|\psi_{s,1}\|_{L^\infty_{\theta}}+\|(u-\hat{c})\partial_Y\psi_{s,1}\|_{L^\infty_\theta}\leq C\alpha^2\|e^{\theta_0\eta_r(Y)^\f32}\psi_{s,app}\|_{L^\infty}\\
	&\quad\leq C\alpha^2(|\varepsilon|^{-\f23}+|\varepsilon|^{-1}|\hat c|)|\log|\hat c_i||(\|(u-\hat c)^2\partial_Y G\|_{L^\infty}+C\|(u-\hat c) G\|_{L^\infty}).
	\end{split}
\end{align}
Therefore, by defining $\psi_s:=\psi_{app,s}+\psi_{s,err}+\psi_{s,1}$, we have that $(\partial_Y^2-\alpha^2)\psi_s=w_s$ and 
\begin{align*}
		&\|\psi_{s}\|_{L^\infty_\theta}+\|(u-\hat{c})\partial_Y\psi_{s}\|_{L^\infty_\theta}\leq C(|\varepsilon|^{-\f23}+|\varepsilon|^{-1}|\hat c|)|\log|\hat c_i||(\|(u-\hat c)^2\partial_Y G\|_{L^\infty}+\|(u-\hat c) G\|_{L^\infty}).
\end{align*}
Moreover, by the definition of $\psi_s$ and \eqref{eq:airy-sf-se1} and \eqref{eq:airy-sf-se2}, we obtain
\begin{align*}
	&\|\psi_{s}-\psi_{app,s}\|_{L^\infty_{\theta}}+\|(u-\hat{c})\partial_Y(\psi_{s}-\psi_{app,s})\|_{L^\infty_\theta}\\
	&\quad\leq C|\varepsilon|^{-\f23}(\|(u-\hat 
        	c)G\|_{L^\infty}+\|(u-\hat c)^2\partial_Y G\|_{L^\infty}).
\end{align*}
	
The proof is completed.
\end{proof}

\subsection{Homogeneous Airy equation}

In this part, we construct a non-zero solution to the homogeneous Airy equation
\begin{align}\label{eq:airy-homo}
	\begin{split}
			&\varepsilon(\partial_Y^2-\alpha^2)w_a-(u-c)w_a=0\quad,(\partial_Y^2-\alpha^2)\psi_a(Y)=w_a(Y),\\
		&\lim_{Y\to\infty}\psi_a(Y)=\lim_{Y\to\infty}w_a(Y)=0.
	\end{split}
\end{align}
 We denote 
	\begin{align*}
		\tilde{\mathcal A}(1,Y):=-\int_Y^{+\infty}Ai(e^{\mathrm i\frac{\pi}{6}}\kappa\eta(Z))dZ \text{ and }\tilde{\mathcal A}(2,Y):=-\int_Y^{+\infty}\tilde{\mathcal A}(1,Z)dZ.
	\end{align*}
	We also define
	\begin{align*}
		w_a^{(0)}(Y):=\frac{Ai(e^{\mathrm i\frac{\pi}{6}}\kappa\eta(Y))}{Ai(e^{\mathrm i\frac{\pi}{6}}\kappa\eta(0))} \text{ and }\psi_a^{(0)}(Y)=\frac{\tilde{\mathcal A}(2,Y)}{Ai(e^{\mathrm i\frac{\pi}{6}}\kappa\eta(0))}.
	\end{align*}
	By \eqref{eq:Airy-Err-app}, we know that 
	\begin{align*}
		Airy[w_a^{(0)}(Y)]=Err_1(Y)w_a^{(0)}(Y)+Err_2(Y)\partial_Y w^{(0)}_a(Y).
	\end{align*}
	To eliminate the above errors, we define $w_{a,err}(Y)$ as the solution constructed in Theorem \ref{them:airy-gf-1} with source term $-Err_1(Y)w_a^{(0)}(Y)-Err_2(Y)\partial_Y w^{(0)}_a(Y)$. We also show that the boundary value of $\psi_{a,err}$ is a small perturbation compared with $\psi_a^{(0)}(0)$. Now we state our results.
	\begin{theorem}\label{them:airy-homo}
	Let $1\lesssim\theta\leq 3\eta_0$ and $\hat c:=c+\mathrm i c_0$ with $c_0\in(|\varepsilon|^\f23,|\varepsilon|^\f13)$. Suppose $(\alpha,c)\in\mathbb H_2$ with $c_i>-c_0/2$. Then there exists a solution $\psi_a(Y)\in W^{4,\infty}$ to \eqref{eq:airy-homo} satisfying 
	   	\begin{align*}
&\|(u-\hat{c})^2(w_a-w_a^{(0)})\|_{L^\infty_\theta}+|\varepsilon|^\f23\|(u-\hat{c})\partial_Y(w_a-w_a^{(0)})\|_{L^\infty_\theta}+|\varepsilon|\|(u-\hat{c})\partial_Y^2 (w_a-w_a^{(0)})\|_{L^\infty_\theta}\\
&\leq C|\log|\hat{c}_i||(|\varepsilon|^\f23+|c|^2)|c_i|,\\
	&\|(\psi_a-\psi_{a}^{(0)})\|_{L^\infty_\theta}+\|(u-\hat{c})\partial_Y(\psi_a-\psi_a^{(0)})\|_{L^\infty_\theta}\leq C|\varepsilon|^\f13(|c_ic|+|\varepsilon|^\f13\alpha^2).
\end{align*}	
\end{theorem}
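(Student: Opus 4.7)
The plan follows the ansatz sketched immediately before the statement. By the definition of $w_a^{(0)}$ and \eqref{eq:Airy-Err-app}, one has $Airy[w_a^{(0)}]=Err_1 w_a^{(0)}+Err_2\,\partial_Y w_a^{(0)}$. The idea is to cancel this right-hand side with a correction $w_{a,err}$ produced by Theorem \ref{them:airy-gf-1}, set $w_a:=w_a^{(0)}+w_{a,err}$, and then lift to $\psi_a$ through a Helmholtz problem whose source is supplied by $\psi_a^{(0)}$.

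First I would estimate $F_0:=-(Err_1 w_a^{(0)}+Err_2\partial_Y w_a^{(0)})$ in the weighted norm $\|(u-\hat c)F_0\|_{L^\infty_\theta}$. Using the piecewise bounds on $Err_1,Err_2$ from Lemma \ref{lem:err1-err2} together with the pointwise decay of $w_a^{(0)}$ and $\partial_Y w_a^{(0)}$ inherited from Lemmas \ref{lem:airy-langer-asy}--\ref{lem:A1A2}, the target bound is of size $(|\varepsilon|^{2/3}+|c|^2)|c_i|$: the $|c_i|$ factor comes from the outer region where $|Err_1|\lesssim|c_i|$ and the Airy factor $w_a^{(0)}$ decays exponentially; the $|\varepsilon|^{2/3}$ arises in the sublayer from $Err_2\partial_Y w_a^{(0)}$ because $|Err_2|\lesssim|\varepsilon|$ and $|\partial_Y w_a^{(0)}|\lesssim|\varepsilon|^{-1/3}$; and the $|c|^2$ reflects the upper-deck tail, where $(u-\hat c)\sim 1$ but $Err_1$ picks up a quadratic $|\eta_r|^2$ growth. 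Feeding this source into Theorem \ref{them:airy-gf-1} (second regime) furnishes $w_{a,err}$ and a companion stream function $\tilde\psi_{a,err}$ satisfying the desired $(u-\hat c)^2$-weighted bounds (with the logarithmic factor $|\log|\hat c_i||$ absorbed from that theorem), which yields the claimed estimate for $w_a-w_a^{(0)}$.

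For the stream function, observe that by construction $\partial_Y^2\psi_a^{(0)}=w_a^{(0)}$, hence $(\partial_Y^2-\alpha^2)\psi_a^{(0)}=w_a^{(0)}-\alpha^2\psi_a^{(0)}$. Setting $\psi_a:=\psi_a^{(0)}+\tilde\psi_{a,err}+\psi_{a,\alpha}$ forces $\psi_{a,\alpha}$ to solve
\begin{align*}
(\partial_Y^2-\alpha^2)\psi_{a,\alpha}=\alpha^2\psi_a^{(0)},\qquad \lim_{Y\to\infty}\psi_{a,\alpha}(Y)=0,
\end{align*}
which is handled explicitly by the one-dimensional Helmholtz Green's function. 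Combining with the size of $\psi_a^{(0)}$ (read off from the $\tilde{\mathcal A}(2,\cdot)$ asymptotics in Lemma \ref{lem:airy-langer-asy} using the normalisation $w_a^{(0)}(0)=1$) supplies the $|\varepsilon|^{2/3}\alpha^2$ piece of the claimed bound, while the $|\varepsilon|^{1/3}|c_i c|$ piece comes from the $\tilde\psi_{a,err}$ estimate in Theorem \ref{them:airy-gf-1}. Summing delivers the announced control of $\psi_a-\psi_a^{(0)}$.

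The principal obstacle is the vanishing of the weight $u-\hat c$ at $Y=Y_c$: the source estimate $\|(u-\hat c)F_0\|_{L^\infty_\theta}$ must be verified separately in the three subregions $\mathcal N^-,\mathcal N,\mathcal N^+$, matching the region-dependent bound on $Err_1$ (ranging from $|\eta_r|(|c_i|+|\eta_r|)$ inside the sublayer down to $|c_i|$ outside) against the corresponding Airy decay of $w_a^{(0)}$. Obtaining sharp dependence on $|c_i|$ rather than $|\hat c_i|$ requires exploiting that $Err_1$ is genuinely $O(|c_i|)$ in the outer region, with the sublayer contribution absorbed into the $|\varepsilon|^{2/3}$ term via the cancellation between the Airy exponential suppression and the local $(|c_i|+|\eta_r|)$ growth.
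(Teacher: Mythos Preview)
Your plan for the vorticity correction $w_{a,err}$ is essentially the paper's, but the stream-function step contains a genuine gap. You assert that the piece $|\varepsilon|^{1/3}|c_ic|$ in the bound for $\psi_a-\psi_a^{(0)}$ ``comes from the $\tilde\psi_{a,err}$ estimate in Theorem~\ref{them:airy-gf-1}''. It does not: that theorem returns $\|\psi\|\lesssim |\hat c|\,|\varepsilon|^{-1/3}\|(u-\hat c)F\|$, so even with an optimistic source bound you would get $|c|\,|\varepsilon|^{-1/3}$ times the source size, which near the upper branch $\alpha\sim\nu^{1/12}$ is \emph{larger} than $|\psi_a^{(0)}(0)|\sim|\varepsilon|/|c|$ and therefore useless as a perturbation. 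The paper states this obstruction explicitly (``the bounds of the corresponding stream function $\psi_{a,err}$ deduced by Theorem~\ref{them:airy-gf-1} are not small enough\dots'') and then repairs it by splitting the source as $F_0=F_1+F_2$ with $F_2:=\mathbf 1_{[0,|\varepsilon|^{1/3}]}F_0$ and $F_1$ the remainder; $F_1$ is fed into Theorem~\ref{them:airy-gf-1} (its support avoids the boundary layer, so the source is genuinely $O(|\varepsilon|)$ there), whereas $F_2$ is handled by a direct Green's-function computation that exploits the tiny support to gain a factor $|\varepsilon|^{1/3}$. Only this splitting produces the $|c|\,|\varepsilon|^{2/3}$ bound on the stream-function correction.

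A related issue is your source estimate itself: you claim $\|(u-\hat c)F_0\|_{L^\infty_\theta}\lesssim(|\varepsilon|^{2/3}+|c|^2)|c_i|$, arguing that each region contributes a $|c_i|$ factor. This is false for the $Err_2\,\partial_Y w_a^{(0)}$ term, since $|Err_2|\leq C|\varepsilon|$ carries no $|c_i|$ at all; near $Y=0$ (where $w_a^{(0)}$ is normalised to $1$) one only gets $|(u-\hat c)Err_2\,\partial_Y w_a^{(0)}|\lesssim |c|\,|\varepsilon|^{2/3}$. The $|c_i|$ smallness you want is simply not present in the $Err_2$ contribution, which is precisely why the near-boundary piece $F_2$ must be treated separately rather than through the black-box Theorem~\ref{them:airy-gf-1}.
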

\begin{proof}
	By the definition of $A_1(2,Y)$ and the properties of Airy function, we obtain 
	\begin{align*}
		\partial_Y^2\psi_a^{(0)}(Y)=w_a^{(0)}(Y),
	\end{align*}
	and 
	\begin{align*}
		\varepsilon(\partial_Y^2-\alpha^2)w_a^{(0)}-(u-c)w_a^{(0)}=Err_1w_a^{(0)}+Err_2\partial_Yw_a^{(0)}.
	\end{align*}
Hence, to construct a homogeneous solution to the Airy equation \eqref{eq:airy-homo}, we need to construct a solution $(w_{a,err},\psi_{a,err})$ to 
	\begin{align}\label{eq:airy-homo-err}
		\begin{split}
			&\varepsilon(\partial_Y^2-\alpha^2)w_{a,err}-(u-c)w_{a,err}=-Err_1w_a^{(0)}-Err_2\partial_Yw_a^{(0)},\\
			&(\partial_Y^2-\alpha^2)\psi_{a,err}(Y)=w_{a,err}(Y)-\alpha^2\psi_{a}^{(0)}(Y) ,\\
		&\lim_{Y\to\infty}\psi_{a,err}(Y)=\lim_{Y\to\infty}w_{a,err}(Y)=0.
		\end{split}
	\end{align}
	
	\no\textbf{Case 1. $0\in\mathcal N$.}
    We notice that if $0\in\mathcal N$, then $\mathcal N^-=\emptyset$ and $|Ai(e^{\mathrm i\frac{\pi}{6}}\kappa\eta(0))|\leq C$. Moreover, since $\mathrm{Im}(\kappa\eta(0))<\delta_0$, there exists $C_1$ independent of $\varepsilon$ such that $|Ai(e^{\mathrm i\frac{\pi}{6}}\kappa\eta(0))|\geq C_1>0$, which along with Lemma \ref{lem:A1A2} implies that 
    	\begin{align}\label{eq:airy-homo-n}
		\begin{split}
			&|w_a^{(0)}(Y)|\leq Ce^{-C|\kappa\eta(Y)|^\f32},\quad |\partial_Yw_a^{(0)}(Y)|\leq C|\varepsilon|^{-\f13}e^{-C|\kappa\eta(Y)|^\f32},\\
		&|\psi_a^{(0)}(Y)|\leq C|\varepsilon|^{\f23}e^{-C|\kappa\eta(Y)|^\f32}\text{ and }|\partial_Y\psi_a^{(0)}(Y)|\leq C|\varepsilon|^{\f13}e^{-C|\kappa\eta(Y)|^\f32}.
		\end{split}
	\end{align}
	Moreover, we have the following controls on the  boundary value of $\psi_a^{(0)}$ and $\partial_Y\psi_{a}^{(0)}$:
	\begin{align*}
		|\psi_a^{(0)}(0)|\sim|\varepsilon|^\f23 \text{ and }|\partial_Y\psi_a^{(0)}(0)|\sim|\varepsilon|^{\f13}.
	\end{align*}
	Therefore, we obtain 
	\begin{align}
		\|(u-\hat c)Err_1w_a^{(0)}\|_{L^\infty_\theta}+\|(u-\hat c)Err_2\partial_Y w_a^{(0)}\|_{L^\infty_\theta}\leq C|\varepsilon|.
	\end{align}
	Hence, by Theorem \ref{them:airy-gf-1},  there exists a solution $(w_{a,err},\psi_{a,err})$ to \eqref{eq:airy-homo-err} satisfying 
	\begin{align*}
&\|(u-\hat{c})^2w_{a,err}\|_{L^\infty_\theta}+|\varepsilon|^\f23\|(u-\hat{c})\partial_Yw_{a,err}\|_{L^\infty_\theta}+|\varepsilon|\|(u-\hat{c})\partial_Y^2 w_{a,err}\|_{L^\infty_\theta}\leq C|\log|\hat{c}_i||\varepsilon| ,
\end{align*}
and
\begin{align*}
&\|\psi_{a,err}^{(1)}\|_{L^\infty_\theta}+\|(u-\hat{c})\partial_Y\psi_{a,err}^{(1)}\|_{L^\infty_\theta}\leq C|\log|\hat c_i||^2 |\varepsilon|,
\end{align*}
where $(\partial_Y^2-\alpha^2)\psi_{a,err}^{(1)}=w_{a,err}$. By a similar argument as in the proof of Theorem \ref{them:airy-sf}, we can construct a solution $\psi_{a,err}^{(2)}$ to $(\partial_Y^2-\alpha^2)\psi_{a,err}^{(2)}=-\alpha^2\psi^{(0)}_a$ satisfying 
\begin{align*}
	&\|\psi_{a,err}^{(2)}\|_{L^\infty_\theta}+\|(u-\hat{c})\partial_Y\psi_{a,err}^{(2)}\|_{L^\infty_\theta}\leq C|\alpha|^2|\varepsilon|.
\end{align*}
Then by denoting $\psi_{a,err}(Y):=\psi_{a,err}^{(1)}(Y)+\psi_{a,err}^{(2)}(Y)$, we construct  a solution to \eqref{eq:airy-homo-err} and 
	\begin{align*}
&\|(u-\hat{c})^2w_{a,err}\|_{L^\infty_\theta}+|\varepsilon|^\f23\|(u-\hat{c})\partial_Yw_{a,err}\|_{L^\infty_\theta}+|\varepsilon|\|(u-\hat{c})\partial_Y^2 w_{a,err}\|_{L^\infty_\theta}\\
&\quad\leq C|\log|\hat{c}_i||(|c_i|+|\varepsilon|^\f13)|\varepsilon|^\f23 ,
\end{align*}
and
\begin{align*}
&\|\psi_{a,err}\|_{L^\infty_\theta}+\|(u-\hat{c})\partial_Y\psi_{a,err}\|_{L^\infty_\theta}\leq C|\log|\hat c_i||^2 (|c_i|+|\varepsilon|^\f13)|\varepsilon|^\f23.
\end{align*}
As a consequence, we have 
\begin{align*}
&\|\psi_a-\psi_a^{(0)}\|_{L^\infty_\theta}+\|(u-\hat{c})\partial_Y(\psi_a-\psi_a^{(0)})\|_{L^\infty_\theta}\leq C|\log|\hat c_i||^2 (|c_i|+|\varepsilon|^\f13)|\varepsilon|^\f23,
\end{align*}
which implies that 
\begin{align*}
	&\psi_a(0)=\psi_a^{(0)}(0)+\mathcal O(|\log|\hat c_i||^2 (|c_i|+|\varepsilon|^\f13)|\varepsilon|^\f23),\quad\partial_Y\psi_a(0)=\partial_Y\psi_{a}^{(0)}(0)+\mathcal O(|\log|\hat c_i||^2|\varepsilon|^\f23).
\end{align*}

\no\textbf{Case 2. $0\in\mathcal N^-$.}
In this case,  by Lemma \ref{lem:airy-langer-asy}, we have that for $k=0,1$,
\begin{align*}
	&\left|\partial_Y^kw_a^{(0)}(Y)\right|\sim |\varepsilon|^{-\frac{k}{3}}|\partial_Y^kAi(e^{\mathrm i\frac{\pi}{6}}\kappa\eta(Y))Ai(e^{\mathrm i\frac{5\pi}{6}}\kappa\eta(0))|,\\
	&\left|\partial_Y^k\psi_a^{(0)}(Y)\right|\sim |\varepsilon|^{\frac{2-k}{3}}|\tilde{\mathcal A}(2-k,Y)Ai(e^{\mathrm i\frac{5\pi}{6}}\kappa\eta(0))|,
\end{align*}
	which along with Lemma \ref{lem:A1A2} implies that for any $k=0,1$,
    \begin{align}\label{eq:airy-homo-k}
    \begin{split}
    	&\left|\partial_Y^kw_a^{(0)}(Y)\right|\leq C|\varepsilon|^{-\frac{k}{3}}|\kappa\eta(Y)|^{-\f14+\f k2}|\kappa\eta(0)|^\f14 e^{-C|\varepsilon|^{-\f13}|\eta(Y)-\eta(0)|(|\kappa\eta(Y)|^\f12+|\kappa\eta(0)|^\f12)},\\
    &\left|\partial_Y^k\psi_a^{(0)}(Y)\right|\leq C|\varepsilon|^{\frac{2-k}{3}}|\kappa\eta(Y)|^{-\f54+\f k2}|\kappa\eta(0)|^\f14 e^{-C|\varepsilon|^{-\f13}|\eta(Y)-\eta(0)|(|\kappa\eta(Y)|^\f12+|\kappa\eta(0)|^\f12)}.
    \end{split}
    \end{align}
	In particular, we have  
	\begin{align*}
		&|\psi_a^{(0)}(0)|\sim|\varepsilon|^\f23|\kappa\eta(0)|^{-1}\sim|\varepsilon||c|^{-1},\quad|\partial_Y\psi_a^{(0)}(0)|\sim |\varepsilon|^\f13|\kappa\eta(0)|^{-\f12}\sim|\varepsilon|^\f12|c|^{-\f12}.
	\end{align*}
    To show the estimates for the solution to \eqref{eq:airy-homo-err}, we need to show the control of $Err_1 w_a^{(0)}(Y)$ and $Err_2\partial_Yw_a^{(0)}(Y)$. By Lemma \ref{lem:err1-err2} and \eqref{eq:airy-homo-k}, we can obtain that for any $|Y|\geq |\varepsilon|^\f13$,
    \begin{align}\label{eq:airy-homo-errYL}
    \begin{split}
    &|(u-\hat c)Err_1w_a^{(0)}(Y)|\leq C|\varepsilon|^\f13|w_a^{(0)}(Y)|\leq C|\varepsilon| e^{-\theta_0\eta_r(Y)^\f32},\\
    &|(u-\hat c)Err_2\partial_Y w_a^{(0)}(Y)|\leq C|\varepsilon|e^{-\theta_0\eta_r(Y)^\f32},
    \end{split}
    \end{align}
   and for  $Y\in[0,|\varepsilon|^\f13]$, 
    \begin{align*}
    	&|(u-\hat c)Err_1w_a^{(0)}(Y)|\leq C|\varepsilon|^\f13|w_a^{(0)}(Y)|\leq C|\varepsilon|^\f13|c|^2 e^{-\theta_0\eta_r(Y)^\f32},\\
    	&|(u-\hat c)Err_2\partial_Y w_a^{(0)}(Y)|\leq C|\varepsilon|^\f23|c|e^{-\theta_0\eta_r(Y)^\f32}.
    \end{align*}
    As a consequence, we obtain 
    \begin{align*}
    	&\|(u-\hat c)Err_1w_a^{(0)}\|_{L^\infty_\theta}\leq C|\varepsilon|^\f13(|\varepsilon|^\f23+|c|^2),\quad\|(u-\hat c)Err_2\partial_Y w_a^{(0)}\|_{L^\infty_\theta}\leq C|\varepsilon|^\f23(|\varepsilon|^\f12+|c|).
    \end{align*}
   Therefore, by Theorem \ref{them:airy-gf-1}, we obtain 
   	\begin{align*}
&\|(u-\hat{c})^2w_{a,err}\|_{L^\infty_\theta}+|\varepsilon|^\f23\|(u-\hat{c})\partial_Yw_{a,err}\|_{L^\infty_\theta}+|\varepsilon|\|(u-\hat{c})\partial_Y^2 w_{a,err}\|_{L^\infty_\theta}\\
&\quad\leq C|\log|\varepsilon||(|c|^2|\varepsilon|^\f13+|c\varepsilon|^\f23).
\end{align*}

However, the bounds of the corresponding stream function $\psi_{a,err}$ deduced by Theorem \ref{them:airy-gf-1} are not small enough to make it as a small perturbation of $\psi_a^{(0)}$ when the frequency is near the upper branch of the neutral curve. Hence, our next task to provide  sharper estimates for $\psi_{a,err}(Y)$. For this purpose, we introduce the following notations
\begin{align*}
	&F_1:=\mathbf 1_{[|\varepsilon|^\f13,+\infty)}(Y)(-Err_1w_a^{(0)}-Err_2\partial_Yw_a^{(0)}),\\
	&F_2:=\mathbf 1_{[0,|\varepsilon|^\f13]}(Y)(-Err_1w_a^{(0)}-Err_2\partial_Yw_a^{(0)}).
\end{align*}
Let $(w_{a,err}^{(1)},\psi_{a,err}^{(1)})$ be the solution to \eqref{eq:Airy-eq} with source term $F_1$ constructed as in Theorem \ref{them:airy-gf-1} and $(w_{a,err}^{(2)},\psi_{a,err}^{(2)})$ be defined by  \eqref{eq:airy-app-green} with source term $F_2$ . 
Moreover, we define $(w_{a,err}^{(3)},\psi_{a,err}^{(3)})$ to be the solution to the following system
\begin{align*}
		&\varepsilon(\partial_Y^2-\alpha^2)w_{a,err}^{(3)}-(u-c)w_{a,err}^{(3)}=-Err_1w_{a,err}^{(2)}-Err_2\partial_Yw_{a,err}^{(2)},\\
			&(\partial_Y^2-\alpha^2)\psi_{a,err}^{(3)}(Y)=w_{a,err}^{(3)}(Y)-\alpha^2\psi_a^{(0)}-\alpha^2\psi_{a,err}^{(2)} ,\\
		&\lim_{Y\to\infty}\psi_{a,err}^{(3)}(Y)=\lim_{Y\to\infty}w_{a,err}^{(3)}(Y)=0.
\end{align*}
Firstly, by Theorem \ref{them:airy-gf-1} and \eqref{eq:airy-homo-errYL}, we obtain 
\begin{align*}
&\|e^{\theta_0\eta_r(Y)^\f32}\psi_{a,err}^{(1)}\|_{L^\infty}+\|e^{\theta_0\eta_r(Y)^\f32}(u-\hat{c})\partial_Y\psi_{a,err}^{(1)}\|_{L^\infty}\leq C|\log|\hat c_i||^2 |\varepsilon|.
\end{align*}
For the estimates of $\psi_{a,err}^{(2)}$, by the definition, we can write
\begin{align*}
	\partial_Y\psi_{a,err}^{(2)}(Y)=&A_1(1,Y)\int_0^YA_2(Z)\partial_Z\eta(Z)^{-1}F_2(Z)dZ+A_2(1,Y)\int_Y^{+\infty}A_1(Z)\partial_Z\eta^{-1}(Z)F_2(Z)dZ\\
	&+\int_Y^{+\infty}[A_1(1,Z)A_2(Z)-A_2(1,Z)A_1(Z)]\partial_Z\eta(Z)^{-1}F_2(Z)dZ.
\end{align*}
For any $Y\in[|\varepsilon|^\f13,+\infty)$, notice that
\begin{align*}
	\partial_Y\psi_{a,err}^{(2)}(Y)=&A_1(1,Y)\int_0^YA_2(Z)\partial_Z\eta(Z)^{-1}F_2(Z)dZ+A_2(1,Y)\int_Y^{+\infty}A_1(Z)\partial_Z\eta^{-1}(Z)F_2(Z)dZ,
\end{align*}
which implies that for any $Y\in[|\varepsilon|^\f13,+\infty)$,
\begin{align*}
	|(u-\hat c)\partial_Y\psi_{a,err}^{(2)}(Y)|\leq& C\int_0^{|\varepsilon|^\f13}|\kappa\eta(Y)|^{\f14}|\partial_Y\eta(Y)^{-1}||\kappa\eta(Z)|^{-\f14}e^{-W_\varepsilon(Y,Z)}|F_2(Z)|dZ\\
	\leq&C|c||\varepsilon|^\f13e^{-\theta Y}\int_0^{|\varepsilon|^\f13}1dZ
	\leq C|c||\varepsilon|^\f23e^{-\theta Y}.
\end{align*}
On the other hand, for any $Y\in[0,|\varepsilon|^\f13]$, we have 
\begin{align*}
	&|(u-\hat c)\partial_Y\psi_{a,err}^{(2)}(Y)|\\
	&\leq C\int_0^{|\varepsilon|^\f13}|\kappa\eta(Y)|^{\f14}||\kappa\eta(Z)|^{-\f14} e^{-W_\varepsilon(Y,Z)}|F_2(Z)|dZ+C|u-\hat c||\varepsilon|^{-\f13}\int_0^{|\varepsilon|^\f13}|\kappa\eta(Z)|^{-1}|F_2(Z)|dZ\\
	&\leq C|c||\varepsilon|^\f13\int_0^{|\varepsilon|^\f13}|\kappa\eta(Y)|^\f12e^{-\gamma_0|\varepsilon|^{-\f13}|\eta(Y)-\eta(Z)||\kappa\eta(Y)|^\f12}dZ+C|\varepsilon|^\f13|c|\int_0^{|\varepsilon|^\f13}1dZ\leq C|c||\varepsilon|^\f23.
\end{align*}
Therefore, we obtain 
\begin{align}
	\|e^{\theta Y}(u-\hat c)\partial_Y\psi_{a,err}^{(2)}\|_{L^\infty}\leq C|c||\varepsilon|^\f23,
\end{align}
which implies that 
\begin{align}
	\|e^{\theta Y}\psi_{a,err}^{(2)}\|_{L^\infty}\leq C|c||\varepsilon|^\f23|\log|\hat c_i||.
\end{align}

Now we are left with the estimates for $\psi_{a,err}^{(3)}$. Let $\psi_{a,err}^{(3,1)}$ be the solution to $(\partial_Y^2-\alpha^2)\psi_{a,err}^{(3,1)}=w_{a,err}^{(3)}$ and $\psi_{a,err}^{(3,2)}$ be the solution to $(\partial_Y^2-\alpha^2)\psi_{a,err}^{(3,2)}=-\alpha^2\psi_a^{(0)}-\alpha^2\psi_{a,err}^{(2)}$. Then by Proposition \ref{prop:airy-gf2}  Theorem \ref{them:airy-gf-1}, we obtain 
\begin{align*}
	&\|\psi_{a,err}^{(3,1)}\|_{L^\infty_\theta}+\|(u-\hat{c})\partial_Y\psi_{a,err}^{(3,1)}\|_{L^\infty_\theta}\leq C|\log|\hat c_i||^3(|c_i|+|\varepsilon|^\f13)|c_i|(|\varepsilon|^\f23+|c|^2)\leq C|c||\varepsilon|^\f23,
\end{align*}
and 
\begin{align*}
	&\|\psi_{a,err}^{(3,2)}\|_{L^\infty_\theta}+\|e^{\theta_0\eta_r(Y)^\f32}\partial_Y\psi_{a,err}^{(3,2)}\|_{L^\infty_\theta}\leq C\alpha^2(\|\psi_{a,err}^{(2)}\|_{L^\infty_\theta}+\|\psi_{a}^{(0)}\|_{L^\infty_\theta})\leq C|\varepsilon|^\f23\alpha^2.
\end{align*}
Then by defining $w_{a,err}:=w_{a,err}^{(1)}+w_{a,err}^{(2)}+w_{a,err}^{(3)}$ and $\psi_{a,err}=\psi_{a,err}^{(1)}+\psi_{a,err}^{(2)}+\psi_{a,err}^{(3,1)}+\psi_{a,err}^{(3,2)}$, we construct a solution to \eqref{eq:airy-homo-err}. Hence, $w_a:=w_a^{(0)}+w_{a,err}$ and $\psi_a:=\psi_a^{(0)}+\psi_{a,err}$ are a solution to  \eqref{eq:airy-homo} and 
\begin{align*}
	&\|(\psi_a-\psi_{a}^{(0)})\|_{L^\infty_\theta}\leq C|\varepsilon|^\f13(|c_ic|+|\varepsilon|^\f13\alpha^2),\quad \|(u-\hat{c})\partial_Y(\psi_a-\psi_a^{(0)})\|_{L^\infty}\leq C|\varepsilon|^\f13(|c_ic|+|\varepsilon|^\f13\alpha^2).
\end{align*}

The proof is completed.
\end{proof}

\section{The Orr-Sommerfeld equation}
This section is devoted to constructing the slow mode $\phi_s$ and the fast mode $\phi_f$ for the following  homogeneous Orr-Semmerfeld equation:
\begin{align}\label{eq:OS-homo}
\left\{\begin{aligned}
&\varepsilon(\partial_Y^2-\alpha^2)^2\phi-(u-c)(\partial_Y^2-\alpha^2)\phi+u''\phi=0,\\
&\lim_{Y\to\infty}\phi(Y)=0.
\end{aligned}
\right.
\end{align}
The leading order terms of the slow mode and the fast mode are the solution $\varphi_{Ray}$ to the modified homogeneous Rayleigh equation  and  the solution $\psi_a$ to the homogeneous Airy equation, respectively. We develop a modified Rayleigh-Airy iteration to construct the exact solutions from $\varphi_{Ray}$ and $\psi_a$ around the neutral curve. 

Let us introduce that following function spaces. We define $\hat c=c+\mathrm i|\varepsilon|\alpha^{-\f32}$ and 
\begin{align*}
\mathcal{X}:=\{f\in W^{4,\infty}|\|f\|_{\mathcal X}<+\infty\},
\end{align*}
where 
\begin{align*}
\|f\|_{\mathcal X}:=&\|\mathcal{W} \partial_Y^4f\|_{L^\infty_{\eta_0}}+|\varepsilon|^\f23\|(u-\hat c)\partial_Y^3f\|_{L^\infty_{\eta_0}}+\|(u-\hat c)^2\partial_Y^2f\|_{L^\infty_{\eta_0}}+\|(u-\hat c)\partial_Yf\|_{L^\infty_{\eta_0}}+\|f\|_{L^\infty_{\eta_0}}.
\end{align*}
Here $\mathcal W(Y)>0$ is a smooth weighted function defined such that for any $Y\in\mathcal N$, $\mathcal W(Y)\sim |\varepsilon|^\f13 |u-\hat c|^3$ and for any $Y\in\mathcal N^-\cup\mathcal N^+$, $\mathcal W(Y)\sim |\varepsilon||u-\hat c|$. We also define
\begin{align*}
\mathcal{Y}:=\{f\in W^{4,\infty}|\|f\|_{\mathcal Y}<+\infty\},
\end{align*}
where 
\begin{align*}
\|f\|_{\mathcal{Y}}=&\|(u-\hat  c)^3\pa_Y^4 f\|_{L^\infty_{\eta_0}}+\|(u-\hat  c)^2\pa_Y^3 f\|_{L^\infty_{\eta_0}}+\|(u-\hat c)\pa_Y^2 f\|_{L^\infty_{\eta_0}}+\|\pa_Y f\|_{L^\infty_{\eta_0}}+\| f\|_{L^\infty_{\eta_0}}.
\end{align*}

In the rest of this section, we always assume $(\al, c)\in\mathbb{H}_2\cap\{(\al,c):\al\sim c_r\}:=\mathbb{ H}_3$.

\subsection{Non-homogeneous OS equation}
In this part, we consider the following non-homogeneous OS equation for $(\al,c)\in\mathbb{ H}_3$:
\begin{align}\label{eq:OS-non-homo}
	\left\{\begin{aligned}
		&\varepsilon(\partial_Y^2-\alpha^2)^2\phi_{non}-(u-c)(\partial_Y^2-\alpha^2)\phi_{non}+u''\phi_{non}=f,\\
		&\lim_{Y\to\infty}\phi_{non}(Y)=0.
	\end{aligned}
	\right.
\end{align}
We develop a modified Rayleigh-Airy iteration to construct a solution of \eqref{eq:OS-non-homo}.
\begin{theorem}\label{thm:OS-non-homo}
Let  $\hat c=c+\mathrm i c_0$ with $|\varepsilon|^\f23\ll c_0\ll|\varepsilon|^\f13$ and $(\al,c)\in\mathbb{ H}_3$ with $c_i>-c_0/2$.  Then for any $e^{\eta_0 Y}f\in W^{2,\infty}$, there exists a solution $\phi_{non}\in\mathcal{X}$ to \eqref{eq:OS-non-homo} satisfying 
\begin{align*}
	\|\phi_{non}\|_{\mathcal{X}}\leq C|\log|\varepsilon||(\|(u-\hat c)^2\pa_Y^2f\|_{L^\infty_{\eta_0}}+\|(u-\hat c)\pa_Yf\|_{L^\infty_{\eta_0}}+\|f\|_{L^\infty_{\eta_0}}).
\end{align*}
\end{theorem}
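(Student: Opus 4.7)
The plan is to realize $\phi_{non}$ as the limit of a modified Rayleigh--Airy iteration built on $Ray_{\hat c}$ in place of the singular $Ray_c$. Set $c_0=|\varepsilon|\alpha^{-3/2}$, so that $\mathrm{Im}(\hat c)=c_i+c_0>c_0/2>0$ and hence $(\alpha,\hat c)\in\mathbb H_1$, and define
\begin{align*}
  Ray_{\hat c}[\varphi^{(0)}]&=-f,\qquad Ray_{\hat c}[\varphi^{(j)}]=u''\psi^{(j-1)}\quad(j\geq 1),\\
  Airy\bigl[(\partial_Y^2-\alpha^2)\psi^{(j)}\bigr]&=-\varepsilon(\partial_Y^2-\alpha^2)^2\varphi^{(j)}-\mathrm i c_0(\partial_Y^2-\alpha^2)\varphi^{(j)},
\end{align*}
and $\phi_{non}:=\sum_{j\geq 0}(\varphi^{(j)}+\psi^{(j)})$. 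Using $u-c=(u-\hat c)+\mathrm i c_0$, a direct calculation shows that the scheme is arranged so that $\sum_{j=0}^N OS[\varphi^{(j)}+\psi^{(j)}]=f+u''\psi^{(N)}$, so convergence $\psi^{(N)}\to 0$ in $\mathcal X$ delivers a genuine solution of $OS[\phi_{non}]=f$.

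The two building blocks are already in place. Each Rayleigh step is handled by Proposition~\ref{pro: phi_non}, which places $\varphi^{(j)}\in\mathcal Y$ through the full-derivative estimate~\eqref{eq:ray-force-Y}. For each Airy step I would split the source in accord with the triple-deck structure,
\begin{align*}
-\varepsilon(\partial_Y^2-\alpha^2)^2\varphi^{(j)}-\mathrm i c_0(\partial_Y^2-\alpha^2)\varphi^{(j)}
=(\mathrm i c_0\alpha^2-\varepsilon\alpha^4)\varphi^{(j)}+(2\varepsilon\alpha^2-\mathrm i c_0)\partial_Y^2\varphi^{(j)}-\varepsilon\partial_Y^4\varphi^{(j)},
\end{align*}
whose three pieces are of orders $1$, $(u-\hat c)^{-1}$, and $(u-\hat c)^{-3}$ near the critical layer; these feed respectively into the first case and the second case of Theorem~\ref{them:airy-gf-1} (upper and main decks), and into Theorem~\ref{them:airy-sf} for the sublayer after writing the singular piece as $\partial_Y^2 G$ with $G=-\varepsilon\chi\partial_Y^2\varphi^{(j)}$ for a cutoff $\chi$ localising to $\mathcal N$. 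The tail of $-\varepsilon\partial_Y^4\varphi^{(j)}$ outside $\mathcal N$ is regular and absorbs into the first two pieces. All hypotheses are met because $\mathcal Y$ controls $(u-\hat c)^k\partial_Y^k\varphi^{(j)}$ for $0\leq k\leq 4$. Summing the three outputs places $\psi^{(j)}\in\mathcal X$.

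The crux is the contraction. Chaining the bounds, the worst contribution is the sublayer one: Theorem~\ref{them:airy-sf} with $G=\mathcal O(\varepsilon)\partial_Y^2\varphi^{(j)}$ yields an $\mathcal X$-norm of order $|\varepsilon|\cdot|\varepsilon|^{-2/3}|\log|c_i||\cdot\|\varphi^{(j)}\|_{\mathcal Y}=\mathcal O\bigl(|\varepsilon|^{1/3}|\log|\varepsilon||\bigr)\|\varphi^{(j)}\|_{\mathcal Y}$, and the subsequent Rayleigh step contributes only a further $|\log|\hat c_i||$. Thus at each loop $\|\varphi^{(j+1)}\|_{\mathcal Y}+\|\psi^{(j+1)}\|_{\mathcal X}\leq\delta(\varepsilon,\alpha)\bigl(\|\varphi^{(j)}\|_{\mathcal Y}+\|\psi^{(j)}\|_{\mathcal X}\bigr)$ with $\delta\ll 1$ on $\mathbb H_3$, and the series converges in $\mathcal X$ to the desired $\phi_{non}$ satisfying the stated estimate. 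The main obstacle is the bookkeeping of the weight $\mathcal W$ --- which transitions from $|\varepsilon|^{1/3}(u-\hat c)^3$ in $\mathcal N$ to $|\varepsilon|(u-\hat c)$ in $\mathcal N^\pm$ --- in recovering the $\mathcal W\partial_Y^2 w$ piece of $\|\psi^{(j)}\|_{\mathcal X}$: this requires $\|(u-\hat c)^3\partial_Y^2 G\|_{L^\infty}$, which translates into an effective fifth-order control of $\varphi^{(j)}$ that $\mathcal Y$ only narrowly provides, and the deficit must be absorbed by the $|\varepsilon|$ prefactor together with the constraint $|\varepsilon|^{2/3}\ll c_0\ll|\varepsilon|^{1/3}$. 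It is precisely this balance that forces the choice $c_0=|\varepsilon|\alpha^{-3/2}$.
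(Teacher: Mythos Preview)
Your overall strategy is exactly the paper's: a Rayleigh--Airy iteration built on $Ray_{\hat c}$, with the Airy source decomposed according to its regularity near the critical layer, and the three Airy theorems handling the three pieces. Two organizational points diverge from the paper's execution and deserve correction.

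First, the paper does not expand the Airy source directly in $\partial_Y^2\varphi^{(j)}$ and $\partial_Y^4\varphi^{(j)}$. Instead it uses the Rayleigh equation $(u-\hat c)(\partial_Y^2-\alpha^2)\varphi^{(j)}=u''\varphi^{(j)}+u''\psi^{(j-1)}$ to write
\[
OS[\varphi^{(j)}]=-u''\psi^{(j-1)}+\varepsilon\partial_Y^2\Bigl(\tfrac{u''\varphi^{(j)}+u''\psi^{(j-1)}}{u-\hat c}\Bigr)-(\mathrm ic_0+\varepsilon\alpha^2)\tfrac{u''\varphi^{(j)}+u''\psi^{(j-1)}}{u-\hat c},
\]
and then splits by a cutoff $\chi$ equal to $1$ on $[0,\tfrac12]$ (not localizing to $\mathcal N$). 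This is algebraically equivalent to your $G=-\varepsilon\chi\partial_Y^2\varphi^{(j)}$ up to harmless terms, and your worry about ``fifth-order control'' is unfounded: $\|(u-\hat c)^3\partial_Y^2 G\|$ requires only $(u-\hat c)^3\partial_Y^4\varphi^{(j)}$, which is precisely what $\mathcal Y$ provides.

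Second, and more substantively, your contraction bookkeeping is off. The sublayer Airy estimate carries the factor $|\varepsilon|^{-1/3}|\hat c|$ on $\|(u-\hat c)G\|$, so the output is $\sim |\hat c|\,|\log|\varepsilon||\,\|\varphi^{(j)}\|_{\mathcal Y}\sim\alpha|\log|\varepsilon||\,\|\varphi^{(j)}\|_{\mathcal Y}$, not $|\varepsilon|^{1/3}|\log|\varepsilon||$. More importantly, the paper does \emph{not} contract the full $\mathcal X$-norm: the $\|\psi^{(j)}\|_{\mathcal X}$ bound carries the extra factor $(\alpha+|\varepsilon|^{2/3}|\hat c_i|^{-1})$, which is only marginally small. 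Instead the paper runs the geometric decay in the reduced norm $\|(u-\hat c)^2\partial_Y^2\psi^{(j)}\|+\|(u-\hat c)\partial_Y\psi^{(j)}\|+\|\psi^{(j)}\|$, where the factor is cleanly $C\alpha|\log|\varepsilon||^2$; since the Rayleigh step and the $\mathcal X$-bound for $\psi^{(j)}$ depend only on this reduced norm of $\psi^{(j-1)}$, one recovers $\|\phi^{(j)}\|_{\mathcal X}\leq C(\alpha|\log|\varepsilon||^2)^{j-1}|\log|\varepsilon||\cdot(\text{data})$ and sums. Your direct $\mathcal X$-to-$\mathcal X$ contraction would need a more careful justification.
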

\begin{proof}
	\textbf{The initial step of the iteration.}\smallskip
	
		We introduce that $\varphi^{(1)}$, as constructed in Proposition \ref{pro: phi_non}, is the solution to the following non-homogeneous Rayleigh equation
\begin{align*}
	Ray_{\hat c}[\varphi^{(1)}]=-f,\quad\varphi^{(1)}(\infty)=0.
\end{align*}
Then by Proposition \ref{pro: phi_non}, we have  
\begin{align}\label{eq:OS-non-homo-phi1}
	\|\varphi^{(1)}\|_{\mathcal{X}}\leq \|\varphi^{(1)}\|_{\mathcal{Y}}\leq C|\log|\varepsilon||(\|(u-\hat c)^2\pa_Y^2f\|_{L^\infty_{\eta_0}}+\|(u-\hat c)\pa_Yf\|_{L^\infty_{\eta_0}}+\|f\|_{L^\infty_{\eta_0}}),
\end{align}
and 
\begin{align}\label{eq:OS-non-homo-phi2}
	\|(u-\hat c)\partial_Y^2\varphi^{(1)}\|_{L^\infty_{\eta_0}}+\|\partial_Y\varphi^{(1)}\|_{L^\infty_{\eta_0}}+\|\varphi^{(1)}\|_{L^\infty_{\eta_0}}\leq C|\log|\varepsilon||\|f\|_{L^\infty_{\eta_0}}.
\end{align}
Moreover, we notice that 
\begin{align*}
	OS[\varphi^{(1)}]=&f+\varepsilon(\partial_Y^2-\al^2)^2\varphi^{(1)}-\mathrm{i}c_0(\pa_Y^2-\al^2)\varphi^{(1)}\\
	=&f+\varepsilon\pa_Y^2\Big(\frac{u''\varphi^{(1)}}{u-\hat c}-\frac{f}{u-\hat c}\Big)-(\mathrm i c_0+\varepsilon\al^2)\Big(\frac{u''\varphi^{(1)}}{u-\hat c}-\frac{f}{u-\hat c}\Big).
\end{align*}
Next we introduce $\psi^{(1)}=\psi_1^{(1)}+\psi_2^{(1)}+\psi_3^{(1)}$, where $\psi_j^{(1)}$ for $j=1,2,3$ satisfy 
\begin{align*}
	&Airy[(\partial_Y^2-\al^2)\psi_1^{(1)}]=-\varepsilon\pa_Y^2\Big[(1-\chi)\Big(\frac{u''\varphi^{(1)}}{u-\hat c}-\frac{f}{u-\hat c}\Big)\Big],\\
	&Airy[(\partial_Y^2-\al^2)\psi_2^{(1)}]=(\mathrm i c_0+\varepsilon\al^2)\Big(\frac{u''\varphi^{(1)}}{u-\hat c}-\frac{f}{u-\hat c}\Big),\\
	&Airy[(\partial_Y^2-\al^2)\psi_3^{(1)}]=-\varepsilon\pa_Y^2\Big[\chi \Big(\frac{u''\varphi^{(1)}}{u-\hat c}-\frac{f}{u-\hat c}\Big)\Big].
\end{align*}
Here $\chi(Y)$ is a smooth cut-off function with $\chi(Y)\equiv1$ on $[0,\f12]$ and $\chi(Y)\equiv0$ on $[1,+\infty)$. According to Theorem \ref{them:airy-gf-1} and \eqref{eq:OS-non-homo-phi2}, we obtain 
\begin{align*}
	&\|\psi_1^{(1)}\|_{\mathcal X}\leq C|\varepsilon||\log|\varepsilon||(\|\varphi^{(1)}\|_{L^\infty_{\eta_0}}+\|f\|_{L^\infty_{\eta_0}})\leq C|\varepsilon||\log|\varepsilon||^2\|f\|_{L^\infty_{\eta_0}},\\
	&\|\psi_2^{(1)}\|_{\mathcal X}\leq C|\varepsilon|^{-\f13}\al c_0|\log|\varepsilon||(\|\varphi^{(1)}\|_{L^\infty_{\eta_0}}+\|f\|_{L^\infty_{\eta_0}})\leq C|\varepsilon|^{-\f13}\al c_0|\log|\varepsilon||^2\|f\|_{L^\infty_{\eta_0}}.
\end{align*}
By Theorem \ref{them:airy-sf} and \eqref{eq:OS-non-homo-phi1}, we have  
\begin{align*}
	\|\psi_3^{(1)}\|_{\mathcal{X}}\leq& C(\al+|\varepsilon|^\f23|\hat c_i|^{-1})|\log|\varepsilon||\Big\|(u-\hat c)^3\pa_Y^2\Big[\chi \Big(\frac{u''\varphi^{(1)}}{u-\hat c}-\frac{f}{u-\hat c}\Big)\Big]\Big\|_{L^\infty}\\
	&+C(\al+|\varepsilon|^\f23|\hat c_i|^{-1})|\log|\varepsilon||\Big\|(u-\hat c)^2\pa_Y\Big[\chi \Big(\frac{u''\varphi^{(1)}}{u-\hat c}-\frac{f}{u-\hat c}\Big)\Big]\Big\|_{L^\infty}\\
	&+C(\al+|\varepsilon|^\f23|\hat c_i|^{-1})|\log|\varepsilon||\Big\|(u-\hat c)\chi \Big(\frac{u''\varphi^{(1)}}{u-\hat c}-\frac{f}{u-\hat c}\Big)\Big\|_{L^\infty}\\
	\leq& C(\al+|\varepsilon|^\f23|\hat c_i|^{-1})|\log|\varepsilon||^2(\|(u-\hat c)^2\pa_Y^2f\|_{L^\infty_{\eta_0}}+\|(u-\hat c)\pa_Yf\|_{L^\infty_{\eta_0}}+\|f\|_{L^\infty_{\eta_0}}),
\end{align*}
and 
\begin{align*}
&\|(u-\hat c)^2\pa_Y^2\psi_3^{(1)}\|_{L^\infty_{\eta_0}}+\|(u-\hat c)\pa_Y\psi_3^{(1)}\|_{L^\infty_{\eta_0}}+\|\psi_3^{(1)}\|_{L^\infty_{\eta_0}}	\\
&\leq C\al|\log|\varepsilon||^2(\|(u-\hat c)^2\pa_Y^2f\|_{L^\infty_{\eta_0}}+\|(u-\hat c)\pa_Yf\|_{L^\infty_{\eta_0}}+\|f\|_{L^\infty_{\eta_0}}).
\end{align*}
Therefore, we obtain 
\begin{align}
	\|\psi^{(1)}\|_{\mathcal{X}}\leq C(\al+|\varepsilon|^\f23|\hat c_i|^{-1})|\log|\varepsilon||^2(\|(u-\hat c)^2\pa_Y^2f\|_{L^\infty_{\eta_0}}+\|(u-\hat c)\pa_Yf\|_{L^\infty_{\eta_0}}+\|f\|_{L^\infty_{\eta_0}}),
\end{align}
and 
\begin{align}
&\|(u-\hat c)^2\pa_Y^2\psi^{(1)}\|_{L^\infty_{\eta_0}}+\|(u-\hat c)\pa_Y\psi^{(1)}\|_{L^\infty_{\eta_0}}+\|\psi^{(1)}\|_{L^\infty_{\eta_0}}\nonumber	\\
&\leq C\al|\log|\varepsilon||^2(\|(u-\hat c)^2\pa_Y^2f\|_{L^\infty_{\eta_0}}+\|(u-\hat c)\pa_Yf\|_{L^\infty_{\eta_0}}+\|f\|_{L^\infty_{\eta_0}}).
\end{align}
We define $\phi^{(1)}=\varphi^{(1)}+\psi^{(1)}$, which satisfies
\begin{align}
	OS[\phi^{(1)}]=f+u''\psi^{(1)}.
\end{align}
\textbf{The iteration scheme.} For any $j\geq 2$, we define $\phi^{(j)}=\varphi^{(j)}+\psi^{(j)}$, where $\varphi^{(j)}$ is the solution to the following non-homogeneous Rayleigh equation,
\begin{align*}
	Ray_{\hat c}[\varphi^{(j)}]=u''\psi^{(j-1)},\quad\varphi^{(j)}(\infty)=0,
\end{align*}
and $\psi^{(j)}=\psi_1^{(j)}+\psi_2^{(j)}+\psi_3^{(j)}$ with $\psi_1^{(j)}, \psi_2^{(j)},\psi_3^{(j)}$ satisfying the following non-homogeneous Airy equations, respectively
\begin{align}\label{eq:r-a-airy}
	\begin{split}
		&Airy[(\partial_Y^2-\al^2)\psi_1^{(j)}]=-\varepsilon\pa_Y^2\Big[(1-\chi)\Big(\frac{u''\varphi^{(j)}}{u-\hat c}-\frac{u''\psi^{(j-1)}}{u-\hat c}\Big)\Big],\\
		&Airy[(\partial_Y^2-\al^2)\psi_2^{(j)}]=(\mathrm i c_0+\varepsilon\al^2)\Big(\frac{u''\varphi^{(j)}}{u-\hat c}-\frac{u''\psi^{(j-1)}}{u-\hat c}\Big),\\
		&Airy[(\partial_Y^2-\al^2)\psi_3^{(j)}]=-\varepsilon\pa_Y^2\Big[\chi \Big(\frac{u''\varphi^{(j)}}{u-\hat c}-\frac{u''\psi^{(j-1)}}{u-\hat c}\Big)\Big].
	\end{split}
\end{align}
By Proposition \ref{pro: phi_non}, Theorem \ref{them:airy-sf}, Theorem \ref{them:airy-gf-1} and a similar argument as above, we can obtain  
\begin{align}\label{eq:OS-non-homo-j1}
	\begin{split}
		&\|\varphi^{(j)}\|_{\mathcal X}\leq C|\log|\varepsilon||(\|(u-\hat c)^2\pa_Y^2\psi^{(j-1)}\|_{L^\infty_{\eta_0}}+\|(u-\hat c)\pa_Y\psi^{(j-1)}\|_{L^\infty_{\eta_0}}+\|\psi^{(j-1)}\|_{L^\infty_{\eta_0}}),\\
	&\|\psi^{(j)}\|_{\mathcal X}\leq C(\al+|\varepsilon|^\f23|\hat c_i|^{-1})|\log|\varepsilon||^2(\|(u-\hat c)^2\pa_Y^2\psi^{(j-1)}\|_{L^\infty_{\eta_0}}+\|(u-\hat c)\pa_Y\psi^{(j-1)}\|_{L^\infty_{\eta_0}}\\
	&\qquad\qquad\quad+\|\psi^{(j-1)}\|_{L^\infty_{\eta_0}}),
	\end{split}
\end{align}
and 
\begin{align}\label{eq:OS-non-homo-j}
	\begin{split}
		&\|(u-\hat c)^2\pa_Y^2\psi^{(j)}\|_{L^\infty_{\eta_0}}+\|(u-\hat c)\pa_Y\psi^{(j)}\|_{L^\infty_{\eta_0}}+\|\psi^{(j)}\|_{L^\infty_{\eta_0}}	\\
&\leq C\al|\log|\varepsilon||^2(\|(u-\hat c)^2\pa_Y^2\psi^{(j-1)}\|_{L^\infty_{\eta_0}}+\|(u-\hat c)\pa_Y\psi^{(j-1)}\|_{L^\infty_{\eta_0}}+\|\psi^{(j-1)}\|_{L^\infty_{\eta_0}}).
	\end{split}
\end{align}
Moreover, we have 
\begin{align*}
	OS[\phi^{(j)}]=-u''\psi^{(j-1)}+u''\psi^{(j)}.
\end{align*}
\textbf{The construction of $\phi_{non}$.} We define $\phi_{non}=\sum_{j=1}^{+\infty}\phi^{(j)}$, which satisfies $OS[\phi_{non}]=f$ formally. Hence, we are left with the proof of the convergence of the series $\sum_{j=1}^{+\infty}\phi^{(j)}$. By \eqref{eq:OS-non-homo-j}, we obtain  
\begin{align*}
&\|(u-\hat c)^2\pa_Y^2\psi^{(j)}\|_{L^\infty_{\eta_0}}+\|(u-\hat c)\pa_Y\psi^{(j)}\|_{L^\infty_{\eta_0}}+\|\psi^{(j)}\|_{L^\infty_{\eta_0}}\\
&\leq C(\al|\log|\varepsilon||^2)^j(\|(u-\hat c)^2\pa_Y^2f\|_{L^\infty_{\eta_0}}+\|(u-\hat c)\pa_Yf\|_{L^\infty_{\eta_0}}+\|f\|_{L^\infty_{\eta_0}}),
\end{align*}
which along with \eqref{eq:OS-non-homo-j1} deduces 
\begin{align*}
	\|\phi^{(j)}\|_{\mathcal X}\leq C(\al|\log|\varepsilon||^2)^{j-1}|\log|\varepsilon||(\|(u-\hat c)^2\pa_Y^2f\|_{L^\infty_{\eta_0}}+\|(u-\hat c)\pa_Yf\|_{L^\infty_{\eta_0}}+\|f\|_{L^\infty_{\eta_0}}).
\end{align*}
Therefore, we have 
\begin{align*}
	\|\phi_{non}\|_{\mathcal X}\leq& C|\log|\varepsilon||(\sum_{j=1}^{+\infty}(\al|\log|\varepsilon||^2)^{j})(\|(u-\hat c)^2\pa_Y^2f\|_{L^\infty_{\eta_0}}+\|(u-\hat c)\pa_Yf\|_{L^\infty_{\eta_0}}+\|f\|_{L^\infty_{\eta_0}})\\
	\leq&C|\log|\varepsilon||(\|(u-\hat c)^2\pa_Y^2f\|_{L^\infty_{\eta_0}}+\|(u-\hat c)\pa_Yf\|_{L^\infty_{\eta_0}}+\|f\|_{L^\infty_{\eta_0}}).
\end{align*}

The proof is completed.	
\end{proof}

\subsection{The slow mode}
In this part, we construct the slow mode of \eqref{eq:OS-homo}. Let $\varphi_{Ray}(Y)$ be the solution to the homogeneous Rayleigh equation as constructed in Proposition \ref{prop:ray-homo}, with $c$ replaced by $\hat c:=c+\mathrm i |\varepsilon|\alpha^{-\f32}$. That is,
\begin{align}\label{eq:ray-hat-c}
(u-\hat  c)(\partial_Y^2-\alpha^2)\varphi_{Ray}-u''\varphi_{Ray}=0,\quad\varphi_{Ray}(\infty)=0.
\end{align} 
Hence, we have 
\begin{align*}
	OS[\varphi_{Ray}](Y)
	=&\varepsilon(\partial_Y^2-\alpha^2)^2\varphi_{Ray} -\mathrm i|\varepsilon|\alpha^{-\f32}(\partial_Y^2-\alpha^2)\varphi_{Ray}\\
	=&\varepsilon\partial_Y^2(u''(u-\hat c)^{-1}\varphi_{Ray})-(\varepsilon\alpha^2+\mathrm i|\varepsilon|\alpha^{-\f32})u''(u-\hat  c)^{-1}\varphi_{Ray}.
\end{align*}
By Proposition \ref{prop:ray-homo}, we know that 
\begin{align*}
\varphi_{Ray}=(u-\hat  c)e^{-\alpha Y}+\tilde{\varphi}_{R}(Y),\quad\|\tilde{\varphi}_{R}\|_{\mathcal Y}\leq C\alpha|\log\hat  c_i|.
\end{align*}
As a consequence,  there exist $H_1(Y)\in L^\infty, H_2(Y)\in W^{2,\infty}$ such that 
\begin{align}
\frac{u''(Y)\varphi_{Ray}(Y)}{u(Y)-\hat  c}=H_1(Y)+H_2(Y),
\end{align}
and 
\begin{align}\label{eq:os-ray-err}
\begin{split}
&\|H_1(Y)\|_{L^\infty_{\eta_0}}\leq C,\quad\|H_1-u''e^{-\alpha Y}\|_{L^\infty_{\eta_0}}+\|H_2-\frac{\al u''}{u'(Y_c)(u-\hat  c)}\|_{L^\infty_{\eta_0}}\leq C\alpha|\log\hat  c_i|.\\
&\|(u-\hat  c)H_2\|_{L^\infty_{\eta_0}}+\|(u-\hat  c)^2\partial_Y H_2\|_{L^\infty_{\eta_0}}+\|(u-\hat  c)^3\partial_Y^2 H_2\|_{L^\infty_{\eta_0}}\leq C\alpha|\log\hat  c_i|.
\end{split}
\end{align}
Then we can write 
\begin{align*}
	OS[\varphi_{Ray}](Y)=\varepsilon\partial_Y^2H_1(Y)+\varepsilon\partial_Y^2H_2(Y)-(\mathrm i|\varepsilon|\alpha^{-\f32}+\varepsilon\alpha^2)(H_1+H_2).
\end{align*}

\begin{theorem}\label{them:slow-mode}
Let $(\al,c)\in\mathbb{ H}_3$ with $c_i>-|\varepsilon|\alpha^{-\f32}/ 2$.  Then there exists a solution $\phi_s$  to \eqref{eq:OS-homo} such that $e^{\alpha Y}\phi_s(Y)\in W^{4,\infty}(\mathbb R_+)$ and 
\begin{align*}
	&\phi_s(0)=-c+u'(0)^{-1}\alpha+\mathcal O(|\varepsilon|\alpha^{-\f32}+\alpha^2)|\log|\varepsilon||^3,\\
	&\partial_Y\phi_s(0)= u'(0)+\mathcal{O}(|\al|| \log |\varepsilon| |).
\end{align*}
Moreover, if $\alpha\gg|\varepsilon|^\f14$, we have
\begin{align*}
	&\mathrm{Im}(\phi_s(0))=- c_i+\frac{\al c_r u''(Y_c)\pi}{u'(Y_c)^{3}}+o(\alpha^2),\quad\mathrm{Im}(\partial_Y\phi_s(0))=-\frac{\al u''(Y_c)\pi}{u'(Y_c)^{2}}+o(\alpha).
\end{align*}

\end{theorem}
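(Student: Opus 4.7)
The plan is to realize $\phi_s$ by the modified Rayleigh--Airy scheme sketched in \eqref{eq:sp-slow-mode}, writing
\[
\phi_s \;=\; \varphi_{Ray} \;+\; \psi_s^{(1)} \;+\; \tilde\phi_s,
\]
and then reading off the boundary values from Proposition \ref{prop:ray-homo}. I first take $\varphi_{Ray}$ to be the decaying solution of the modified Rayleigh equation \eqref{eq:ray-hat-c} supplied by Proposition \ref{prop:ray-homo} with $c$ replaced by $\hat c = c + \mathrm i|\varepsilon|\alpha^{-3/2}$; the hypothesis $(\alpha,\hat c)\in\mathbb H_1$ holds because $\hat c_i \geq |\varepsilon|\alpha^{-3/2}/2 > 0$ and $c_r \sim \alpha$. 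Exploiting the modified Rayleigh equation to substitute for $(\partial_Y^2-\alpha^2)\varphi_{Ray}$ turns $OS[\varphi_{Ray}]$ into $\varepsilon\partial_Y^2(H_1+H_2) - (\mathrm i|\varepsilon|\alpha^{-3/2} + \varepsilon\alpha^2)(H_1+H_2)$, with $H_1, H_2$ the bounded and singular pieces from \eqref{eq:os-ray-err}.

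Next I will construct $\psi_s^{(1)}$ as the decaying solution of
\[
Airy[(\partial_Y^2 - \alpha^2)\psi_s^{(1)}] \;=\; -\varepsilon(\partial_Y^2 - \alpha^2)^2\varphi_{Ray} \;-\; \mathrm i|\varepsilon|\alpha^{-3/2}(\partial_Y^2 - \alpha^2)\varphi_{Ray},
\]
by decomposing the source into three pieces exactly as in the initial step of the iteration inside the proof of Theorem \ref{thm:OS-non-homo}: the regular piece supported away from the critical layer is controlled by Theorem \ref{them:airy-gf-1} (first alternative); the lower-order $\mathrm i|\varepsilon|\alpha^{-3/2}$ piece multiplied by the singular factor $(u-\hat c)^{-1}$ is controlled by Theorem \ref{them:airy-gf-1} (second alternative); and the $\chi\varepsilon\partial_Y^2$-part supported near $Y_c$ is controlled by Theorem \ref{them:airy-sf}. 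A direct size count then yields boundary estimates $|\psi_s^{(1)}(0)| + |\partial_Y\psi_s^{(1)}(0)| = \mathcal O((|\varepsilon|\alpha^{-3/2} + \alpha^2)|\log|\varepsilon||^2)$.

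Then I close the scheme by invoking Theorem \ref{thm:OS-non-homo} with forcing $f = -u''\psi_s^{(1)}$ to obtain $\tilde\phi_s \in \mathcal X$ of the same order at the boundary (gaining one extra $|\log|\varepsilon||$). By construction $OS[\varphi_{Ray} + \psi_s^{(1)}] = u''\psi_s^{(1)}$, so $OS[\phi_s] = 0$. Summing $\phi_s(0) = \varphi_{Ray}(0) + \psi_s^{(1)}(0) + \tilde\phi_s(0)$, using the boundary expansion from Proposition \ref{prop:ray-homo} and replacing $\hat c$ by $c$ (the shift $\mathrm i|\varepsilon|\alpha^{-3/2}$ being absorbed into the stated error), and using $u'(Y_c)^{-1} = u'(0)^{-1} + \mathcal O(c_r)$, gives the first pair of boundary formulas; the corresponding estimate for $\partial_Y\phi_s(0)$ follows identically from the second expansion of Proposition \ref{prop:ray-homo}.

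For the refined imaginary part in the regime $\alpha \gg |\varepsilon|^{1/4}$, equivalently $|\varepsilon|\alpha^{-3/2} \ll \alpha$, I would apply the second expansion in Proposition \ref{prop:ray-homo} and use $\arg(-\hat c) \to -\pi$ as $\hat c_i \downarrow 0^+$ to produce the $\pi$-factor. The main obstacle lies here: one must prove that $\mathrm{Im}\,\psi_s^{(1)}(0)$ and $\mathrm{Im}\,\tilde\phi_s(0)$ contribute only $o(\alpha^2)$. This would be obtained by extracting the leading boundary coefficient $\mathcal M_0$ from Corollary \ref{coro:airy-app-sf-boundary} and showing that, after pairing with the explicit Rayleigh source, its imaginary part is dominated by $\alpha^2$ under the smallness $|\varepsilon|\alpha^{-3/2} \ll \alpha$. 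All other errors are higher order by the pointwise Airy estimates of Section~3.
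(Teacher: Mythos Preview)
Your construction $\phi_s=\varphi_{Ray}+\psi_s^{(1)}+\tilde\phi_s$ and the three--piece splitting of the Airy source are exactly the paper's approach, and the first pair of boundary formulas does follow as you describe (one small slip: the direct bound on $\partial_Y\psi_s^{(1)}(0)$ is only $\mathcal O(\alpha|\log|\varepsilon||)$, not $\mathcal O(\alpha^2|\log|\varepsilon||^2)$, because $|(u-\hat c)\partial_Y\psi_{3,s}^{(1)}|\lesssim\alpha^2$ is divided by $|u(0)-\hat c|\sim\alpha$; fortunately this is all the first statement needs).

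The genuine gap is in the refined imaginary part. Two separate issues arise and neither is covered by ``the pointwise Airy estimates of Section~3'':
\begin{itemize}
\item To show $\mathrm{Im}\,\mathcal M_{0,s}=o(\alpha)$ (equivalently $\mathrm{Im}\,\psi_s^{(1)}(0)=o(\alpha^2)$) one has to compute $\varepsilon\int_0^\infty\mathcal H_1(Z)\,u''(u-\hat c)^{-1}\partial_Z\eta^{-1}\chi\,dZ$ to leading order. The paper does this (Lemmas \ref{lem:m0s}--\ref{lem:m0}) by recognizing $\mathcal H_1^{(2)}=A_2(1,\cdot)\partial^2A_1-A_1(1,\cdot)\partial^2A_2$ as (up to small errors) the Scorer function $Hi$, and then invoking the asymptotics of $\int Hi$ from Lemma \ref{lem:asy-Hi}. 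This is the mechanism that produces the explicit $\pi$ and makes the $\arg(-\hat c)$ contribution cancel; it is not a size estimate but an identification, and your outline does not indicate it.
\item More critically, the $\mathcal X$--norm bound from Theorem \ref{thm:OS-non-homo} yields only $|\tilde\phi_s(0)|\le C(|\varepsilon|\alpha^{-3/2}+\alpha^2)|\log|\varepsilon||^3$, which is \emph{not} $o(\alpha^2)$. The paper bypasses this by peeling off the first Rayleigh step $\tilde\varphi_s$ with $Ray_{\hat c}[\tilde\varphi_s]=u''\psi_s^{(1)}$, writing $\tilde\varphi_s(0)$ via the explicit integral formula, and integrating by parts in $\int_0^\infty u''\psi_s^{(1)}\,dZ$. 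The resulting boundary terms $-u'(0)\psi_s^{(1)}(0)-c\,\partial_Y\psi_s^{(1)}(0)$ cancel to $o(\alpha^2)$ thanks to the structural relations $\psi_s^{(1)}(0)\approx u'(0)^{-1}c_r\mathcal M_{0,s}$ and $\partial_Y\psi_s^{(1)}(0)\approx-\mathcal M_{0,s}$ from Theorem \ref{them:airy-sf}/Corollary \ref{coro:airy-app-sf-boundary}, while the bulk term uses the \emph{sharper} estimate $\|(u-\hat c)^2\partial_Y^2\psi_s^{(1)}\|_{L^\infty_{\eta_0}}\le C(|\varepsilon|\alpha^{-3/2}+|\varepsilon|^{1/3}\alpha)|\log|\varepsilon||$, which is genuinely better than the $\mathcal X$--norm of $\psi_s^{(1)}$ and \emph{is} $\ll\alpha^2$ when $\alpha\gg|\varepsilon|^{1/4}$.
\end{itemize}
Without these two ingredients the refined $o(\alpha^2)$ claim does not close.
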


\begin{proof}
Let $\varphi_{Ray}$ be the solution to \eqref{eq:ray-hat-c}. We define $\psi^{(1)}_s=\psi_{1,s}^{(1)}+\psi_{2,s}^{(1)}+\psi_{3,s}^{(1)}$, where $\psi_{1,s}^{(1)},\psi_{2,s}^{(1)}$ and $\psi_{3,s}^{(1)}$ satisfy the following equations, respectively
\begin{align}
	\begin{split}
		&Airy[(\partial_Y^2-\alpha^2)\psi_{1,s}^{(1)}]=-\varepsilon\partial_Y^2H_1-\varepsilon(\alpha^{-\f32}-\alpha^2)H_1-\varepsilon\partial_Y^2[(1-\chi)H_2]\\
&Airy[(\partial_Y^2-\alpha^2)\psi_{2,s}^{(1)}]=-\varepsilon(\alpha^{-\f32}-\alpha^2)H_2,\quad Airy[(\partial_Y^2-\alpha^2)\psi_{3,s}^{(1)}]=-\varepsilon\partial_Y^2(\chi H_2).
	\end{split}
\end{align}
Here $\chi(Y)$ is a smooth cut-off function with $\chi(Y)\equiv1$ on $[0,\f12]$ and $\chi(Y)\equiv0$ on $[1,+\infty)$. 
Hence, by \eqref{eq:os-ray-err} 
\begin{align*}
\|\varepsilon\partial_Y^2H_1+\varepsilon(\alpha^{-\f32}-\alpha^2)H_1+\varepsilon\partial_Y^2[(1-\chi)H_2]\|_{L^\infty_{\eta_0}}\leq C|\varepsilon|\alpha^{-\f32},
\end{align*} 
which along with Theorem \ref{them:airy-gf-1} implies that 
\begin{align}\label{eq:OS-slow-airy1}
\|\psi_{1,s}^{(1)}\|_{\mathcal X}\leq C|\varepsilon|\alpha^{-\f32}|\log|\varepsilon||.
\end{align}
In particular,
\begin{align}\label{eq:slow-airy1-bv}
	|(u-\hat c)\partial_Y\psi^{(1)}_{1,s}(0)|+|\psi_{1,s}^{(1)}(0)|\leq C|\varepsilon|\alpha^{-\f32}|\log|\varepsilon||.
\end{align}
Again by \eqref{eq:os-ray-err}, we have 
\begin{align*}
\|(u-\hat c)\varepsilon(\alpha^{-\f32}-\alpha^2)H_2\|_{L^\infty_{\eta_0}}\leq C|\varepsilon|^\f56|\log|\hat c_i||,
\end{align*}
which along with Theorem \ref{them:airy-gf-1} implies that 
\begin{align}\label{eq:OS-slow-airy2}
\|\psi_{2,s}^{(1)}\|_{\mathcal X}\leq C|\varepsilon|^\f12\al|\log|\varepsilon||^3.
\end{align}
By the definition of $\mathbb H_3$, \eqref{eq:os-ray-err} and Theorem \ref{them:airy-sf}, we obtain 
\begin{align}\label{eq:slow-mode-psi1-e1}
\begin{split}
&|\varepsilon|^{\f23}\|(u-\hat c)\partial^3_Y\psi_{3,s}^{(1)}\|_{L^\infty_{\eta_0}}+\|\mathcal{W}(Y)\partial^4_Y\psi_{3,s}^{(1)}\|_{L^\infty_{\eta_0}}\\
&\leq  C\alpha|\log|\varepsilon||^2(\alpha^\f23|\varepsilon|^{-\f13}+|\varepsilon|^\f13+\alpha^2|\varepsilon|^{-\f13})\leq C\alpha^{\f52}|\varepsilon|^{-\f13}|\log|\varepsilon||,\\
&\|(u-\hat c)^2\partial_Y^2\psi_{3,s}^{(1)}\|_{L^\infty_{\eta_0}}\leq C|\varepsilon|^\f13\alpha|\log|\varepsilon||,  \\
&\|\psi_{3,s}^{(1)}\|_{L^\infty_{\eta_0}}+\|(u-\hat{c})\partial_Y\psi_{3,s}^{(1)}\|_{L^\infty}\leq C\alpha^2|\log|\varepsilon||^2.
\end{split}
\end{align}
Hence, we have 
\begin{align*}
\|\psi_{3,s}^{(1)}\|_{\mathcal X}\leq \alpha^\f52|\varepsilon|^{-\f13}|\log|\varepsilon||.
\end{align*}
Thus, we obtain 
\begin{align}\label{eq:slow-mode-airy-x}
\begin{split}
	&\|\psi^{(1)}_s\|_{\mathcal X}\leq C(|\varepsilon|\alpha^{-\f32}+\alpha^\f52|\varepsilon|^{-\f13})|\log|\varepsilon||,\\
	&\|(u-\hat{c})^2\partial^2_Y\psi_s^{(1)}\|_{L^\infty_{\eta_0}}\leq C(|\varepsilon|\alpha^{-\f32}+|\varepsilon|^\f13\alpha)|\log|\varepsilon||, \\
	&\|\psi^{(1)}_s\|_{L^\infty_{\eta_0}}+\|(u-\hat{c})\partial_Y\psi_s^{(1)}\|_{L^\infty_{\eta_0}}\leq C(|\varepsilon|\alpha^{-\f32}+\alpha^2)|\log|\varepsilon||.
\end{split}
\end{align}
From \eqref{eq:slow-mode-airy-x},  we infer that 
\begin{align*}
&|\psi^{(1)}_s(0)|\leq C(|\varepsilon|\alpha^{-\f32}+\alpha^2)|\log|\varepsilon||,\quad|\partial_Y\psi^{(1)}_s(0)|\leq C\alpha|\log|\varepsilon||.
\end{align*}
Moreover, we have 
\begin{align*}
	OS[\varphi_{Ray}+\psi_s^{(1)}]=u''\psi_s^{(1)}.
\end{align*}
Next we define $\tilde\phi_{s}\in \mathcal X$ to be the solution to $OS[\tilde\phi_s]=-u''\psi^{(1)}_s$. Then by Theorem \ref{thm:OS-non-homo} and \eqref{eq:slow-mode-airy-x}, 
we can obtain  
\begin{align*}
	\|\tilde\phi_s\|_{\mathcal X}\leq& C|\log|\varepsilon||(\|\psi^{(1)}_s\|_{L^\infty_{\eta_0}}+\|(u-\hat{c})\partial_Y\psi^{(1)}_s\|_{L^\infty_{\eta_0}}+\|(u-\hat{c})^2\partial^2_Y\psi^{(1)}_s\|_{L^\infty_{\eta_0}})\\
	\leq &C(|\varepsilon|\alpha^{-\f32}+\alpha^2)|\log|\varepsilon||^2.
\end{align*}
We finish our construction by defining $\phi_s=\varphi_{Ray}+\psi_s^{(1)}+\tilde\phi_s$ and applying Proposition \ref{prop:ray-homo} with $c$ replaced by $\hat c$.

To construct the neutral stable mode, we need to obtain the asymptotic expansion of $\phi_s(0)$ up to the order $\alpha^2$. Hence, the above bounds of $\psi_s^{(1)}(0)$ are not sharp enough. In the rest of the proof, we shall provide the exact expansion of $\psi^{(1)}_s(0)$ up to the order of $\alpha^2$ for the case $\alpha\gg|\varepsilon|^\f14$.

By \eqref{eq:slow-airy1-bv}, \eqref{eq:OS-slow-airy2}, \eqref{eq:slow-mode-psi1-e1} and Theorem \ref{them:airy-sf},  we have  
\begin{align}\label{eq:slow-up-psi}
	\begin{split}
			&|\psi_s^{(1)}(0)-u'(0)^{-1}c_r\mathcal M_{0,s}|\leq C(|\varepsilon|\alpha^{-\f32}+\alpha|\varepsilon|^\f13)|\log|\varepsilon||^2\ll \alpha^2,\\
	&|\psi_s^{(1)}(0)+\mathcal M_{0,s}|\leq C(|\varepsilon|\alpha^{-\f52}+|\varepsilon|^\f13)|\log|\varepsilon||^2\leq C(|\varepsilon|^\f38+|\varepsilon|^\f13)|\log|\varepsilon||^2\ll\alpha,
	\end{split}
\end{align}
where $\mathcal M_{0,s}$ is defined as \eqref{eq:def-m-0s}.

By Proposition \ref{prop:ray-homo}, Lemma \ref{lem:m0s} and \eqref{eq:slow-up-psi}  we can obtain that  
\begin{align*}
	&\mathrm{Im}(\varphi_{Ray}(0)+\psi_s^{(1)}(0))=- c_i+\frac{\al c_r u''(Y_c)\pi}{u'(Y_c)^{3}}+o(\alpha^2),\\
	&\mathrm{Im}(\partial_Y(\varphi_{Ray}+\psi_s^{(1)})(0))=-\frac{\al u''(Y_c)\pi}{u'(Y_c)^{2}}+o(\alpha).
\end{align*}
Now we turn to show that $|\tilde\phi_s(0)|+\alpha|\partial_Y\tilde\phi_s(0)|\ll\alpha^2$. We notice that by \eqref{eq:OS-slow-airy1}, \eqref{eq:OS-slow-airy2}, \eqref{eq:slow-mode-psi1-e1} and the iteration scheme of $\tilde\phi_s$ introduced in the proof of Theorem \ref{thm:OS-non-homo},
\begin{align*}
	\|\tilde\phi_s-\tilde\varphi_{s}\|_{\mathcal X}\ll\alpha^2, \text{ where }Ray_{\hat c}[\tilde\varphi_{s}]=u''\psi_{s}^{(1)},
\end{align*}
which implies that 
\begin{align}\label{eq:slow-up-t-dis}
|(\tilde\phi_s-\tilde\varphi_s)(0)|+\al|\partial_Y(\tilde\phi_s-\tilde\varphi_s)(0)|\ll\alpha^2	
\end{align}
By Proposition \ref{pro: phi_non}, we can obtain
\begin{align}\label{eq:slow-up-p-phi}
	|\partial_Y\tilde\varphi_s(0)|\leq C\alpha^2|\log|\varepsilon||^3\ll\alpha.
\end{align}
To obtain the estimates for $\tilde\varphi_s(0)$, we decompose $\tilde\varphi_s=\tilde\varphi_1+\tilde\varphi_2$, where
\begin{align*}
	\tilde\varphi_1(Y)=(u-\hat c)\int_Y^{+\infty}(u(Y')-\hat c)^{-2}\int_{Y'}^{+\infty}u''(Z)\psi_{s}^{(1)}(Z)dZdY',\quad Ray_{\hat c}[\tilde\varphi_2]=\alpha^2(u-\hat c)\tilde\varphi_1.
\end{align*}
We get by integration by parts that 
\begin{align*}
\tilde\varphi_1(0)=u'(Y_c)^{-1}\int_0^{+\infty}u''(Z)\psi_{s}^{(1)}(Z)dZ+\tilde {\mathcal R},
\end{align*}
where
\begin{align*}
	\tilde {\mathcal R}=&\frac{\hat c}{u'(Y_c)(u(Y_0)-\hat c)}\int_{Y_0}^{+\infty}u''\psi_{s}^{(1)}dZ+\frac{\hat c}{u'(Y_c)}\int_0^{Y_0}\frac{u''\psi_{s}^{(1)}}{u-\hat c}dZ\\
	&-\hat c\int_{Y_0}^{+\infty}(u(Y)-\hat c)^{-2}\int_Y^{+\infty}u''\psi_{s}^{(1)}dZdY+\frac{\hat c}{u'(Y_c)}\int_0^{Y_0}\frac{u'(Y)-u'(Y_c)}{(u-\hat c)^2}\int_Y^{+\infty}u''\psi_{s}^{(1)}dZdY.
\end{align*}
We notice that by integration by parts,
\begin{align*}
	\int_0^{+\infty}u''(Z)\psi_{s}^{(1)}(Z)dZ=-u'(0)\psi_s^{(1)}(0)-c\partial_Y\psi_s^{(1)}(0)+\int_0^{+\infty}(u-c)\partial_Y^2\psi_s^{(1)}dY,
\end{align*}
which along with \eqref{eq:slow-mode-airy-x} and \eqref{eq:slow-up-psi} implies that 
\begin{align*}
	&\Big|u'(Y_c)^{-1}\int_0^{+\infty}u''(Z)\psi_{s}^{(1)}(Z)dZ\Big|\\
	&\leq  C(|\varepsilon|\alpha^{-\f32}+\alpha|\varepsilon|^\f13)|\log|\varepsilon||^2+C\|(u-\hat c)^2\partial_Y^2\psi_s^{(1)}\|_{L^\infty_{\eta_0}}\int_0^{+\infty}e^{-\eta_0 Z}|u-\hat c|^{-1}dZ\\
	&\leq C(|\varepsilon|\alpha^{-\f32}+\alpha|\varepsilon|^\f13)|\log|\varepsilon||^3\ll\alpha^2.
\end{align*}
By \eqref{eq:slow-mode-airy-x}, we have
\begin{align*}
	|\tilde{\mathcal R}|\leq C |\hat c|\|\psi_s^{(1)}\|_{L^\infty}+C|\hat c|\|\psi_s^{(1)}\|_{L^\infty}\int_0^{Y_0}|u-\hat c|^{-1}dZ\leq C\alpha^3|\log|\varepsilon||^3\ll\alpha^2.
\end{align*}
Therefore, we obtain 
\begin{align}\label{eq:slow-up-t-phi1}
	|\tilde \varphi_1(0)|\ll\alpha^2.
\end{align}
Using the bounds of $\varphi_{non}^{(0)}$ from the proof of Proposition 3.2 in \cite{MWWZ} for the case $m=0$, we can obtain 
\begin{align*}
	\|\tilde\varphi_1\|_{L^\infty_{\eta_0}}\leq C|\log|\varepsilon||\|\psi_{3,s}^{(1)}\|_{L^\infty}\leq C\alpha^2|\log|\varepsilon||^3.
\end{align*}
From the above bounds and Proposition \ref{pro: phi_non}, we infer that 
\begin{align}\label{eq:slow-up-t-phi2}
	|\tilde\varphi_2(0)|\leq \|\tilde\varphi_2\|_{L^\infty_{\eta_0}}\leq C\alpha^4|\log|\varepsilon||^4\ll\alpha^2.
\end{align}
Gathering \eqref{eq:slow-up-t-phi1} and \eqref{eq:slow-up-t-phi2}, we conclude  
\begin{align*}
	|\tilde\varphi_s(0)|\ll\alpha^2,
\end{align*}
which along with \eqref{eq:slow-up-t-dis} and \eqref{eq:slow-up-p-phi} deduces that 
\begin{align*}
	|\tilde\phi_s(0)|+\alpha|\partial_Y\tilde\phi_s(0)|\ll\alpha^2.
\end{align*}

The proof is completed.
      \end{proof}

We define 
\begin{align}\label{eq:def-m-0s}
	\mathcal M_{0,s}:=\mathcal M_0[-\varepsilon\chi H_2]=\int_0^{+\infty}\mathcal H_1(Z)\partial_Z\eta(Z)^{-1}(-\varepsilon\chi H_2)(Z)dZ,
\end{align}
where $\mathcal H_1$ is defined in \eqref{eq:airy-app-green-H}
\begin{lemma}\label{lem:m0s}
	Let  $\hat c:=c+\mathrm i |\varepsilon|\alpha^{-\f32}$.  Suppose $(\al,c)\in\mathbb{ H}_3$ with $c_i>-|\varepsilon|\alpha^{-\f32}/ 2$ and $\alpha\gg|\varepsilon|^\f14$. Then we have 
\begin{align*}
&|\mathrm{Re}\left(\mathcal{M}_{0,s}\right)|\leq C\alpha|\log|\varepsilon|| \text{ and }\mathrm{Im}(\mathcal{M}_{0,s})= \f{\alpha u''(Y_c)}{u'(Y_c)^2}\arg(-\hat c)+\frac{\al u''(Y_c)\pi}{u'(Y_c)^{2}}+o(\alpha).
\end{align*}
\end{lemma}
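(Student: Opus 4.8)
Recall from \eqref{eq:def-m-0s} that $\mathcal M_{0,s}=\mathcal M_0[-\varepsilon\chi H_2]=\int_0^{+\infty}\mathcal H_1(Z)\,\partial_Z\eta(Z)^{-1}(-\varepsilon\chi H_2)(Z)\,dZ$, where $\mathcal H_1=\partial_Z\tilde{\mathcal H}$ with $\tilde{\mathcal H}=\partial_Z A_1\,A_2(1,\cdot)-A_1(1,\cdot)\,\partial_Z A_2$ as in the proof of Proposition~\ref{prop:airy-green-Y}. The plan is to localize this integral to the critical layer, replace $H_2$ there by its leading singular part, and then evaluate the remaining Airy integral by a residue/finite-part splitting. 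For the localization: where $\chi'$ lives and, more generally, where $|Z-Y_c|\gtrsim L$, the modified-Airy kernel is exponentially small (Lemma~\ref{lem:A1A2}, via $\tilde W_\varepsilon$), so that region contributes $o(\alpha)$. For the replacement: using the refined near-$Y_c$ expansions of $\varphi_{Ray}$ (Lemma~\ref{lem: asymp-phi_0} and Lemma~\ref{lem: varphi_R}, with $c\to\hat c$), one writes $H_2=\frac{\alpha u''(Y_c)}{u'(Y_c)(u-\hat c)}+H_2^{\rm err}$ where, for $|Z-Y_c|\le L$, $\|(u-\hat c)\chi H_2^{\rm err}\|_{L^\infty}+\|(u-\hat c)^2\chi\partial_Y H_2^{\rm err}\|_{L^\infty}\lesssim \alpha^2|\log|\varepsilon||$ — one logarithm (and a power of $\alpha$) better than the global bounds in \eqref{eq:os-ray-err}, since near $Y_c$ the ``main-deck'' part of $\varphi_{Ray}$ is exactly $\alpha u'(Y_c)^{-1}+O(\alpha^2|\log|\varepsilon||)$. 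A localized version of the estimate of Corollary~\ref{coro:airy-app-sf-boundary} then gives $\mathcal M_0[-\varepsilon\chi H_2^{\rm err}]=O(\alpha^2|\log|\varepsilon||^2)=o(\alpha)$, because on this frequency range ($|\varepsilon|^{1/4}\ll\alpha\ll\nu^{1/12}$) one has $\alpha|\log|\varepsilon||^2\to0$. Thus $\mathcal M_{0,s}$ reduces, up to $o(\alpha)$, to $-\varepsilon\,\tfrac{\alpha u''(Y_c)}{u'(Y_c)}\int\frac{\mathcal H_1(Z)}{u(Z)-\hat c}\,dZ$ over the critical layer, and one may freeze $u',u'',\partial_Z\eta$ at $Y_c$ inside the layer at the same cost.

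Next I integrate by parts, $\mathcal H_1=\partial_Z\tilde{\mathcal H}$: the boundary term at $+\infty$ vanishes by compact support and the one at the inner edge of the layer is exponentially small, $\tilde{\mathcal H}$ decaying there (Lemma~\ref{lem:airy-langer-asy}); using $\partial_Z(u-\hat c)^{-1}=-u'(u-\hat c)^{-2}\approx-u'(Y_c)(u-\hat c)^{-2}$ and bounding the $\partial_Z(\partial_Z\eta^{-1})$-correction by Lemma~\ref{lem:est-eta}, this yields
\[
\mathcal M_{0,s}=-\varepsilon\,\alpha u''(Y_c)\int\frac{\tilde{\mathcal H}(Z)}{(u(Z)-\hat c)^2}\,dZ+o(\alpha).
\]
Since $\hat c_i>0$, the zero $Z_c$ of $u(\cdot)-\hat c$ lies strictly above $\mathbb R$, with $u(Z)-\hat c=u'(Y_c)\big(Z-Y_c-i\hat c_i/u'(Y_c)\big)+O(|Z-Y_c|^2)$; hence $(u-\hat c)^{-2}\to u'(Y_c)^{-2}(Z-Y_c-i0)^{-2}$, and by the derivative of the Sokhotski–Plemelj formula (and $\tilde{\mathcal H}'=\mathcal H_1$),
\[
\int\frac{\tilde{\mathcal H}(Z)}{(u-\hat c)^2}\,dZ=\frac{1}{u'(Y_c)^2}\Big(\mathrm{f.p.}\!\int\frac{\tilde{\mathcal H}(Z)}{(Z-Y_c)^2}\,dZ+i\pi\,\mathcal H_1(Y_c)\Big)+o(|\varepsilon|^{-1}).
\]

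The half-residue is the source of the ``$+\pi$''. At $Y_c$ the modified Langer variable $\kappa\eta(Y_c)=\kappa\eta_i$ is $o(1)$, so (by the explicit form of $A_1,A_2$ and Lemma~\ref{lem:airy-langer-asy}) the two terms of $\mathcal H_1$ carrying a $\partial_Z^2A_j$ vanish at $Y_c$, and a direct computation gives $\mathcal H_1(Y_c)=-2\pi i|\varepsilon|^{-1}(e^{i\pi/6}-e^{i5\pi/6})\,Ai(0)Ai'(0)+o(|\varepsilon|^{-1})=i|\varepsilon|^{-1}+o(|\varepsilon|^{-1})$, using $e^{i\pi/6}-e^{i5\pi/6}=\sqrt3$ and $Ai(0)Ai'(0)=-\sqrt3/(6\pi)$ (from $\Gamma(\tfrac{1}{3})\Gamma(\tfrac{2}{3})=2\pi/\sqrt3$). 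Since $\varepsilon|\varepsilon|^{-1}=-i$, this term contributes $\mp\pi\,\frac{\alpha u''(Y_c)}{u'(Y_c)^2}$ to $\mathrm{Im}(\mathcal M_{0,s})$. For the finite-part term I change variables to $s=\kappa\eta(Z)$, which turns $A_1,A_2$ and their primitives into genuine Airy functions of $e^{i\pi/6}s$ and $e^{i5\pi/6}s$ and reduces $\mathrm{f.p.}\!\int\tilde{\mathcal H}(Z)(Z-Y_c)^{-2}dZ$ to a universal integral depending only on the small parameter $\kappa\eta_i$; simplifying it with the ODEs $f''=isf$ (for $f=Ai(e^{i\pi/6}s),Ai(e^{i5\pi/6}s)$) and matching its logarithmic tail to the Rayleigh main-deck contribution — i.e.\ to the branch of $\log(u-\hat c)$ continued from $Z\gg Y_c$ down to $Z=0$, where $u(0)-\hat c=-\hat c$ — shows that its imaginary part is $\frac{\alpha u''(Y_c)}{u'(Y_c)^2}\arg(-\hat c)+o(\alpha)$ and its real part is $O(\alpha|\log|\varepsilon||)$. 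Adding the two contributions gives the stated formula for $\mathrm{Im}(\mathcal M_{0,s})$; the bound $|\mathrm{Re}(\mathcal M_{0,s})|\le C\alpha|\log|\varepsilon||$ follows from the same evaluation, or more cheaply from the localized Corollary~\ref{coro:airy-app-sf-boundary} estimate (near $Y_c$ the norm $\|(u-\hat c)(-\varepsilon\chi H_2)\|_{L^\infty}$ is only $O(|\varepsilon|\alpha)$).

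\textbf{Main obstacle.} The crux is the finite-part Airy integral of the last step: identifying it and fixing its constant and branch precisely enough to recover exactly $\frac{\alpha u''(Y_c)}{u'(Y_c)^2}\arg(-\hat c)$, so that it cancels the corresponding term of Proposition~\ref{prop:ray-homo} in $\mathrm{Im}(\varphi_{Ray}(0))$ and leaves precisely the physical ``Tollmien shift'' $+\pi$ coming from the half-residue. A secondary — but still delicate — point is the error analysis in the reduction step: showing that replacing $H_2$ by its leading singular part costs only $o(\alpha)$ forces one to use the refined near-$Y_c$ expansions of $\varphi_{Ray}$ (with $c$ replaced by $\hat c$) rather than merely the global bounds \eqref{eq:os-ray-err}.
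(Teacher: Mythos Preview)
Your reduction step --- splitting off the leading singular part $\frac{\alpha u''}{u'(Y_c)(u-\hat c)}$ of $H_2$ and showing that the remainder contributes $o(\alpha)$ --- is the same as the paper's. The divergence, and the gap, is in how the main integral
\[
\mathcal M_{1,s}\;=\;-\frac{\alpha}{u'(Y_c)}\cdot\varepsilon\int_0^{\infty}\frac{\mathcal H_1(Z)\,u''(Z)\chi(Z)}{(u(Z)-\hat c)\,\partial_Z\eta(Z)}\,dZ
\]
is evaluated. The paper does \emph{not} integrate by parts and invoke Sokhotski--Plemelj. Instead it splits $\mathcal H_1=\mathcal H_1^{(1)}+\mathcal H_1^{(2)}$ along the Wronskian identity \eqref{eq:Airy-wron}: since $\mathcal H_1^{(1)}(\partial_Z\eta)^{-1}\equiv\varepsilon^{-1}$, the first piece collapses to the elementary integral $\int u''(u-\hat c)^{-1}\chi\,dZ$, whose imaginary part is $-\frac{u''(Y_c)}{u'(Y_c)}\arg(-\hat c)+O(|c|)$. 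For the second piece the paper identifies $A_2(1,\cdot)A_1-A_1(1,\cdot)A_2$ with the Scorer function $Hi$ (Lemma~\ref{lem: est-Scorer}) and then reads off the ``$\pi$'' from the known asymptotic $\int_0^z Hi(-t)\,dt\sim\pi^{-1}\log z$ (Lemma~\ref{lem:asy-Hi}). Both steps are robust to $\kappa\eta_i$ being merely $O(\delta_0)$, not $o(1)$.

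Your route has two genuine problems. First, the half--residue computation requires $\mathcal H_1(Y_c)=i|\varepsilon|^{-1}(1+o(1))$, which you justify by ``$\kappa\eta(Y_c)=\kappa\eta_i$ is $o(1)$''. But the hypotheses only give $|\kappa\eta_i|<\delta_0$ for a \emph{fixed} small constant $\delta_0$ (this is exactly what the Airy machinery of Section~3 needs), and then the terms $A_2(1,Y_c)\partial_Z^2A_1(Y_c)$ etc.\ are $O(\delta_0|\varepsilon|^{-1})$, not $o(|\varepsilon|^{-1})$. The resulting $O(\delta_0\alpha)$ error in $\mathrm{Im}(\mathcal M_{0,s})$ is not $o(\alpha)$. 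The same issue reappears in the Sokhotski--Plemelj step itself: the approximation $\int\tilde{\mathcal H}(u-\hat c)^{-2}\approx\mathrm{f.p.}+i\pi\mathcal H_1(Y_c)$ has an error governed by $\hat c_i\cdot\mathcal H_1'(Y_c)\sim\hat c_i|\varepsilon|^{-4/3}$, which is $o(|\varepsilon|^{-1})$ only if $\hat c_i=o(|\varepsilon|^{1/3})$ --- again not guaranteed. Second, the finite--part integral is simply asserted to yield $\frac{\alpha u''}{u'^2}\arg(-\hat c)$: but once the Plemelj limit is taken the integral no longer sees $\hat c$, so the $\arg(-\hat c)$ has to come from the boundary at $Z=0$ (as it does in the paper's $\mathcal I_1$), and your localization and IBP discussion does not track this boundary contribution. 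The ``matching the logarithmic tail'' sentence is not a computation. In short, the Wronskian/Scorer decomposition is what makes the two constants ($\arg(-\hat c)$ and $\pi$) fall out cleanly without any smallness assumption on $\kappa\eta_i$; your residue picture would need substantially more work to reach the same precision.
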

\begin{proof}
We can write 
\begin{align}\label{eq:slow-mode-m0s}
\begin{split}
\mathcal{M}_{0,s}=&\underbrace{-\frac{\alpha\varepsilon }{u'(Y_c)}\int_0^{+\infty}\mathcal H_1(Z)u''(Z)\partial_Z\eta(Z)^{-1}(u(Z)-\hat c)^{-1}\chi(Z)dZ}_{\mathcal M_{1,s}}\\
&\underbrace{-\varepsilon\int_0^{+\infty}\mathcal H_1(Z)\partial_Z\eta(Z)^{-1}\chi(Z) H_{2,err}(Z)dZ}_{\mathcal M_{2,s}} ,
\end{split}
\end{align}
where 
\begin{align*}
H_{2,err}(Z):=H_2(Z)-\frac{\alpha u''(Y)}{2u'(Y_c)(u-\hat c)}.
\end{align*}
By Lemma \ref{lem:m0}, we first have 
\begin{align}\label{eq:m0s-main}
\begin{split}
&|\mathcal M_{1,s}|\leq C\alpha|\log|\varepsilon||,\quad\mathrm{Im}(\mathcal M_{1,s})=\f{\alpha u''(Y_c)}{u'(Y_c)^2}\arg(-\hat c)+\frac{\al u''(Y_c)\pi}{u'(Y_c)^{\f43}}+o(\alpha) .
\end{split}
\end{align}
We denote that $\tilde Y_0\geq Y_c$ such that $\tilde\eta_r(Y_0)=c_r$, and we write 
\begin{align*}
&\mathcal M_{2,s}=\Big(\int_0^{Y_0}+\int_{Y_0}^{+\infty}\Big)\mathcal H_1(Z)\partial_Z\eta^{-1}(\varepsilon\chi(Z) H_{2,err}(Z))dZ
\end{align*}
 By Lemma \ref{lem:A1A2} and \eqref{eq:os-ray-err}, we have
\begin{align*}
&\left|\int_0^{Y_0}\mathcal H_1(Z)\partial_Z\eta(Z)^{-1}(\varepsilon\chi(Z) H_{2,err}(Z))dZ\right|\leq C\alpha|\log|\varepsilon||\int_0^{Y_0}dZ\leq C\alpha^2|\log|\varepsilon||\ll\alpha.
\end{align*}
On the other hand, by a similar argument for $\mathcal{M}_3^{(1)}$ as in the proof of Proposition \ref{prop:airy-green-Y}, we have
\begin{align*}
&\left|\int_{Y_0}^{+\infty}\mathcal H_1(Z)\partial_Z\eta(Z)^{-1}(\varepsilon\chi(Z) H_{2,err}(Z))dZ\right|\leq C|\varepsilon|\alpha^{-\f12}|\log|\varepsilon||\ll\al.
\end{align*}
Hence, we obtain
\begin{align*}
&\left|\mathcal M_{2,s}\right|\leq C|\varepsilon|^\f13,
\end{align*}
which along with \eqref{eq:slow-mode-m0s} and \eqref{eq:m0s-main} implies that 
\begin{align*}
&|\mathrm{Re}\left(\mathcal{M}_{0,s}\right)|\leq C\alpha|\log|\varepsilon|| \text{ and }\mathrm{Im}(\mathcal{M}_{0,s})= \f{\alpha u''(Y_c)}{u'(Y_c)^2}\arg(-\hat c)+\frac{\al u''(Y_c)\pi}{u'(Y_c)}+o(\alpha).
\end{align*}

The proof is completed.
\end{proof}

\begin{lemma}\label{lem:m0}
Let  $\hat c:=c+\mathrm i |\varepsilon|\alpha^{-\f32}$.  Suppose $(\al,c)\in\mathbb{ H}_3$ with $c_i>-|\varepsilon|\alpha^{-\f32}/ 2$ and $\alpha\gg|\varepsilon|^\f14$. Then we have
\begin{align*}
	&\left|\mathrm{Re}\left(\varepsilon\int_0^{+\infty}\mathcal H_1(Z)\f{(\partial_Z^2u)\chi(Z)}{(u-\hat c)\partial_Z\eta}dZ\right)\right|\leq C|\log|\varepsilon||,\\
	&\mathrm{Im}\left(\varepsilon\int_0^{+\infty}\mathcal H_1(Z)\f{(\partial_Z^2u)\chi(Z)}{(u-\hat c)\partial_Z\eta}dZ\right)=-\f{u''(Y_c)}{u'(Y_c)}\arg(-\hat c)-\frac{u''(Y_c)\pi}{u'(Y_c)}+\mathcal O(|\varepsilon|^{\frac{1}{24}}+c_r^2|\varepsilon|^{-\f13}),
\end{align*}
where $\chi(Y)\equiv1$ on $[0,\f12]$ and $\chi(Y)\equiv0$ on $[1,+\infty)$.
\end{lemma}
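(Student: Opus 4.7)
The starting point is the Wronskian identity \eqref{eq:Airy-wron}, which lets us simplify one piece of $\mathcal H_1$ exactly:
\[
\mathcal H_1(Z) = \varepsilon^{-1}\partial_Z\eta + \bigl[A_2(1,Z)\partial_Z^2 A_1 - A_1(1,Z)\partial_Z^2 A_2\bigr].
\]
Denoting the integral in the statement by $L$, this yields $L=L_1+L_2$ with
\[
L_1 := \int_0^{+\infty}\frac{u''(Z)\chi(Z)}{u(Z)-\hat c}\,dZ, \qquad L_2 := \varepsilon\int_0^{+\infty}\frac{[A_2(1,Z)\partial_Z^2 A_1 - A_1(1,Z)\partial_Z^2 A_2]\,u''(Z)\chi(Z)}{(u-\hat c)\partial_Z\eta}\,dZ.
\]
The $\arg(-\hat c)$ contribution arises from $L_1$, and the $-\pi u''(Y_c)/u'(Y_c)$ term must come from the Airy piece $L_2$.

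For $L_1$, I change variables $w=u(Z)$ on $[0,1]$ and set $f(w):=(u''\chi/u')(Z(w))$, a smooth function with $f(c_r)=u''(Y_c)/u'(Y_c)$ since $Y_c\in(0,1/2)$. Writing $f=f(c_r)+(f-f(c_r))$ gives
\[
L_1 = f(c_r)\bigl[\log(u(1)-\hat c) - \log(-\hat c)\bigr] + \int_0^{u(1)}\frac{f(w)-f(c_r)}{w-\hat c}\,dw.
\]
The remainder integral is bounded in real part and has imaginary part of order $|\hat c_i\log|\hat c_i||$, because subtracting $f(c_r)$ kills the delta-like behavior of the Cauchy kernel at $w=c_r$ (one gets an odd-function cancellation plus a $O(v^2)$ correction). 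The singular term contributes $-\frac{u''(Y_c)}{u'(Y_c)}\arg(-\hat c)+O(\hat c_i)$ to the imaginary part and $O(|\log|\varepsilon||)$ to the real part, as required.

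For $L_2$, I rewrite the modified Airy equation \eqref{eq:Airy-Err-app} algebraically as $\varepsilon\partial_Z^2 A_j = u'(Y_c)\eta(\partial_Z\eta)^2 A_j + Err_2\,\partial_Z A_j$ and introduce $\mathcal G(Z):=A_2(1,Z)A_1(Z)-A_1(1,Z)A_2(Z)$, which satisfies $\partial_Z\mathcal G = A_2(1,Z)\partial_Z A_1 - A_1(1,Z)\partial_Z A_2$. This produces
\[
\varepsilon\bigl[A_2(1,Z)\partial_Z^2 A_1 - A_1(1,Z)\partial_Z^2 A_2\bigr] = u'(Y_c)\eta(\partial_Z\eta)^2\,\mathcal G(Z) + Err_2(Z)\,\partial_Z\mathcal G(Z).
\]
The $Err_2$-piece is $O(|\varepsilon|)$-small by Lemma \ref{lem:err1-err2} and, combined with the pointwise Airy bounds of Lemma \ref{lem:A1A2}, contributes at most $O(|\varepsilon|^{1/24})$ to $L_2$. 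For the main piece I rescale to the Langer coordinate $\xi=\kappa\eta(Z)$, freeze the slowly-varying factor $u''/u'$ at $Y_c$ (the resulting error is absorbed into $O(c_r^2|\varepsilon|^{-1/3})$ via a Taylor expansion of $u$ around $Y_c$), and note that $u'(Y_c)\eta(\partial_Z\eta)^2/(u-\hat c)$ reduces near $Y_c$ to a Cauchy-type kernel proportional to $(\xi+i\kappa\eta_i)^{-1}/u'(Y_c)$. The leftover integral is then a universal Airy integral in $\xi$, independent of $u$ and $\varepsilon$, which is evaluated to $-i\pi$ by a contour/residue computation that uses the explicit asymptotic expansions of Lemma \ref{lem:airy-langer-asy} on the relevant Stokes rays. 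This yields $\mathrm{Im}\,L_2 = -\pi u''(Y_c)/u'(Y_c)+O(|\varepsilon|^{1/24}+c_r^2|\varepsilon|^{-1/3})$ and $|\mathrm{Re}\,L_2|=O(|\log|\varepsilon||)$.

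The main obstacle is Step~2, specifically the quantitative identification of the universal Airy integral as $-i\pi$: one must track $\mathcal G(Z)$ on both sides of the critical layer using the distinct exponential asymptotics from Lemma \ref{lem:airy-langer-asy}, deform a contour around the effective critical point $\xi=-i\kappa\eta_i$, and propagate the Langer-approximation errors sharply enough to arrive at the stated $O(|\varepsilon|^{1/24})$ remainder. A secondary subtlety is that the leading terms of $L_1$ and $L_2$ almost cancel, so the final imaginary part of $L$ is controlled by the small combination $\arg(-\hat c)+\pi$; this requires that each error be tracked with genuine care, since a loss at any stage would overwhelm the principal quantity.
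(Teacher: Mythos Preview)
Your decomposition $L=L_1+L_2$ via the Wronskian identity is exactly the paper's splitting $\mathcal I_1+\mathcal I_2$, and your treatment of $L_1$ matches the paper's computation of $\mathcal I_1$ essentially line for line.

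For $L_2$ there is a conceptual slip and a genuine gap. First, the factor $u'(Y_c)\eta(\partial_Z\eta)^2/(u-\hat c)$ is \emph{not} a Cauchy kernel: by the very definition of $Err_1$ one has $u'(Y_c)\eta(\partial_Z\eta)^2 = (u-c)+\varepsilon\alpha^2+Err_1$, so this ratio equals $1+O\bigl((c_0+|\varepsilon|\alpha^2+Err_1)/(u-\hat c)\bigr)$, which is close to $1$ throughout. This is precisely the cancellation the paper exploits: after it, $L_2$ collapses to $\int_0^\infty \mathcal G(Z)\,u''\chi\,(\partial_Z\eta)^{-1}\,dZ$ with $\mathcal G=A_2(1,\cdot)A_1-A_1(1,\cdot)A_2$, with no residual $1/(u-\hat c)$ at all. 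Your narrative about a Cauchy kernel and a residue is therefore pointing at the wrong mechanism.

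Second, and more importantly, you leave the evaluation of the ``universal Airy integral'' as an unproven claim. The paper does not use a contour or residue argument here. Instead it invokes two special-function facts: (i) a Wronskian-type identity (Lemma~\ref{lem: est-Scorer}) expressing $\mathcal G(Z)$ explicitly in terms of the Scorer function, $\mathcal G(Z)=\pi|\varepsilon|^{-1/3}e^{-i\pi/2}u'(Y_c)^{-2/3}Hi(e^{-i\pi/2}\kappa\eta(Z))+O(|\varepsilon|^{-1/3}c_r)$; and (ii) the classical asymptotic $\int_0^z Hi(-t)\,dt \sim \pi^{-1}\log z$ (Lemma~\ref{lem:asy-Hi}). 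After freezing $u''$ at $Y_c$ and integrating over $[0,2Y_c]$, the $-\pi$ term emerges from the \emph{argument jump} of $\log(\kappa\eta)$ as $\eta$ crosses from negative to positive real part, not from a pole. The remaining tail over $[2Y_c,\infty)$ is handled by the large-$|\kappa\eta|$ asymptotics of $\mathcal G$ and contributes $O(|\varepsilon|^{1/24})$ to the imaginary part. Your proposed route might be made to work, but as written it is incomplete, and the Scorer-function identification is the standard and concrete way to close this step.
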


\begin{proof}
We denote $\mathcal H_1(Z)=\mathcal H_1^{(1)}(Z)+\mathcal H_1^{(2)}(Z)$ with 
\begin{align*}
	&\mathcal H_1^{(1)}(Z):=\partial_Y A_1A_2-\partial_Z A_2A_1,\quad\mathcal H_1^{(2)}(Z):=A_2(1,Z)\partial_Z^2A_1-A_1(1,Z)\partial_Z^2A_2.
\end{align*}
Then we can write 
\begin{align*}
	&\varepsilon\int_0^{+\infty}\mathcal H_1(Z)\f{(\partial_Z^2u)\chi(Z)}{(u-\hat c)\partial_Z\eta}dZ\\
	&\quad=\varepsilon\int_0^{+\infty}\mathcal H_1^{(1)}(Z)\f{(\partial_Z^2u)\chi(Z)}{(u-\hat c)\partial_Z\eta}d+\varepsilon\int_0^{+\infty}\mathcal H_1^{(2)}(Z)\f{(\partial_Z^2u)\chi(Z)}{(u-\hat c)\partial_Z\eta}dZ:=\mathcal I_1+\mathcal I_2.
\end{align*}
By \eqref{eq:Airy-wron}, we obtain that for any $Z\geq 0$, $\mathcal H_1^{(1)}(Z)\partial_Y\eta(Z)^{-1}\equiv\varepsilon^{-1}$, 
which implies that 
\begin{align}\label{eq:H11-1}
	\mathcal I_1=\int_0^{+\infty}u''(Z)(u(Z)-\hat c)^{-1}\chi(Z)dZ.
\end{align}
On the other hand, we know that 
\begin{align}\label{eq:H11-2}
	\Big|\int_L^{+\infty}u''(Z)(u(Z)-\hat c)^{-1}\chi(Z)dZ\Big|+\hat c^{-1}_i\Big|\mathrm{Im}\Big(\int_L^{+\infty}u''(Z)(u(Z)-\hat c)^{-1}\chi(Z)dZ\Big)\Big|\leq C.
\end{align}
Hence, we only need to show the estimates for $\int_0^{L}u''(Z)(u(Z)-\hat c)^{-1}\chi(Z)dZ$. Notice that 
\begin{align*}
\int_0^{L}u''(Z)(u(Z)-\hat c)^{-1}\chi(Z)dZ=-\f{u''(Y_c)}{u'(Y_c)}\log(-\hat c)+\mathcal{I}_{Err,1},
\end{align*}
where
\begin{align*}
\mathcal{I}_{Err,1}=&\f{u''(L)}{u'(Y_c)}\log(u(L)-\hat c)+\f{u''(Y_c)-u''(0)}{u'(Y_c)}\log(-\hat c)+u'(Y_c)^{-1}\int_0^L\frac{(u'(Y_c)-u'(Z))u''\chi}{u(Z)-\hat c}dZ\\
&-u'(Y_c)^{-1}\int_0^L\log(u(Z)-\hat c)\partial_Z[u'(Z)\chi]dZ.
\end{align*}
We can easily obtain 
\begin{align}\label{eq:H11-3}
	\begin{split}
		&\left|\mathrm{Re}\left(\mathcal{I}_{Err,1}\right)\right|\leq C|\log|\varepsilon||,\quad\left|\mathrm{Im}\left(\mathcal{I}_{Err,1}\right)\right|\leq C|c|.
	\end{split}
\end{align}
Therefore, by \eqref{eq:H11-1}, \eqref{eq:H11-2} and \eqref{eq:H11-3}, we obtain 
\begin{align*}
	&\left|\mathrm{Re}(\mathcal I_1)\right|\leq C|\log|\varepsilon||,\quad\mathrm{Im}(\mathcal I_1)=-\f{u''(Y_c)}{u'(Y_c)}\arg(-\hat c)+\mathcal{O}(|c|).
\end{align*}

Now we turn to show the estimates for $\mathcal I_2$. By the definition of $A_1$ and $A_2$,  we notice that 
\begin{align}\label{eq:H12-decom}
	\begin{split}
		&\mathcal{I}_2=\underbrace{\int_0^{+\infty}\mathcal H_{1,1}^{(2)}u''(Z)\partial_Y\eta(Z)^{-1}\chi(Z)dZ}_{\mathcal I_{2,1}} +\underbrace{\int_0^{+\infty}\mathcal H_{1,err1}(Z)u''(Z)\partial_Y\eta(Z)^{-1}\chi(Z)dZ}_{\mathcal I_{2,2}}
	\end{split}
\end{align}
with 
\begin{align*}
&\mathcal H_{1,1}^{(2)}=A_2(1,Z)A_1(Z)-A_1(1,Z)A_2(Z),\\
	&\mathcal H_{1,err1}(Z)=A_2(1,Z)(u(Z)-\hat c)^{-1}(Err_1(Z)A_1(Z)+Err_2(Z)\partial_YA_1(Z))\\
	&\qquad\qquad\qquad-A_1(1,Z)(u(Z)-\hat c)^{-1}(Err_1(Z)A_2(Z)+Err_2(Z)\partial_YA_2(Z)).
\end{align*}
Notice that 
\begin{align*}
	\mathcal{I}_{2,1}=\int_{0}^{2Y_c}\mathcal H_{1,1}^{(2)}u''(Z)\partial_Z\eta(Z)^{-1}dZ+\underbrace{\int_{2Y_c}^{+\infty}\mathcal H_{1,1}^{(2)}u''(Z)\partial_Y\eta(Z)^{-1}\chi(Z)dZ}_{\mathcal R_{2,1}}.
\end{align*}
Moreover, we have
\begin{align*}
&\int_{0}^{2Y_c}\mathcal H_{1,1}^{(2)}u''(Z)\partial_Z\eta(Z)^{-1}dZ\\
&=u''(Y_c)\int_{0}^{2Y_c}\mathcal H_{1,1}^{(2)}\partial_Z\eta(Z)dZ+\int_{0}^{2Y_c}\mathcal H_{1,1}^{(2)}[u''(Z)\partial_Z\eta(Z)^{-1}-u''(Y_c)\partial_Z\eta
(Z)]dZ.
\end{align*}
On the other hand, by lemma \ref{lem:est-eta} and \ref{lem:A1A2}, we can obtain 
\begin{align*}
	&\Big|\int_{0}^{2Y_c}\mathcal H_{1,1}^{(2)}[u''(Z)\partial_Z\eta(Z)^{-1}-u''(Y_c)\partial_Z\eta
(Z)]dZ\Big|\\
&\leq Cc_r\int_0^{2Y_c}|A_2(1,Z)A_1(Z)-A_1(1,Z)A_2(Z)|\leq Cc_r.
\end{align*}
Hence, we obtain 
\begin{align}\label{eq:H-11-2-1}
	\int_{0}^{2Y_c}\mathcal H_{1,1}^{(2)}u''(Z)\partial_Z\eta(Z)^{-1}dZ=u''(Y_c)\int_{0}^{2Y_c}\mathcal H_{1,1}^{(2)}\partial_Z\eta(Z)dZ+\mathcal O(c_r).
\end{align}
By Lemma \ref{lem: est-Scorer}, we have 
\begin{align*}
	\int_{0}^{2Y_c}\mathcal H_{1,1}^{(2)}\partial_Z\eta(Z)dZ=&\pi|\varepsilon|^{-\f13}e^{-\mathrm i\frac{\pi}{2}}u'(Y_c)^{-\f23}\int_0^{2Y_c}Hi(e^{-\mathrm i\frac{\pi}{2}}\kappa\eta(Z))\partial_Z\eta(Z)dZ+\mathcal O(c_r^2|\varepsilon|^{-\f13})\\
	=&-\pi u'(Y_c)^{-1}\int_{e^{\mathrm i\frac{\pi}{2}}\kappa\eta(0)}^{e^{\mathrm i\frac{\pi}{2}}\kappa\eta(2Y_c)}Hi(-z)dz+\mathcal O(c_r^2|\varepsilon|^{-\f13}).
\end{align*}
Hence, by Lemma \ref{lem:asy-Hi} and applying the optimal truncation of the asymptotic expansion \eqref{eq:asy-int-Hi}, we can obtain 
\begin{align}\label{eq:Hi-int}
\begin{split}
	&\Big|\int_{0}^{2Y_c}\mathcal H_{1,1}^{(2)}\partial_Z\eta(Z)dZ\Big|\leq C,\\
&\mathrm{Im}\Big(\int_{0}^{2Y_c}\mathcal H_{1,1}^{(2)}\partial_Z\eta(Z)dZ\Big)=\frac{-u''(Y_c)\pi}{u'(Y_c)}+\mathcal O(|\varepsilon|c_r^{-3}+c^2|\varepsilon|^{-\f13}).
\end{split}
\end{align}
Therefore, by \eqref{eq:H-11-2-1} and \eqref{eq:Hi-int}, we have
\begin{align}\label{eq:H-11-2-2}
	\begin{split}
			&\Big|\int_{0}^{2Y_c}\mathcal H_{1,1}^{(2)}u''(Z)\partial_Z\eta(Z)^{-1}dZ\Big|\leq C,\\
	&\mathrm{Im}\Big(\int_{0}^{2Y_c}\mathcal H_{1,1}^{(2)}u''(Z)\partial_Z\eta(Z)^{-1}dZ\Big)=\frac{-u''(Y_c)\pi}{u'(Y_c)}+\mathcal O(c_r+c_r^2|\varepsilon|^{-\f13}).
	\end{split}
\end{align}
By Lemma \ref{lem:airy-langer-asy} and \eqref{eq:airy-decay}, we know that for any $Y\geq 2Y_c$,
\begin{align*}
	 \mathcal H_{1,1}^{(2)}(Y)=\tilde C|\varepsilon|^{-\f13}(\kappa\eta(Y))^{-1}(1+\mathcal O(|\kappa\eta(Y)|^{-\f32}),\quad\tilde C\in\mathbb R,
\end{align*}
which along with the fact $|\mathrm{Im}(\eta^{-1})(Y)|\leq c_ic_r^{-1}$ for $Y\geq 2Y_c$, implies that 
\begin{align*}
	|\mathcal R_{2,1}|\leq C|\log|\varepsilon||,\quad|\mathrm{Im}(\mathcal R_{2,1})|\leq C(c_ic_r^{-1}+|\varepsilon|^{\f16}c_r^{-\f12})\ll C|\varepsilon|^{\frac{1}{24}} .
\end{align*}
The above bounds along with \eqref{eq:H-11-2-2} deduce that 
\begin{align*}
	|\mathcal I_{2,1}|\leq C\text{ and }\mathrm{Im}(\mathcal I_{2,1})=\frac{-u''(Y_c)\pi}{u'(Y_c)^{\f13}}+\mathcal O(|\varepsilon|^{\frac{1}{24}}+c_r^2|\varepsilon|^{-\f13}) .
\end{align*}
By Lemma \ref{lem:err1-err2} and Lemma \ref{lem:A1A2} and a similar argument as in the proof of  Proposition \ref{prop:airy-green-Y}, we can infer that 

\begin{align*}
	\left|\int_0^{+\infty}\mathcal H_{1,err1}(Z)u''\partial_Y\eta(Z)^{-1}\chi(Z)dZ\right|\leq C|\varepsilon|^\f13 .
\end{align*}
Gathering the above results, we obtain
\begin{align*}
	&\left|\mathrm{Re}\left(\mathcal I_2\right)\right|\leq C,\quad\mathrm{Im}(\mathcal I_{2})=\frac{-u''(Y_c)\pi}{u'(Y_c)^{\f13}}+\mathcal O(|\varepsilon|^{\frac{1}{24}}+c_r|\varepsilon|^{-\f13}).
\end{align*}

The proof is completed.	
\end{proof}


\subsection{The fast mode}

This part is devoted to constructing the fast mode for the Orr-Sommerfeld equation. We construct the fast mode solution $\phi_f$ to Orr-Sommerfeld equation around the solution $\psi_a$ to the homogeneous Airy equation,  as constructed in Theorem \ref{them:airy-homo}. By our construction,  $\psi_a(Y)$ is a solution to the following Airy equation
\begin{align*}
		&\varepsilon(\partial_Y^2-\alpha^2)w_a-(u-c)w_a=0,\quad(\partial_Y^2-\alpha^2)\psi_a(Y)=w_a(Y),\\
		&\lim_{Y\to\infty}\psi_a(Y)=\lim_{Y\to\infty}w_a(Y)=0.
\end{align*}
Moreover, we notice that 
\begin{align*}
	\varepsilon(\partial_Y^2-\alpha^2)^2\psi_a-(u-c)(\partial_Y^2-\alpha^2)\psi_a-u''\psi_a =-u''\psi_a.
\end{align*}
Hence, to construct $\phi_f$, we need to construct a solution $\tilde \phi_f(Y)$ to the equation:
\begin{align}\label{eq:fast-mode-err}
	\varepsilon(\partial_Y^2-\alpha^2)^2\tilde\phi_f-(u-c)(\partial_Y^2-\alpha^2)\tilde\phi_f-u''\tilde\phi_f=u''\psi_a
\end{align}
with the following properties
\begin{align*}
	\lim_{Y\to\infty}\tilde\phi_f(Y)=0,\quad|\tilde\phi_f(0)|\ll|\psi_a(0)| \text{ and }|\partial_Y\tilde\phi_f(0)|\ll|\partial_Y\psi_a(0)|.
\end{align*}

\begin{theorem}\label{them:fast-mode}
Let $(\al,c)\in\mathbb{ H}_3$ with $c_i>-|\varepsilon|\alpha^{-\f32}/ 2$. Then there exists a solution $e^{\eta_0 Y}\phi_f(Y)\in W^{4,\infty}(\mathbb R_+)$ to \eqref{eq:OS-homo} such that 
\begin{align*}
	&|\phi_f(0)-\psi_a(0)|\leq C|\varepsilon|^\f23(|\varepsilon|^\f13+\alpha)|\log|\varepsilon||\ll|\psi_a(0)|,\\
	&|\partial_Y(\phi_f-\psi_a)(0)|\leq C|\varepsilon|^\f23|\log|\varepsilon||\ll|\partial_Y\psi_a(0)|.
\end{align*}

\end{theorem}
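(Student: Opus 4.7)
The natural ansatz is to write $\phi_f=\psi_a+\tilde\phi_f$, where $\psi_a$ is the homogeneous Airy solution constructed in Theorem \ref{them:airy-homo}. Since $(\partial_Y^2-\alpha^2)\psi_a=w_a$ with $Airy[w_a]=0$, a direct computation gives $OS[\psi_a]=u''\psi_a$, so the problem reduces to solving the non-homogeneous OS equation
\begin{align*}
OS[\tilde\phi_f]=-u''\psi_a,\qquad \lim_{Y\to\infty}\tilde\phi_f(Y)=0,
\end{align*}
and then showing that the boundary contributions of $\tilde\phi_f$ are negligible compared to those of $\psi_a$.

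First I would apply Theorem \ref{thm:OS-non-homo} with source $f=-u''\psi_a$ to obtain $\tilde\phi_f\in\mathcal X$. This requires control of $\|f\|_{L^\infty_{\eta_0}}$, $\|(u-\hat c)\partial_Y f\|_{L^\infty_{\eta_0}}$ and $\|(u-\hat c)^2\partial_Y^2 f\|_{L^\infty_{\eta_0}}$. Using the exponential decay of $u''$, the bounds of $\psi_a$ and $(u-\hat c)\partial_Y\psi_a$ from Theorem \ref{them:airy-homo}, and the identity $\partial_Y^2\psi_a=w_a+\alpha^2\psi_a$ together with the asymptotics in Lemma \ref{lem:airy-langer-asy}, these three norms are controlled by $C|\varepsilon|^{2/3}$ when $0\in\mathcal N$ and by $C|\varepsilon|/|c|$ when $0\in\mathcal N^-$. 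Theorem \ref{thm:OS-non-homo} then produces $\tilde\phi_f$ satisfying $\|\tilde\phi_f\|_{\mathcal X}\le C|\log|\varepsilon||\cdot (\textrm{above norm})$, and hence $e^{\eta_0 Y}\tilde\phi_f\in W^{4,\infty}$; combined with Theorem \ref{them:airy-homo} this gives $e^{\eta_0 Y}\phi_f\in W^{4,\infty}$ and $OS[\phi_f]=0$.

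The main obstacle is to refine the crude boundary estimate $|\tilde\phi_f(0)|\le\|\tilde\phi_f\|_{\mathcal X}$ into the claimed sharper bound, which must beat the explicit leading sizes $|\psi_a(0)|\sim |\varepsilon|^{2/3}$ (or $|\varepsilon|/|c|$) and $|\partial_Y\psi_a(0)|\sim |\varepsilon|^{1/3}$ (or $|\varepsilon|^{1/2}|c|^{-1/2}$) read off from Theorem \ref{them:airy-homo} via Lemma \ref{lem:airy-langer-asy}. My strategy is to mimic the proof of Theorem \ref{them:slow-mode} and unpack the modified Rayleigh--Airy iteration of Theorem \ref{thm:OS-non-homo}, writing $\tilde\phi_f=\varphi^{(1)}+\psi^{(1)}+\sum_{j\ge 2}(\varphi^{(j)}+\psi^{(j)})$, and tracking boundary contributions level by level. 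The dominant boundary term comes from $\varphi^{(1)}$, the solution of $Ray_{\hat c}[\varphi^{(1)}]=u''\psi_a$ produced by Proposition \ref{pro: phi_non}. Because $\psi_a$ decays away from the sublayer like a shifted Airy function, the explicit Green's function representation of $\varphi^{(1)}(0)$ and $\partial_Y\varphi^{(1)}(0)$ reduces to integrals of $u''\psi_a$ weighted by $(u-\hat c)^{-2}e^{-2\alpha Z}$; the Airy-type decay and the concentration of $\psi_a$ near $Y_c$ produce the factor $|\varepsilon|^{2/3}(|\varepsilon|^{1/3}+\alpha)|\log|\varepsilon||$.

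Finally, the subsequent Airy correction $\psi^{(1)}$ and iterates $(\varphi^{(j)},\psi^{(j)})_{j\ge 2}$ each gain the smallness factor $\alpha|\log|\varepsilon||^2$ exactly as in the proof of Theorem \ref{thm:OS-non-homo}, so after summation they are subdominant perturbations of the $\varphi^{(1)}$ boundary contribution. Combining these estimates yields both $|\phi_f(0)-\psi_a(0)|\le C|\varepsilon|^{2/3}(|\varepsilon|^{1/3}+\alpha)|\log|\varepsilon||$ and, by applying $(u-\hat c)\partial_Y$ to the same representation and using the $(u-\hat c)$-weighted norms in $\mathcal X$, the analogous bound $|\partial_Y(\phi_f-\psi_a)(0)|\le C|\varepsilon|^{2/3}|\log|\varepsilon||$; a direct comparison with the explicit sizes of $\psi_a(0)$ and $\partial_Y\psi_a(0)$ on both $\mathcal N$ and $\mathcal N^-$ confirms that these corrections are indeed of lower order, completing the construction of the fast mode.
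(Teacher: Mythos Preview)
Your proposal is correct and follows essentially the same approach as the paper: write $\phi_f=\psi_a+\tilde\phi_f$, identify $OS[\tilde\phi_f]=-u''\psi_a$, and obtain the sharp boundary bounds by isolating the first Rayleigh step $\varphi^{(1)}$ (solving $Ray_{\hat c}[\varphi^{(1)}]=u''\psi_a$) via its explicit Green's-function integral, exploiting the sublayer concentration of $\psi_a$.

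Two minor points of execution where the paper's organization may save you work. First, the paper splits $\psi_a=\psi_a^{(0)}+(\psi_a-\psi_a^{(0)})$ and bases the explicit integral $\varphi_1^{(1)}(Y)=(u-\hat c)\int_Y^\infty(u-\hat c)^{-2}\int_{Y'}^\infty u''\psi_a^{(0)}\,dZ\,dY'$ on the pure Airy profile $\psi_a^{(0)}$, for which the pointwise decay \eqref{eq:airy-homo-n}--\eqref{eq:airy-homo-k} is available; the remainder $u''(\psi_a-\psi_a^{(0)})$ is handled by Proposition \ref{pro: phi_non} using only the $L^\infty_{\eta_0}$ bound from Theorem \ref{them:airy-homo}. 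This is exactly the concentration argument you describe, but the explicit split is what makes the $|\varepsilon|^{2/3}(|\varepsilon|^{1/3}+\alpha)$ factor drop out cleanly from the integration-by-parts of $\int(u-\hat c)^{-2}\int u''\psi_a^{(0)}$ on $[0,Y_c]$. Second, rather than ``unpacking'' the full iteration of Theorem \ref{thm:OS-non-homo}, the paper performs only the first Rayleigh and Airy steps $\varphi_f^{(1)},\psi_f^{(1)}$ by hand (with the refined pointwise estimates \eqref{eq:fast-psi-1}--\eqref{eq:fast-psi-a} for $\psi_f^{(1)}$), and then applies Theorem \ref{thm:OS-non-homo} as a black box to the residual $-u''\psi_f^{(1)}$, whose $\mathcal X$-norm is already $\le C|\varepsilon|^{2/3}\alpha|\log|\varepsilon||^2\ll|\varepsilon|\alpha^{-1}$. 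This is equivalent to your plan but avoids tracking iterates beyond the first.
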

\begin{proof}
Firstly, according to Theorem \ref{them:airy-homo}, we know that 
\begin{align*}
	\|(\psi_a-\psi_{a}^{(0)})\|_{L^\infty_{\eta_0}}
	\ll&C|\varepsilon|\alpha^{-1}\sim|\psi_a^{(0)}(0)|\sim|\psi_a(0)|.
\end{align*}
We apply Theorem \ref{thm:OS-non-homo}  to construct the solution $\tilde\phi_f$ to \eqref{eq:fast-mode-err}. To show that $\psi_a(0)$ and  $\partial_Y\psi_a(0)$ is the leading order of $\phi_f(0)$ and $\partial_Y\phi_f(0)$, respectively, We need to provide more precise estimates for $\tilde\phi_f$. 

We define $\varphi_f^{(1)}(Y)=\varphi_1^{(1)}(Y)+\varphi_2^{(1)}(Y)$ satisfying $Ray_{\hat c}[\varphi_f^{(1)}]=u''\psi_a$,  where 
\begin{align}
	\varphi_{1}^{(1)}(Y):=(u-\hat  c)\int_Y^{+\infty}\frac{\int_{Y'}^\infty u''(Z)\psi_a^{(0)}(Z)dZ}{(u(Y')-\hat c)^2}dY',
\end{align}
and $\varphi_2^{(1)}(Y)$ is the solution to the following system constructed in Proposition \ref{pro: phi_non}
\begin{align}
	Ray_{\hat c}[\varphi_2^{(0)}]=u''(\psi_a-\psi_a^{(0)})-\alpha^2(u-\hat c)\varphi_1^{(1)}.
\end{align}

Now we show the estimates for $\varphi_{1}^{(1)}(Y)$. By \eqref{eq:airy-homo-n} and \eqref{eq:airy-homo-k}, we have that for any $Y\geq Y_c$, 
\begin{align*}
|\varphi_{1}^{(1)}(Y)|\leq& C\int_Y^{+\infty}|u(Y')-\hat c|^{-1}\int_{Y'}^{+\infty}|u''(Z)\psi_a^{(0)}(Z)|dZdY'\\
\leq&C|\varepsilon|^\f23\int_Y^{+\infty}|u(Y')-\hat c|^{-1}\int_{Y'}^{+\infty}e^{-\eta_0 Z}e^{-C|\varepsilon|^{-\f13}|\eta(Z)-\eta(0)|}dZdY' \\
\leq&C|\varepsilon||\log|\varepsilon||e^{-\eta_0 Y}.
\end{align*}
For any $Y\in[0,Y_c]$, we write 
\begin{align}\label{eq:fast-mode-ray01}
	\begin{split}
			\varphi_{1}^{(1)}(Y)
	=&u'(Y_c)^{-1}\int_Y^{+\infty}u''(Z)\psi_a^{(0)}(Z)dZ-\frac{u(Y)-\hat c}{u'(Y_c)}\int_Y^{+\infty}\frac{u''(Z)\psi_a^{(0)}(Z)}{u(Z)-\hat c}dZ\\
	&+\frac{u(Y)-\hat c}{u'(Y_c)}\int_Y^{+\infty}\frac{(u'(Y_c)-u'(Y'))\int_{Y'}^\infty u''(Z)\psi_a^{(0)}(Z)dZ}{(u(Y')-\hat c)^2}dY'.
	\end{split}
\end{align}
We find that for any $Y\in[0, Y_c]$,
\begin{align}\label{eq:fast-mode-ray02}
	\begin{split}
		\left|\int_Y^{+\infty}u''(Z)\psi_a^{(0)}(Z)dZ\right|\leq& C|\varepsilon|^\f23 |\kappa\eta(0)|^\f14\int_Y^{+\infty}e^{-C|\varepsilon|^{-\f13}|\eta(Z)-\eta(0)||\kappa\eta(0)|^\f12}dZ\\
	\leq&C|\varepsilon|.
	\end{split}
\end{align}
We notice that for any $Y\in[\delta|\varepsilon|^\f13,Y_c]$ with some $\delta>0$ such that $|\eta(Y)-\eta(0)|\geq C|\varepsilon|^\f13$, 
\begin{align*}
	\left|\frac{u(Y)-\hat c}{u'(Y_c)}\int_Y^{+\infty}\frac{u''(Z)\psi_a^{(0)}(Z)}{u(Z)-\hat c}dZ\right|\leq& C|\hat c||\varepsilon|^\f23e^{-C|\kappa\eta(0)|^\f12}\int_Y^{+\infty}e^{-\eta_0 Z}|u(Z)-\hat c|^{-1}dZ\\
	\leq& C|\varepsilon|^\f23|\alpha||\log|\varepsilon||,
\end{align*}
and for any $Y\in[0,\delta|\varepsilon|^\f13]$,
\begin{align*}
	&\left|\frac{u(Y)-\hat c}{u'(Y_c)}\int_Y^{+\infty}\frac{u''(Z)\psi_a^{(0)}(Z)}{u(Z)-\hat c}dZ\right|\\
	&\quad\leq C|\hat c|\int_{Y}^{\delta|\varepsilon|^\f13}\left|\frac{u''(Z)\psi_a^{(0)}(Z)}{u(Z)-\hat c}\right|dZ+C|\hat c|\int_{\delta|\varepsilon|^\f13}^{+\infty}\left|\frac{u''(Z)\psi_a^{(0)}(Z)}{u(Z)-\hat c}\right|dZ\\
	&\quad\leq C|\varepsilon|^\f23|\kappa\eta(0)|^\f14\int_Y^{\delta|\varepsilon|^\f13}e^{-C|\varepsilon|^{-\f13}|\eta(Z)-\eta(0)||\kappa\eta(0)|^\f12}dZ+C|\varepsilon|^\f23|\alpha||\log|\varepsilon||\\
	&\quad\leq C|\varepsilon|^\f23(|\varepsilon|^\f13+\alpha)|\log|\varepsilon||.
\end{align*}
By gathering the above two estimates, we obtain that for any $Y\in[0,Y_c]$,
\begin{align}\label{eq:fast-mode-ray03}
	\left|\frac{u(Y)-\hat c}{u'(Y_c)}\int_Y^{+\infty}\frac{u''(Z)\psi_a^{(0)}(Z)}{u(Z)-\hat c}dZ\right|\leq C|\varepsilon|^\f23(|\varepsilon|^\f13+\alpha)|\log|\varepsilon||.
\end{align}
For the last term on the right hand side of \eqref{eq:fast-mode-ray01}, we have that for any $Y\in[0,Y_c]$
\begin{align*}
	&\left|\frac{u(Y)-\hat c}{u'(Y_c)}\int_Y^{+\infty}\frac{(u'(Y_c)-u'(Y'))\int_{Y'}^\infty u''(Z)\psi_a^{(0)}(Z)dZ}{(u(Y')-\hat c)^2}dY'\right|\\
	&\qquad\leq C|\hat c|\left|\int_{Y}^\infty u''(Z)\psi_a^{(0)}(Z)dZ\right|\int_Y^{+\infty}|u(Z)-\hat c|^{-1}dZ\\
	&\qquad\leq C|\varepsilon|\alpha|\log|\varepsilon||,
\end{align*}
which along with \eqref{eq:fast-mode-ray01}, \eqref{eq:fast-mode-ray02} and \eqref{eq:fast-mode-ray03} gives that for any $Y\in[0,Y_c]$,
\begin{align*}
	|\varphi_{1}^{(1)}(Y)|\leq C|\varepsilon|^\f23(|\varepsilon|^\f13+\alpha)|\log|\varepsilon||.
\end{align*}
Hence, we obtain 
\begin{align}
	\|\varphi_{1}^{(1)}\|_{L^\infty_{\eta_0}}\leq C|\varepsilon|^\f23(|\varepsilon|^\f13+\alpha)|\log|\varepsilon||.
\end{align}
By the definition of $\varphi_{1}^{(0)}$, we have 
\begin{align*}
	\partial_Y\varphi_{1}^{(1)}(Y)=&u'(Y)\int_Y^{+\infty}\frac{\int_{Y'}^\infty u''(Z)\psi_a^{(0)}(Z)dZ}{(u(Y')-\hat c)^2}dY'-\frac{\int_{Y}^\infty u''(Z)\psi_a^{(0)}(Z)dZ}{u(Y)-\hat c}.
\end{align*}
Hence, by a similar argument as above, we can obtain  
\begin{align}
\|(u-\hat{c})\partial_Y\varphi_{1}^{(1)}\|_{L^\infty_{\eta_0}}+\alpha\|\partial_Y\varphi_1^{(1)}\|_{L^\infty_{\eta_0}}\leq C|\varepsilon|^\f23(|\varepsilon|^\f13+\alpha)|\log|\varepsilon||.
\end{align}
Moreover, by the definition of $\varphi_{1,1}^{(0)}$, we have 
\begin{align*}
(u-\hat c)\partial_Y^2\varphi_{1}^{(1)}(Y)=u''(\varphi_{1}^{(1)}(Y)+\psi_a^{(0)}(Y)).
\end{align*}
Then we get 
\begin{align*}
\|(u-\hat{c})\partial^2_Y\varphi_{1}^{(1)}\|_{L^\infty_{\eta_0}}\leq& C\|\varphi_{1}^{(1)}\|_{L^\infty_{\eta_0}}+C\|\psi_a^{(0)}\|_{L^\infty_{\eta_0}}\\
\leq&C|\varepsilon|^\f23(|\varepsilon|^\f13+\alpha)|\log|\varepsilon||+C|\varepsilon|^\f23|\kappa\eta(0)|^{-1}\leq C|\varepsilon|\alpha^{-1}.
\end{align*}
Similarly, we have 
\begin{align*}
\|(u-\hat{c})^2\partial^3_Y\varphi_{1}^{(1)}\|_{L^\infty_{\eta_0}}\leq C|\varepsilon|^\f12\alpha^\f12,\quad
\|(u-\hat{c})^3\partial^4_Y\varphi_{1}^{(1)}\|_{L^\infty_{\eta_0}}&\leq C\alpha^2.
\end{align*}
Hence, we obtain
\begin{align}\label{eq:fast-mode-ray0-e}
\begin{split}
&\|(u-\hat{c})\partial_Y\varphi_{1}^{(1)}\|_{L^\infty_{\eta_0}}+\|\varphi_{1}^{(0)}\|_{L^\infty_{\eta_0}}\leq C|\varepsilon|^\f23(|\varepsilon|^\f13+\alpha)|\log|\varepsilon||,\\
&\|(u-\hat{c})\partial^2_Y\varphi_{1}^{(1)}\|_{L^\infty_{\eta_0}}\leq C|\varepsilon|\alpha^{-1}, \quad\|(u-\hat{c})^2\partial^3_Y\varphi_{1}^{(1)}\|_{L^\infty_{\eta_0}}\leq C|\varepsilon|^\f12\alpha^\f12,\\
&\|(u-\hat{c})^3\partial^4_Y\varphi_{1}^{(1)}\|_{L^\infty_{\eta_0}}\leq C\alpha^2.
\end{split}
\end{align}
In particular, we have
\begin{align*}
|\varphi_1^{(1)}(0)|\leq C|\varepsilon|^\f23(|\varepsilon|^\f13+\alpha)|\log|\varepsilon||\ll|\psi_a(0)|\text{ and } |\partial_Y\varphi_{1}^{(1)}(0)|\leq C|\varepsilon|^\f23|\log|\varepsilon||\ll|\partial_Y\psi_a(0)|.
\end{align*}

Now we turn to show the estimates for $\varphi_2^{(1)}(Y)$. By  Proposition \ref{pro: phi_non}, Theorem \ref{them:airy-homo} and \eqref{eq:fast-mode-ray0-e}, we  have 
\begin{align*}
&\|(u-\hat{c})\partial^2_Y\varphi_{2}^{(1)}\|_{L^\infty_{\eta_0}}+\|\partial_Y\varphi_{2}^{(1)}\|_{L^\infty_{\eta_0}}+\|\varphi_{2}^{(1)}\|_{L^\infty_{\eta_0}}\\
&\leq C|\log|\hat c_i||\|u''(\psi_a-\psi_a^{(0)})\|_{L^\infty_{\eta_0}}+C\alpha^2\|\varphi^{(1)}_1\|_{L^\infty_{\eta_0}}\leq C|\varepsilon|^{\f23}\alpha|\log|\varepsilon||\ll |\varepsilon|\alpha^{-1},
\end{align*}
and 
\begin{align*}
	\|\varphi_2^{(1)}\|_{\mathcal X}\leq \|\varphi_2^{(1)}\|_{\mathcal Y}\leq& C|\log|\varepsilon||\sum_{j=0}^2\Big(\|(u-\hat c)^j\partial_Y^j(u''(\psi_a-\psi_a^{(0)})-\alpha^2(u-\hat c)\varphi_1^{(1)})\|_{L^\infty_{\eta_0}}\Big)\\
	\leq&|\varepsilon|^\f23\alpha|\log|\varepsilon||.
\end{align*}
The above estimates along with \eqref{eq:fast-mode-ray0-e} deduce
\begin{align}\label{eq:fast-mode-rayf-e}
\begin{split}
&\al^2\|(u-\hat c)\partial_Y^2\varphi_f^{(1)}\|_{L^\infty_{\eta_0}}+|\varepsilon|^\f13\|(u-\hat{c})\partial_Y\varphi^{(1)}_f\|_{L^\infty_{\eta_0}}+|\varepsilon|^\f13\|\varphi^{(1)}_f\|_{L^\infty_{\eta_0}}\leq C|\varepsilon|\alpha|\log|\varepsilon||,\\
&\|\varphi^{(1)}_f\|_{\mathcal X}\leq C\alpha^2.
\end{split}
\end{align}
In particular, we obtain 
\begin{align}\label{eq:fast-phi-bv}
|\varphi^{(1)}_f(0)|\leq C|\varepsilon|^\f23\alpha|\log|\varepsilon||\ll|\psi_a(0)|\text{ and } |\partial_Y\varphi^{(1)}_f(0)|\leq C|\varepsilon|^\f23|\log|\varepsilon||\ll|\partial_Y\psi_a(0)|.
\end{align}
Moreover, we have
\begin{align*}
OS[\psi_a+\varphi_f^{(1)}]=\varepsilon(\partial_Y^2-\alpha^2)^2\varphi_f^{(1)}.
\end{align*}
Next we define $\psi^{(1)}_f=\psi^{(1)}_{f,1}+\psi^{(1)}_{f,2}+\psi^{(1)}_{f,3}$,
where $\psi^{(1)}_{f,1}, \psi^{(1)}_{f,2}$ and $\psi^{(1)}_{f,3}$ are the solutions to \eqref{eq:r-a-airy} by taking  $\varphi^{(j)}=\varphi_f^{(1)}$ and $\psi^{(j-1)}=\psi_a$. Then by  \eqref{eq:OS-non-homo-j1},  we have
\begin{align*}
	\|\psi^{(1)}_f\|_{\mathcal X}\leq C(\al+|\varepsilon|^\f23|\hat c_i|^{-1})|\log|\varepsilon||^2(\|(u-\hat c)^2\pa_Y^2\psi_a\|_{L^\infty_{\eta_0}}+\|(u-\hat c)\pa_Y\psi_a\|_{L^\infty_{\eta_0}}+\|\psi_a\|_{L^\infty_{\eta_0}}).
\end{align*}
Moreover, by Theorem \ref{them:airy-sf} and Theorem \ref{them:airy-gf-1}, we can obtain  
\begin{align}\label{eq:fast-psi-1}
	\begin{split}
			&\|(u-\hat c)^2\pa_Y^2\psi^{(1)}_{f,1}\|_{L^\infty_{\eta_0}}+\|(u-\hat c)\pa_Y\psi^{(1)}_{f,1}\|_{L^\infty_{\eta_0}}+\|\psi^{(f)}_{f,1}\|_{L^\infty_{\eta_0}}\\
	&\leq C|\varepsilon||\log|\varepsilon||\sum_{j=0}^2\Big(\|\partial_Y^j\varphi_f^{(1)}\|_{L^\infty([1/2,+\infty))}+\|\partial_Y^j\psi_a\|_{L^\infty([1/2,+\infty))}\Big),\\
	&\leq C|\varepsilon||\log|\varepsilon||\sum_{j=0}^2\Big(\|(u-\hat c)^j\partial_Y^j\varphi_f^{(1)}\|_{L^\infty}+\|(u-\hat c)^j\partial_Y^j\psi_a\|_{L^\infty}\Big),		
	\end{split}
\end{align}
\begin{align}\label{eq:fast-psi-2}
	\begin{split}
		&\|(u-\hat c)^2\pa_Y^2\psi^{(1)}_{f,2}\|_{L^\infty_{\eta_0}}+\|(u-\hat c)\pa_Y\psi^{(1)}_{f,2}\|_{L^\infty_{\eta_0}}+\|\psi^{(f)}_{f,2}\|_{L^\infty_{\eta_0}}\\
	&\leq C|\varepsilon|^\f23\al^{-\f12}|\log|\varepsilon||(\|\varphi_f^{(1)}\|_{L^\infty}+\|\psi_a\|_{L^\infty}),
	\end{split}
\end{align}
and 
\begin{align}\label{eq:fast-psi-3}
	\begin{split}
		&\|(u-\hat c)^2\pa_Y^2\psi^{(1)}_{f,3}\|_{L^\infty_{\eta_0}}+\|(u-\hat c)\pa_Y\psi^{(1)}_{f,3}\|_{L^\infty_{\eta_0}}+\|\psi^{(f)}_{f,3}\|_{L^\infty_{\eta_0}}\\
	&\leq C|\varepsilon|^\f13|\log|\varepsilon||(\|(u-\hat c)\partial_Y\varphi_f^{(1)}\|_{L^\infty}+\|(u-\hat c)\partial_Y\psi_a\|_{L^\infty})\\
	&\quad+C\alpha|\log|\varepsilon||(\|\varphi_f^{(1)}\|_{L^\infty}+\|\psi_a\|_{L^\infty}).
	\end{split}
\end{align}
We also notice that by Theorem \ref{them:airy-homo}, \eqref{eq:airy-homo-n} and \eqref{eq:airy-homo-k},
\begin{align}\label{eq:fast-psi-a}
	\alpha^{-2}|\varepsilon|^{\f23}\|(u-\hat c)^2\pa_Y^2\psi_a\|_{L^\infty_{\eta_0}}+|\varepsilon|^\f16\alpha^{-\f12}\|(u-\hat c)\pa_Y\psi_a\|_{L^\infty_{\eta_0}}+\|\psi_a\|_{L^\infty_{\eta_0}}\leq C|\varepsilon|^\f23.
\end{align}
Therefore, by \eqref{eq:fast-mode-rayf-e}, \eqref{eq:fast-psi-1}-\eqref{eq:fast-psi-a},  we obtain 
\begin{align}
\begin{split}
	&\|\psi_f^{(1)}\|_{\mathcal{X}}\leq C\al^3|\log|\varepsilon||^2,\\
	&\|(u-\hat c)^2\pa_Y^2\psi^{(1)}_f\|_{L^\infty_{\eta_0}}+\|(u-\hat c)\pa_Y\psi^{(1)}_f\|_{L^\infty_{\eta_0}}+\|\psi^{(f)}_f\|_{L^\infty_{\eta_0}}\leq C|\varepsilon|^\f23\al|\log|\varepsilon||^2\ll|\varepsilon|\al^{-1}.
\end{split}
\end{align}
In particular, we have
\begin{align}\label{eq:fast-psi-bv}
	|\psi_f^{(1)}(0)|\leq C|\varepsilon|^\f23\alpha |\log|\varepsilon||\ll\psi_a(0)\text{ and }|\partial_Y\psi_f^{(1)}(0)|\leq C|\varepsilon|^\f23|\log|\varepsilon||\ll|\partial_Y\psi_a(0)|. 
\end{align}
Moreover, $OS[\psi_a+\varphi_f^{(1)}+\psi_f^{(1)}]=u''\psi_f^{(1)}$. We define that $\tilde{\phi}_f^{(1)}$ is the solution to $OS[\tilde{\phi}_f^{(1)}]=-u''\psi^{(1)}_f$ constructed in Theorem \ref{thm:OS-non-homo}. Then by Theorem \ref{thm:OS-non-homo}, we find that 
\begin{align*}
	\|\tilde{\phi}_f^{(1)}\|_{\mathcal{X}}\leq& C|\log|\varepsilon||(\|(u-\hat c)^2\pa_Y^2\psi^{(1)}_f\|_{L^\infty_{\eta_0}}+\|(u-\hat c)\pa_Y\psi^{(1)}_f\|_{L^\infty_{\eta_0}}+\|\psi^{(f)}_f\|_{L^\infty_{\eta_0}})\\
	\leq&C|\varepsilon|^\f23\al|\log|\varepsilon||^3\ll|\varepsilon|\al^{-1}.
\end{align*} 
In particular, we have
\begin{align}\label{eq:fast-tphi-bv}
|\tilde{\phi}^{(1)}_f(0)|\leq C|\varepsilon|^\f23\alpha|\log|\varepsilon||\ll|\psi_a(0)|\text{ and } |\partial_Y\tilde{\phi}^{(1)}_f(0)|\leq C|\varepsilon|^\f23|\log|\varepsilon||\ll|\partial_Y\psi_a(0)|.
\end{align}
Moreover, $OS[\psi_a+\varphi_f^{(1)}+\psi_f^{(1)}+\tilde{\phi}_f^{(1)}]=0$.
Then we take $\tilde{\phi}_f=\varphi_f^{(1)}+\psi_f^{(1)}+\tilde{\phi}_f^{(1)}$, and by \eqref{eq:fast-phi-bv},\eqref{eq:fast-psi-bv} and \eqref{eq:fast-tphi-bv}, we have
\begin{align*}
|\tilde{\phi}_f(0)|\leq C|\varepsilon|^\f23\alpha|\log|\varepsilon||\ll|\psi_a(0)|\text{ and } |\partial_Y\tilde{\phi}_f(0)|\leq C|\varepsilon|^\f23|\log|\varepsilon||\ll|\partial_Y\psi_a(0)|.	
\end{align*}

The proof is completed.
\end{proof}

\section{Dispersion relation and T-S waves}

In this section, we construct a non-trivial solution $\phi(Y)$ to the homogeneous Orr-Sommerfeld equation with non-slip boundary condition
\begin{align}\label{eq:OS-nonslip}
	\begin{split}
		&\varepsilon(\partial_Y^2-\alpha^2)^2\phi-(u-c)(\partial_Y^2-\alpha^2)\phi+u''\phi=0,\\
		&\phi(0)=\partial_Y\phi(0)=0,\quad\lim_{Y\to\infty}\phi(Y)=0.
	\end{split}
\end{align}
We shall construct such a solution $\phi$ by a linear combination of the slow mode $\phi_s$ and the fast mode $\phi_f$, which are constructed in Theorem \ref{them:slow-mode} and Theorem \ref{them:fast-mode}, respectively. That is, for certain $\alpha\ll 1$, there exists a $c\in\mathbb C$ such that 
\begin{align}
	\phi=C_s(\alpha)\phi_s+C_f(\alpha)\phi_f
\end{align}
with $C_s$ and $C_f$ are not zero. Since $\phi_s$ and $\phi_f$ are both solutions to the homogeneous Orr-Sommerfeld equation, we only need to match the non-slip boundary condition by choosing appropriate $C_s$ and $C_f$. In details, 
\begin{align*}
	C_s\phi_s(0)+C_f\phi_f(0)=0 \text{ and }C_s\partial_Y\phi_s(0)+C_f\partial_Y\phi_f(0)=0.
\end{align*}
Hence, the existence of the non-zero constants $C_s$ and $C_f$ is guaranteed by the following dispersion relation:
\begin{align}\label{eq:dis-rel}
	\frac{\phi_s(0)}{\partial_Y\phi_s(0)}=\frac{\phi_f(0)}{\partial_Y\phi_f(0)}.
\end{align} 

The task of this section is to find $c\in\mathbb C$ such that \eqref{eq:dis-rel} holds for certain  small $\alpha$. To ensure the existences of the slow mode $\phi_s$ and the fast mode $\phi_f$, we assume $(\alpha,c)\in\mathbb H_3$.
By Theorem \ref{them:slow-mode} and Theorem \ref{them:fast-mode}, we know that \eqref{eq:dis-rel} is equivalent to 
\begin{align}\label{eq:solu-c}
	c-u'(0)^{-1}\alpha+\frac{u'(0)\tilde{\mathcal A}(2,0)}{\tilde{\mathcal A}(1,0)}+\mathcal R_d(\alpha,c)=0,
\end{align}
where $\mathcal R_d(\alpha,c)$ is smooth on $c$ and analytic on $\alpha$. We point out that the solutions to the Rayleigh equation are analytic on $c$ and $\alpha$, and the solutions to the Airy equation are smooth on $c_r, c_i$ and analytic on $\alpha$ for $\alpha\neq0$ (here we recall the relation $\varepsilon=-\mathrm i\nu^\f12\alpha^{-1}$). Moreover, by the definitions of $A_j(Y)$ with $j=1,2$, we know that $|\partial_{c_r}A_j(k,Y)+|\partial_{c_i}A_j(k,Y)|\leq C|\varepsilon|^{-\f13}|A_j(k-1,Y)|$ and $|\partial_{c_r}\partial_Y^kA_j(Y)|+|\partial_{c_r}\partial_Y^kA_j(Y)|\leq C|\varepsilon|^{-\f13}|\partial_Y^{k+1}A_j(Y)|$. 
Hence, we have that for any $(\alpha,c)\in\mathbb H_3$,
\begin{align}
	\begin{split}
		&|\mathcal R_d(\alpha,c)|\leq C(\alpha^2+|\varepsilon|\alpha^{-\f32})|\log|\varepsilon||,\quad|\partial_\alpha\mathcal R_d(\alpha,c)|\leq C(\nu),\\
	&|\partial_{c_r}\mathcal R_d(\alpha,c)|+|\partial_{c_i}\mathcal R_d(\alpha,c)|\leq C|\varepsilon|^\f12 |\log|\varepsilon||.
	\end{split}
\end{align}
Moreover, if $\alpha\gg|\varepsilon|^\f14$ and $(\alpha,c)\in\mathbb H_3$, then we have the following formula for the imaginary part of \eqref{eq:solu-c}:
\begin{align}\label{eq:dr-i}
	c_i-\frac{\alpha^2u''(0)\pi}{u'(0)^2}+\mathrm{Im}\Big(\frac{u'(0)\tilde{\mathcal A}(2,0)}{\tilde{\mathcal A}(1,0)}\Big)+o(\alpha^2)=0.
\end{align}
Here we used the fact $u'(Y_c)=u'(0)+\mathcal O(\alpha)$ adn $u''(Y_c)=u''(0)+\mathcal O(\alpha)$.
\smallskip

We first provide a priori estimate for $c(\alpha)$ solving \eqref{eq:solu-c}.

\begin{proposition}\label{prop:diss-priori}
		Let $\nu\ll1$. Then there exist $A\leq 1\ll B$ independent of $\nu$ such that for any $\alpha\in(A\nu^{\f18},B\nu^{\frac{1}{12}})$, the solution  $c(\alpha)\in \mathbb H_3$ to \eqref{eq:dis-rel} 
		continuously depends on $\alpha$ satisfying the following properties:
		
	\begin{itemize}
		\item There exists $A_1\geq A_c\geq A_0$ such that 
		      \begin{align*}
		      	&\mathrm{Im}(c(A_0\nu^\f18))<0,\quad \mathrm{Im}(c(A_c\nu^\f18))=0,\quad\mathrm{Im}(c(A_1\nu^\f18))>0.
		      \end{align*}
		\item There exits $B_c\leq B_0\leq B_1$ such that 
		       \begin{align*}
		      	&\mathrm{Im}(c(B_0\nu^\frac{1}{12}))>0,\quad \mathrm{Im}(c(B_c\nu^\frac{1}{12}))=0,\quad\mathrm{Im}(c(B_1\nu^\frac{1}{12}))<0.
		      \end{align*}
		\item If $\nu^\f18\ll\alpha\ll\nu^\frac{1}{12}$, then 
		      \begin{align*}
		      	\alpha\mathrm{Im}(c(\alpha))\sim\nu^\f14.
		      \end{align*}
	\end{itemize}

\end{proposition}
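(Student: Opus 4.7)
The plan is to solve the dispersion relation \eqref{eq:solu-c} by an implicit function argument, then analyze $\mathrm{Im}(c)$ at the two branches via the Tietjens-type function
\[
  T(\alpha,c):=\mathrm{Im}\Big(\frac{u'(0)\tilde{\mathcal A}(2,0)}{\tilde{\mathcal A}(1,0)}\Big).
\]
First I would use the bound $|\partial_c\mathcal R_d|\leq C|\varepsilon|^{1/2}|\log|\varepsilon||$ stated just before the proposition, combined with the estimate $|\partial_c\tilde{\mathcal A}(k,0)|\leq C|\varepsilon|^{-1/3}|\tilde{\mathcal A}(k-1,0)|$ (from the definition of $\tilde{\mathcal A}$ and Lemma \ref{lem:airy-langer-asy}), to show that $\partial_c F(\alpha,c)=1+o(1)$ in $\mathbb H_3$, where $F(\alpha,c):=c-u'(0)^{-1}\alpha+u'(0)\tilde{\mathcal A}(2,0)/\tilde{\mathcal A}(1,0)+\mathcal R_d(\alpha,c)$. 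A contraction argument on the ball $\{c:|c-u'(0)^{-1}\alpha|\leq C|\varepsilon|^{1/3}\}$ then produces a unique continuous root $c(\alpha)\in\mathbb H_3$ with $c_r\sim\alpha$, reducing everything to the sign of $c_i=\alpha^2 u''(0)\pi/u'(0)^2-T(\alpha,c(\alpha))+o(\alpha^2)$.

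At the lower branch $\alpha=A\nu^{1/8}$, the scaling $|\varepsilon|^{1/3}=\nu^{1/6}\alpha^{-1/3}$ yields $\kappa\eta(0)\sim -A^{4/3}$, of order one and essentially independent of $\nu$. Via the change of variables $z=e^{i\pi/6}\kappa\eta(Z)$ in the integrals defining $\tilde{\mathcal A}(1,0)$ and $\tilde{\mathcal A}(2,0)$, one factors $\tilde{\mathcal A}(2,0)/\tilde{\mathcal A}(1,0)=|\varepsilon|^{1/3}\tau(A)(1+o(1))$ for a Tietjens-type function $\tau$ depending only on $A$ to leading order. Since $|\varepsilon|^{1/3}\sim\nu^{1/8}$ dominates $\alpha^2\sim\nu^{1/4}$, the sign of $c_i$ equals the sign of $-\mathrm{Im}\,\tau(A)$. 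The same classical analysis of Airy-function integrals that produces the usual Tietjens curve gives $\mathrm{Im}\,\tau>0$ for $A$ near the lower endpoint, $\mathrm{Im}\,\tau<0$ for $A$ larger but still of order one, with a sign change at some $A_c$; picking $A_0<A_c<A_1$ near this zero yields the claimed alternations.

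At the upper branch $\alpha=B\nu^{1/12}$ one has $\kappa\eta(0)\sim -B^{4/3}\nu^{-1/18}\to-\infty$, so I would apply the large-argument asymptotics of Lemma \ref{lem:airy-langer-asy} to get
\[
\frac{\tilde{\mathcal A}(2,0)}{\tilde{\mathcal A}(1,0)}=c_\star\,|\varepsilon|^{1/3}|\kappa\eta(0)|^{-1/2}(1+o(1))
\]
for an explicit complex constant $c_\star$ with $\mathrm{Im}\,c_\star>0$. Hence $T\sim \nu^{1/6}B^{-2/3}\mathrm{Im}\,c_\star$, the same order as $\alpha^2 u''(0)\pi/u'(0)^2\sim\nu^{1/6}B^2$, but of opposite sign because $u''(0)<0$. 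For $B_0\gg1$ fixed the Airy piece still wins and $c_i>0$; for $B_1\gg B_0$ the $B^2$ growth of the $\alpha^2$ term overwhelms the $B^{-2/3}$ decay of the Airy piece and $c_i<0$; continuity produces $B_c\in[B_0,B_1]$ with $c_i(B_c\nu^{1/12})=0$. The same large-argument formula applied in the intermediate range $\nu^{1/8}\ll\alpha\ll\nu^{1/12}$ gives $\alpha T\sim\nu^{1/4}$, while $\alpha^3\ll\nu^{1/4}$ there, so $\alpha\,\mathrm{Im}(c)\sim\nu^{1/4}$.

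The main obstacle is verifying the sign behavior and zero crossing of the Tietjens-type function $\mathrm{Im}\,\tau(A)$ for $A$ of order one, and the smooth matching to the large-$|\kappa\eta(0)|$ asymptotics used at the upper branch. Because $\tau$ is defined through the modified Langer transformation $\eta$ of Section~3 rather than the straight change of variables of the classical constant-shear problem, one has to quantitatively compare the integrals $\int Ai(e^{i\pi/6}\kappa\eta(Z))\,dZ$ against the standard Airy primitives along the ray $\arg=\pi/6$, using the pointwise bounds on $\eta$ and $\partial_Y\eta$ from Lemma \ref{lem:est-eta}; this comparison, together with the crossover between the $O(1)$ and $\to\infty$ regimes of $\kappa\eta(0)$, is the delicate computational step that underpins the whole proposition.
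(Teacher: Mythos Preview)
Your overall strategy—reduce to the dispersion relation \eqref{eq:solu-c}, analyze the Tietjens-type ratio $\tilde{\mathcal A}(2,0)/\tilde{\mathcal A}(1,0)$ in the three $\alpha$-regimes, and invoke the large-$|\kappa\eta(0)|$ asymptotics—is exactly the paper's. But two of your steps contain genuine errors.

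\textbf{Contraction step.} The claim $\partial_c F=1+o(1)$ fails near the lower branch. Your bound $|\partial_c\tilde{\mathcal A}(k,0)|\leq C|\varepsilon|^{-1/3}|\tilde{\mathcal A}(k-1,0)|$ only yields $|\partial_c(\text{Airy ratio})|=O(1)$, not $o(1)$; in fact (see the computation in the paper's proof of Proposition \ref{prop:diss-existence})
\[
\partial_{c}\Big(\frac{u'(0)\tilde{\mathcal A}(2,0)}{\tilde{\mathcal A}(1,0)}\Big)=-1+\frac{Ai(e^{i\pi/6}\kappa\eta(0))\mathcal A(2,\kappa\eta(0))}{\mathcal A(1,\kappa\eta(0))^2}+O(|c|),
\]
and the second term is $\sim 1$ in modulus when $|\kappa\eta(0)|\sim 1$. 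A direct contraction on the ball around $u'(0)^{-1}\alpha$ therefore does not close for $A\sim 1$. The paper runs the contraction only for $A\gg 1$ (where this derivative is indeed $O(|\kappa\eta(0)|^{-3/2})=o(1)$) and then continues the root by an implicit-function/nonvanishing-Jacobian argument.

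\textbf{Upper branch.} Your sign and scaling are off. From Lemma \ref{lem:airy-0} one has $\tilde{\mathcal A}(2,0)/\tilde{\mathcal A}(1,0)=-e^{i\pi/4}|\varepsilon|^{1/2}c_r^{-1/2}(1+o(1))$, so $\mathrm{Im}\,c_\star<0$ (not $>0$) and $T<0$; also $|\varepsilon|^{1/3}|\kappa\eta(0)|^{-1/2}\sim B^{-1}\nu^{1/6}$, not $B^{-2/3}\nu^{1/6}$. With the correct sign, $c_i\approx \frac{u''(0)\pi}{u'(0)^2}B^2\nu^{1/6}+C_{Ai}B^{-1}\nu^{1/6}$ has a single crossover at $B_c\sim 1$: one must take $B_0\ll 1$ to get $c_i>0$ and $B_1\gg 1$ to get $c_i<0$. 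Your assertion ``for $B_0\gg 1$ the Airy piece still wins'' is false—once $B\gg 1$ the $B^2$ term always dominates. The paper accordingly argues with $B\ll 1$ versus $B\gg 1$.

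Your lower-branch and intermediate-regime outlines match the paper, and the obstacle you flag in the last paragraph is the right one; the paper resolves it via Lemma \ref{lem:airy-0} (which reduces $\tilde{\mathcal A}(k,0)$ to the straight-line primitives $\mathcal A(k,\kappa\eta(0))$ up to $O(\kappa^{-1})$ errors) together with the classical sign behavior of the Tietjens function $\mathcal F$ cited from Lin's monograph.
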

\begin{proof}
From \eqref{eq:solu-c}, we know that 
\begin{align}\label{eq:dis-rel-1}
c=u'(0)^{-1}\alpha-\frac{u'(0)\tilde{\mathcal A}(2,0)}{\tilde{\mathcal A}(1,0)}-\mathcal R_d(\alpha,c).
\end{align}

\no\textbf{Case 1. $\alpha\sim\nu^\f18$.} Let $\alpha=A\nu^\f18$. Then we first have that $|\varepsilon|=A^{-1}\nu^{\f38}$. For the case $A\gg1$ such that $|\kappa\eta(0)|>M$, we get by Lemma \ref{lem:airy-0} that
\begin{align*}
	\frac{\tilde{\mathcal A}(2,0)}{\tilde{\mathcal A}(1,0)}
	=&-e^{\mathrm i\f{\pi}{4}}|\varepsilon|^\f12c_r^{-\f12}+o(|\varepsilon|^\f12c_r^{-\f12}+|\varepsilon|^{\f13}),
	\end{align*}
	which implies that 
	\begin{align*}
		c_r=u'(0)^{-1}A\nu^\f18+C_0A^{-\f12}\nu^{\frac{3}{16}}c_r^{-\f12}+o(\nu^\f18).
	\end{align*}
	Then we can obtain that $c_r\sim A\nu^\f18$. Moreover, we have 
\begin{align}\label{eq:lower-bra-A1}
	\mathrm{Im}(c)=&-\mathrm{Im}\left(\frac{u'(0)\tilde{\mathcal A}(2,0)}{\tilde{\mathcal A}(1,0)}\right)+o(\nu^\f18)\geq CA^{-1}\nu^\f18=C'A^{-\f23}|\varepsilon|^\f13.
\end{align}
On the other hand, for the case of $A\sim1$,  we rewrite \eqref{eq:dis-rel-1} as
\begin{align*}
	1=\frac{\alpha}{u'(0)c}+\frac{u'(0)\tilde {\mathcal A}(2,0)}{-c\tilde{\mathcal {A}}(1,0)}+\mathcal O(\nu^{\frac{1}{16}}),
\end{align*}
which along with Lemma \ref{lem:airy-0} implies that 
\begin{align*}
	1=\frac{\alpha}{u'(0)c}+\frac{\mathcal A(2,0)}{\kappa\eta(0)\mathcal A(1,0)}+\mathcal O(\nu^{\frac{1}{16}}).
\end{align*}
Therefore, we obtain 
\begin{align}\label{eq:low-branch-dr}
	\mathcal F(-\kappa\eta(0))+\mathcal O(\nu^{\frac{1}{16}})=\frac{u'(0)c}{\alpha},
\end{align}
where $\mathcal F(z)$ is the function defined in \cite{L} related to the Hankel function or the Tietjens function. By the property of $\mathcal F(z)$ (see Fig 3.2 in \cite{L}), we know that there exists $z_0\in[2,2.5]$ such that $\mathrm{Im}(\mathcal F(z_0))=0$ and $\mathrm {Im}(\mathcal F(z))<0$ for any $z\in[2,z_0)$. Then there exits  $z_1=a_1+b_1i$ with $a_1<z_0$ and $|b_1|\leq C\nu^{\frac{1}{16}}$ such that $\mathrm{Re}(\mathcal F(z_0+z_1))\in[2,3]$, $\mathrm{Im}(\mathcal F(z_0+z_1)+\mathcal O(\nu^{\frac{1}{16}}))<0$ and $|\mathrm{Im}(\mathcal F(z_0+z_1)+O(\nu^{\frac{1}{16}}))|>\frac{1}{8}\nu^{\frac{1}{16}}$. Moreover, we have $|z_0+z_1|\in[2,2.5]$. Then there exists a small $r_0$ such that for any $-\kappa\eta(0)\in B(z_0+z_1,r_0)$, we always have $c_i<0$. 

In particular, we take $\mathrm{Re}(-\kappa\eta(0))=\mathrm{Re}(z_0+z_1)$, which implies 
\begin{align*}
	c_r=u'(0)^\f23|\varepsilon|^\f13\mathrm{Re}(z_0+z_1).
\end{align*}
By \eqref{eq:low-branch-dr}, we obtain 
\begin{align*}
	A_0=u'(0)^\f53\frac{\mathrm{Re}(z_0+z_1)}{\mathrm{Re}(\mathcal F(z_0+z_1))},
\end{align*}
where $\alpha=A_0\nu^{\f18}$.
As a consequence, we infer that there exists $A_0\sim1$ such that $c(A_0\nu^\f18)\in\mathbb H$ with $\mathrm{Im}(c(A_0\nu^\f18))<0$.

Since $c$ depends on $\alpha$ continuously,  we obtain that there exist $A_0, A_c, A_1$ such that $\mathrm{Im}(c(A_0\nu^\f18))<0$, $\mathrm{Im}(c(A_c\nu^\f18))=0$ and $\mathrm{Im}(c(A_1\nu^\f18))>0$.\smallskip

\no\textbf{Case 2. $\nu^\f18\ll\alpha\ll\nu^\frac{1}{12}$.} We denote $\alpha=A\nu^{\gamma}$ with $\gamma\in(\f18,\frac{1}{12})$. In this case, we first notice that by Lemma \ref{lem:airy-0},
\begin{align*}
	\mathrm{Im}(c(\alpha))=&CA^{-1}\nu^{\f14-\gamma}-A^{-3}\nu^{\f12-3\gamma}+\mathcal O(\alpha^2|\log|\varepsilon||)\\
	=&CA^{-1}\nu^{\f14-\gamma}(1+o(1))>0,
\end{align*}
provide that $\nu^{\f12-3\gamma}\ll\nu^{\f14-\beta}$ for any $\gamma>\f18$ and $\alpha^2|\log|\varepsilon||=A^2\nu^{2\gamma}|\log\nu|\ll\nu^{\f14-\gamma}$ for any $\gamma<\frac{1}{12}$.

\no\textbf{Case 3. $\alpha\sim\nu^{\frac{1}{12}}$.} 
Let $\alpha=B\nu^{\frac{1}{12}}$. In this case, by \eqref{eq:dr-i}, we know that 
\begin{align*}
	\mathrm{Im}(c(\alpha))=\frac{u''(0)}{u'(0)^2}\alpha^2\pi-\mathrm{Im}\left(\frac{u'(0)\tilde{\mathcal A}(2,0)}{\tilde{\mathcal A}(1,0)}\right)+o(\alpha^2).
\end{align*}
 We first notice that for $B\ll1$,
\begin{align}\label{eq:diss-rel-upp1}
	\begin{split}
			\mathrm{Im}(c(\alpha))=&\frac{u''(0)}{u'(0)^2}B^2\nu^\f16+C_{Ai}B^{-\f32}\nu^{\f16}+o(\nu^\f16)\\
	\geq&C(B^{-\f32}-B^2)\nu^\f16>0.
	\end{split}
\end{align}
Here $C_{Ai}$ is a positive constant just relying on Airy functions.
On the other hand, for $B\gg1$
\begin{align*}
	\mathrm{Im}(c(\alpha))
	&=\frac{u''(0) }{u'(0)^2}B^2\nu^\f16+C_{Ai}B^{-\f32}\nu^{\f16}+o(\nu^\f16)\\
	&\leq -C(B^2-B^{-\f32})\nu^\f16<0.
\end{align*}
Therefore, there exists $B_c>1$ such that for $\alpha=B_c\nu^{\frac{1}{12}}$,
\begin{align*}
\mathrm{Im}(c(\alpha))=0.
\end{align*}

The proof is completed.
\end{proof}

\begin{proposition}\label{prop:diss-existence}
	Let $\nu\ll1$.  Then  for any $\alpha\in(A\nu^{\f18},B\nu^{\frac{1}{12}})$ with $A\leq 1\ll B$, there exists a unique $c(\alpha)\in\mathbb H_3$ solving \eqref{eq:solu-c}. Moreover, $c(\alpha)$ depends on $\alpha$ continuously .
\end{proposition}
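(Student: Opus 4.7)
The plan is to recast \eqref{eq:solu-c} as a fixed-point equation $c = T_\alpha(c)$ with
\[
T_\alpha(c) := u'(0)^{-1}\alpha - \frac{u'(0)\tilde{\mathcal A}(2,0)}{\tilde{\mathcal A}(1,0)} - \mathcal R_d(\alpha, c),
\]
and to apply the Banach contraction principle on a closed disk $\overline{D_\alpha} \subset \mathbb H_3$ centered at an approximate solution $c_*(\alpha)$. The center $c_*(\alpha)$ is the one produced in the proof of Proposition \ref{prop:diss-priori} by inverting the leading-order terms of \eqref{eq:solu-c}: it satisfies $c_{*,r}(\alpha)\sim\alpha$ together with the $\alpha$-dependent sign conditions on $c_{*,i}(\alpha)$ recorded there. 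The radius $r(\alpha)$ will be taken as a small constant multiple of $\alpha$ (with the constant chosen so that $\overline{D_\alpha} \subset \mathbb H_3$).

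On $\overline{D_\alpha}$ I would verify two estimates. \textit{Self-map:} the residual $|T_\alpha(c_*)-c_*|$ is bounded by $|\mathcal R_d(\alpha,c_*)| \leq C(\alpha^2 + |\varepsilon|\alpha^{-3/2})|\log|\varepsilon||$, which is much smaller than $r(\alpha)$ throughout the range $\alpha\in(A\nu^{1/8},B\nu^{1/12})$, since both $\alpha^2$ and $|\varepsilon|\alpha^{-3/2}$ are $o(\alpha)$ there. \textit{Contraction:} the Lipschitz constant of $T_\alpha$ is the sum of two pieces. The $\mathcal R_d$-piece contributes at most $|\partial_{c_r}\mathcal R_d| + |\partial_{c_i}\mathcal R_d| \leq C|\varepsilon|^{1/2}|\log|\varepsilon|| \ll 1$. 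The Airy-ratio piece is computed directly from the definitions of $\tilde{\mathcal A}(j,0)$ by differentiating under the integral: the $c$-dependence passes through $\kappa\eta(0)$, and splitting into the two asymptotic regimes $|\kappa\eta(0)|\lesssim 1$ (lower branch, $\alpha\sim\nu^{1/8}$) and $|\kappa\eta(0)|\gg 1$ (approach to the upper branch, $\alpha$ up to $\nu^{1/12}$), one uses the expansions from Lemma \ref{lem:airy-langer-asy} and Lemma \ref{lem:airy-0} to bound $\partial_c[u'(0)\tilde{\mathcal A}(2,0)/\tilde{\mathcal A}(1,0)]$ so that, multiplied by the diameter $2r(\alpha)$, the total Lipschitz constant of $T_\alpha$ stays below $1/2$.

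The main obstacle will be this uniform Lipschitz bound on the Airy ratio across the full frequency range, especially near the upper branch, where subtle cancellations between the two terms of
$\partial_c(\tilde{\mathcal A}(2,0)/\tilde{\mathcal A}(1,0))$ must be tracked to avoid losing powers of $|\varepsilon|^{-1/3}$ coming from $\partial_c(\kappa\eta(0))$. Once this bound is in hand, the contraction principle yields existence of a fixed point $c(\alpha) \in \overline{D_\alpha}$, local uniqueness inside $\overline{D_\alpha}$, and continuous dependence of $c(\alpha)$ on $\alpha$ (since $T_\alpha$ depends continuously on $\alpha$). Finally, uniqueness within the full set $\mathbb H_3$, rather than only inside $\overline{D_\alpha}$, is obtained by combining this local uniqueness with the a priori analysis carried out in Proposition \ref{prop:diss-priori}, which forces every solution in $\mathbb H_3$ to lie in the contraction disk $\overline{D_\alpha}$.
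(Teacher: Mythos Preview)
Your approach has a genuine gap near the lower branch $\alpha\sim\nu^{1/8}$ with $A\sim 1$. Differentiating the Airy ratio in $c$ gives, to leading order,
\[
\partial_{c}\Big[\frac{u'(0)\tilde{\mathcal A}(2,0)}{\tilde{\mathcal A}(1,0)}\Big]
= -1 + g(\kappa\eta(0)) + \mathcal O(|c|),
\qquad
g(z):=\frac{Ai(e^{i\pi/6}z)\,\mathcal A(2,z)}{\mathcal A(1,z)^2},
\]
so the Lipschitz constant of $T_\alpha$ is $|1-g(\kappa\eta(0))|+\mathcal O(|c|)$. When $|\kappa\eta(0)|\gg 1$ (the regime you worry about, toward the upper branch) Lemma~\ref{lem:airy-0} gives $g=1+\mathcal O(|\varepsilon|^{1/2}c_r^{-3/2})$, so $|1-g|\ll 1$ and the contraction is fine; your feared loss of $|\varepsilon|^{-1/3}$ does not materialize. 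The real problem is the opposite regime $|\kappa\eta(0)|\sim 1$ (lower branch, $A\sim 1$): there one only knows $|g|\sim 1$, not $|1-g|<1$, and nothing in the Airy asymptotics forces the Lipschitz constant below $1$. Your sentence ``multiplied by the diameter $2r(\alpha)$, the total Lipschitz constant\ldots stays below $1/2$'' does not salvage this---the contraction principle requires the Lipschitz constant itself to be $<1$, independent of the radius.

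The paper sidesteps this by a two-step argument. First it runs your iteration only at a single frequency $\alpha^{0}=A\nu^{1/8}$ with $A\gg 1$, where the contraction constant $|1-g|\lesssim A^{-4/3}$ is genuinely small; this produces one zero $(\alpha^{0},c^{0})$. Then it computes the Jacobian of $\mathbb F(\alpha;c_r,c_i)=(\mathrm{Re},\mathrm{Im})$ of the full dispersion relation with respect to $(c_r,c_i)$ and finds $\det(D_{c}\mathbb F)=|g(\kappa\eta(0))|^2+\mathcal O(|c|)$, which is bounded away from zero throughout $\mathbb H_3$ (again by Lemma~\ref{lem:airy-0}). The implicit function theorem then extends the solution continuously to the entire interval $\alpha\in(A\nu^{1/8},B\nu^{1/12})$ and gives (local) uniqueness. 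The key point is that the implicit function theorem needs only $|g|\neq 0$, which is strictly weaker than the $|1-g|<1$ your contraction requires.
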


\begin{proof}
 We introduce 
 $$
 \mathbb F(\alpha_r,\alpha_i;c_r,c_i)=\big(F_r(\alpha_r,\al_i;c_r,c_i),F_i(\alpha_r,\alpha_i ;c_r,c_i)\big),
$$
  where
	\begin{align*}
		&F_r(\alpha_r, \al_i ;c_r,c_i)=\mathrm{Re}\left(c-u'(0)^{-1}\alpha+\frac{u'(0)\tilde{\mathcal A}(2,0)}{\tilde{\mathcal A}(1,0)}+\mathcal R_d(\alpha,c)\right),\\
		&F_i(\alpha_r,\al_i;c_r,c_i)=\mathrm{Im}\left(c-u'(0)^{-1}\alpha+\frac{u'(0)\tilde{\mathcal A}(2,0)}{\tilde{\mathcal A}(1,0)}+\mathcal R_d(\alpha,c)\right).
	\end{align*}
	
We need to show the Jacobian determinant of $F_r$ and $F_i$ is non-zero for $(\alpha,c)\in \mathbb{ H}_3$. For this purpose, by Lemma \ref{lem:airy-0}, we have that for $j=1,2$,
	\begin{align*}
		\frac{u'(0)\tilde{\mathcal A}(2,0)}{\tilde{\mathcal A}(1,0)}=\kappa^{-1}u'(0)\frac{\mathcal A(2,\kappa\eta(0))}{\mathcal A(1,\kappa\eta(0))}+\mathcal O(|c|^2).
	\end{align*}
	By our definition of $\eta(Y)$, we know that 
	\begin{align*}
		\eta(0;c_r,c_i)=-u'(0)^{-1}(c_r+\mathrm ic_i)+\mathcal O(|c|^2),
	\end{align*}
	and smoothly depends on $c_r$ and $c_i$. Therefore, we obtain 
	\begin{align}\label{eq:d-cr-ci}
		\begin{split}
				&\partial_{c_r}\left(\frac{u'(0)\tilde{\mathcal A}(2,0)}{\tilde{\mathcal A}(1,0)}\right)=-1+\frac{Ai(e^{\mathrm i\frac{\pi}{6}}\kappa\eta(0))\mathcal A(2,\kappa\eta(0))}{\mathcal A(1,\kappa\eta(0))^2}+\mathcal O(|c|),\\
		&\partial_{c_i }\left(\frac{u(0)\tilde{\mathcal A}(2,0)}{\tilde{\mathcal A}(1,0)}\right)=-\mathrm i +\mathrm i\frac{Ai(e^{\mathrm i(\frac{\pi}{6}}\kappa\eta(0))\mathcal A(2,\kappa\eta(0))}{\mathcal A(1,\kappa\eta(0))^2}+\mathcal O(|c|).
		\end{split}
	\end{align}
	Thus, we have 
	\begin{align*}
		&\partial_{c_r}F_r(\alpha_r,\al_i;c_r,c_i)=\mathrm{Re}\left(\frac{Ai(e^{\mathrm i\frac{\pi}{6}}\kappa\eta(0))\mathcal A(2,\kappa\eta(0))}{\mathcal A(1,\kappa\eta(0))^2}\right)+\mathcal O(|c|),\\
		&\partial_{c_r}F_i(\alpha_r,\al_i;c_r,c_i)=\mathrm{Im}\left(\frac{Ai(e^{\mathrm i\frac{\pi}{6}}\kappa\eta(0))\mathcal A(2,\kappa\eta(0))}{\mathcal A(1,\kappa\eta(0))^2}\right)+\mathcal O(|c|),\\
		&\partial_{c_r}F_i(\alpha_r,\al_i;c_r,c_i)=-\mathrm{Im}\left(\frac{Ai(e^{\mathrm i\frac{\pi}{6}}\kappa\eta(0))\mathcal A(2,\kappa\eta(0))}{\mathcal A(1,\kappa\eta(0))^2}\right)+\mathcal O(|c|),\\
		&\partial_{c_i}F(\alpha_r,\al_i;c_r,c_i)=\mathrm{Re}\left(\frac{Ai(e^{\mathrm i\frac{\pi}{6}}\kappa\eta(0))\mathcal A(2,\kappa\eta(0))}{\mathcal A(1,\kappa\eta(0))^2}\right)+\mathcal O(|c|).
	\end{align*}
Hence, the Jacobian determinant $J(F_r,F_i)(\alpha;c_r,c_i)$ of $F_r$ and $F_i$ satisfies 
	\begin{align*}
		J(F_r,F_i)(\alpha_r,\al_i;c_r,c_i)=&\left|\frac{Ai(e^{\mathrm i \frac{\pi}{6}}\kappa\eta(0))\mathcal A(2,\kappa\eta(0))}{\mathcal A(1,\kappa\eta(0))^2}\right|^2+\mathcal O(|c|).
	\end{align*}
And by Lemma \ref{lem:airy-0}, we know that for any $(\alpha,c)\in \mathbb H_3$,
	\begin{align*}
		\left|\frac{Ai(e^{\mathrm i\frac{\pi}{6}}\kappa\eta(0))\mathcal A(2,\kappa\eta(0))}{\mathcal A(1,\kappa\eta(0))^2}\right|\sim1.
	\end{align*}
	Therefore, for $\nu\ll1$, we obtain that for any $(\alpha,c)\in\mathbb  H_3$,
    \begin{align}\label{eq:F-c-Jaco}
    	\begin{split}
    			J(F_r,F_i)(\alpha_r,\al_i;c_r,c_i)=&\left|\frac{Ai(e^{\mathrm i \frac{\pi}{6}}\kappa\eta(0))\mathcal A(2,\kappa\eta(0))}{\mathcal A(1,\kappa\eta(0))^2}\right|^2+\mathcal O(|c|)\\
		\geq&C(1-|c|)\geq \frac{1}{2}C>0.
    	\end{split}
    \end{align}
    
	On the other hand, since $F_r,F_i$ are both analytic on $\alpha$, for any fixed $\nu$, we have 
	\begin{align}\label{eq:F-alpha}
		|\partial_\alpha F_r(\alpha;c_r,c_i)|+|\partial_\alpha F_i(\alpha;c_r,c_i)|\leq C. .
	\end{align}
We denote 
	\begin{align*}
		\mathbb A(c^{(j)})=\frac{u'(0)\tilde{\mathcal A}(2,0)}{\tilde{\mathcal A}(1,0)}\text{ with taking } c=c^{(j)}.
	\end{align*}
We now show that there exists a $(\al^0,c^0)\in\mathbb H_3$ such that $\mathbb F(\alpha^0;c_r^0,c_i^0)=0$. Let $\alpha=A\nu^\f18$ with $A\gg1$.  We take $c^{(0)}=u'(0)^{-1}\alpha$, and for $j\geq 1$,
	\begin{align*}
		c^{(j)}=u'(0)^{-1}\alpha-\mathbb A(c^{(j-1)})-\mathcal R_d(\alpha,c^{(j-1)}).
	\end{align*}
	By Lemma \ref{lem:airy-0}, we know that
	\begin{align*}
		\mathbb A(c^{(0)})=-A^{-1}e^{\mathrm i\frac{\pi}{4}}\nu^\f18+\mathcal O(A^{-\f43})\nu^\f18,
	\end{align*}
   which implies 
   \begin{align*}
   	&c^{(1)}=\nu^\f18\left(u'(0)A+A^{-1}e^{\mathrm i \frac{\pi}{4}}\right)+\mathcal O(A^{-\f43})\nu^\f18,\\
   	&\mathrm{Im}(c^{(1)})=\frac{\sqrt{2}}{2}A^{-1}\nu^{\f18}+\mathcal O(A^{-\f43})\nu^{\f18}.
   \end{align*}
   Hence, $(\al,c^{(1)})\in\mathbb H_3$.
   By the inductive argument and Lemma \ref{lem:airy-0}, we can obtain that for any $j\geq 1$, $(\alpha,c^{(j)})\in\mathbb H_2$. Moreover, again by Lemma \ref{lem:airy-0} and \eqref{eq:d-cr-ci}, we have
   \begin{align*}
   	\left|\partial_{c_r}\left(\frac{u'(0)\tilde{\mathcal A}(2,0)}{\tilde{\mathcal A}(1,0)}\right)\right|+\left|\partial_{c_i}\left(\frac{u'(0)\tilde{\mathcal A}(2,0)}{\tilde{\mathcal A}(1,0)}\right)\right|\leq CA^{-\f43}.
   \end{align*}
	Therefore, we obtain that for any $j\geq 1$,
	\begin{align*}
		|c^{(j+1)}-c^{(j)}|\leq& CA^{-\f43}|c^{(j)}-c^{(j-1)}|+C|\varepsilon\log|\varepsilon|| |c^{(j)}-c^{(j-1)}|\\
		\leq&CA^{-\f43}|c^{(j)}-c^{(j-1)}|.
	\end{align*}
	Since $A\gg1$, there exists $c^0$ such that $\lim_{j\to\infty}c^{(j)}=c^0$ and $c^0$ satisfies 
		\begin{align*}
		c^{0}=u'(0)^{-1}\alpha-\mathbb A(c^{0})-\mathcal R_d(\alpha,c^{(0)}).
	\end{align*}
	Hence,  $(\alpha,c_r^{(0)},c_i^{(0)})$ is a zero point of $\mathbb F(\alpha;c_r,c_i)$, which along with \eqref{eq:F-c-Jaco} and \eqref{eq:F-alpha} implies that for  $\alpha\in(A\nu^{\f18},B\nu^{\frac{1}{12}})$, there exists a unique $c(\alpha)$ solving \eqref{eq:solu-c} with $(c(\alpha),\alpha)\in\mathbb H_3$. Moreover, $c(\alpha)$ depends on $\alpha$ continuously.
	\end{proof}
	
By Proposition \ref{prop:diss-priori} and \ref{prop:diss-existence}, we conclude the following result.

\begin{theorem}
Let $\nu\ll1$. Then there exist $A\leq1\ll B$ independent of $\nu$ such that for any $\alpha\in(A\nu^{\f18},B\nu^{\frac{1}{12}})$, there exists a  pair $(c,\phi)$ solving \eqref{eq:OS-nonslip} with $\phi\in W^{4,\infty}$. Moreover, $c$ satisfies the following properties

	\begin{itemize}
		\item There exists $A_1\geq A_c\geq A_0>A$ such that 
		      \begin{align*}
		      	&\mathrm{Im}(c(A_0\nu^\f18))<0,\quad \mathrm{Im}(c(A_c\nu^\f18))=0,\quad\mathrm{Im}(c(A_1\nu^\f18))>0.
		      \end{align*}
		\item There exists $B_c\leq B_0\leq B_1<B$ such that 
		       \begin{align*}
		      	&\mathrm{Im}(c(B_0\nu^\frac{1}{12}))>0,\quad \mathrm{Im}(c(B_c\nu^\frac{1}{12}))=0,\quad\mathrm{Im}(c(B_1\nu^\frac{1}{12}))<0.
		      \end{align*}
		\item If $\nu^\f18\ll\alpha\ll\nu^\frac{1}{12}$, then 
		      \begin{align*}
		      	\alpha\mathrm{Im}(c(\alpha))\sim\nu^\f14.
		      \end{align*}
	\end{itemize}
\end{theorem}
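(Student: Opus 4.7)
The plan is to assemble the result by combining the slow/fast mode construction from Section 4 with the analysis of the dispersion relation carried out in Propositions \ref{prop:diss-priori} and \ref{prop:diss-existence}. First, given $\alpha\in(A\nu^{1/8},B\nu^{1/12})$, I would seek a candidate solution of \eqref{eq:OS-nonslip} in the form $\phi = C_s\phi_s + C_f\phi_f$, where $\phi_s\in\mathcal X$ is the slow mode from Theorem \ref{them:slow-mode} and $\phi_f\in W^{4,\infty}$ is the fast mode from Theorem \ref{them:fast-mode}. Since both $\phi_s$ and $\phi_f$ already satisfy the homogeneous OS equation together with the decay condition $\lim_{Y\to\infty}\phi(Y)=0$, the only task is to match the no-slip boundary conditions $\phi(0)=\partial_Y\phi(0)=0$. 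Nontrivial $(C_s,C_f)$ exist precisely when the dispersion identity \eqref{eq:dis-rel} holds, which by the expansions in Theorems \ref{them:slow-mode} and \ref{them:fast-mode} is equivalent to \eqref{eq:solu-c}.

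Next, I would invoke Proposition \ref{prop:diss-existence} to obtain, for each $\alpha\in(A\nu^{1/8},B\nu^{1/12})$, a unique $c(\alpha)\in\mathbb H_3$ solving \eqref{eq:solu-c}, depending continuously on $\alpha$. With such $c(\alpha)$ in hand, the slow and fast modes are well-defined in the appropriate spaces, and choosing $(C_s,C_f)\ne(0,0)$ in the kernel of the $2\times 2$ boundary matrix produces $\phi = C_s\phi_s+C_f\phi_f\in W^{4,\infty}$ that solves \eqref{eq:OS-nonslip}. Nondegeneracy of $\phi$ is guaranteed because the Wronskian at $Y=0$ of the pair $\{\phi_s,\phi_f\}$ is controlled by $|\psi_a(0)|\sim|\varepsilon|\alpha^{-1}$ from Theorem \ref{them:fast-mode} together with the $\mathcal O(1)$ behaviour of $\partial_Y\phi_s(0)\sim u'(0)$ from Theorem \ref{them:slow-mode}, so the fast mode cannot be proportional to the slow mode in this regime.

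For the three qualitative properties of $c(\alpha)$, I would simply quote Proposition \ref{prop:diss-priori}. The existence of $A_0,A_c,A_1$ around the lower branch follows from the sign change of $\mathrm{Im}(c)$ detected in Case 1 of its proof: at $\alpha=A_0\nu^{1/8}$ with $A_0\sim 1$ chosen via the Tietjens function $\mathcal F$, one has $\mathrm{Im}(c)<0$, while at $\alpha=A_1\nu^{1/8}$ with $A_1\gg 1$ the Airy expansion gives $\mathrm{Im}(c)\gtrsim A_1^{-1}\nu^{1/8}>0$; continuous dependence then yields an intermediate $A_c$ at which $\mathrm{Im}(c)=0$. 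Around the upper branch, the analogous sign change comes from the explicit formula \eqref{eq:dr-i}: the stabilising contribution $\frac{u''(0)}{u'(0)^2}\alpha^2\pi$ (negative since $u''(0)<0$) overtakes the destabilising Airy contribution $\sim B^{-3/2}\nu^{1/6}$ as $B$ crosses a critical value $B_c$. In the intermediate range $\nu^{1/8}\ll\alpha\ll\nu^{1/12}$, the Airy term dominates and yields $\alpha\,\mathrm{Im}(c)\sim\nu^{1/4}$, as computed in Case 2 of Proposition \ref{prop:diss-priori}.

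The main obstacle is essentially already embedded in the cited propositions, namely the delicate competition between the Airy-function contribution from the sublayer and the Rayleigh-type corrections in the upper/main deck near the neutral curve. In presenting the proof of the theorem itself I expect the only substantive verification to be that the boundary-matching procedure indeed produces a $W^{4,\infty}$ function (which follows since $\phi_s\in\mathcal X\subset W^{4,\infty}$ and $e^{\eta_0 Y}\phi_f\in W^{4,\infty}$), and that continuity of $c(\alpha)$ and the sign information survive when one passes from the analytic dispersion equation \eqref{eq:solu-c} back to the original problem \eqref{eq:OS-nonslip}. Both are immediate consequences of the uniform bounds built into the spaces $\mathcal X,\mathcal Y$ and of the non-vanishing Jacobian estimate \eqref{eq:F-c-Jaco}, so the proof will essentially be a clean assembly rather than new analysis.
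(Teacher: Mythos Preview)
Your proposal is correct and follows essentially the same approach as the paper: the theorem is stated as an immediate consequence of Propositions \ref{prop:diss-priori} and \ref{prop:diss-existence}, with the solution $\phi$ built as a linear combination of the slow and fast modes once the dispersion relation \eqref{eq:dis-rel} is solved. Your write-up is more explicit about the boundary-matching and $W^{4,\infty}$ regularity than the paper (which simply says ``By Proposition \ref{prop:diss-priori} and \ref{prop:diss-existence}, we conclude the following result''), but the underlying logic is identical.
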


\appendix

\section{The Airy function}\label{Sec:Airy}

 Let $Ai(y)$ be the Airy function, which is a nontrivial solution of $f''-yf=0$. By \cite{OLBC}, we have the following asymptotic formula for $|\arg z|\leq\pi-\delta$ with $\delta>0$ and $|z|\geq M$ for some large $M$,
 \begin{align}\label{eq:airy-decay}
 	\partial^k_z Ai(z)=\frac{1}{2\sqrt{\pi}}z^{-\f14+\f k2}e^{-\f23z^{\f32}}(1+R(z)),\quad R(z)=\mathcal O(z^{-\f32}),\quad\forall k=0,1,2.
 \end{align}  
 We denote
 \begin{align*}
 	&\mathcal A(1,z):=-\int_z^{+\infty}Ai(e^{\mathrm i\frac{\pi}{6}}t)dt,\quad\mathcal A(2,z):=-\int_z^{+\infty}\mathcal A(1,t)dt,\\
 	&\mathcal C(1,z):=\int_{-\infty}^zAi(e^{\mathrm i\frac{5\pi}{6}}t)dt,\quad\mathcal C(2,z):=\int_{-\infty}^z\mathcal B(1,t)dt.
 \end{align*}

\begin{lemma}[Lemma A.2 in \cite{MWWZ}]\label{lem:pri-Airy-decay}
	Suppose that $\delta_0$ is a small positive constant. Assume that $\mathrm{Im}z\leq\delta_0$ and $|z|\geq M$. Then we have
\begin{align*}
	&\mathcal A(1,z)=-e^{-\mathrm i\frac{\pi}{6}}(e^{\mathrm i\frac{\pi}{6}}z)^{-\f34}e^{-\f23(e^{\mathrm i\frac{\pi}{6}}z )^\f32}(1+\mathcal R_1(z)),\\
	&\mathcal A(2,z)=e^{-\mathrm i\frac{\pi}{3}}(e^{\mathrm i\frac{\pi}{6}}z)^{-\f54}e^{-\f23(e^{\mathrm i\frac{\pi}{6}}z )^\f32}(1+\mathcal R_2(z)),\\
	&\mathcal C(1,z)=-e^{-\mathrm i\frac{5\pi}{6}}(e^{\mathrm i\frac{5\pi}{6}}z)^{-\f34}e^{-\f23(e^{\mathrm i\frac{5\pi}{6}}z )^\f32}(1+\mathcal R_3(z)),\\
	&\mathcal C(2,z)=e^{\mathrm i\frac{\pi}{3}}(e^{\mathrm i\frac{5\pi}{6}}z)^{-\f54}e^{-\f23(e^{\mathrm i\frac{5\pi}{6}}z )^\f32}(1+\mathcal R_4(z)),
\end{align*}
where $\mathcal R_i(z)=\mathcal O(z^{-\f32})$ for $i=1,2,3,4.$
\end{lemma}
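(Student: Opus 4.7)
The plan is to reduce each of the four estimates to the classical asymptotic \eqref{eq:airy-decay} for $Ai$ via a change of variables that rotates the integration variable into the decay sector of the Airy function, and then to peel off the leading exponential behavior by a single integration by parts. The algebraic engine driving everything is the identity
\begin{equation*}
-\frac{d}{dw}\Big[w^{-\sigma}e^{-\frac{2}{3}w^{3/2}}\Big]=w^{-\sigma+\frac{1}{2}}e^{-\frac{2}{3}w^{3/2}}+\sigma\, w^{-\sigma-1}e^{-\frac{2}{3}w^{3/2}},
\end{equation*}
which, after integrating from $w_0$ to the direction of decay, gives $\int w^{-\sigma+1/2}e^{-\frac{2}{3}w^{3/2}}\,dw = w_0^{-\sigma}e^{-\frac{2}{3}w_0^{3/2}}\bigl(1+\mathcal O(w_0^{-3/2})\bigr)$. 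Applied with $\sigma=\frac{3}{4}$ this converts the $Ai$ asymptotic (which has the $w^{-1/4}$ prefactor) into an antiderivative with a $w^{-3/4}$ prefactor, and iterating with $\sigma=\frac{5}{4}$ handles the second primitive.

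For $\mathcal{A}(1,z)$ I would first substitute $w=e^{\mathrm i\pi/6}t$, turning the defining integral into $-e^{-\mathrm i\pi/6}\int_{w_0}^{\infty}Ai(w)\,dw$ along the ray $w=w_0+e^{\mathrm i\pi/6}s$, $s\in[0,\infty)$, where $w_0=e^{\mathrm i\pi/6}z$. Under the hypothesis $\mathrm{Im}\,z\leq\delta_0$ with $|z|\geq M$, the starting point $w_0$ satisfies $|\arg w_0|\leq\pi-\delta$ for some $\delta>0$, and the tail direction $\arg e^{\mathrm i\pi/6}=\pi/6$ lies in the decay sector, so \eqref{eq:airy-decay} is valid uniformly along the contour. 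Substituting the leading-order asymptotic and applying the identity above with $\sigma=3/4$ yields exactly the claimed leading term; the contribution from the error $R(w)$ in \eqref{eq:airy-decay} is handled by the same integration by parts and contributes to $\mathcal{R}_1(z)=\mathcal O(z^{-3/2})$.

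For $\mathcal{A}(2,z)=-\int_z^{\infty}\mathcal{A}(1,t)\,dt$ I would insert the asymptotic obtained in the previous step, change variables again to $w=e^{\mathrm i\pi/6}t$, and apply the same identity now with $\sigma=5/4$, which produces the $(e^{\mathrm i\pi/6}z)^{-5/4}$ prefactor together with the matching phase $e^{-\mathrm i\pi/3}=e^{-\mathrm i\pi/6}\cdot e^{-\mathrm i\pi/6}$. Treatment of $\mathcal{C}(1,z)$ and $\mathcal{C}(2,z)$ is entirely parallel with the rotation $e^{\mathrm i 5\pi/6}$ in place of $e^{\mathrm i\pi/6}$: the critical observation is that $t\to-\infty$ along the real axis corresponds to $e^{\mathrm i 5\pi/6}t\to\infty$ along the direction $e^{-\mathrm i\pi/6}$, which again lies in the Airy decay sector, so \eqref{eq:airy-decay} can be used throughout and the same two integration by parts produce the expansions with prefactors $-e^{-\mathrm i 5\pi/6}(e^{\mathrm i 5\pi/6}z)^{-3/4}$ and $e^{\mathrm i\pi/3}(e^{\mathrm i 5\pi/6}z)^{-5/4}$ respectively.

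The only delicate point is verifying that the integration contours remain in the uniform decay sector of $Ai$, so that \eqref{eq:airy-decay} applies with uniform constants. The condition $\mathrm{Im}\,z\leq\delta_0$ with $|z|\geq M$ forces $z$ to avoid a neighborhood of the upper imaginary axis but otherwise leaves its argument essentially unrestricted; one checks explicitly that the rotations by $\pi/6$ and $5\pi/6$, combined with the choice of the ray direction along which we integrate out to infinity, keep $|\arg w|\leq\pi-\delta$ for some $\delta>0$ depending on $\delta_0$ but not on $z$. Once this bookkeeping is in place, the error estimate $\mathcal{R}_i(z)=\mathcal O(z^{-3/2})$ follows by bounding the tail integrals $\int|w|^{-\sigma-1}|e^{-\frac{2}{3}w^{3/2}}|\,|dw|$ and observing they are $\mathcal O(|w_0|^{-3/2})$ times the leading term.
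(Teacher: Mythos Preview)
The paper does not supply its own proof of this lemma; it is quoted as Lemma~A.2 of \cite{MWWZ} and stated without argument, so there is nothing in the present paper to compare against. Your approach---rotating the integration variable into the sector where the standard Airy asymptotic \eqref{eq:airy-decay} applies, and then integrating by parts via the identity $-\tfrac{d}{dw}\bigl[w^{-\sigma}e^{-2w^{3/2}/3}\bigr]=w^{-\sigma+1/2}e^{-2w^{3/2}/3}+\sigma\,w^{-\sigma-1}e^{-2w^{3/2}/3}$ with $\sigma=\tfrac34,\tfrac54$---is the standard derivation of such expansions and is correct in outline.

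One refinement is needed: when $\mathrm{Re}\,z<0$ the ray $w=w_0+e^{\mathrm i\pi/6}s$ passes within distance $|\mathrm{Im}\,z|\leq\delta_0$ of the origin, so \eqref{eq:airy-decay} is \emph{not} valid uniformly along the whole contour as you assert. This is harmless, however: the contribution from any fixed neighborhood of $0$ is $O(1)$, whereas the leading boundary term $w_0^{-3/4}e^{-2w_0^{3/2}/3}$ is exponentially large in $|z|$ in that regime (since $\mathrm{Re}(w_0^{3/2})<0$ there), so splitting off a bounded piece near the origin and applying your integration by parts on the remaining portion recovers the stated expansion with the claimed $\mathcal O(z^{-3/2})$ remainder. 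The same remark applies to the $\mathcal C$ integrals after the $e^{\mathrm i 5\pi/6}$ rotation.
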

\begin{lemma}[Lemma A.3 in \cite{MWWZ}]\label{lem:Airy-green-decay}
	Let $\delta_0>0$ be a small constant.  Suppose $-\delta_0<\mathrm{Im}Z_0<\delta_0$, $\mathrm{Re}Z_0<0$.  There exists  $M_1>0$ such that if $|\mathrm{Re}(Y+Z_0)|>M_1\delta_0$, then for any $0\leq Z\leq Y$,
	\begin{align*}
		\left|e^{-\f23(e^{\mathrm i\frac{\pi}{6}}(Y+Z_0))^\f32 } e^{-(e^{\mathrm i\frac{5\pi}{6}}(Z+Z_0))^\f32 }\right|\leq e^{-C_1|Y-Z|(|Y+Z_0|+|Z+Z_0)^\f12}.
	\end{align*}
\end{lemma}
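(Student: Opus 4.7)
The plan is to rewrite the exponential bound as a lower bound on the real part of the complex exponent and then verify that lower bound by a careful case analysis using the geometry of the branch of $(\cdot)^{3/2}$. Set $W_1:=Y+Z_0$ and $W_2:=Z+Z_0$; then $W_1-W_2=Y-Z\in[0,\infty)$ is real, and $\mathrm{Im}\,W_1=\mathrm{Im}\,W_2=\mathrm{Im}\,Z_0\in(-\delta_0,\delta_0)$. The conclusion is equivalent to
\begin{align*}
\Phi(Y,Z)\;:=\;\mathrm{Re}\bigl(\tfrac{2}{3}(e^{\mathrm i\pi/6}W_1)^{3/2}\bigr)+\mathrm{Re}\bigl((e^{\mathrm i 5\pi/6}W_2)^{3/2}\bigr)\;\geq\;C_1\,(Y-Z)\bigl(|W_1|^{1/2}+|W_2|^{1/2}\bigr),
\end{align*}
using the principal branch of $(\cdot)^{3/2}$. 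Because $|\mathrm{Im}\,W_1|<\delta_0$ and $|\mathrm{Re}\,W_1|>M_1\delta_0$, the argument of $W_1$ lies in a cone of half-angle $\lesssim 1/M_1$ about either $0$ or $\pi$, so all branch-cut issues sit away from $W_1,W_2$ once $M_1$ is large.

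I would then split into cases on the signs of $\mathrm{Re}\,W_1$ and $\mathrm{Re}\,W_2$. The generic, hardest case is $\mathrm{Re}\,W_1>M_1\delta_0$ and $\mathrm{Re}\,W_2>M_1\delta_0$: both points lie in a thin cone about $\mathbb R_+$, so $(e^{\mathrm i\pi/6}W_j)^{3/2}$ has argument close to $\pi/4$ and $(e^{\mathrm i5\pi/6}W_j)^{3/2}$ has argument close to $5\pi/4$. Taking real parts and tracking the cosines gives
\begin{align*}
\Phi(Y,Z)\;=\;\tfrac{\sqrt{2}}{3}\bigl(|W_1|^{3/2}-|W_2|^{3/2}\bigr)\,+\,O(M_1^{-1})\bigl(|W_1|^{3/2}+|W_2|^{3/2}\bigr).
\end{align*}
Now I would invoke the elementary inequality
\begin{align*}
a^{3/2}-b^{3/2}\;=\;(a-b)\,\frac{a+\sqrt{ab}+b}{a^{1/2}+b^{1/2}}\;\geq\;\tfrac{1}{2}(a-b)\bigl(a^{1/2}+b^{1/2}\bigr),\qquad a\geq b\geq 0,
\end{align*}
together with $|W_1|-|W_2|\geq c(Y-Z)$ (valid because both $W_j$ sit in the same narrow right cone, where modulus is comparable to real part). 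Choosing $M_1$ large enough that the $O(M_1^{-1})$ error is dominated by a constant fraction of the main term yields the claimed lower bound with an explicit $C_1>0$.

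In the mixed case $\mathrm{Re}\,W_1>M_1\delta_0$, $\mathrm{Re}\,W_2<-M_1\delta_0$, the point $W_2$ sits in a cone about the negative real axis, so $\arg(e^{\mathrm i5\pi/6}W_2)^{3/2}$ is close to $-\pi/4$ and contributes a positive real part; hence $\Phi(Y,Z)\gtrsim|W_1|^{3/2}+|W_2|^{3/2}$, which dominates $(Y-Z)(|W_1|^{1/2}+|W_2|^{1/2})$ by AM-GM. The remaining case $\mathrm{Re}\,W_1<-M_1\delta_0$ forces $\mathrm{Re}\,W_2\leq\mathrm{Re}\,W_1<-M_1\delta_0$, and the same phase inspection shows both terms of $\Phi$ are positive of order $|W_j|^{3/2}$, so the bound is immediate.

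The main obstacle will be the transitional regime where one of $W_1,W_2$ has real part of size comparable to $\delta_0$, so the phase of that point is not well approximated by $0$ or $\pi$ and the cone geometry degenerates. I expect to handle this by observing that in this regime $|W_j|\lesssim\delta_0$ is small, so its entire contribution $|W_j|^{3/2}\lesssim\delta_0^{3/2}$ is bounded and can be absorbed into the dominant term coming from the other point (whose modulus is at least $M_1\delta_0$) once $M_1$ is taken large. The quantitative choice of $M_1$ — ensuring simultaneously that (i) cone phases are within tolerance, (ii) perturbation errors from the small imaginary part are absorbed, and (iii) the transition strip contributes a controllable remainder — is the one delicate book-keeping step; everything else reduces to the monotonicity inequality displayed above.
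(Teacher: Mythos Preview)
The paper does not prove this lemma; it is quoted from \cite{MWWZ} (their Lemma~A.3), so there is no in-paper argument to compare against. Your overall strategy---reduce to a lower bound on $\Phi$ and split according to the signs of $\mathrm{Re}\,W_1,\mathrm{Re}\,W_2$---is the natural one and essentially correct, but two points need repair.

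First, your error bookkeeping in the main case is too crude to close. You write $\Phi=\tfrac{\sqrt2}{3}(|W_1|^{3/2}-|W_2|^{3/2})+O(M_1^{-1})(|W_1|^{3/2}+|W_2|^{3/2})$ and propose to absorb the error into the main term by taking $M_1$ large. But at $Y=Z$ the main term vanishes while the error, as you have written it, does not---so no choice of $M_1$ works uniformly. The fix is that for $\arg W$ in the narrow right cone one has the \emph{exact} identity $(e^{\mathrm i\pi/6}W)^{3/2}+(e^{\mathrm i5\pi/6}W)^{3/2}=0$ (just compute: both equal $|W|^{3/2}e^{\mathrm i3\arg W/2}$ times $e^{\mathrm i\pi/4}$ and $e^{\mathrm i5\pi/4}$, which cancel). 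Hence in this cone $\Phi=\tfrac{2}{3}\,\mathrm{Re}\bigl[(e^{\mathrm i\pi/6}W_1)^{3/2}-(e^{\mathrm i\pi/6}W_2)^{3/2}\bigr]$ exactly, and writing this as $\int_0^{Y-Z}\mathrm{Re}\bigl(e^{\mathrm i\pi/6}(e^{\mathrm i\pi/6}(W_2+t))^{1/2}\bigr)\,dt$ gives an integrand bounded below by $c|W_2+t|^{1/2}$; integrating and using your factorisation inequality yields the claim with no error to absorb.

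Second, in your third case $\mathrm{Re}\,W_1<-M_1\delta_0$ the phase inspection is wrong: with $\arg W_1\approx\pm\pi$ one gets $\arg(e^{\mathrm i\pi/6}W_1)\approx -5\pi/6$, so $(e^{\mathrm i\pi/6}W_1)^{3/2}$ has argument near $-5\pi/4$ and \emph{negative} real part. The second term is indeed positive, so $\Phi\approx\tfrac{\sqrt2}{3}(|W_2|^{3/2}-|W_1|^{3/2})$; since here $|W_2|\geq|W_1|$, your displayed inequality with $a=|W_2|,\ b=|W_1|$ still closes the case. (Incidentally, the printed statement is missing the factor $\tfrac{2}{3}$ on the second exponent---compare the Airy asymptotics and Lemma~\ref{lem:airy-langer-asy}; taken literally the inequality would fail at $Y=Z$. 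Your computation with matching coefficients is the intended reading.)
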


Next, we present the asymptotic formula for the classical Scorer function, which can be found in \cite{OLBC}.

\begin{lemma}\label{lem:asy-Hi}
	Assume $|\arg z|\leq \frac{2\pi}{3}-\delta$, where $\delta$ is an arbitrarily small positive constant. Then the following asymptotic expansion holds:
	\begin{align}\label{eq:asy-int-Hi}
		\int_0^zHi(-t)dt\sim\pi^{-1}\log z+\frac{2\gamma+\log 3}{3\pi}+\pi^{-1}\sum_{k=1}^{\infty}(-1)^{k-1}\frac{(3k-1)!}{k!(3z^3)^k},\quad z\to\infty.
	\end{align}
\end{lemma}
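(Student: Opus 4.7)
My plan is to reduce the asymptotic expansion to two independent pieces: identifying the leading $\log z + \text{constant}$ terms by a direct integral manipulation, and obtaining the full algebraic tail by integrating the classical Poincar\'e expansion of $\mathrm{Hi}(-z)$.

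I begin from the integral representation
\begin{align*}
\mathrm{Hi}(-t)=\frac{1}{\pi}\int_0^\infty \exp\!\bigl(-\tfrac{s^3}{3}-ts\bigr)\,ds,
\end{align*}
and apply Fubini to obtain
\begin{align*}
\int_0^z \mathrm{Hi}(-t)\,dt = \frac{1}{\pi}\int_0^\infty \frac{e^{-s^3/3}\,(1-e^{-zs})}{s}\,ds.
\end{align*}
For real $z>0$ this is immediate; the identity extends to $|\arg z|\le \tfrac{2\pi}{3}-\delta$ by analytic continuation, with absolute convergence secured by the super-exponential factor $e^{-s^3/3}$. To isolate the logarithm I split at $s=1$:
\begin{align*}
\pi\!\int_0^z\! \mathrm{Hi}(-t)\,dt=\int_0^1\!\frac{(1-e^{-zs})(e^{-s^3/3}-1)}{s}\,ds+\int_0^1\!\frac{1-e^{-zs}}{s}\,ds+\int_1^\infty\!\frac{(1-e^{-zs})e^{-s^3/3}}{s}\,ds.
\end{align*}
After the substitution $u=zs$, the middle integral equals $\gamma+\log z+E_1(z)$, where $E_1(z)=\int_z^\infty e^{-u}/u\,du$ is exponentially small in the sector. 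As $z\to\infty$, dominated convergence plus standard integration-by-parts bounds show that the first piece converges to $\int_0^1 (e^{-s^3/3}-1)/s\,ds$ and the third to $\int_1^\infty e^{-s^3/3}/s\,ds$, modulo errors $O(|z|^{-N})$ for arbitrary $N$. Substituting $u=s^3/3$ in these two $s$-integrals and using the identity $\int_0^1(e^{-u}-1)/u\,du+\int_1^\infty e^{-u}/u\,du=-\gamma$ gives the explicit value $\tfrac{1}{3}(\log 3-\gamma)$; combined with $\gamma$ from the middle piece, this reproduces the claimed leading terms $\log z/\pi+(2\gamma+\log 3)/(3\pi)$.

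For the algebraic tail I introduce
\begin{align*}
F(z):=\int_0^z \mathrm{Hi}(-t)\,dt-\frac{\log z}{\pi}-\frac{2\gamma+\log 3}{3\pi},
\end{align*}
so that $F(z)\to 0$ as $z\to\infty$ in the sector by the preceding step. Since $F'(z)=\mathrm{Hi}(-z)-\tfrac{1}{\pi z}$, I invoke the classical expansion (a direct consequence of Watson's lemma applied to the same integral representation)
\begin{align*}
\mathrm{Hi}(-z)\sim \frac{1}{\pi z}\sum_{k=0}^{\infty}\frac{(-1)^k(3k)!}{k!(3z^3)^k},
\end{align*}
uniformly valid in the sector. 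Term-by-term integration from $z$ to $\infty$ along a ray lying in the sector then yields
\begin{align*}
F(z)=-\int_z^\infty F'(\zeta)\,d\zeta\sim -\frac{1}{\pi}\sum_{k=1}^\infty \frac{(-1)^k(3k)!}{k!\,3^k}\cdot\frac{1}{3k\,z^{3k}}=\frac{1}{\pi}\sum_{k=1}^\infty \frac{(-1)^{k-1}(3k-1)!}{k!(3z^3)^k},
\end{align*}
using $(3k)!/(3k)=(3k-1)!$. This is the series in the statement.

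The main technical point is the uniform validity of both the Watson-lemma expansion of $\mathrm{Hi}(-z)$ and the termwise integration across the full sector $|\arg z|\le \tfrac{2\pi}{3}-\delta$. Uniformity of the expansion follows because the $N$-th remainder in Watson's lemma applied to the integral representation is bounded by a constant times $|z|^{-(3N+1)}$ with a constant depending only on $\delta$; integrating this remainder along a ray in the sector therefore produces an error of order $|z|^{-3N}$, matching the order of the truncated sum and justifying the asymptotic series claim.
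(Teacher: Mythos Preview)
The paper does not prove this lemma at all: it is stated as a quotation from the NIST handbook \cite{OLBC} (this is formula 9.12.31 there), with no argument given. Your proposal therefore goes well beyond what the paper does, supplying a complete self-contained derivation. The two-step strategy is sound and the computations are correct: the Frullani-type integral produces $\gamma+\log z$, the substitution $u=s^{3}/3$ together with the standard identity $\int_{0}^{1}(e^{-u}-1)u^{-1}\,du+\int_{1}^{\infty}e^{-u}u^{-1}\,du=-\gamma$ yields exactly $(\log 3-\gamma)/3$, and termwise integration of the Watson expansion of $\mathrm{Hi}(-z)$ gives the tail with the claimed coefficients $(3k-1)!/k!$.

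One technical point deserves tightening. Your splitting of the $s$-integral and the claim that $E_{1}(z)$ is exponentially small are only valid as written for $|\arg z|<\pi/2$; once $|\arg z|\in(\pi/2,\,2\pi/3-\delta)$ the factor $e^{-zs}$ grows along the positive real $s$-axis, so each of the three pieces individually blows up even though their sum stays bounded. The standard remedy is to rotate the $s$-contour to the ray $\arg s=-\phi$ with $\phi\in(\,\arg z-\pi/2,\,\pi/6\,)$, which is nonempty precisely when $|\arg z|<2\pi/3$; along this ray both $\mathrm{Re}(zs)>0$ and $\mathrm{Re}(s^{3})>0$, so the super-exponential decay persists and your estimates go through unchanged. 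With that rotation inserted (or, alternatively, by first proving the expansion for real $z>0$ and then invoking uniqueness of asymptotic expansions under analytic continuation of the remainder), the argument is complete.
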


\begin{lemma}\label{lem: est-Scorer}
Let $\delta_0>0$ be a small constant as in Lemma \ref{lem:pri-Airy-decay} and $\hat c:=c+\mathrm i |\varepsilon|\alpha ^{-\f32}$.  Suppose $-\delta_0<\mathrm{Im}(\kappa\eta(0))<\delta_0$ and $(\al,c)\in\mathbb{ H}_2$.  Then for any $Y\in[0,2Y_c]$,
\begin{align*}
	A_2(1,Y)A_1(Y)-A_1(1,Y)A_2(Y)=\pi|\varepsilon|^{-\f13}e^{-\mathrm i\frac{\pi}{2}}u'(Y_c)^{-\f23}Hi(e^{-\mathrm i\frac{\pi}{2}}\kappa\eta(Y))+\mathcal O(|\varepsilon|^{-\f13}c_r).
\end{align*}	
\end{lemma}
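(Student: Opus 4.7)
The plan is to reduce the left-hand side, via the change of variable $t=\kappa\eta(Z)$ in the integrals defining $A_1(1,Y)$ and $A_2(1,Y)$, to a classical combination of Airy functions, and then invoke the standard identity that expresses the Scorer function $Hi$ through such a combination. The modified Langer transformation enters only through the Jacobian $\kappa\eta'$, whose deviation from the constant $\kappa$ is $\mathcal{O}(|Z-Y_c|)$ by Lemma \ref{lem:est-eta}; since $Y\in[0,2Y_c]$ and $Y_c\sim c_r$, this is exactly the source of the $\mathcal O(|\varepsilon|^{-1/3}c_r)$ remainder.

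First I would rewrite $A_1(1,Y)=-\int_Y^\infty A_1(Z)\,dZ$ and $A_2(1,Y)=\int_{-\infty}^YA_2(Z)\,dZ$ with the substitution $s=\kappa\eta(Z)$. By Lemma \ref{lem:est-eta}, $\kappa\eta'(Z)=\kappa(1+\mathcal O(|Z-Y_c|))$ on $[0,2Y_c]$, so the Jacobian $(\kappa\eta'(Z))^{-1}$ can be replaced by $\kappa^{-1}$ with a relative pointwise error of size $\mathcal O(c_r)$. Combined with the exponential decay bounds from Lemma \ref{lem:pri-Airy-decay} and the pointwise bounds of Lemma \ref{lem:A1A2}, this produces
\[
A_1(1,Y)=-\mathrm{i}|\varepsilon|^{-1/3}u'(Y_c)^{-2/3}\mathcal A(1,\kappa\eta(Y))+\mathcal O(|\varepsilon|^{-1/3}c_r),
\]
and an analogous expression for $A_2(1,Y)$ in terms of $\mathcal C(1,\kappa\eta(Y))$ with prefactor $2\pi|\varepsilon|^{1/3}u'(Y_c)^{-1/3}$.

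Next I would form the rescaled combination
\[
G(z):=Ai(e^{\mathrm{i}\pi/6}z)\,\mathcal C(1,z)+Ai(e^{\mathrm{i}5\pi/6}z)\,\mathcal A(1,z),
\]
and verify by direct differentiation that $G$ satisfies the inhomogeneous equation $G''(z)-\mathrm{i}zG(z)=W_0$, where $W_0$ is the constant Wronskian of the two solutions $Ai(e^{\mathrm{i}\pi/6}z)$ and $Ai(e^{\mathrm{i}5\pi/6}z)$ of the modified Airy equation $\phi''-\mathrm{i}z\phi=0$. Applying the rotation $s=e^{-\mathrm{i}\pi/2}z$ converts this into the classical Scorer equation $\tilde G''(s)-s\tilde G(s)=-W_0$; the decay condition in the appropriate sector, together with the uniqueness of bounded solutions in that sector, forces $\tilde G(s)$ to equal a specific multiple of $Hi(s)$. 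Evaluating $W_0$ from the standard identities $W[Ai,Bi]=1/\pi$ and $Bi(z)=e^{\mathrm{i}\pi/6}Ai(e^{-\mathrm{i}2\pi/3}z)+e^{-\mathrm{i}\pi/6}Ai(e^{\mathrm{i}2\pi/3}z)$ then produces the explicit prefactor $\pi|\varepsilon|^{-1/3}e^{-\mathrm{i}\pi/2}u'(Y_c)^{-2/3}$ stated in the lemma.

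The main obstacle is Step 2: pinning down the precise constant multiplying $Hi$, i.e. confirming that it is $Hi$ rather than $Gi$ or a mixed combination, and correctly tracking the complex exponentials (and their branches) through the rotation $s=e^{-\mathrm{i}\pi/2}z$. This requires a careful reading of the asymptotic behavior of the $\mathcal A(k,\cdot)$ and $\mathcal C(k,\cdot)$ functions in the sectors $|\arg z|<\pi-\delta$ versus the Scorer asymptotic sectors, so as to match both the equation and the boundary behavior that uniquely pick out $Hi$. Once that identification is complete, the error bookkeeping in Step~1 combines routinely with Lemma \ref{lem:A1A2} to yield the $\mathcal O(|\varepsilon|^{-1/3}c_r)$ remainder uniformly in $Y\in[0,2Y_c]$.
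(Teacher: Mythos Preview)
Your Step~1 matches the paper's argument exactly: replace the Jacobian $\kappa\eta'(Z)$ by $\kappa$ using Lemma~\ref{lem:est-eta}, and bound the remainder by $\mathcal O(|\varepsilon|^{-1/3}c_r)$ via the decay of Lemmas~\ref{lem:pri-Airy-decay} and~\ref{lem:A1A2}. However, your combination $G(z)=Ai(e^{\mathrm i\pi/6}z)\,\mathcal C(1,z)+Ai(e^{\mathrm i5\pi/6}z)\,\mathcal A(1,z)$ does \emph{not} satisfy $G''-\mathrm izG=\text{const}$: a direct computation gives $G''-\mathrm izG=3\big(Ai(e^{\mathrm i\pi/6}z)Ai(e^{\mathrm i5\pi/6}z)\big)'$. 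The combination that actually appears in $A_2(1,Y)A_1(Y)-A_1(1,Y)A_2(Y)$ carries a \emph{minus} sign, namely $\phi_1\mathcal C(1,\cdot)-\phi_2\mathcal A(1,\cdot)$, and for that one the cross terms cancel and you get $G''-\mathrm izG=-W[\phi_1,\phi_2]$ as desired. With this correction your ODE route is valid.

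For Step~2 the paper takes a different, more algebraic path: rather than characterize the combination via its inhomogeneous ODE and asymptotics, it rewrites $\mathcal C(1,z)$ and $\mathcal A(1,z)$ as contour integrals, applies the classical identity $Ai(e^{\mathrm i2\pi/3}z)=\tfrac12e^{\mathrm i\pi/3}(Ai(z)-\mathrm iBi(z))$ together with $\int_{-\infty}^0Bi=0$, and then invokes the known relation $Gi(z)+\mathrm iAi(z)=e^{\mathrm i\pi/3}Hi(e^{-\mathrm i2\pi/3}z)$ to land directly on $Hi$. This sidesteps exactly the obstacle you flag---distinguishing $Hi$ from $Gi$ and fixing the prefactor via sector asymptotics---since the identities pin down the constant mechanically. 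Your ODE approach is conceptually cleaner and would work once you correct the sign and carry out the asymptotic matching, but the paper's identity-based computation is shorter and avoids any delicate uniqueness argument in complex sectors.
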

\begin{proof}
	By the definition of $A_j(Y)$ and $A_j(1,Y)$, we know that 
	\begin{align*}
		A_2(1,Y)A_1(Y)-A_1(1,Y)A_2(Y)=&-2\pi|\varepsilon|^{-\f23}u'(Y_c)^{-\f13}\mathrm i\int_{-\infty}^YAi(e^{\mathrm i\frac{5\pi}{6}}\kappa\eta(Z))dZAi(e^{\mathrm i\frac{\pi}{6}}\kappa\eta(Y))\\
		&-2\pi|\varepsilon|^{-\f23}u'(Y_c)^{-\f13}\mathrm i\int^{+\infty}_YAi(e^{\mathrm i\frac{\pi}{6}}\kappa\eta(Z))dZAi(e^{\mathrm i\frac{5\pi}{6}}\kappa\eta(Y)).
	\end{align*}
We also notice that 
\begin{align*}
\int^{+\infty}_YAi(e^{\mathrm i\frac{\pi}{6}}\kappa\eta(Z))dZAi(e^{\mathrm i\frac{5\pi}{6}}\kappa\eta(Y))=&\kappa^{-1}\int_Y^{+\infty}	\partial_Z(\mathcal A(1,\kappa\eta(Z))\partial_Z\eta^{-1}dZAi(e^{\mathrm i\frac{5\pi}{6}}\kappa\eta(Y))\\
=&-\kappa^{-1}\mathcal A(1,\kappa\eta(Y))Ai(e^{\mathrm i\frac{5\pi}{6}}\kappa\eta(Y))+\mathcal R_1(Y),
\end{align*}
where 
\begin{align*}
	\mathcal R_1(Y)=&\kappa^{-1}\mathcal A(1,\kappa\eta(Y))Ai(e^{\mathrm i\frac{5\pi}{6}}\kappa\eta(Y))(1-\partial_Y\eta(Y)^{-1})\\
	&+\kappa^{-1}\int_Y^{+\infty}\mathcal A(1,\kappa\eta(Z))\frac{\partial_Z^2\eta}{(\partial_Z\eta)^2}dZAi(e^{\mathrm i\frac{5\pi}{6}}\kappa\eta(Y)).
\end{align*}
On the other hand, by Lemma \ref{lem:est-eta}, we know that for any $Y\in[0,2Y_c]$,
\begin{align*}
	|1-\partial_Y\eta(Y)^{-1}|\leq C c_r\text{ and }|\partial_Y^2\eta(Y)/(\partial_Y\eta(Y))^2 |\leq C,
\end{align*}
which along with Lemma \ref{lem:pri-Airy-decay} and \ref{lem:Airy-green-decay} implies that for $Y\in[0,2Y_c]$,
\begin{align*}
	|\mathcal R_1(Y)|\lesssim \kappa^{-1}(c_r+\kappa^{-1})\lesssim|\varepsilon|^{\f13}c_r.
\end{align*}
Hence, we obtain that for any $Y\in[0,2Y_c]$,
\begin{align*}
		\int^{+\infty}_YAi(e^{\mathrm i\frac{\pi}{6}}\kappa\eta(Z))dZAi(e^{\mathrm i\frac{5\pi}{6}}\kappa\eta(Y))=-\kappa^{-1}\mathcal A(1,\kappa\eta(Y))Ai(e^{\mathrm i\frac{5\pi}{6}}\kappa\eta(Y))+\mathcal O(|\varepsilon|^{\f13}c_r).
\end{align*}
Similarly, we can obtain 
\begin{align*}
\int_{-\infty}^YAi(e^{\mathrm i\frac{5\pi}{6}}\kappa\eta(Z))dZAi(e^{\mathrm i\frac{\pi}{6}}\kappa\eta(Y))=\kappa^{-1}\mathcal C(1,\kappa\eta(Y))Ai(e^{\mathrm i\frac{\pi}{6}}\kappa\eta(Y))+\mathcal O(|\varepsilon|^{\f13}c_r).	
\end{align*}
Thus, we can write that for any $Y\in[0,2Y_c]$,
\begin{align*}
			&A_2(1,Y)A_1(Y)-A_1(1,Y)A_2(Y)\\
	&=-2\pi|\varepsilon|^{-\f13}u'(Y_c)^{-\f23}\mathrm i[\mathcal C(1,\kappa\eta(Y))Ai(e^{\mathrm i\frac{\pi}{6}}\kappa\eta(Y))-\mathcal A(1,\kappa\eta(Y))Ai(e^{\mathrm i\frac{5\pi}{6}}\kappa\eta(Y))]+\mathcal O(|\varepsilon|^{-\f13}c_r)\\
	&=2\pi|\varepsilon|^{-\f13}e^{-\mathrm i\frac{2\pi}{3}}u'(Y_c)^{-\f23}\Big(\int_{-\infty}^{e^{\mathrm i\frac{\pi}{6}}\kappa\eta(Y)}Ai(e^{\mathrm i\frac{2\pi}{3}}z)dzAi(e^{\mathrm i\frac{\pi}{6}}\kappa\eta(Y))+\int_{e^{\mathrm i\frac{\pi}{6}}\kappa\eta(Z)}^{+\infty}Ai(z)dzAi(e^{\mathrm i\frac{5\pi}{6}}\kappa\eta(Y))\Big)\\
	&\quad+\mathcal O(|\varepsilon|^{-\f13}c_r),
\end{align*}
which along with  the facts $Ai(e^{\mathrm i\frac{2\pi}{3}}z)=\f12e^{\mathrm i\frac{\pi}{3}}(Ai(z)-\mathrm iBi(z))$, $\int_{-\infty}^0Bi(z)dz=0$ and $Gi(z)+\mathrm iAi(z)=e^{\mathrm i\frac{\pi}{3}}Hi(e^{-\mathrm i\frac{2\pi}{3}}z)$, implies that 
\begin{align*}
&A_2(1,Y)A_1(Y)-A_1(1,Y)A_2(Y)\\
&=\pi|\varepsilon|^{-\f13}e^{-\mathrm i\frac{\pi}{3}}u'(Y_c)^{-\f23}\int_{-\infty}^{\infty}Ai(z)dzAi(e^{\mathrm i\frac{\pi}{6}}\kappa\eta(Y))\\
&\quad-\pi|\varepsilon|^{-\f13}e^{-\mathrm i\frac{5\pi}{6}}u'(Y_c)^{-\f23}\int_{-\infty}^{e^{\mathrm i\frac{\pi}{6}}\kappa\eta(Y)}Bi(z)dzAi(e^{\mathrm i\frac{\pi}{6}}\kappa\eta(Y))\\
&\quad+\pi|\varepsilon|^{-\f16}e^{-\mathrm i\frac{5\pi}{6}}u'(Y_c)^{-\f23}\int_{e^{\mathrm i\frac{\pi}{6}}\kappa\eta(Y)}^{+\infty}Ai(z)dzBi(e^{\mathrm i\frac{\pi}{6}}\kappa\eta(Y))+\mathcal O(|\varepsilon|^{-\f13}c_r)\\
&=\pi|\varepsilon|^{-\f13}e^{-\mathrm i\frac{5\pi}{6}}u'(Y_c)^{-\f23}\Big(\mathrm i Ai(e^{\mathrm i\frac{\pi}{6}}\kappa\eta(Y))+Gi(e^{\mathrm i\frac{\pi}{6}}\kappa\eta(Y))\Big)+\mathcal O(|\varepsilon|^{-\f13}c_r)\\
&=\pi|\varepsilon|^{-\f13}e^{-\mathrm i\frac{\pi}{2}}u'(Y_c)^{-\f23}Hi(e^{-\mathrm i\frac{\pi}{2}}\kappa\eta(Y))+\mathcal O(|\varepsilon|^{-\f13}c_r).
\end{align*}

The proof is completed.
	\end{proof}

\begin{lemma}[Lemma A.5 in \cite{MWWZ}]\label{lem:airy-0}
	Let $|\varepsilon|,|c|\ll1\ll\kappa$ and $c_r>0$. Suppose $|\kappa\eta_i|<\delta_0$. Then there holds
	\begin{align*}
	&\left|\tilde{\mathcal A}(1,0)-\kappa^{-1}\mathcal A(1,\kappa\eta(0))\right|+\kappa\left|\tilde{\mathcal A}(2,0)-\kappa^{-2}\mathcal A(2,\kappa\eta(0))\right|\leq C\kappa^{-1}(|c|+\kappa^{-1}),
		\end{align*}
	and
	\begin{align*}
	\left|\frac{\tilde{\mathcal A}(2,0)}{\tilde{\mathcal A}(1,0)}-\kappa^{-1}\frac{\mathcal A(2,\kappa\eta(0))}{\mathcal A(1,\kappa\eta(0))}\right|\leq C\kappa^{-1}(\kappa^{-1}+|c|).
	\end{align*}
	Moreover, if $|\kappa\eta(0)|>M$, then we have
	\begin{align*}
	\frac{\tilde{\mathcal A}(2,0)}{\tilde{\mathcal A}(1,0)}
	=&-e^{\mathrm i(\f{\pi}{4}-\f{\theta_0}{2})}|\varepsilon|^\f12c_r^{-\f12}+o(|\varepsilon|^\f12c_r^{-\f12}+|\varepsilon|^{\f13}),\quad
	\frac{Ai(e^{\mathrm i\frac{\pi}{6}}\kappa\eta(0))\tilde{\mathcal A}(2,0)}{\tilde{\mathcal A}(1,0)^2}=1+\mathcal O(|\varepsilon|^\f12c_r^{-\f32}).
	\end{align*}
\end{lemma}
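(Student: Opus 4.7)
\medskip
\noindent\textbf{Proof proposal for Lemma \ref{lem:airy-0}.}

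The plan is to reduce the ``tilded'' quantities $\tilde{\mathcal A}(j,0)$ to the ``untilded'' classical ones $\mathcal A(j,\kappa\eta(0))$ by the change of variables $t=\kappa\eta(Z)$, and to control the resulting error by exploiting the exponential decay of $Ai(e^{i\pi/6}t)$ together with the fact that $\partial_Y\eta(Y)^{-1}$ is very close to $1$ near $Y_c$. Once this reduction is in hand, the asymptotic formulas for $|\kappa\eta(0)|>M$ will follow by inserting the full expansions from Lemma \ref{lem:pri-Airy-decay} and computing the resulting leading coefficient and phase.

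First, starting from $\tilde{\mathcal A}(1,0)=-\int_0^{+\infty}Ai(e^{i\pi/6}\kappa\eta(Z))\,dZ$ and substituting $t=\kappa\eta(Z)$, which is admissible since $\eta_r$ is strictly monotone by Lemma \ref{lem:est-eta}, I obtain
\begin{equation*}
\tilde{\mathcal A}(1,0)-\kappa^{-1}\mathcal A(1,\kappa\eta(0))=-\kappa^{-1}\int_{\kappa\eta(0)}^{+\infty}Ai(e^{i\pi/6}t)\bigl(\partial_Z\eta(Z(t))^{-1}-1\bigr)\,dt.
\end{equation*}
I would split this integral into the region $\{|Z-Y_c|\le L\}$, where Lemma \ref{lem:est-eta} yields $|\partial_Z\eta^{-1}-1|\le C|Z-Y_c|\le C(|t|/\kappa+|c|)$, and the region $\{|Z-Y_c|\ge L\}$, where $|\partial_Z\eta^{-1}-1|$ is bounded but $|Ai(e^{i\pi/6}t)|\lesssim e^{-c|t|^{3/2}}$ from \eqref{eq:airy-decay}. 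The first region contributes $\kappa^{-1}\bigl(|c|\int|Ai|\,dt+\kappa^{-1}\int|t||Ai|\,dt\bigr)\lesssim \kappa^{-1}(|c|+\kappa^{-1})$, and the second contributes $O(\kappa^{-1}e^{-cL^{3/2}\kappa^{3/2}})$, which is much smaller. Integrating once more in $Y$ (and using that $|\tilde{\mathcal A}(1,Y)|$ is itself bounded by a similar estimate for each $Y\ge 0$) handles $\tilde{\mathcal A}(2,0)$ in exactly the same way and produces an extra factor $\kappa^{-1}$, giving the first displayed bound.

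For the ratio bound, I would write
\begin{equation*}
\frac{\tilde{\mathcal A}(2,0)}{\tilde{\mathcal A}(1,0)}-\kappa^{-1}\frac{\mathcal A(2,\kappa\eta(0))}{\mathcal A(1,\kappa\eta(0))}=\frac{\bigl[\tilde{\mathcal A}(2,0)-\kappa^{-2}\mathcal A(2,\kappa\eta(0))\bigr]}{\tilde{\mathcal A}(1,0)}+\kappa^{-2}\mathcal A(2,\kappa\eta(0))\,\frac{\kappa^{-1}\mathcal A(1,\kappa\eta(0))-\tilde{\mathcal A}(1,0)}{\tilde{\mathcal A}(1,0)\kappa^{-1}\mathcal A(1,\kappa\eta(0))},
\end{equation*}
and combine the individual estimates with the lower bound $|\mathcal A(1,\kappa\eta(0))|\gtrsim 1$ when $|\kappa\eta(0)|\lesssim 1$, or the explicit asymptotic bound from Lemma \ref{lem:pri-Airy-decay} when $|\kappa\eta(0)|>M$; in both regimes one controls $\tilde{\mathcal A}(1,0)$ from below since the relative error is of size $|c|+\kappa^{-1}\ll 1$.

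Finally, for the $|\kappa\eta(0)|>M$ asymptotics, I would substitute $z=\kappa\eta(0)$ into the expansions of Lemma \ref{lem:pri-Airy-decay}, yielding
\begin{equation*}
\frac{\mathcal A(2,z)}{\mathcal A(1,z)}=-e^{-i\pi/6}(e^{i\pi/6}z)^{-1/2}\bigl(1+O(|z|^{-3/2})\bigr).
\end{equation*}
With $\eta(0)=-Y_c+i\eta_i+O(|c|^2)$, $|z|\sim\kappa c_r\sim c_r|\varepsilon|^{-1/3}$, and $\kappa^{-3/2}(c_r)^{-1/2}\sim|\varepsilon|^{1/2}c_r^{-1/2}$, putting the pre-factor $-\kappa^{-1}$ from the ratio reduction together with $(e^{i\pi/6}z)^{-1/2}$ gives the claimed magnitude $|\varepsilon|^{1/2}c_r^{-1/2}$ and a phase of the form $\pi/4-\theta_0/2$, where $\theta_0$ records the correction to $\arg\eta(0)$ away from the branch cut. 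The remainder $o(|\varepsilon|^{1/2}c_r^{-1/2}+|\varepsilon|^{1/3})$ absorbs both the $O(|z|^{-3/2})$ from the Airy asymptotic and the $O(|c|+\kappa^{-1})$ error from the reduction. The second asymptotic formula follows from the identities $\partial_z\mathcal A(1,z)=Ai(e^{i\pi/6}z)$, so that the leading parts of $Ai(e^{i\pi/6}z)\mathcal A(2,z)$ and $\mathcal A(1,z)^2$ share the same exponential $e^{-\frac{4}{3}(e^{i\pi/6}z)^{3/2}}$ and the same algebraic pre-factor, cancelling to leave $1+O(|z|^{-3/2})=1+O(|\varepsilon|^{1/2}c_r^{-3/2})$.

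The principal obstacle is the careful tracking of branches and phases in the $|\kappa\eta(0)|>M$ regime: because $\eta(0)$ lies near the negative real axis (its argument is $\pi$ up to a correction of size $|\eta_i|/Y_c$, which can have either sign), $e^{i\pi/6}\kappa\eta(0)$ sits near the boundary $\pm 5\pi/6$ of the sector of validity of \eqref{eq:airy-decay}, so I must verify that the expansions of Lemma \ref{lem:pri-Airy-decay} remain applicable and that the principal branch of $(\,\cdot\,)^{1/2}$ is chosen consistently with the definition of $\theta_0$ in order to produce the exact phase $\pi/4-\theta_0/2$. All other steps are routine perturbative estimates using the decay of $Ai$ and the quantitative properties of the modified Langer transformation provided by Lemma \ref{lem:est-eta}.
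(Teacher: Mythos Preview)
The paper does not actually supply a proof of this lemma: it is simply imported as Lemma~A.5 from \cite{MWWZ} and stated in the appendix without argument. Your proposal is the natural route (and is essentially what is done in \cite{MWWZ}): reduce $\tilde{\mathcal A}(j,0)$ to $\kappa^{-j}\mathcal A(j,\kappa\eta(0))$ via the substitution $t=\kappa\eta(Z)$, bound the error through $|\partial_Z\eta^{-1}-1|\le C|Z-Y_c|$ from Lemma~\ref{lem:est-eta} together with the exponential decay of $Ai(e^{i\pi/6}t)$, and then read off the $|\kappa\eta(0)|>M$ asymptotics from Lemma~\ref{lem:pri-Airy-decay}. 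The phase-tracking issue you flag is the only genuinely delicate point, and your identification of it is accurate; otherwise the argument is a routine perturbative computation.
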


\section*{Acknowledgments}
 Q. Chen is supported by NSF of China under Grant 12288201. D. Wu is supported by NSF of China under Grant 12471196.  Z. Zhang is partially supported by  NSF of China  under Grant 12171010 and 12288101. 

\end{document}